\begin{document}
	\pdfoutput=1
	\theoremstyle{plain}
	\newtheorem{thm}{Theorem}[section]
	\newtheorem*{thm1}{Theorem 1}

	\newtheorem*{thmM}{Main Theorem}
	\newtheorem*{thmA}{Theorem A}
	\newtheorem*{thm2}{Theorem 2}
	\newtheorem{lemma}[thm]{Lemma}
	\newtheorem{lem}[thm]{Lemma}
	\newtheorem{cor}[thm]{Corollary}
	\newtheorem{pro}[thm]{Proposition}
	\newtheorem{prop}[thm]{Proposition}
	\newtheorem{variant}[thm]{Variant}
	\theoremstyle{definition}
	\newtheorem{notations}[thm]{Notations}
	\newtheorem{rem}[thm]{Remark}
	\newtheorem{rmk}[thm]{Remark}
	\newtheorem{rmks}[thm]{Remarks}
	\newtheorem{defi}[thm]{Definition}
	\newtheorem{exe}[thm]{Example}
	\newtheorem{claim}[thm]{Claim}
	\newtheorem{ass}[thm]{Assumption}
	\newtheorem{prodefi}[thm]{Proposition-Definition}
	\newtheorem{que}[thm]{Question}
	\newtheorem{con}[thm]{Conjecture}

	\newtheorem{exa}[thm]{Example}
	\newtheorem*{assa}{Assumption A}
	\newtheorem*{algstate}{Algebraic form of Theorem \ref{thmstattrainv}}
	
	\newtheorem*{dmlcon}{Dynamical Mordell-Lang Conjecture}
	\newtheorem*{condml}{Dynamical Mordell-Lang Conjecture}
	\newtheorem*{congb}{Geometric Bogomolov Conjecture}
	\newtheorem*{congdaocurve}{Dynamical Andr\'e-Oort Conjecture for curves}
	
	\newtheorem*{pdd}{P(d)}
	\newtheorem*{bfd}{BF(d)}

	\newtheorem*{probreal}{Realization problems}
	\numberwithin{equation}{section}
	\newcounter{elno}                
	\def\points{\list
		{\hss\llap{\upshape{(\roman{elno})}}}{\usecounter{elno}}}
	\let\endpoints=\endlist
	\newcommand{\SH}{\rm SH}
	\newcommand{\Cov}{\rm Cov}
	\newcommand{\Tan}{\rm Tan}
	\newcommand{\res}{\rm res}
	\newcommand{\Om}{\Omega}
	\newcommand{\om}{\omega}
	\newcommand{\La}{\Lambda}
	\newcommand{\la}{\lambda}
	\newcommand{\mc}{\mathcal}
	\newcommand{\mb}{\mathbb}
	\newcommand{\surj}{\twoheadrightarrow}
	\newcommand{\inj}{\hookrightarrow}
	\newcommand{\zar}{{\rm zar}}
	\newcommand{\Exc}{{\rm Exc}}
	\newcommand{\an}{{\rm an}}
	\newcommand{\red}{{\rm red}}
	\newcommand{\codim}{{\rm codim}}
	\newcommand{\Supp}{{\rm Supp\;}}
		\newcommand{\Leb}{{\rm Leb}}
	\newcommand{\rank}{{\rm rank}}
	\newcommand{\Ker}{{\rm Ker \ }}
	\newcommand{\Pic}{{\rm Pic}}
	\newcommand{\Der}{{\rm Der}}
	\newcommand{\Div}{{\rm Div}}
	\newcommand{\Hom}{{\rm Hom}}
	\newcommand{\Corr}{{\rm Corr}}
	\newcommand{\im}{{\rm im}}
	\newcommand{\Spec}{{\rm Spec \,}}
	\newcommand{\Nef}{{\rm Nef \,}}
	\newcommand{\Frac}{{\rm Frac \,}}
	\newcommand{\Sing}{{\rm Sing}}
	\newcommand{\sing}{{\rm sing}}
	\newcommand{\reg}{{\rm reg}}
	\newcommand{\Char}{{\rm char\,}}
	\newcommand{\Tr}{{\rm Tr}}
	\newcommand{\ord}{{\rm ord}}
	\newcommand{\bif}{{\rm bif}}
	\newcommand{\AS}{{\rm AS}}
	\newcommand{\FS}{{\rm FS}}
	\newcommand{\CE}{{\rm CE}}
	\newcommand{\PCE}{{\rm PCE}}
	\newcommand{\WR}{{\rm WR}}
	\newcommand{\PR}{{\rm PR}}
	\newcommand{\TCE}{{\rm TCE}}
	\newcommand{\diam}{{\rm diam\,}}
	\newcommand{\id}{{\rm id}}
	\newcommand{\NE}{{\rm NE}}
	\newcommand{\Gal}{{\rm Gal}}
	\newcommand{\Min}{{\rm Min \ }}
	\newcommand{\Hol}{{\rm Hol \ }}

	\newcommand{\Max}{{\rm Max \ }}
	\newcommand{\Alb}{{\rm Alb}\,}
	\newcommand{\Aff}{{\rm Aff}\,}
	\newcommand{\GL}{{\rm GL}\,}        
	\newcommand{\PGL}{{\rm PGL}\,}
	\newcommand{\Bir}{{\rm Bir}}
	\newcommand{\Aut}{{\rm Aut}}
	\newcommand{\End}{{\rm End}}
	\newcommand{\Per}{{\rm Per}\,}
	\newcommand{\Preper}{{\rm Preper}\,}
	\newcommand{\ie}{{\it i.e.\/},\ }
	\newcommand{\niso}{\not\cong}
	\newcommand{\nin}{\not\in}
	\newcommand{\soplus}[1]{\stackrel{#1}{\oplus}}
	\newcommand{\by}[1]{\stackrel{#1}{\rightarrow}}
	\newcommand{\longby}[1]{\stackrel{#1}{\longrightarrow}}
	\newcommand{\vlongby}[1]{\stackrel{#1}{\mbox{\large{$\longrightarrow$}}}}
	\newcommand{\ldownarrow}{\mbox{\Large{\Large{$\downarrow$}}}}
	\newcommand{\lsearrow}{\mbox{\Large{$\searrow$}}}
	\renewcommand{\d}{\stackrel{\mbox{\scriptsize{$\bullet$}}}{}}
	\newcommand{\dlog}{{\rm dlog}\,}    
	\newcommand{\longto}{\longrightarrow}
	\newcommand{\vlongto}{\mbox{{\Large{$\longto$}}}}
	\newcommand{\limdir}[1]{{\displaystyle{\mathop{\rm lim}_{\buildrel\longrightarrow\over{#1}}}}\,}
	\newcommand{\liminv}[1]{{\displaystyle{\mathop{\rm lim}_{\buildrel\longleftarrow\over{#1}}}}\,}
	\newcommand{\norm}[1]{\mbox{$\parallel{#1}\parallel$}}
	\newcommand{\boxtensor}{{\Box\kern-9.03pt\raise1.42pt\hbox{$\times$}}}
	\newcommand{\into}{\hookrightarrow}
	\newcommand{\image}{{\rm image}\,}
	\newcommand{\Lie}{{\rm Lie}\,}      
	\newcommand{\CM}{\rm CM}
	\newcommand{\sext}{\mbox{${\mathcal E}xt\,$}}  
	\newcommand{\shom}{\mbox{${\mathcal H}om\,$}}  
	\newcommand{\coker}{{\rm coker}\,}  
	\newcommand{\sm}{{\rm sm}}
	\newcommand{\pgcd}{\text{pgcd}}
	\newcommand{\trd}{\text{tr.d.}}
	\newcommand{\tensor}{\otimes}
	\newcommand{\hotimes}{\hat{\otimes}}
	\newcommand{\crit}{{\rm crit}}

	\newcommand{\CH}{{\rm CH}}
	\newcommand{\tr}{{\rm tr}}
	\newcommand{\e}{\rm SH}
	
	\renewcommand{\iff}{\mbox{ $\Longleftrightarrow$ }}
	\newcommand{\supp}{{\rm supp}\,}
	\newcommand{\ext}[1]{\stackrel{#1}{\wedge}}
	\newcommand{\onto}{\mbox{$\,\>>>\hspace{-.5cm}\to\hspace{.15cm}$}}
	\newcommand{\propsubset}
	{\mbox{$\textstyle{
				\subseteq_{\kern-5pt\raise-1pt\hbox{\mbox{\tiny{$/$}}}}}$}}
	\newcommand{\sA}{{\mathcal A}}
	\newcommand{\sB}{{\mathcal B}}
	\newcommand{\sC}{{\mathcal C}}
	\newcommand{\sD}{{\mathcal D}}
	\newcommand{\sE}{{\mathcal E}}
	\newcommand{\sF}{{\mathcal F}}
	\newcommand{\sG}{{\mathcal G}}
	\newcommand{\sH}{{\mathcal H}}
	\newcommand{\sI}{{\mathcal I}}
	\newcommand{\sJ}{{\mathcal J}}
	\newcommand{\sK}{{\mathcal K}}
	\newcommand{\sL}{{\mathcal L}}
	\newcommand{\sM}{{\mathcal M}}
	\newcommand{\sN}{{\mathcal N}}
	\newcommand{\sO}{{\mathcal O}}
	\newcommand{\sP}{{\mathcal P}}
	\newcommand{\sQ}{{\mathcal Q}}
	\newcommand{\sR}{{\mathcal R}}
	\newcommand{\sS}{{\mathcal S}}
	\newcommand{\sT}{{\mathcal T}}
	\newcommand{\sU}{{\mathcal U}}
	\newcommand{\sV}{{\mathcal V}}
	\newcommand{\sW}{{\mathcal W}}
	\newcommand{\sX}{{\mathcal X}}
	\newcommand{\sY}{{\mathcal Y}}
	\newcommand{\sZ}{{\mathcal Z}}
	\newcommand{\A}{{\mathbb A}}
	\newcommand{\B}{{\mathbb B}}
	\newcommand{\C}{{\mathbb C}}
	\newcommand{\D}{{\mathbb D}}
	\newcommand{\E}{{\mathbb E}}
	\newcommand{\F}{{\mathbb F}}
	\newcommand{\G}{{\mathbb G}}
	\newcommand{\HH}{{\mathbb H}}
	\newcommand{\LL}{{\mathbb L}}
	\newcommand{\J}{{\mathbb J}}
	\newcommand{\M}{{\mathbb M}}
	\newcommand{\N}{{\mathbb N}}
	\renewcommand{\P}{{\mathbb P}}
	\newcommand{\Q}{{\mathbb Q}}
	\newcommand{\R}{{\mathbb R}}
	\newcommand{\T}{{\mathbb T}}
	\newcommand{\U}{{\mathbb U}}
	\newcommand{\V}{{\mathbb V}}
	\newcommand{\W}{{\mathbb W}}
	\newcommand{\X}{{\mathbb X}}
	\newcommand{\Y}{{\mathbb Y}}
	\newcommand{\Z}{{\mathbb Z}}
	\newcommand{\bk}{{\mathbf{k}}}
	
	\newcommand{\bp}{{\mathbf{p}}}
	\newcommand{\ep}{\varepsilon}
	\newcommand{\bbk}{{\overline{\mathbf{k}}}}
	\newcommand{\Fix}{\mathrm{Fix}}
	
	\newcommand{\tor}{{\mathrm{tor}}}
	\renewcommand{\div}{{\mathrm{div}}}
	
	\newcommand{\trdeg}{{\mathrm{trdeg}}}
	\newcommand{\Stab}{{\mathrm{Stab}}}
	
	\newcommand{\OK}{{\overline{K}}}
	\newcommand{\ok}{{\overline{k}}}
	
	\newcommand{\cf}{{\color{red} [c.f. ?]}}
	\newcommand{\jy}{\color{red} jy:}

	\title[]{DAO for curves}
	
\author{Zhuchao Ji}

\address{Institute for Theoretical Sciences, Westlake University, Hangzhou 310030, China}

\email{jizhuchao@westlake.edu.cn}

\author{Junyi Xie}


\address{Beijing International Center for Mathematical Research, Peking University, Beijing 100871, China}

\email{xiejunyi@bicmr.pku.edu.cn}

	
	\date{\today}

	\bibliographystyle{alpha}
	
	\maketitle
\begin{abstract}
	We prove the Dynamical Andr\'e-Oort (DAO) conjecture proposed by Baker and DeMarco for families of rational maps parameterized by an algebraic curve. In fact, we prove a stronger result, which is a Bogomolov type generalization of DAO for curves. 
	\end{abstract}
	\tableofcontents
	
	\epigraph{\emph{Dao that can be daoed is not the true Dao; Name that can be named is not the true Name.} }{Lao Zi, {\emph{Dao De Jing}}, 400 BC}
	
	\section{Introduction}
	\subsection{Statement of the main results}
	A rational map on $\P^1$ over $\C$ is called {\em postcritically finite} (PCF) if its critical orbits are finite. PCF maps play a fundamental role in complex dynamics and arithmetic dynamics, since the dynamical behavior of critical points usually reflect the general dynamical behavior of a rational map. A consequence of Thurston's rigidity theorem \cite{Douady1993} shows that PCF maps are defined over $\overline{\Q}$ in the moduli space of rational maps of fixed degree, except for the well-understood one-parameter family of flexible Latt\`es maps.  Moreover, PCF maps are Zariski dense \cite[Theorem A]{de2018dynamical}, and form a set of bounded Weil height after excluding the flexible Latt\`es family \cite[Theorem 1.1]{benedetto2014attracting}.  It was suggested by Silverman \cite{Silverman2012} that PCF points in the moduli space of rational maps  play a role analogous to that played by CM points in Shimura varieties.
	\medskip
	\par One may consider PCF maps as ``special points" in the moduli space. It is natural to ask what is the distribution of these special points.  This leads to the following conjecture \cite[Conjecture 1.1]{demarco2018critical} concerning PCF points and critical orbit relations, which is the curve case of the Dynamical Andr\'e-Oort conjecture
proposed by Baker and DeMarco in \cite{baker2013special}.
	\begin{con}[Dynamical Andr\'e-Oort conjecture for curves]\label{condao}
Let $(f_t)_{t\in \La}$ be  a non-isotrivial algebraic family of rational maps with degree $d\geq 2$,  parametrized by an algebraic curve $\La$ over $\C$. Then the following are equivalent:
\begin{points}
	\item There are infinitely many $t\in \La$ such that $f_t$ is PCF;
	\item The family has at most one independent critical orbit.
\end{points}
\end{con}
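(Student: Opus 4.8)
The plan is to establish the two implications separately; the substance lies in (i) $\Rightarrow$ (ii), while (ii) $\Rightarrow$ (i) is a matter of classical parameter‑space dynamics. For (ii) $\Rightarrow$ (i): after a finite branched base change we may assume the $2d-2$ critical points are marked by sections $c_1,\dots,c_{2d-2}\colon\La\to\P^1$, and ``at most one independent critical orbit'' means, after relabelling, that $c_1$ is active and each remaining $c_i$ is either persistently preperiodic or tied to the forward orbit of $c_1$ by an identity $f_t^{a_i}(c_i(t))=f_t^{b_i}(c_1(t))$ holding for all $t$. Then every $t$ at which $c_1$ is preperiodic under $f_t$ is automatically PCF, and the set of such $t$ is infinite: by non‑isotriviality the bifurcation locus of the pair $(f,c_1)$ is nonempty, and a Montel/transversality argument (McMullen, made precise by DeMarco and others) shows the $c_1$‑preperiodic parameters are dense in it. If instead the family is the flexible Latt\`es family, all fibers are PCF and (i) holds trivially.

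For (i) $\Rightarrow$ (ii) I would first set the flexible Latt\`es family aside, where all critical orbits are manifestly related so (ii) holds. Otherwise, a non‑flexible‑Latt\`es PCF map is defined over $\overline{\Q}$ by Thurston rigidity (as recalled in the introduction), so infinitely many PCF parameters force the image curve of $\La$ in the moduli space $M_d$ — and hence $\La$ itself together with the family — to be defined over a number field $K$; replace the ground field by $K$ and mark the critical points $c_1,\dots,c_{2d-2}$. On the connected curve $\La$ a \emph{passive} critical point is either persistently preperiodic or persistently non‑preperiodic (its orbit staying in a persistent Fatou component); the second case would leave no PCF fiber, so under (i) every passive $c_i$ is persistently preperiodic and thus imposes a trivial relation. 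It therefore suffices to control the \emph{active} critical points $c_1,\dots,c_k$ with $k\ge 1$.

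The engine is arithmetic equidistribution. For each active $c_i$ there is a semipositive adelically metrized line bundle $\overline{L}_i$ on a smooth compactification $\overline{\La}$, with $\deg\overline{L}_i>0$, whose canonical height $\hat h_i\ge 0$ vanishes at exactly the $t$ where $c_i$ is preperiodic under $f_t$, and whose curvature at a place $v$ is the bifurcation measure $\mu_{i,v}$ of $(f,c_i)$ (the construction of such metrized bundles and heights for dynamical pairs over a curve being by now standard, following Call--Silverman, DeMarco, Favre--Gauthier). The infinitely many PCF parameters form a non‑repeating — hence, on a curve, Zariski‑generic — sequence that is simultaneously small for every $\hat h_i$. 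Applying the arithmetic equidistribution theorem (Yuan; over $\P^1$: Baker--Rumely, Chambert-Loir, Favre--Rivera-Letelier) to each $i$ in turn, the Galois orbits of this sequence equidistribute towards $\mu_{i,v}/\deg\overline{L}_i$ at every place $v$; since the sequence is the same for all $i$, we get $\mu_{i,v}/\deg\overline{L}_i=\mu_{j,v}/\deg\overline{L}_j$ for all $i,j$ and all $v$. A standard rigidity for adelic line bundles on a curve (equal curvature at every place, both heights nonnegative and vanishing somewhere) then promotes this to the proportionality $\hat h_i=(\deg\overline{L}_i/\deg\overline{L}_j)\,\hat h_j$. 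Note that only a non‑repeating sequence with $\hat h_i\to 0$ for every $i$ was used, which is exactly the Bogomolov‑type strengthening of DAO announced in the abstract.

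The crux — and the step I expect to be the main obstacle — is the rigidity statement converting ``$\hat h_i$ proportional to $\hat h_j$'' into an honest dynamical relation between the orbits of $c_i$ and $c_j$; granting this, one orders $c_1,\dots,c_k$ so that all but one are tied to the others, i.e.\ the family has at most one independent critical orbit, which closes (i) $\Rightarrow$ (ii). In the polynomial case this conversion is due to Favre--Gauthier and uses B\"ottcher coordinates and external rays, tools unavailable for general rational maps; one must instead read the relation off the fine structure of the common bifurcation locus $\supp\mu_{1,v}=\dots=\supp\mu_{k,v}$ — where all the active critical points bifurcate simultaneously — exploiting the Zariski density and bounded Weil height of PCF maps in $M_d$ together with non‑archimedean and equidistribution input. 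Making this rigidity work in full generality for rational families is where the real difficulty lies; the preceding reductions and the equidistribution step are assembled from known machinery.
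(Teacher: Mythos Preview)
Your reductions and the equidistribution step are correct and match the paper exactly: reduction to $\overline{\Q}$ via Thurston rigidity, marking the critical points after base change, discarding passive critical points as persistently preperiodic, and using Yuan--Zhang equidistribution to conclude that the bifurcation measures $\mu_{f,c_i}$ of the active $c_i$ are mutually proportional. You also correctly identify the crux: converting this proportionality into a genuine dynamical relation between any two active critical points. But you do not supply a mechanism for this step, and the vague hope of ``reading the relation off the fine structure of the common bifurcation locus'' is not close to what the paper does.

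The paper's method is entirely different from the polynomial (B\"ottcher/Green function) route and also from your height-proportionality sketch. Assume $c_1,c_2$ are active and not dynamically related and argue by contradiction at a $\mu_{\bif}$-generic parameter $t_0$. Using arithmetic intersection theory on the quasi-projective total space, the paper shows that $\mu_{\bif}$-a.e.\ $t_0$ satisfies a quantitative separation condition $\FS(\Gamma_{t_0})$ for every $f_{t_0}\times f_{t_0}$-invariant correspondence $\Gamma_{t_0}$ (the orbit of $(c_1(t_0),c_2(t_0))$ stays, in density, bounded away from $\Gamma_{t_0}$). Independently, using De Th\'elin--Gauthier--Vigny and a pluripotential argument, $\mu_{\bif}$-a.e.\ $t_0$ satisfies Parametric Collet--Eckmann, Polynomial Recurrence, and $f_{t_0}$ is Topological Collet--Eckmann. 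For such $t_0$ the paper builds, for each of $c_1,c_2$, a non-degenerate renormalization family $h_{n}\colon\D\to\P^1$ (Tan--Lei type similarity between parameter and phase space, but at generic non-prerepelling parameters); the proportionality of $\mu_{f,c_1}$ and $\mu_{f,c_2}$ forces the two renormalization families to have equivalent scales, so the pair is an \emph{asymptotic symmetry} of $\mu_{f_{t_0}}$. A local rigidity theorem for the maximal entropy measure (the authors' earlier work, or Dujardin--Favre--Gauthier in the TCE case) then produces an invariant correspondence $\Gamma_{t_0}\in\Corr(\P^1)^{f_{t_0}}_*$ that the orbit of $(c_1(t_0),c_2(t_0))$ stays close to in positive density --- contradicting $\FS(\Gamma_{t_0})$.

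Two smaller points. First, your claimed ``standard rigidity'' upgrading equality of curvatures at all places to proportionality of the canonical heights $\hat h_i$ is not used in the paper and is not obviously available in this generality; the paper works only with the archimedean bifurcation measures. Second, even if you had $\hat h_i$ proportional to $\hat h_j$, there is no known direct route from that to a dynamical relation for rational (non-polynomial) families --- this is precisely the obstacle the paper circumvents with the renormalization/local-symmetry argument.
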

\par We need to explain the meaning of  ``independent critical orbit''.  
Set $k:=\C(\La)$.
The geometric generic fiber of the family $(f_t)_{t\in \La}$ is a rational function $f_{\overline{k}}:\P^1_{\overline{k}}\to \P^1_{\overline{k}}$.  
Following DeMarco \cite{demarco2018critical},  a pair $a,b\in \P^1(\overline{k})$  is called {\em dynamically related } if there is an algebraic curve $V\subseteq \P^1_{\overline{k}}\times \P^1_{\overline{k}}$ such that  $(a, b)\in V$  and $V$ is preperiodic by the product map $f_{\overline{k}}\times f_{\overline{k}}:\P^1_{\overline{k}}\times \P^1_{\overline{k}}\to \P^1_{\overline{k}}\times \P^1_{\overline{k}}$.  
In other words,  a pair of points $a,b\in \P^1(\overline{k})$  are dynamically related if and only if the orbit of $(a,b)\in \P^1_{\overline{k}}\times \P^1_{\overline{k}}$ is not Zariski dense in $\P^1_{\overline{k}}\times \P^1_{\overline{k}}.$
\par We say that a family $(f_t)$ has at most one independent critical orbit if any pair of critical points is dynamically related. 
\medskip
\par In \cite{baker2013special}, Baker and DeMarco actually proposed  a more general conjecture, considering also the higher dimensional parameter spaces, where almost nothing is known. Conjecture \ref{condao} and its variations  also appear in  Ghioca-Hsia-Tucker \cite{ghioca2015preperiodic}, and in DeMarco \cite{demarco2016bifurcations}, \cite{de2018dynamical},\cite{demarco2018critical}. One of the motivations of Conjecture \ref{condao} comes from the analogy between PCF points and CM points and the Andr\'e-Oort Conjecture in arithmetic geometry, which was recently fully solved by  Pila, Shankar and Tsimerman \cite{pila2021canonical}.  We note that  Conjecture \ref{condao} and the Andr\'e-Oort Conjecture both fit the principle of unlikely intersections, however,  there is no overlap between them.
 
\medskip
\par In this paper, we confirm the Dynamical Andr\'e-Oort conjecture for curves.
	\begin{thm}\label{dao}
	Conjecture \ref{condao} is true.
	\end{thm}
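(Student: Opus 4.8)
The plan is to prove Conjecture \ref{condao} via arithmetic equidistribution of points of small height on the parameter curve. The implication (ii) $\Rightarrow$ (i) is the easy direction and is known: if all critical pairs are dynamically related, one reduces (after spreading out and using the structure of dynamically related points) to a single bifurcation measure, and classical results (e.g.\ DeMarco's work on bifurcations, or the non-isotriviality together with the fact that the bifurcation current is nonzero) force infinitely many PCF parameters. So the substance is (i) $\Rightarrow$ (ii). Here the strategy is: associate to the family $(f_t)_{t\in\La}$ and each critical point $c_i\in\P^1(\bbk)$ of $f_{\bbk}$ an adelic metrized line bundle / height function $\hat h_{c_i}$ on $\La$, whose local components are the bifurcation currents/measures at each place (archimedean and non-archimedean), built from the escape-rate functions $G_{f_t}(c_i(t))$. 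The key point is that $f_t$ being PCF is equivalent to all these heights vanishing simultaneously, so an infinite sequence of PCF parameters is a sequence of small points for each $\hat h_{c_i}$.

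The main engine is then the arithmetic equidistribution theorem of Yuan (building on Szpiro--Ullmo--Zhang, Chambert-Loir, Baker--Rumely, Favre--Rivera-Letelier): given infinitely many PCF parameters $t_n$, their Galois orbits equidistribute with respect to the equilibrium measure $\mu_i$ attached to $\hat h_{c_i}$, at every place $v$, and this forces $\mu_i = \mu_j$ for all $i,j$ (same limiting measure). Equality of the bifurcation measures $\mu_i = \mu_j$ at, say, an archimedean place is then fed into a rigidity statement for the pair $(c_i,c_j)$: the proportionality of the two bifurcation currents on $\La$, together with an analysis of the common support and the local dynamics, should propagate to a global statement that the pair $(c_i,c_j)$ is dynamically related in the sense above, i.e.\ the orbit of $(c_i,c_j)$ under $f_{\bbk}\times f_{\bbk}$ is not Zariski dense in $\P^1\times\P^1$. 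Doing this for every pair of critical points yields that the family has at most one independent critical orbit, which is (ii).

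The hard part — and where most of the real work must go — is the rigidity step: deducing from $\mu_i=\mu_j$ (equality of bifurcation measures on $\La$, at all places) that $(c_i,c_j)$ lies on a periodic/preperiodic curve for $f_{\bbk}\times f_{\bbk}$. Equality of measures gives equality of the associated heights up to a constant and hence, via the theory of adelic line bundles and an "unlikely intersection" / Mordell--Lang-type input, strong constraints; but converting this into an \emph{algebraic} preperiodic curve requires a delicate argument, presumably splitting into cases according to the structure of $f_{\bbk}$ (e.g.\ whether $f_{\bbk}$ is special — a power map, Chebyshev, or Latt\`es — or whether some iterate has a nontrivial symmetry), and invoking a dynamical rigidity result about invariant curves in $\P^1\times\P^1$ for non-special $f$. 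A secondary obstacle is purely technical but essential: constructing the adelic metrized line bundles $\hat h_{c_i}$ with the required continuity/semipositivity so that Yuan's equidistribution applies — this needs the good reduction behavior of $f_t$ at all but finitely many places and a careful treatment of the non-archimedean bifurcation measures (Berkovich-analytic), and in the Latt\`es/flexible case one must separately handle or exclude the flexible Latt\`es locus, matching the hypothesis already built into the discussion of PCF maps in the introduction.
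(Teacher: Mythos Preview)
Your outline correctly identifies Step 1 (arithmetic equidistribution forcing the bifurcation measures $\mu_{f,c_i}$ to coincide), and this is indeed how the paper begins. However, the ``rigidity step'' you describe --- passing from $\mu_i=\mu_j$ at all places to an algebraic preperiodic curve for $f_{\overline{k}}\times f_{\overline{k}}$ via heights, adelic line bundles, and ``a dynamical rigidity result about invariant curves'' --- is exactly the step that had been the obstruction to proving Conjecture~\ref{condao} beyond the polynomial case. The paper explicitly recalls (Section~1.4) that the Baker--DeMarco/Favre--Gauthier strategy relies on canonical Green functions, B\"ottcher coordinates, and an algebraization theorem, all of which are specific to polynomial families; no analogue is available for general rational maps, and your proposal does not supply one. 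So as written, your plan stalls precisely at the point you flag as ``hard'': you have not indicated any mechanism by which equality of bifurcation measures on $\La$ produces a $f_{\overline{k}}\times f_{\overline{k}}$--preperiodic curve through $(c_i,c_j)$.

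The paper's proof takes a genuinely different route after equidistribution. Instead of seeking a direct algebraization, it argues by contradiction on the phase space of a \emph{generic} parameter $t_0\in\supp\,\mu_{\bif}$. One shows that $\mu_{\bif}$-a.e.\ $t_0$ satisfies a package of non-uniformly hyperbolic conditions (Parametric Collet--Eckmann, Polynomial Recurrence, $\TCE$ for $f_{t_0}$, non-exceptionality, transcendence over $\overline{\Q}$). At such $t_0$ one constructs, for each active $c_i$, a normal family of renormalization maps $h_n:\D\to\P^1$ with non-constant limits, establishing a similarity between $\mu_{\bif}$ near $t_0$ and $\mu_{f_{t_0}}$ near the orbit of $c_i(t_0)$ (a Tan~Lei--type result at $\mu_{\bif}$-typical, not prerepelling, parameters). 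Proportionality of $\mu_{f,c_1}$ and $\mu_{f,c_2}$ then forces the pair $(\sH_{c_1},\sH_{c_2})$ of renormalization families to be an \emph{asymptotic symmetry} of $\mu_{f_{t_0}}$, and a local rigidity theorem for symmetries of maximal entropy measures (the authors' Theorem~\ref{jxielocjr}, or Dujardin--Favre--Gauthier in the $\TCE$ case) produces an $f_{t_0}$-invariant correspondence $\Gamma_{t_0}$ that the orbit of $(c_1(t_0),c_2(t_0))$ approaches frequently. This contradicts an independently established $\AS(\Gamma)$/$\FS(\Gamma)$ condition, proved for $\mu_{\bif}$-a.e.\ $t_0$ via arithmetic intersection theory on quasi-projective varieties (Section~\ref{3}). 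None of this appears in your plan; the complex-dynamical machinery (Sections~\ref{4}--\ref{harvest}) and the $\FS$ condition coming from arithmetic intersections are the new ideas that replace the missing algebraization step.
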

We indeed prove a Bogomolov type generalization of Theorem \ref{dao} (c.f. Theorem \ref{thmbogodao}) and get Theorem \ref{dao} as a simple consequence.

\medskip

In a forthcoming paper \cite{jixiemultiplier2023}, using a variant of Theorem \ref{dao}, we show that a general rational map on $\P^1$ of degree $d\geq 2$ is uniquely determined by its multiplier spectrum. This affirmatively answers a question of Poonen \cite[Question 2.43]{Silverman2012}. 

\subsection{A Bogomolov type generalization of DAO}
%
%
%
Let $\La$ be an algebraic curve over $\overline{\Q}$ and
let $(f_t)_{t\in \La}$ be  a non-isotrivial algebraic family of rational maps with degree $d\geq 2$ over $\overline{\Q}.$
The critical height $h_{\crit}: \La(\overline{\Q})\to \R$ is given by $$t\in \La(\overline{\Q})\mapsto \widehat{h}_{f_t}(\sC_{f_t}),$$
where $\sC_{f_t}$ is the critical locus of $f_t$ and $\widehat{h}_{f_t}$ is the canonical height for $f_t$ with respect to. the line bundle $\sO(1)$.
It is clear that a parameter $t\in \La(\overline{\Q})$ is PCF if and only if $h_{\crit}(t)=0.$ 
\medskip

In the following result, we prove a Bogomolov type generalization of Theorem \ref{dao} in the sense that we may replace the  PCF parameters to small height parameters.
\begin{thm}\label{thmbogodao}Let $\La$ be an algebraic curve over $\overline{\Q}$ and
let $(f_t)_{t\in \La}$ be  a non-isotrivial algebraic family of rational maps with degree $d\geq 2$ over $\overline{\Q}.$
Then the following are equivalent:
\begin{points}
	\item There are infinitely many $t\in \La$ such that $f_t$ is PCF;
	\item The family has at most one independent critical orbit;
	\item For every $\ep>0$, the set $\{t\in \La(\overline{\Q})|\,\, h_{\crit}(t)<\ep\}$ is infinite.
\end{points}
\end{thm}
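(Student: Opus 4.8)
The plan is to prove the cycle of implications (iii) $\Rightarrow$ (i) $\Rightarrow$ (ii) $\Rightarrow$ (iii), or possibly (ii) $\Leftrightarrow$ (iii) directly together with the easy direction (i) $\Rightarrow$ (iii). The implication (i) $\Rightarrow$ (iii) is immediate, since a PCF parameter $t$ satisfies $h_{\crit}(t)=0<\ep$ for every $\ep>0$, so infinitely many PCF parameters force the small-height set to be infinite for every $\ep$. The implication (ii) $\Rightarrow$ (i) is the content of the original Dynamical André--Oort conjecture and should be the ``constructive'' direction: if all pairs of critical points are dynamically related, then one exhibits an infinite family of PCF parameters, e.g.\ by realizing that the critical orbit relations cut out the geometric generic fiber inside a lower-dimensional dynamical system (a single independent critical orbit), where one can specialize along a height-zero sequence coming from a one-variable dynamical Manin--Mumford / Bogomolov input on $\P^1$ together with the preperiodicity of the curve $V$ relating the critical points; concretely, one uses that on the non-isotrivial family the bifurcation measure is supported on a positive-capacity set and that PCF parameters equidistribute, so height-zero parameters accumulate.

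The main work, and the genuinely new content, is the implication (iii) $\Rightarrow$ (ii): from an infinite set of parameters $t$ with $h_{\crit}(t)<\ep$ (for all $\ep$) one must deduce that every pair of critical points $a,b\in\P^1(\overline k)$ is dynamically related. I would argue by contradiction: suppose some pair $(a,b)$ of critical points is \emph{not} dynamically related, i.e.\ the orbit of $(a,b)$ under $f_{\overline k}\times f_{\overline k}$ is Zariski dense in $\P^1\times\P^1$. The strategy is an arithmetic equidistribution / height argument on the \emph{arithmetic surface} parametrizing the family: attach to each critical point $c_i$ its canonical height function $\widehat h_{f_t}(c_i(t))$ as $t$ varies over $\La(\overline\Q)$, which by Call--Silverman / DeMarco's theory of the adelic metrized line bundle $\overline L_i$ on $\La$ is a height associated to a semipositive continuous adelic metric whose curvature is the (signed) bifurcation current of the $i$-th critical point. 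Then $h_{\crit}=\sum_i \widehat h_{f_t}(c_i(t))$ is the height attached to $\overline L=\sum_i\overline L_i$, and (iii) says there is an infinite sequence of points of $h_{\overline L}$-height tending to $0$. By Thuillier--Yuan equidistribution (in the adelic form for curves, valid once one checks $\overline L$ is big, i.e.\ $\deg \overline L>0$, which holds by non-isotriviality via Theorem~A-type density of PCF points, or by the positivity of the bifurcation measure), such a sequence equidistributes at every place toward the measure $\mu_{\overline L}$; comparing the equidistribution measures at an archimedean place forces the bifurcation measures $\mu_{\bif,i}$ of the various critical points to be mutually proportional, and in fact to coincide up to scalar with a single measure $\mu$.

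The final step is to upgrade ``all bifurcation measures are proportional'' to ``all pairs of critical points are dynamically related''. Here I would invoke the rigidity results relating equality of bifurcation measures to dynamical (algebraic) relations between the marked points --- in the style of the Baker--DeMarco, Ghioca--Hsia--Tucker, and DeMarco--Mavraki theory of common preperiodic points and special subvarieties: equality $\mu_{\bif,i}=\mu_{\bif,j}$ for all archimedean places, combined with the product equidistribution on $\P^1\times\P^1$ applied to the sequence $(c_i(t_n),c_j(t_n))$, shows that the orbit closure of $(c_i,c_j)$ supports a nontrivial equilibrium-type measure, hence is not Zariski dense, hence $(c_i,c_j)$ is dynamically related --- contradicting our assumption. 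I expect the hardest part to be exactly this last rigidity input: passing from a metric/measure-theoretic coincidence (proportional bifurcation currents) to an \emph{algebraic} correspondence $V\subseteq\P^1\times\P^1$ that is preperiodic, which requires a two-variable unlikely-intersection statement on $\P^1\times\P^1$ (a relative dynamical Bogomolov/Manin--Mumford on the arithmetic surface $\La$), together with care about the isotrivial/Latt\`es exceptional locus where bifurcation currents vanish identically and the argument must be run separately. The equivalence (ii) $\Leftrightarrow$ (iii) over $\overline\Q$ then yields Theorem~\ref{dao} over $\C$ by a standard specialization/spreading-out argument reducing the $\C$ statement to the $\overline\Q$ statement.
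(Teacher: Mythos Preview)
Your overall architecture is right: (i)~$\Rightarrow$~(iii) is trivial, (ii)~$\Rightarrow$~(i) is due to DeMarco, and the substance is (iii)~$\Rightarrow$~(ii), argued by contradiction via equidistribution. Your first step --- using Yuan--Zhang / Thuillier--Yuan equidistribution on the adelic line bundle attached to the critical heights to conclude that the bifurcation measures $\mu_{f,c_i}$ are all proportional --- matches the paper's Step~1 (Corollary~\ref{corequi}) exactly.

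The genuine gap is your ``final step''. You write that you would ``invoke the rigidity results relating equality of bifurcation measures to dynamical (algebraic) relations between the marked points --- in the style of Baker--DeMarco, Ghioca--Hsia--Tucker, DeMarco--Mavraki''. But this rigidity statement \emph{is} the theorem: passing from $\mu_{f,c_i}\propto\mu_{f,c_j}$ to an algebraic preperiodic curve through $(c_i,c_j)$ is exactly what was open for families of rational maps, and the references you cite establish it only for polynomial families (via B\"ottcher coordinates and Green functions, which have no analogue here) or special rational families. Your sentence about ``product equidistribution on $\P^1\times\P^1$ applied to $(c_i(t_n),c_j(t_n))$'' does not produce an $f\times f$-preperiodic curve; equidistribution toward $\mu_{f_t}\times\mu_{f_t}$ would in fact be evidence of Zariski density, not against it.

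What the paper actually does in place of your final paragraph is substantial and new. After equidistribution, it shows that for $\mu_{\bif}$-a.e.\ parameter $t_0$ the map $f_{t_0}$ is non-exceptional and Collet--Eckmann, the marked points satisfy Parametric Collet--Eckmann and a Polynomial Recurrence condition, and --- crucially --- the pair $(c_1(t_0),c_2(t_0))$ satisfies an arithmetic ``average separation'' condition $\AS(\Gamma)$ with respect to every invariant correspondence $\Gamma$ (this last via arithmetic intersection theory on quasi-projective varieties, Lemma~\ref{lemintegarith}). It then builds, at such a generic $t_0$, a non-degenerate normal family of renormalization maps $h_n:\D\to\P^1$ realizing a Tan-Lei-type similarity between the parameter measure $\mu_{\bif}$ near $t_0$ and the maximal entropy measure $\mu_{f_{t_0}}$ (Theorem~\ref{renor}); comparing the two renormalization families for $c_1$ and $c_2$ yields an asymptotic local symmetry of $\mu_{f_{t_0}}$, which by a Levin-type finiteness and the authors' local rigidity of Julia sets forces $(f_{t_0}^n(c_1),f_{t_0}^n(c_2))$ to stay near a fixed $\Gamma_0\in\Corr(\P^1)^{f_{t_0}}_*$ on a set of positive density of times, contradicting $\AS(\Gamma_0)$. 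None of this is in the literature you invoke; your proposal would need to supply an argument of this kind (or a genuinely different one) rather than a citation.
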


Theorem \ref{dao} is a direct consequence of  Theorem \ref{thmbogodao}.

\rem Keep the notations of Theorem \ref{thmbogodao}.
Indeed our proof of Theorem \ref{thmbogodao} shows a stronger result.
Assume further that $f$ has exactly $2d-2$ marked critical points $c_1,\dots, c_{2d-2}$.
Then we may replace (iii) by the following weaker assumption:
\begin{points}
\item[(iii')]For every $i,j\in \{1,\dots, 2d-2\}$ and every $\ep>0$, the set $\{t\in \La(\overline{\Q})|\,\, \hat{h}_{f_t}(c_i)+\hat{h}_{f_t}(c_j)<\ep\}$ is infinite.
\end{points}
\endrem

\medskip

\subsection{Previous results}\label{sectionhistory}
 In \cite{baker2013special}, Baker-DeMarco proved Conjecture \ref{condao} for families of polynomials parameterized by the affine line $\A^1$ with  coefficients that were polynomial
in $t$. 
\par Since the fundamental work of Baker-DeMarco \cite{baker2013special}, plenty of works have been devoted to proving special cases of Conjecture \ref{condao} and its variations. Most progress is made in the setting that the families are given by polynomials. See Ghioca-Hsia-Tucker \cite{ghioca2013preperiodic}, Ghioca-Krieger-Nguyen \cite{ghioca2016case}, Ghioca-Krieger-Nguyen-Ye \cite{ghioca2017dynamical}, Favre-Gauthier \cite{favre2018classification} \cite{favre2022arithmetic}, and Ghioca-Ye \cite{ghioca2018dynamical}.  Among these results, a remarkable work of Favre and Gauthier \cite{favre2022arithmetic} confirmed Conjecture \ref{condao} for families of polynomials. 
\par In the case that the family is not given by polynomials, DeMarco-Wang-Ye \cite{de2015bifurcation} proved Conjecture \ref{condao} for some dynamical meaningful algebraic curves in the moduli space of quadratic rational maps. Ghioca-Hsia-Tucker  \cite{ghioca2015preperiodic} proved a weak version of Conjecture \ref{condao} for families of rational maps given by $f_t(z)=g(z)+t$, $t\in\C$, where $g\in \overline{\Q}(z)$ is of $\deg g\geq 3$ with a super-attracting fixed point at $\infty$. 

\medskip

For the Bogomolov type generalization, as far as we know, Theorem \ref{thmbogodao} is the first result.

\subsection{Strategy in the previous works for families of polynomials}
Before giving a sketch of our proof of Theorem \ref{thmbogodao} (hence Theorem \ref{dao}),  
we first recall the previous strategy for Conjecture \ref{condao}.
As mentioned in Section \ref{sectionhistory}, except for a few special cases, all progress is made in the setting that the families are given by polynomials. 
These progresses roughly follow the line of arguments devised in the original paper of Baker and DeMarco \cite{baker2013special}. 
This strategy culminates in the proof of Conjecture \ref{condao} for all one-dimensional families of polynomials by Favre and Gauthier \cite{favre2022arithmetic}.
The strategy is as follows:

\medskip

The direction that (ii) implies (i) is not hard. So we only need to show that (i) implies (ii).
It is easy to reduce to the case where $f$ is defined over some number field $K$ and has exactly $2d-2$ marked critical points $c_1,\dots, c_{2d-2}$ counted with multiplicity.

\medskip

The first step is to show that the bifurcation measure $\mu_{f,c_i}$ for $c_i, i=1,\dots,2d-2$ are proportional to the bifurcation measure $\mu_{\bif}$ via some arithmetic equidistribution theorems. The application of various equidistribution theorems is one of the most successful ideas in arithmetic dynamics, which backs to the works of Ullmo \cite{Ullmo1998} and  Zhang \cite{Zhang1998}, in where they solved the Bogomolov Conjecture. It was first introduced to study Conjecture \ref{condao}
in Baker-DeMarco's fundamental work \cite{baker2013special}.

\medskip

When the family $f$ is given by polynomials, for $i=1,\dots, 2d-2$, there is a canonical Green function $g_{f,c_i}$ on $\La(\C)$ such that $g_{f,c_i}\geq 0, \Delta g_{f,c_i}=\mu_{f,c_i}$ and $g_{f,c_i}|_{\supp\, \mu_{f,c_i}}=0$. The second step is to show that these Green functions  $g_{f,c_i}$ are proportional.
This step is easy when $\La=\A^1$, but hard in general.

\medskip

The third step is to construct an algebraic relation between $c_i$ and $c_j$ via the B\"ottcher coordinates. One may express the Green functions  $g_{f,c_i}$ using B\"ottcher coordinates. From the fact that  $g_{f,c_i}$ are proportional, one gets an analytic relation between $c_i$ and $c_j$. In the end, one shows that this analytic relation is indeed algebraic. 
This step is highly non-trivial. In \cite{baker2013special}, it was obtained by an explicit computation using properties of B\"ottcher coordinates. In \cite{favre2022arithmetic}, it relies further on an algebraization theorem for adelic series proved by the second-named author  \cite{Xie2015ring}.

\medskip

In practice, the argument could be more complicated. For example, in  each step, one needs to work on all places of $K$, not only on one archimedean place.

The second and the third steps strongly rely on the additional assumption that the families are given by polynomials for several reasons: the canonical Green functions, the B\"ottcher coordinates, the algebraization theorem, etc. 

\subsection{Sketch of our proofs}
Theorem \ref{dao} is implied by Theorem \ref{thmbogodao}.
Indeed, by Thurston's rigidity theorem for PCF maps \cite{Douady1993}, we easily reduce to the case where the family $f: \La\times \P^1\to \La\times \P^1$ over $\La$ is defined over $\overline{\Q}.$ Then Theorem \ref{dao} becomes the equivalence of (i) and (ii) in  Theorem \ref{thmbogodao}.

\medskip

For Theorem \ref{thmbogodao},
the direction that (ii) implies (i) was proved by DeMarco \cite[Section 6.4]{demarco2016bifurcations} and 
the direction that (i) implies (iii) is trivial.
We only need to show that (iii) implies (ii). 
We may assume that $f$ has exactly $2d-2$ marked critical points $c_1,\dots, c_{2d-2}$ counted with multiplicity. Assume for the sake of contradiction that $c_1$ and $c_2$ are  not dynamically related.

\medskip

\subsubsection{Structure of the proof}
There are four steps in our proof.  As in the previous works, our first step is to show the equidistribution of parameters of small height. 
We then get an additional condition that for every $c_i$,  $\mu_{f,c_i}$ is proportional to the bifurcation measure $\mu_{\bif}$ and we only need to get a contradiction under this additional assumption.  
We do this in the next three steps using a new strategy. Our basic idea is to work on general points with respect to $\mu_{\bif}$, which is motivated by the previous work \cite{jixielocal2022} of the authors on the local rigidity of Julia sets. 

In Step 2, we show  a selection  of  conditions are satisfied for $\mu_{\bif}$-a.e. point $t\in \La(\C)$, such that for a good parameter satisfies these conditions, we can construct the similarities in Step 3, moreover  we can finally get a contradiction in Step 4.

In Step 3, we construct similarities between the phase space and the parameter space. Such similarities are known in some cases for prerepelling parameters (which is a countable set). 
The novelty of our result is to get the similarities at $\mu_{\bif}$-a.e. point $t\in \La(\C)$. This leads to many difficulties coming  from the non-uniformly hyperbolic phenomenon.   

In Step 4, we  construct local symmetries of maximal entropy measures from the similarities constructed in Step 3,  and a contradiction comes from  these symmetries and an arithmetic condition that we selected in Step 2.

\subsubsection{Step 1: Equidistribution}
\par Dujardin and Favre \cite[Theorem 2.5]{Dujardin2008} (and 
DeMarco \cite{demarco2016bifurcations}) showed that the bifurcation measure $\mu_{f,c_i}$ for $c_i$ is non-zero if and only if $c_i$ is not preperiodic. In this case, $c_i$ is called active.
Since $c_1$ and $c_2$ are  not dynamically related, both of them are active.
The equidistribution theorem for small points of Yuan and Zhang \cite[Theorem 6.2.3]{yuan2021} implies that for every active $c_i$,  $\mu_{f,c_i}$ is proportional to the bifurcation measure $\mu_{\bif}$. 
This result is based on their  recent theory of adelic line bundles on quasi-projective varieties.
Before Yuan-Zhang' theory, this step was usually non-trivial in the previous works as in \cite{baker2013special} and  \cite{favre2022arithmetic}.
The quidistribution theorem is reviewed in Section \ref{2}.

\subsubsection{Step 2: Parameter exclusion}

In this step, we show  a selection  of  conditions are satisfied for $\mu_{\bif}$-a.e. point $t\in \La(\C)$, such that for a good parameter satisfies these conditions, we can construct the similarities in Step 3, moreover,  we can finally get a contradiction in Step 4.

\medskip

Since aside from the flexible Latt\`es locus, the exceptional maps \footnote{As in \cite[Section 1.1]{ji2023homoclinic}, we call $g$ \emph{exceptional} if it is a Latt\`es map or semiconjugates to a monomial map. } are isolated in the moduli space, it is easy to show that 
\begin{points}
\item[(1)] Let  $(f_t)_{t\in \La}$  be an algebraic family of rational maps over an algebraic curve $\La$,  then $\mu_{\bif}$-a.e. point  $t\in \La(\C)$ is non-exceptional. 
\end{points}
Let
$\Corr(\P^1)^{f_t}_*$ be the set of $f_t\times f_t$-invariant Zariski closed subsets $\Gamma_t\subseteq \P^1\times \P^1$ of pure dimension $1$ such that both $\pi_1|_{\Gamma_t}$ and $\pi_2|_{\Gamma_t}$ are finite.
Let $\Corr^{\flat}(\P^1_{\La})^{f}_*$ be the set of $f\times_{\La}f$-invariant Zariski closed subsets $\Gamma\subseteq \La\times(\P^1\times \P^1)$ which is flat over $\La$ and whose generic fiber is in $\Corr(\P^1_{\eta})^{f_\eta}_*$, where $\eta$ is the generic point of $\La$. In general, a correspondence $\Gamma_t\in \Corr(\P^1)^{f_t}_*$ may not be contained in any correspondence in $\Corr^{\flat}(\P^1_{\La})^{f}_*$.
On the other hand, it is the case if $t$ is \emph{transcendental} in the sense of \cite{xie2023partial} (c.f. Proposition \ref{protransco}). 
Moreover, $\Corr^{\flat}(\P^1_{\La})^{f}_*$ is countable (c.f. Proposition \ref{corfcorcountable}).
It is easy to show that 
\begin{points}
\item[(2)]Let  $(f_t)_{t\in \La}$  be an algebraic family of rational maps over an algebraic curve $\La$,  then $\mu_{\bif}$-a.e. point  $t\in \La(\C)$ is transcendental. 
\end{points}
For every $\Gamma\in \Corr^{\flat}(\P^1_{\La})^{f}_*$, we introduce a condition  $\FS(\Gamma_t)$ with respect to the pair $c_1,c_2$ for every $t\in \La(\C)$.
Roughly speaking, this condition means that in most of the time $n\geq 0$, $(f_t^n(c_1(t)), f^n_t(c_2(t)))$ is not too close to $\Gamma.$
We show that 
\begin{points}
\item[(3)] Let  $(f_t)_{t\in \La}$  be an algebraic family of rational maps over an algebraic curve $\La$,  assume moreover  that  (iii) in Theorem \ref{thmbogodao} is satisfied, then $\mu_{\bif}$-a.e. point  $t\in \La(\C)$ satisfies  the  $\FS(\Gamma_t)$ condition for every $\Gamma\in \Corr^{\flat}(\P^1_{\La})^{f}_*$. 
\end{points}
To prove (3), we introduce another condition $\AS(\Gamma_t)$ which implies $\FS(\Gamma_t).$ To show that $\AS(\Gamma_t)$ is satisfied for $\mu_{\bif}$-a.e. point, we consider integrations with respect to $\mu_{\bif}$ having arithmetic meaning, and the aim is to show that these integrations  are bounded. We bound these integrations
 via the arithmetic intersection theory on quasi-projecitve varieties \cite{yuan2021}. This was done in Section \ref{3}.

Next, we consider some typical non-uniformly hyperbolic conditions.
A result of  De Th\'elin-Gauthier-Vigny \cite{de2021parametric} shows that
\begin{points}
\item[(4)] Let  $(f_t)_{t\in \La}$  be an algebraic family of rational maps over an algebraic curve $\La$ and let $a$ be a marked point, then $\mu_{f,a}$-a.e. parameters satisfy the Parametric Collet-Eckmann and Marked Collet-Eckmann condition.
\end{points}
We introduce a condition $\PR(s)$ for $s>1/2$ with respect to a marked point $a$.
It means  that the orbit of $a$ could at most polynomially (with power $s$) close to the critical locus.
We show that 
\begin{points}
\item[(5)] Let  $(f_t)_{t\in \La}$  be an algebraic family of rational maps over an algebraic curve $\La$ and let $a$ be a marked point, then  for every $s>1/2$,  $\mu_{f,a}$-a.e. parameters satisfy the $\PR(s)$ condition.
\end{points}
This was done in Section \ref{4}. 
\par The proof of Condition (6)  is a combination of Condition (4),  Siegel's linearization theorem \cite[Theorem 11.4]{milnor2011dynamics} and the fact that 
the set of Liouville numbers has Hausdorff dimension $0$  \cite[Lemma C.7]{milnor2011dynamics},
\begin{points}
\item[(6)]  Let  $(f_t)_{t\in \La}$  be an algebraic family of rational maps over an algebraic curve $\La$, assume moreover that  for every active  marked critical point $c_i$,  $\mu_{f,c_i}$ is proportional to the bifurcation measure $\mu_{\bif}$, then  $\mu_{\bif}$-a.e. point $t\in \La(\C)$ satisfies the  Collet-Eckmann condition.
\end{points}
\par Condition (1), (2) and (6) are relatively easy to show, and Condition (4) can be easily deduced by the work of 
De Th\'elin-Gauthier-Vigny \cite{de2021parametric}. The major part of Step 2 is the proof of Condition (3) and (5). The proof of Condition (5) requires pluripotential theory. The proof of Condition (3) requires both pluripotential theory and arithmetic intersection theory. 


\subsubsection{Step 3: Similarity between the phase space and the parameter space}
\par To get a contradiction, we show that the conditions (1),(2),(4),(5),(6) imply the opposite of (3).
Our idea is to get similarity between the bifurcation measure $\mu_{\bif}$ on the parameter space and the maximal entropy measure $\mu_{f_t}$ on the phase space.
This can be thought  of as a generalization of Tan's work \cite{Tan1990}, in where she got such a similarity at Misiurewicz points in Mandelbrot set,  and as a generalization of Gauthier's \cite[Section 3.1]{gauthier2018dynamical} and Favre-Gauthier's works \cite[Section 4.1.4]{favre2022arithmetic}, in where they got such a similarity at properly prerepelling parameters.
We show that when $t\in \La(\C)$ satisfies (4),(5),(6),  for each $c_i$ active,
there is a subset $A\subseteq \Z_{\geq 0}$ of large lower density and a sequence of positive real numbers $(\rho_n)_{n\in A}$ tending to zero, such that we can construct a 
 family of renormalization maps $h_n:\D\to \P^1, n\in A$, defined by first shrinking the parameter disk $\D$ to a small disk of  radius $\rho_n$, then use $f^n$ to iterate the graph of $c_i$ over this small disk and projects to the phase space $\P^1(\C)$.  We show that this family is normal and no subsequences of $h_n, n\in A$ tending to a constant map (c.f. Theorem \ref{renor}). 
 
Comparing with the previous results, we get similarity between phase space and parameter space not only for prerepelling parameters (which is a countable set) but also for parameters satisfying Topological Collet-Eckamnn condition and Polynomial Recurrence condition  (which is a set of full $\mu_{\bif}$ measure).  
We believe that our result has an independent interest in complex dynamics.

In the previous works for prerepelling parameters, the key point is that the orbits of the marked points have a uniform distance from the critical locus for all but finitely many terms.
This is not true in our case. For this reason, we introduce the following new strategy.

Our proof is divided into two parts. 
In the first part, we work only on the phase space. 
We select the ``good time set" $A\subseteq \Z_{\geq 0}$ of large lower density. For each good time $n\in A$, we construct $n$ maps from certain fixed simply connected domain to $\P^1(\C)$. 
Roughly speaking, the goodness of $n$ means that the above maps have a uniformly bounded number of critical points. Then we need to  study the distortions of such maps which are  non-injective in general (c.f. Section \ref{plough 2}).  To describe  the distortion of  perhaps non-injective holomorphic maps, we introduce the concepts of upper and proper lower radius (c.f. Section \ref{plough 1}).
Comparing with the usual lower radius of the image, the advantage of the proper one is the stability under small perturbations.

In the second part, we use a binding argument to get the renormalization maps from the maps defined above and show that this family is normal and no subsequence of $h_n, n\in A$ tending to a constant map  (c.f. Section \ref{harvest}). In particular, we decide the rescaling factors $\rho_n, n\in A$ in this process.

%

\medskip

We also show that $\mu_{f_0}$ can be read from $\mu_{\bif}$ via the family $h_n, n\in A$ (c.f. Proposition \ref{measure}) and $\rho_n, n\in A$ can be read from $\mu_{\bif}$  up to equivalence (c.f. Proposition \ref{proresunique}).  Step 3 was done in Sections \ref{plough 1},  \ref{plough 2} and \ref{harvest}.

\subsubsection{Step 4:  Conclusion via local symmetries of maximal entropy measures}
\par We have constructed a family of renormalization maps $h_n:\D\to \P^1, n\in A$.  Since $A$ has a large lower density, after taking an intersection, we may assume that the set $A$ for $c_1$ and $c_2$ are the same.
After suitable adjustments of $\sH_a:=\{h_{a,n}, n\in A\}$ and $\sH_b:=\{h_{b,n}, n\in A\}$, we show that they form an  asymptotic symmetry, which basically means that 
every limit of $h_{a,n}\times h_{b,n}$ produces a symmetry of $\mu_{f_t}$. 
Applying an argument based on \cite[Theorem 1.7]{jixielocal2022}, we show that (3) is not true. This concludes the proof, see Section \ref{6}. 
Since we may assume that $f_t$ is Collet-Eckmann, in this last step we may replace \cite[Theorem 1.7]{jixielocal2022} by 
combing \cite[Theorem A]{dujardin2022two} with \cite[Corollary 3.2]{dujardin2022two}.

\subsection*{Acknowledgement}
We would like to thank Xinyi Yuan for discussions on the theory of adelic line bundles on quasi-projective varieties and for his help for the proofs of 
Proposition \ref{prointegboundbyint} and Proposition \ref{prointergrow}. We also would like to thank Thomas Gauthier and Laura DeMarco for helpful comments on the first version of this paper.
\par The first-named author would like to thank Beijing International Center for Mathematical Research in Peking University for the invitation. The second-named author Junyi Xie is supported by NSFC Grant (No.12271007).

	
	\section{Equidistribution of small parameters}\label{2}

\subsection{Family of rational maps}
	For $d\geq 1,$ let $\text{Rat}_d(\C)$ be the space of degree $d$ endomorphisms on $\P^1(\C)$.  
It is a smooth quasi-projective variety of dimension $2d+1$ \cite{Silverman2012}. 
The group $\PGL_2(\C)= \Aut(\P^1(\C))$ acts on $\text{Rat}_d(\C)$ by conjugacy. The geometric quotient 
$$\sM_d(\C):=\text{Rat}_d(\C)/\PGL_2(\C)$$ is the (coarse) \emph{moduli space} of endomorphisms  of degree $d$ \cite{Silverman2012}.
The moduli space $\sM_d(\C)=\Spec (\sO(\text{Rat}_d(\C)))^{\PGL_2(\C)}$ is an affine variety of dimension $2d-2$ \cite[Theorem 4.36(c)]{Silverman2007}.
Let $\Psi: \text{Rat}_d(\C)\to \sM_d(\C)$ be the quotient morphism. 
One note that, $\text{Rat}_d(\C)$, $\sM_d(\C)$ and $\Psi$ are defined over $\Q.$

	\begin{defi}
	A (one-dimensional) {\em holomorphic family of rational maps} is a holomorphic map 
		\begin{align}\label{family}
		f:\La\times\P^1(\C)&\to \La\times\P^1(\C), \\
		(t,z)&\mapsto (t,f_t(z)),  \notag
	\end{align}where $\La$ is a Riemann surface and $f_t:\P^1(\C)\to \P^1(\C)$ is a rational map of degree $d\geq 2$.

\par A holomorphic family $f$ is called {\em algebraic} if $\La$ is a smooth algebraic curve over $\C$ and the morphism $f:\La\times\P^1(\C)\to \La\times\P^1(\C)$ is algebraic.
Moreover, we say that $f$ is an algebraic family over a subfield $K$ of $\C$ if both $\Lambda$ and $f: \La\times\P^1\to \La\times\P^1$ are defined over $K.$
In other words, give an algebraic family $f$ on a smooth algebraic curve $\La$ over $\C$ is  equivalent to give an algebraic morphism $\phi_f: t\mapsto f_t\in \text{Rat}_d.$
Moreover $f$ is defined over $K$ if $\La$ and $\phi_f$ are defined over $K.$ We say that $f$ is \emph{non-isotrivial} if $\Psi\circ\phi_f$ is not a constant map.
	\end{defi}

\par 
 \par Let $\pi_1:\La\times\P^1(\C)\to \La$ and $\pi_2:\La\times\P^1(\C)\to \P^1(\C)$ be the canonical projections. Let $\omega_{\P^1}$ be the Fubini-Study form on $\P^1(\C)$, and let  $\omega_\La$ be a fixed K\"ahler form  on $\La$ with $\int_{\La} \omega_{\La}=1$. Let  $\omega_1:=\pi_1^\ast (\omega_{\La})$ and $\omega_2:=\pi_2^\ast (\omega_{\P^1})$.  The {\em relative Green current}\footnote{In \cite{Gauthier2019}, it is called ``fibered Green current".} of $f$ is defined by 
    \begin{equation*}
    T_f:=\lim_{n\to+\infty} d^{-n} (f^n)^\ast (\omega_2)=\omega_2+dd^c g,
    \end{equation*}
where $g$ is a H\"older continuous quasi-p.s.h. function \cite[Lemma 1.19]{dinh2010dynamics}.
\par For every $t_0\in \La$, we have $T_f\wedge [t=t_0]=\mu_{f_{t_0}}$, where $\mu_{f_{t_0}}$ is the maximal entropy measure of $f_{t_0}$.
	\par A {\em marked point} $a$ is a holomorphic map $a:\La\to \P^1(\C)$. The {\em bifurcation measure} of the pair $(f,a)$ is defined by
	\begin{equation*}
		\mu_{f,a}:=(\pi_1)_\ast (T_f\wedge [\Gamma_a])=a^*T_f,
	\end{equation*}
where $\Gamma_a$ is the graph of $a$. When the family $f$ is algebraic, the marked point $a$ is said to be \emph{algebraic} if the map $a:\La\to \P^1(\C)$ is algebraic. When $a$ is algebraic, $\mu_{f,a}$ has finite mass. See Gauthier-Vigny \cite[Proposition 13 (1)]{Gauthier2019}, where they proved the finiteness of the mass of bifurcation currents' for general algebraic families of polarized dynamical systems. 
\par To simplify the notation, for an algebraic family $f$, a marked point $a$  is always assumed to be algebraic in the whole paper.

\par A marked point $a$ is called {\em active} if $\mu_{f,a}$ does not vanish. Otherwise, it is called {\em passive}. 
 \par A {\em marked critical point} $c$ is a marked point such that $c(t)$ is a critical point of $f_t$ for each $t\in \La$.  Let $f$ be an algebraic family of rational maps as in (\ref{family}) with marked critical points $(c_i)_{1\leq i\leq 2d-2}.$ We define the {\em bifurcation measure} of $f$ by $$\mu_{\bif}:=\sum_{i=1}^{2d-2}\mu_{f,c_i}.$$
 Since each $c_i$ is algebraic, $\mu_{\bif}$ has finite mass. 
 \medskip
 \par The following theorem is useful.
 \begin{thm}[DeMarco, \cite{demarco2016bifurcations}]\label{stable}
  Let $f$ be a holomorphic family of rational maps as in (\ref{family}) and $a$ be  a marked point.  Then the following are equivalent:
  \begin{points}
  \item $a$ is passive;
  \item $a$ is  stable, i.e. the family of maps $\left\{t\to f_t^n(a(t))\right\}_{n\geq 1}$  forms a normal family.
  \end{points}
\par If moreover, $f$ is a non-isotrivial algebraic family, then the above two conditions are equivalent to that $a$ is  preperiodic.
 \end{thm}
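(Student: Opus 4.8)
The final statement is Theorem \ref{stable} (DeMarco): for a holomorphic family $f$ of rational maps and a marked point $a$, passivity of $a$ is equivalent to stability (normality of $\{t\mapsto f_t^n(a(t))\}_{n\ge 1}$), and, in the non-isotrivial algebraic case, equivalent to $a$ being preperiodic. Since this is attributed to DeMarco, the natural plan is to reproduce the standard pluripotential-dynamical argument rather than invent something new.

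\textbf{Plan for the equivalence (i) $\Leftrightarrow$ (ii).} First I would recall that $\mu_{f,a}=a^*T_f=(\pi_1)_*(T_f\wedge[\Gamma_a])$ and that the potential $g$ of $T_f=\omega_2+dd^c g$ is the fibrewise Green function, locally expressed as a decreasing (after normalization) limit $g_t(z)=\lim_n d^{-n}\log\|F_t^n(\tilde z)\|$ for a lift $F_t$. Pulling back along the marked point, set $u_n(t):=d^{-n}\log\|F_t^n(\widetilde{a(t)})\|$; these are (after subtracting the pullback potential of $\omega_2$) a family of psh functions on $\La$ with $dd^c u_n \to \mu_{f,a}$, and $u_n$ converges locally uniformly to the psh function $u$ whose $dd^c$ is $\mu_{f,a}$. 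The key local assertion is: on a disk $D\subseteq\La$, the family $\{f_t^n(a(t))\}_n$ is normal on $D$ if and only if $\mu_{f,a}|_D=0$. For the easy direction, if the family is normal on $D$, then by passing to the appropriate chart and using that $d^{-n}\log\|F_t^n(\widetilde{a(t)})\|$ is then locally uniformly bounded and harmonic-in-the-limit, one gets $\mu_{f,a}|_D=dd^c u|_D=0$. For the converse — the substantive direction — I would argue: if $a$ is active somewhere, pick a point $t_0$ in $\supp\mu_{f,a}$; then near $t_0$ the functions $u_n$ cannot be locally uniformly bounded away from a pluriharmonic limit, and one extracts a subsequence of $t\mapsto f_t^n(a(t))$ that must hit any prescribed point infinitely often (a Montel-type argument: if the family avoided three values on $D$ it would be normal and one derives $\mu_{f,a}|_D=0$, contradiction). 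Conversely, passive means active-nowhere, hence $\mu_{f,a}=0$, hence the family omits (locally) a set of positive capacity and normality follows from a refined Montel theorem. This "three exceptional values / positive capacity" dichotomy is the heart of the matter, so I expect that to be the main obstacle to write cleanly; the cleanest route is probably to cite the precise normality criterion and the equality $J(f_t)\ni$ limit values, and push the pluripotential bookkeeping through.

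\textbf{Plan for the non-isotrivial algebraic refinement.} Assume $f$ is non-isotrivial algebraic and $a$ is passive; I want $a$ preperiodic. Work over a number field or function field model: spreading out, $f$ and $a$ are defined over a finitely generated field, and one can use the canonical height $\widehat h_{f_\eta}$ on the generic fiber (with $\eta$ the generic point of $\La$). Passivity gives $\mu_{f,a}=0$ at all places, which by the product formula / arithmetic equidistribution forces $\widehat h_{f_\eta}(a_\eta)=0$; by the standard characterization of height-zero points this means $a_\eta$ is preperiodic for $f_\eta$, i.e. $a$ is preperiodic as a marked point. The reverse implication (preperiodic $\Rightarrow$ passive) is immediate since $\{f_t^n(a(t))\}_n$ takes finitely many values. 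The only point needing care here is that non-isotriviality is what prevents the degenerate situation where $\mu_{f,a}=0$ globally yet $a$ is not preperiodic — concretely one invokes that a non-isotrivial family has at least one place of bad reduction, so the canonical height is genuinely nontrivial and its vanishing is meaningful.

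\textbf{Where the difficulty lies.} The genuinely nontrivial step is the converse in (i) $\Leftrightarrow$ (ii): deducing non-normality from $\mu_{f,a}\ne 0$. The clean way is to prove the contrapositive via capacity: normality on a disk $D$ implies the limit of $d^{-n}\log\|F^n_\cdot(\widetilde a(\cdot))\|$ is pluriharmonic on $D$ (because a normal limit of $f^n(a)$ lands in the Fatou set fibrewise, where the Green potential is pluriharmonic along the marked section), whence $\mu_{f,a}|_D=0$; so active $\Rightarrow$ non-normal. I would present this with the observation that the $u_n$ form a sequence of psh functions decreasing to $u$ with $dd^c u=\mu_{f,a}$, and normality pins $u$ to be pluriharmonic — everything else is routine. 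I would not reprove the full arithmetic equidistribution machinery, simply citing it (and Theorem \ref{stable} itself being DeMarco's, I would keep the write-up to a sketch, emphasizing the pluripotential core and relegating the height argument to a one-paragraph appeal to standard facts).
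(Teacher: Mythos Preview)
The paper does not prove this theorem; it is quoted as a result of DeMarco (with the remark that the algebraic refinement for marked critical points is due to Dujardin--Favre), so there is no proof in the paper to compare against.

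Your sketch of (i) $\Leftrightarrow$ (ii) contains a real gap. The direction you call ``easy'' (normal on $D$ $\Rightarrow$ $\mu_{f,a}|_D=0$) is fine. But what you then label ``the substantive direction'' --- arguing that $t_0\in\supp\mu_{f,a}$ forces non-normality at $t_0$ --- is nothing more than the contrapositive of the easy direction, and your Montel parenthetical says exactly that. The genuinely hard implication is the other one: non-normality at $t_0$ $\Rightarrow$ $t_0\in\supp\mu_{f,a}$ (equivalently, $\mu_{f,a}|_D=0$ $\Rightarrow$ normal on $D$). Your attempt at this --- ``$\mu_{f,a}=0$, hence the family omits locally a set of positive capacity, hence normal by refined Montel'' --- is unjustified: vanishing of $\mu_{f,a}$ gives no a priori information about omitted values of the $\xi_{a,n}$. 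The standard argument runs via the contrapositive: if $\{\xi_{a,n}\}$ is not normal at $t_0$, one uses Montel together with a holomorphically moving repelling periodic point $z(t)$ to find, for infinitely many $n$, parameters $t$ arbitrarily close to $t_0$ with $\xi_{a,n}(t)=z(t)$; since the counting measures $d^{-n}[\,\xi_{a,n}=z\,]$ converge to $\mu_{f,a}$, this forces $t_0\in\supp\mu_{f,a}$.
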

When $a$ is a marked critical point, the last statement was proved in an earlier article \cite[Theorem 2.5]{Dujardin2008}.

\medskip

\subsection{Equidistribution}
The following deep result about equidistribution of preperiodic points  is a direct consequence of  Yuan-Zhang \cite[Theorem 6.2.3]{yuan2021}. Note that for an algebraic family of rational maps as in (\ref{family}) defined over a number field $K$, the canonical height of a preperiodic point is equal to $0$. See also Gauthier \cite[Theorem 3]{gauthier2021good}.
\begin{thm}\label{equi}
Let $f$ be an algebraic family of rational maps as in (\ref{family}) and $a$ be  an active marked point, all defined over a number field $K$. Let $t_n\in \La(\overline{\Q})$ be an infinite sequence of distinct points such that $\widehat{h}_{f_t}(a(t_n))\to 0$ as $n\to 0$. Then we have 
$$\frac{1}{|\Gal(t_n)|}\sum_{t\in \Gal(t_n)}\delta_t\to\frac{\mu_{f,a}}{\mu_{f,a}(\La)},$$
when $n\to+\infty$, where $\Gal(t_n)$ is the Galois orbit of $t_n$. 
\end{thm}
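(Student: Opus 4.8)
The plan is to deduce Theorem \ref{equi} from the general arithmetic equidistribution theorem of Yuan--Zhang \cite[Theorem 6.2.3]{yuan2021} for adelic line bundles on quasi-projective varieties, by producing on $\La$ an adelic line bundle whose induced metric encodes the canonical heights $\widehat h_{f_t}(a(t))$ and whose ``curvature'' measure at the archimedean place is precisely $\mu_{f,a}$. First I would pass to a projective model $\overline\La$ of $\La$ and work with the marked point $a:\La\to\P^1$; the pair $(f,a)$ over the number field $K$ gives rise to a family of canonical heights in the fibers, and the function $t\mapsto\widehat h_{f_t}(a(t))$ should be realized as (minus) the log of the distance to the zero section of a suitably metrized line bundle $\overline L$ on $\La$. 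Concretely, for each place $v$ of $K$ one has the local canonical height / Green function $g_{f_t,v}$ of $f_t$, and pulling back along the graph $\Gamma_a$ one obtains a continuous (away from poles) metric on a line bundle; the relative Green current construction recalled just above, $T_f=\omega_2+dd^c g$, shows that at an archimedean place the resulting curvature form is $a^*T_f=\mu_{f,a}$.

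The key steps, in order, are: (1) set up the adelic line bundle $\overline L$ on $\La$ as the pullback under $(\mathrm{id},a)$ of the relative canonical adelic metric on the fiberwise $\sO_{\P^1}(1)$ associated to $f$ — this is exactly the kind of object Yuan--Zhang construct, and one must check it is integrable with the right positivity (nef, with the self-intersection computing the height of $\La$ itself); (2) identify the associated height function $h_{\overline L}$ on $\La(\overline\Q)$ with $\widehat h_{f_t}(a(t))$ up to a bounded (indeed, with the canonical normalization, exact) term, so that $\widehat h_{f_t}(a(t))\to 0$ is the smallness hypothesis of the equidistribution theorem; (3) compute the essential minimum: since $a$ is \emph{active}, $\mu_{f,a}\neq 0$ and (by the theory, or by the already-quoted finiteness of mass in Gauthier--Vigny) $\overline L$ has strictly positive self-intersection, forcing the essential minimum to be $0$ and $\overline L$ to be ``admissible'' in the sense needed to apply \cite[Theorem 6.2.3]{yuan2021}; (4) invoke that theorem: for any sequence $t_n$ of distinct points with $h_{\overline L}(t_n)\to 0$, the Galois averages $\frac{1}{|\Gal(t_n)|}\sum_{t\in\Gal(t_n)}\delta_t$ converge weakly at the archimedean place $v$ to the normalized curvature measure, which by step (1) is $\mu_{f,a}/\mu_{f,a}(\La)$.

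The main obstacle I anticipate is step (1) together with the verification in step (3): one must be careful that the relative Green current $T_f$ restricted to the graph of $a$ really gives an \emph{adelic} line bundle in the precise (strongly nef / integrable) sense of Yuan--Zhang over the possibly non-proper base $\La$, including controlling the behavior at the finitely many bad places and at the punctures $\overline\La\setminus\La$; this is where the recently developed theory of adelic line bundles on quasi-projective varieties is essential and where the finiteness of the mass of $\mu_{f,a}$ (equivalently, of the bifurcation current) plays a decisive role. Once the metrized bundle is in place and shown to have vanishing essential minimum, the equidistribution statement is a black-box application. A minor additional point is to confirm that activeness of $a$ is equivalent to non-vanishing of $\mu_{f,a}$ and hence to the strict positivity needed — but this is exactly DeMarco's Theorem \ref{stable} combined with \cite[Theorem 2.5]{Dujardin2008}, so no new work is required there. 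I would also remark, following the statement's parenthetical pointer to Gauthier \cite[Theorem 3]{gauthier2021good}, that an alternative route avoids citing \cite{yuan2021} directly by instead quoting Gauthier's packaged equidistribution statement, which may streamline the exposition if one prefers not to reconstruct the adelic line bundle by hand.
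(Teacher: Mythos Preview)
Your proposal is correct and follows exactly the approach indicated by the paper: the authors do not give an independent proof of Theorem \ref{equi} but simply record it as a direct consequence of Yuan--Zhang \cite[Theorem 6.2.3]{yuan2021} (with a pointer to Gauthier \cite[Theorem 3]{gauthier2021good} as an alternative reference). Your outline of steps (1)--(4) is precisely the standard unpacking of how that citation applies, so there is nothing to add or correct.
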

\medskip
\par The following is an important corollary of the equidistribution of parameters of small heights.

\begin{cor}\label{corequi}
 Let $f$ be an algebraic  family of rational maps as in (\ref{family}) with marked critical points $(c_i)_{1\leq i\leq 2d-2}$.  Assume that $f$ is defined over $\overline{\Q}$ and 
 there is an infinite sequence $t_n, n\geq 0$ of distinct points in $\La(\overline{\Q})$
 such that $h_{\crit}(t_n)\to 0$, then for $c_i, c_j$ being active marked critical points, we have
 $$\frac{\mu_{f,c_i}}{\mu_{f,c_i}(\La)}=\frac{\mu_{f,c_j}}{\mu_{f,c_j}(\La)}.$$
\end{cor}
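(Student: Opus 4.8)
\textbf{Proof proposal for Corollary \ref{corequi}.}

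The plan is to reduce everything to an application of the equidistribution theorem (Theorem \ref{equi}) applied separately to the marked critical points $c_i$ and $c_j$. First I would fix active marked critical points $c_i$ and $c_j$, and an infinite sequence $t_n\in\La(\overline{\Q})$ of distinct points with $h_{\crit}(t_n)\to 0$. Since $h_{\crit}(t)=\widehat{h}_{f_t}(\sC_{f_t})$ is, up to a bounded multiplicative constant, the sum $\sum_{\ell=1}^{2d-2}\widehat{h}_{f_t}(c_\ell(t))$ over all marked critical points (each canonical height being nonnegative), the hypothesis $h_{\crit}(t_n)\to 0$ forces $\widehat{h}_{f_{t_n}}(c_i(t_n))\to 0$ and $\widehat{h}_{f_{t_n}}(c_j(t_n))\to 0$ simultaneously. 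I should be slightly careful here: the marked critical points are only defined after a finite base change of $\La$, but one may replace $\La$ by a finite cover without affecting the statement (the pushforward/pullback of the measures along a finite map rescales masses uniformly, so the normalized measures are unchanged), so I would remark that we may assume all $2d-2$ critical points are marked and defined over $K$ for a suitable number field $K$ over which $f$ is defined.

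Next I would apply Theorem \ref{equi} twice. For the active marked point $c_i$: the sequence $t_n$ is an infinite sequence of distinct points with $\widehat{h}_{f_{t_n}}(c_i(t_n))\to 0$, so
\[
\frac{1}{|\Gal(t_n)|}\sum_{t\in\Gal(t_n)}\delta_t \;\longrightarrow\; \frac{\mu_{f,c_i}}{\mu_{f,c_i}(\La)}
\]
weakly as $n\to+\infty$. Applying the same theorem with $c_j$ in place of $c_i$ to the \emph{same} sequence $t_n$ gives that the identical sequence of Galois-averaged Dirac measures converges weakly to $\mu_{f,c_j}/\mu_{f,c_j}(\La)$. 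Since a weakly convergent sequence of probability measures on a compact space (here $\La$ compactified, or one works on the projective completion and notes the limits have no mass at the boundary since both bifurcation measures have finite mass on $\La$) has a unique limit, we conclude
\[
\frac{\mu_{f,c_i}}{\mu_{f,c_i}(\La)} = \frac{\mu_{f,c_j}}{\mu_{f,c_j}(\La)},
\]
which is exactly the claim.

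The argument is essentially formal once Theorem \ref{equi} is in hand, so there is no serious obstacle; the only genuine points needing care are (a) the comparison of $h_{\crit}$ with the individual critical canonical heights — this uses that $\widehat{h}_{f_t}(\sC_{f_t})$ can be taken to mean $\sum_i \widehat{h}_{f_t}(c_i(t))$ (or is comparable to it), together with nonnegativity of $\widehat h$, so that a single small quantity controls each summand; (b) the reduction to a field of definition and to a situation where all critical points are marked, handled by a harmless finite base change; and (c) the uniqueness of weak limits, for which one should make sure both limiting measures are probability measures supported in the same compact space, so that testing against continuous functions identifies them. I expect (a) to be the step most worth spelling out carefully, since it is where the hypothesis on $h_{\crit}$ is converted into the hypothesis required by the equidistribution theorem for each $c_\ell$ individually.
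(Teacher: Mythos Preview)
Your proposal is correct and follows essentially the same approach as the paper: use that $h_{\crit}(t_n)\to 0$ forces each individual $\widehat h_{f_{t_n}}(c_\ell(t_n))\to 0$, then apply Theorem \ref{equi} to the same sequence for both $c_i$ and $c_j$ and invoke uniqueness of weak limits. The paper's proof is slightly terser and omits your remarks on base change and on uniqueness of weak limits, but the argument is the same.
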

\begin{proof}
We may assume that $f$ has at least two active marked critical points. Otherwise, the statement is trivial.
This implies that $f$ is not isotrivial and $\phi_f(\La)$ is not contained in the locus of flexible Latt\`es maps. 

\medskip

As $h_{\crit}(t_n)\to 0$, both $\widehat{h}_{f_{t_n}}(c_i(t_n))$ and $\widehat{h}_{f_{t_n}}(c_j(t_n))$ tend to zero.
By Theorem \ref{equi}, both $\frac{\mu_{f,c_i}}{\mu_{f,c_i}(\La)}$ and $\frac{\mu_{f,c_j}}{\mu_{f,c_j}(\La)}$ equal to $\frac{1}{|\Gal(t_n)|}\sum_{t\in \Gal(t_n)}\delta_t,$ which concludes the proof.
%
%
%
\end{proof}
 \medskip

\section{Frequently separated parameters}\label{3}
\subsection{Frequently separated condition}
Let $(M,d)$ be a metric space and $g: M\to M$ be a self-map.

\begin{defi}
	Let $A$ be a subset of $\Z_{\geq 0}$. The {\em asymptotic lower/upper density} of $A$ is defined by 
	\begin{equation*}
		\underline{d}(A):=\liminf_{n\to \infty} |A\cap[0,n-1]|/n,
	\end{equation*}
	and 
	\begin{equation*}
		\overline{d}(A):=\limsup_{n\to \infty} |A\cap[0,n-1]|/n.
	\end{equation*}
	If $\underline{d}(A)=\overline{d}(A)$, we set $d(A):=\underline{d}(A)=\overline{d}(A)$ and call it the  {\em asymptotic density} of $A$.
\end{defi}

\medskip

We still denote by $d$ the distance in on $M\times M$ by $$d((x_1,y_1),(x_2,y_2))=\max\{d(x_1,x_2),(y_1,y_2)\}.$$
Let $\Sigma$ be a non-empty subset of $M\times M$. 

\rem\label{remcorronefactor}We view $\Sigma$ as a correspondence on $M.$ The most typical example is the diagonal. 
For $x\in M$, we denote by $\Sigma(x):=\pi_2(\pi_1^{-1}(x))$, where $\pi_1,\pi_2$ are the first and the second projections.
When $\pi^{-1}(x)\neq \emptyset$, for every $y\in M$, we have $d((x,y), \Sigma)\geq d(y, \Sigma(x)).$ 
\endrem

\begin{defi}A pair of points $x,y\in M$ is called 
 \begin{points}
 	\item {\em Frequently separated} $\FS(\Sigma)$ for $\Sigma$, if for every $\ep>0$, there is $\delta>0$ such that 
	$$\overline{d}(\{n\geq 0|\, d((g^n(x),g^n(y)),\Sigma)\geq \delta\})>1-\ep.$$
	\item {\em Average separated}  $\AS(\Sigma)$ for $\Sigma$, if 
	$$\liminf_{n\to \infty}\frac{1}{n}\sum_{i=0}^{n-1} \max\{-\log d((g^n(x),g^n(y)),\Sigma), 0\}<+\infty.$$
	\end{points}
\end{defi}
The above conditions  depend only on the equivalence class of distance functions on $M$ and $M\times M.$
It is clear that if $\Sigma\subseteq \Sigma'$, then $\AS(\Sigma')$ (resp. $\FS(\Sigma')$) implies $\AS(\Sigma)$ (resp. $\FS(\Sigma)$).

\rem When $\Sigma$ is the diagonal $\Delta$, the $\FS(\Delta)$ condition means that in most of the time $n\geq 0$, the orbits of $x$ and $y$ are $\delta$-separated for some small $\delta>0$. The proportion of such time could tend to  $1$ when $\delta$ tends to $0.$
\endrem

\medskip

\begin{lem}\label{lemasimplyfs}The $\AS(\Sigma)$ condition implies the $\FS(\Sigma)$ condition.
\end{lem}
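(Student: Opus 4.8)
The plan is to show directly that the $\AS(\Sigma)$ condition, which bounds a Ces\`aro average of $\max\{-\log d((g^n(x),g^n(y)),\Sigma),0\}$, forces the orbit to stay away from $\Sigma$ most of the time, which is exactly $\FS(\Sigma)$. The elementary principle at work is a Markov/Chebyshev-type inequality: if the averages of a sequence of nonnegative numbers $u_n:=\max\{-\log d((g^n(x),g^n(y)),\Sigma),0\}$ have $\liminf_{n\to\infty}\frac1n\sum_{i=0}^{n-1}u_i = C<+\infty$, then for any threshold $M>0$ the upper density of $\{n\geq 0 : u_n > M\}$ is at most $C/M$. Choosing $M$ large makes this density small, and translating back through the monotone relation between $u_n$ and $d((g^n(x),g^n(y)),\Sigma)$ gives the desired statement.

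Concretely, I would proceed as follows. First fix $\ep>0$; I must produce $\delta>0$ with $\overline d(\{n : d((g^n(x),g^n(y)),\Sigma)\geq\delta\}) > 1-\ep$, equivalently $\overline d(\{n : d((g^n(x),g^n(y)),\Sigma)<\delta\}) < \ep$ (note $1$ minus the upper density of a set equals the lower density of its complement, so this reformulation is what one actually uses). Let $C$ denote the finite $\liminf$ in the definition of $\AS(\Sigma)$. Pick a real number $M>C/\ep$, and set $\delta:=e^{-M}$. Then for any $n$ with $d((g^n(x),g^n(y)),\Sigma)<\delta$ we have $-\log d((g^n(x),g^n(y)),\Sigma) > M > 0$, hence $u_n > M$. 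Therefore
\[
\{n\geq 0 : d((g^n(x),g^n(y)),\Sigma)<\delta\}\subseteq \{n\geq 0 : u_n > M\}.
\]
Now I bound the upper density of the right-hand set. For any $n$, let $S_n := \#\{i\in[0,n-1] : u_i > M\}$; since each such $i$ contributes more than $M$ to the sum $\sum_{i=0}^{n-1}u_i$ and all terms are nonnegative, $\sum_{i=0}^{n-1}u_i \geq M\,S_n$, so $S_n/n \leq \frac{1}{M}\cdot\frac1n\sum_{i=0}^{n-1}u_i$. Taking $\liminf$ on the right and $\limsup$ on the left (and using $\liminf$ of the averages being the relevant quantity — one can pass to a subsequence of $n$ realizing the $\liminf$ of the averages along which $\limsup S_n/n$ is still an upper bound for the true $\overline d$, or more simply note $\limsup_n S_n/n \le \frac1M\liminf_n\frac1n\sum_{i<n}u_i$ may fail verbatim, so instead: for the subsequence $n_k$ with $\frac1{n_k}\sum_{i<n_k}u_i\to C$ we get $\limsup_k S_{n_k}/n_k\le C/M$; but $\overline d$ is the full $\limsup$, so one needs a touch more care) — I will handle this by instead choosing $M$ after noting that $\overline d(\{u_n>M\})\le \frac1M\overline{\lim}_n\frac1n\sum_{i<n}u_i$, and observing that the $\limsup$ of the averages is bounded by...

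Actually the cleanest route, which I would adopt: the $\AS(\Sigma)$ hypothesis gives a subsequence $(n_k)$ along which $\frac1{n_k}\sum_{i=0}^{n_k-1}u_i\to C<\infty$, but for the $\FS$ conclusion it suffices to bound $\overline d$, and $\overline d(E)$ for $E=\{n:u_n>M\}$ can be computed along \emph{any} subsequence of indices, in particular one can show $\overline d(E)=\limsup_k \#(E\cap[0,n_k-1])/n_k$ need not hold — so instead I note $\limsup_n \frac1n\sum_{i<n}u_i<\infty$ is NOT granted, only the $\liminf$. Hence the right statement to prove is that $\underline d$ of the complement is $>1-\ep$: $\underline d(\{n : d\geq\delta\}) = 1 - \overline d(\{n: d<\delta\}) \ge 1-\overline d(\{u_n>M\})$, and we want $\overline d(\{u_n>M\})<\ep$. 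For this I use: along the subsequence $n_k$ realizing the average $\liminf$, $\#\{i<n_k:u_i>M\}/n_k \le C/(M n_k)\cdot n_k \to C/M<\ep$; and for a general $n$, sandwich $n$ between consecutive $n_k$'s — but without monotonicity of partial sums in the right way this is the genuinely delicate point. The main obstacle, then, is precisely this bookkeeping: matching the one-sided ($\liminf$) control on Ces\`aro averages in $\AS$ against the one-sided ($\overline d$, i.e. $\limsup$) quantity in $\FS$. I expect the resolution is that $\FS$ as stated with $\overline d$ actually follows because $\overline d(\{n\ge 0 : d((g^n x,g^n y),\Sigma)\ge\delta\})\ge 1 - \underline d(\{n : d<\delta\})\ge 1-\liminf_k \#\{i<n_k:u_i>M\}/n_k \ge 1-C/M>1-\ep$, using that $\overline d(A)\ge 1-\underline d(A^c)$ and that $\underline d(A^c)\le \liminf$ along any subsequence. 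So the steps in order are: (1) reformulate $\FS$ via $\overline d(A)\ge 1-\underline d(A^c)$; (2) choose $M>C/\ep$ and $\delta=e^{-M}$; (3) observe the inclusion $\{d<\delta\}\subseteq\{u_n>M\}$; (4) bound $\underline d(\{u_n>M\})$ by $C/M$ using Markov's inequality along the averaging subsequence from $\AS$; (5) conclude $\overline d(\{d\ge\delta\})\ge 1-C/M>1-\ep$. I will write this out cleanly, with step (4) — the Markov inequality along the $\liminf$-realizing subsequence — being the technical heart.
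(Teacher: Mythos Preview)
Your approach is correct and essentially the same as the paper's: a Markov inequality applied along a subsequence $(n_k)$ realizing the $\liminf$ in $\AS(\Sigma)$, followed by setting $\delta=e^{-M}$ for a suitable threshold $M$. Your concern about the ``delicate bookkeeping'' between the $\liminf$ in $\AS$ and the $\overline d$ in $\FS$ is overblown, and the detour through $\underline d(A^c)$ is unnecessary: since $\overline d(A)=\limsup_n |A\cap[0,n)|/n \ge \limsup_k |A\cap[0,n_k)|/n_k$ for \emph{any} subsequence $(n_k)$, it suffices to show that along the subsequence from $\AS$ the proportion of indices $i<n_k$ with $d((g^i(x),g^i(y)),\Sigma)\ge\delta$ is $\ge 1-\ep$, which is exactly what Markov gives.
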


\proof
Assume that the $\AS(\Sigma)$ condition holds.  
Set $$\phi_n:=\max\{-\log d((g^n(x),g^n(y)),\Sigma), 0\}$$
and $$s_n:=\frac{1}{n}\sum_{i=0}^{n-1}\phi_n.$$

The $\AS(\Sigma)$ condition shows that there is $A\geq 0$ and a sequence $n_j, j\geq 0$ tending to $+\infty$ such that 
$s_n\leq A.$ For $\ep>0$, we have $$\frac{\#\{i=0,\dots,n_j-1|\,\, \phi_i\geq A/\ep\}}{n_j}\leq \ep.$$
Set $\delta:=e^{-A/\ep}$, we get $$\frac{\#\{i=0,\dots,n_j-1|\,\, d((g^i(x),g^i(y)),\Sigma)\geq \delta\}}{n_j}\geq 1-\ep,$$
which concludes the proof.
\endproof
\subsection{Correspondences for rational maps}
Let $\bk$ be a field.  Let $g: \P^1_{\bk}\to \P^1_{\bk}$ be an endomorphism of degree $d\geq 2.$

\medskip

A \emph{correspondence} of $\P^1_{\bk}$ is a non-empty Zariski closed subset of $\P^1_{\bk}\times \P^1_{\bk}.$
We denote by $\Corr(\P^1_{\bk})^g$ the set of correspondences of $\P^1_{\bk}\times \P^1_{\bk}$ which are invariant under $g\times g$ and 
$\Corr(\P^1_{\bk})^g_*$ the subset of $\Corr(\P^1_{\bk})^g$ consisting of those $\Gamma$ which are of pure dimension $1$ and such that both $\pi_1|_{\Gamma}$ and $\pi_2|_{\Gamma}$ are finite.


\begin{lem}\label{lemincordes}Let $K$ be a subfield of $\bk$ such that $g$ is defined over $K$. Then for every $\Gamma\in \Corr(\P^1_{\bk})^g_*$, there is $\Gamma_K\in \Corr(\P^1_{K})^g_*$ such that $\Gamma\subseteq \Gamma_K\otimes_K\bk.$
\end{lem}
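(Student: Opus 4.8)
The statement is that any $g\times g$-invariant correspondence $\Gamma\subseteq \P^1_{\bk}\times\P^1_{\bk}$ of pure dimension $1$ with finite projections is contained in one defined over the field of definition $K$ of $g$. The plan is to produce $\Gamma_K$ as the Zariski closure of the union of the finitely many $\Gal(\bk/K)$-conjugates (or, in the non-perfect case, the finitely many irreducible components over $K$ that the components of $\Gamma$ pick out), and to check that this larger set is still a correspondence of the right type and is still invariant.

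First I would reduce to the case where $\Gamma$ is irreducible over $\bk$: a general $\Gamma\in\Corr(\P^1_{\bk})^g_*$ is a finite union of irreducible components, each of pure dimension $1$ with finite projections, but the components need not be individually $g\times g$-invariant — only permuted by $g\times g$. So instead I work directly with the whole $\Gamma$. The key point is descent of the closed subscheme. Since $\P^1_K\times\P^1_K$ is a variety over $K$ and $\bk/K$ is a (not necessarily algebraic, not necessarily separable) field extension, I would use the standard fact that a closed subscheme of $X_{\bk}$ is the base change of a closed subscheme of $X_K$ as soon as it is ``defined over $K$'' in the scheme-theoretic sense; in general it need not be, but one can always find a \emph{smallest} closed subscheme $Z$ of $\P^1_K\times\P^1_K$ with $\Gamma\subseteq Z_{\bk}$, namely the one cut out by the ideal $I(\Gamma)\cap K[\ldots]$ (intersecting the ideal of $\Gamma$ with the coordinate ring over $K$ in each affine chart). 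Concretely: write the (radical) ideal of $\Gamma$ in the bihomogeneous coordinate ring over $\bk$, intersect with the subring over $K$, and let $\Gamma_K$ be the corresponding closed subset. This $\Gamma_K$ is automatically a variety over $K$ with $\Gamma\subseteq \Gamma_K\otimes_K\bk$ and it is the \emph{smallest} such.

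Next I would verify the three required properties of $\Gamma_K$. (a) \textbf{Pure dimension $1$}: Since $\Gamma_K$ is the smallest $K$-closed set containing $\Gamma$ and $\Gamma$ has dimension $1$, every component of $\Gamma_K$ meets $\Gamma\otimes\bk$ in something of dimension $\geq 1$... more carefully, $\Gamma_K$ being minimal forces $\Gamma_K\otimes_K\bk = \bigcup_\sigma \sigma(\Gamma)$ over a suitable (finite, by Noetherianity and minimality) set of ``conjugates'' inside an algebraic closure, plus purely inseparable adjustment — each such conjugate has pure dimension $1$ and finite projections since these properties are preserved by field automorphisms. Hence $\Gamma_K$ has pure dimension $1$ and both projections $\pi_i|_{\Gamma_K}$ are finite (finiteness of $\pi_i|_{\Gamma_K}$ can be checked after the faithfully flat base change to $\bk$, where $\pi_i$ restricted to each conjugate $\sigma(\Gamma)$ is finite). (b) \textbf{$g\times g$-invariance}: Because $g\times g$ is defined over $K$, the image $(g\times g)(\Gamma_K)$ is again a $K$-closed set; base-changing to $\bk$ and using $(g\times g)(\Gamma)\subseteq\Gamma$ (invariance of $\Gamma$) shows $(g\times g)(\Gamma_K)$ is a $K$-closed set whose base change contains... here I need the minimality the other way: actually $(g\times g)(\Gamma_K\otimes\bk)=(g\times g)(\bigcup_\sigma\sigma\Gamma)=\bigcup_\sigma\sigma((g\times g)\Gamma)\subseteq\bigcup_\sigma\sigma\Gamma=\Gamma_K\otimes\bk$, using that $g\times g$ commutes with the conjugation action since $g$ is $K$-rational; hence $(g\times g)(\Gamma_K)\subseteq\Gamma_K$, i.e. $\Gamma_K\in\Corr(\P^1_K)^g_*$. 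Therefore $\Gamma_K$ does the job.

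\textbf{Main obstacle.} The routine part is the descent of the ideal; the genuinely delicate point is handling \emph{non-perfect} base fields $K$ (the lemma does not assume $\bk/K$ separable or algebraic), where ``union of Galois conjugates'' is not literally available and one must argue via the minimal $K$-subscheme and faithfully flat descent rather than via $\Gal(\bk/K)$. Specifically, one must be sure that the minimal $\Gamma_K$ still has its two projections finite and has pure dimension $1$ — properties that are étale-local/flat-local on the base, so they do descend, but this requires a small amount of care: finiteness and equidimensionality are fpqc-local on the base (for the target side, finiteness of $\pi_i|_{\Gamma_K}$ is equivalent after the faithfully flat extension $K\to\bk$ to finiteness of $\pi_i|_{\Gamma_K\otimes\bk}$, and $\Gamma_K\otimes\bk\supseteq\Gamma$ with $\Gamma_K\otimes\bk$ still mapping finitely to each $\P^1$ because it is contained in the preimage under $\pi_i$ of a finite set — indeed $\pi_1(\Gamma_K)$ is a proper $K$-closed subset of $\P^1_K$ hence finite or empty's complement... one checks $\pi_1|_{\Gamma_K}$ is quasi-finite and proper, hence finite). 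Once this local-on-the-base bookkeeping is done, the invariance argument above closes the proof. I expect the author's proof to simply invoke Galois descent after first enlarging $K$ to its perfect/separable closure, or to argue exactly via this minimal-subscheme construction.
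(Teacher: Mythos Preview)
Your descent strategy handles the case of an \emph{algebraic} extension $\bk/K$ correctly: there the minimal $K$-closed subscheme containing $\Gamma$ does base-change to a finite union of Galois conjugates of $\Gamma$, and each conjugate inherits pure dimension $1$, finite projections, and invariance. However, the crucial case is when $\bk/K$ is \emph{transcendental} (for instance $K=\overline{\Q}$, $\bk=\C$, which is exactly the situation the lemma is later applied to). There your key claim that ``$\Gamma_K\otimes_K\bk=\bigcup_\sigma\sigma(\Gamma)$'' simply fails: for a curve $\Gamma\subseteq(\P^1\times\P^1)_{\bk}$ whose defining equation involves a transcendental element, the ideal intersected with $K[\ldots]$ may well be zero, so the minimal $K$-closed set is all of $\P^1\times\P^1$. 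You flag this as the main obstacle but do not actually overcome it; the appeal to ``faithfully flat descent'' only tells you that properties of $\Gamma_K$ can be checked after base change, not that $\Gamma_K\otimes_K\bk$ is $1$-dimensional in the first place. In particular, nothing in your argument uses the $(g\times g)$-invariance of $\Gamma$ to control $\dim\Gamma_K$; invariance is only invoked \emph{after} you have already (incorrectly) asserted pure dimension $1$.

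The paper's proof fills precisely this gap with a dynamical argument. After the easy reduction to $K$ and $\bk$ both algebraically closed, it observes that every preperiodic point of $g$ lies in $\P^1(K)$ (since $g$ is defined over $K=\overline{K}$). Now invariance and finiteness of the projections are used in an essential way: if $(x,y)\in\Gamma$ with $x\in\Preper(g)$, then the finite fibre $\pi_1|_\Gamma^{-1}(\text{periodic part of the orbit of }x)$ is mapped to itself by a power of $g\times g$, forcing $y\in\Preper(g)$ as well. Hence $\Gamma\cap\pi_1^{-1}(\Preper(g))\subseteq\Preper(g)\times\Preper(g)\subseteq(\P^1\times\P^1)(K)$, and since $\Preper(g)$ is Zariski dense and $\pi_1|_\Gamma$ is surjective, this set is Zariski dense in $\Gamma$. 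A curve with a Zariski-dense set of $K$-points is defined over $K$, so in fact $\Gamma=\Gamma_K\otimes_K\bk$ on the nose. This is the missing idea in your approach.
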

\proof When $\bk=\overline{K},$ we may define $\Gamma_K$ to be the image of $\Gamma$ under the natural morphism $(\P^1\times \P^1)_\bk\to (\P^1\times \P^1)_K$.
Now we may assume that both $\bk$ and $K$ are algebraically closed.
Then every preperiodic point is defined over $K$.

For every $\Gamma\in \Corr(\P^1_{\bk})^g_*$, we have $$P_{\Gamma}:=\Gamma\cap \pi^{-1}(\Preper(g))=\Gamma\cap \pi_2^{-1}(\Preper(g))\subseteq \Preper(g)\times \Preper(g),$$
where $\pi_1,\pi_2$ are the first and the second projections.
Since $\Preper(g)$ is Zariski dense in $\P^1_{\bk}$ and $\pi_1|_{\Gamma}$ is surjective, $P_{\Gamma}$ is Zariski dense in $\Gamma.$ Since every points in $P_{\Gamma}$ are defined over $K$, $\Gamma$ is defined over $K$, which concludes the proof.
\endproof

\begin{lem}\label{lemgincoun}The set $\Corr(\P^1_{\bk})^g$  is countable. 
\end{lem}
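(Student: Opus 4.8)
The statement to prove is Lemma \ref{lemgincoun}: the set $\Corr(\P^1_{\bk})^g$ of $g\times g$-invariant Zariski closed subsets of $\P^1_{\bk}\times \P^1_{\bk}$ is countable. The plan is to decompose an arbitrary member of $\Corr(\P^1_{\bk})^g$ into irreducible components and argue componentwise: a Zariski closed subset of the surface $\P^1\times \P^1$ has only finitely many irreducible components, each of dimension $0$, $1$, or $2$; the whole set of such closed subsets is countable as soon as the set of irreducible closed subvarieties that can occur as a component of a $g\times g$-invariant set is countable. Since $g\times g$ permutes the irreducible components of an invariant set, each component $W$ of $\Gamma\in\Corr(\P^1_{\bk})^g$ is periodic under $g\times g$: there is $m\ge 1$ with $(g\times g)^m(W)=W$, i.e. $W$ is an invariant irreducible subvariety of $(g\times g)^m$. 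So it suffices to show that for each $m$, the irreducible $(g\times g)^m$-invariant closed subvarieties form a countable set, and then take the union over $m\in\Z_{\ge1}$.

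The dimension-$0$ case is immediate: an invariant point of $(g\times g)^m$ is a preperiodic point of $g\times g$, and $\Preper(g\times g)=\Preper(g)\times\Preper(g)$ is a countable set (each coordinate is preperiodic, and a degree-$d\ge2$ endomorphism of $\P^1$ has only countably many preperiodic points). The dimension-$2$ component is all of $\P^1\times\P^1$, a single possibility. The crux is the dimension-$1$ case: I must show the set of irreducible curves $C\subseteq\P^1\times\P^1$ with $(g\times g)^m(C)=C$ is countable. Here I would invoke the reduction already carried out in Lemma \ref{lemincordes} together with a degree bound. First, by Lemma \ref{lemincordes} applied to $g^m$ (or directly to $g$, noting such a $C$ lies in some $\Gamma\in\Corr(\P^1_{\bk})^{g^m}_*$ unless one of the projections $\pi_i|_C$ is constant — but if $\pi_1|_C$ is constant then $C=\{x\}\times\P^1$ with $x$ preperiodic, again countably many, and symmetrically for $\pi_2$), the curve $C$ is already defined over any field of definition $K$ of $g$; since $K$ may be taken countable (e.g. the field generated over the prime field by the coefficients of $g$), and since for each bidegree $(p,q)$ the curves of bidegree $(p,q)$ defined over $K$ form a subset of the $K$-points of a projective space, hence a countable set, it remains only to bound the bidegree of $C$ in terms of $m$. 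This bound comes from intersection theory on $\P^1\times\P^1$: $g^m$ has degree $d^m$, so $(g\times g)^m$ has bidegree $(d^m,d^m)$, and invariance $(g^m\times g^m)^{*}C\supseteq C$ (with appropriate multiplicities) forces the bidegree $(p,q)$ of $C$ to satisfy a relation bounding $p,q$ by a function of $d$ and $m$ — concretely, pulling back the class of $C$ multiplies $(p,q)$ by the matrix $\mathrm{diag}(d^m,d^m)$ up to the contribution of contracted/expanded components, which for a genuinely invariant irreducible curve with finite projections pins down $(p,q)$ among finitely many values. Thus for each $m$ only finitely many bidegrees occur, each giving countably many $K$-curves, so countably many in total.

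Assembling: $\Corr(\P^1_{\bk})^g=\bigcup_{N\ge1}\{\text{finite unions of at most }N\text{ irreducible invariant-up-to-iterate subvarieties}\}$, and this is a countable union of countable sets, hence countable. The main obstacle is the dimension-$1$ step — specifically, making the degree/bidegree bound on a periodic irreducible curve precise; everything else (finiteness of components, countability of preperiodic points, countability of $K$-points of a Hilbert/Chow-type parameter space for a countable $K$) is routine. One clean way to handle the bidegree bound uniformly, avoiding case analysis, is to note that a $(g\times g)^m$-periodic irreducible curve $C$ has its dynamical degree controlled: the self-map of $C$ induced by a suitable iterate is finite of degree dividing a power of $d$, and comparing the induced action on $\Pic(\P^1\times\P^1)\otimes\Q\cong\Q^2$ (where $(g\times g)^{*}$ acts as $d\cdot\mathrm{id}$) with the pushforward of the class $[C]$ shows $[C]$ is an eigenvector, which for an effective nonzero class of the form $(p,q)$ with $p,q\ge0$ still leaves infinitely many rays a priori — so the genuine input is rather the finiteness, for fixed degree $d^m$ of the map, of invariant curves of \emph{each} bidegree combined with a bound forcing only finitely many bidegrees; I would get the latter from the fact that $C$ must contain a Zariski-dense set of preperiodic pairs (as in the proof of Lemma \ref{lemincordes}) and that a curve of bidegree $(p,q)$ is cut out inside $\P^1\times\P^1$ by sections of $\sO(p,q)$, whose preperiodic-point incidences grow with $(p,q)$, ultimately reducing to the elementary fact that an irreducible curve invariant under an endomorphism of bidegree $(d^m,d^m)$ on $\P^1\times\P^1$ has bidegree bounded by $d^m$ in each coordinate. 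I will carry out this last estimate explicitly in the proof below.
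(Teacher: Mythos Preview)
Your core idea is correct and matches the paper's: use Lemma \ref{lemincordes} to descend each invariant curve with finite projections to a countable field $K$ of definition of $g$, and handle vertical/horizontal curves and points via preperiodic points. The paper executes this in three lines: take $K\subseteq\bk$ algebraically closed of finite transcendence degree with $g$ defined over $K$; then $\Corr(\P^1_K)^g_*$ is countable (since $K$ is), Lemma \ref{lemincordes} shows every element of $\Corr(\P^1_{\bk})^g_*$ already descends to $K$, and the complement $\Corr(\P^1_{\bk})^g\setminus\Corr(\P^1_{\bk})^g_*$ is countable for the elementary reasons you indicate.

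Your detour through bidegree bounds is both unnecessary and wrong. It is unnecessary because once you know an invariant curve $C$ is defined over the countable field $K$, you are done: the set of \emph{all} curves over $K$ (union over all bidegrees $(p,q)\in\Z_{\ge0}^2$) is a countable union of countable sets, hence countable. No bound on $(p,q)$ is needed. It is wrong because the claimed bound is false: for $g(z)=z^d$, the curve $C_n=\overline{\{x=y^n\}}\subseteq\P^1\times\P^1$ has bidegree $(1,n)$ and satisfies $(g\times g)(C_n)=C_n$ for every $n\ge1$, so $(g\times g)$-invariant irreducible curves of arbitrarily large bidegree exist. Your intersection-theory heuristic fails precisely because $(g\times g)^*$ acts as $d\cdot\id$ on $\Pic(\P^1\times\P^1)\otimes\Q$, so \emph{every} effective class is an eigenvector and no bidegree is singled out. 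Drop the entire bidegree paragraph and your argument is complete and essentially identical to the paper's, just organized via irreducible components rather than via the set $\Corr(\P^1_{\bk})^g_*$ directly.
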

\proof
We may assume that $\bk$ is algebraically closed.
There is an algebraically closed subfield $K$ of $\bk$, such that $K$ has finite transcendence degree and $g$ is defined over $K.$
Since $\Corr(\P^1_{K})^g_*$ is countable, we conclude the proof by Lemma \ref{lemincordes} and the fact that 
$\Corr(\P^1_{\bk})^g\setminus \Corr(\P^1_{\bk})^g_*$  is countable.
\endproof

\medskip

Let $f: \La\times \P^1\to \La\times \P^1$ be an algebraic family of rational maps as in (\ref{family}).
A \emph{flat family of correspondences} over $\La$ is a closed subset $\Gamma\subseteq (\P^1\times \P^1)\times \La$ which is flat over $\La.$
Let $\eta$ be the generic point of $\La$. The map $\Gamma\mapsto \Gamma_{\eta}$ gives a bijection between flat family of correspondences over $\La$
and correspondences of the generic fiber  $\P^1_{\eta}.$ Moreover $\Gamma$ is $f\times_{\La} f$-invariant if and only if $\Gamma_{\eta}$ is $f_{\eta}\times f_{\eta}$-invariant.
Hence, by Lemma \ref{lemgincoun}, we get the following result.

\medskip

\begin{cor}\label{corfcorcountable} The set $\Corr^{\flat}(\P^1_{\La})^f$ of $f\times_{\La} f$-invariant flat family of correspondences over $\La$ is countable.
\end{cor}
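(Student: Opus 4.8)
The plan is to reduce the statement about families of correspondences over $\La$ to the already-established countability of $\Corr(\P^1_{\bk})^g$ (Lemma \ref{lemgincoun}), using the bijection with correspondences of the generic fiber that has just been set up. First I would recall the dictionary: a flat family of correspondences $\Gamma \subseteq (\P^1\times\P^1)\times\La$ over the smooth curve $\La$ corresponds bijectively to its generic fiber $\Gamma_\eta$, a closed subscheme of $(\P^1\times\P^1)_{\eta} = \P^1_{\eta}\times\P^1_{\eta}$, and this is exactly a correspondence of $\P^1_{\eta}$ in the sense of the preceding subsection applied with $\bk = k(\eta) = \C(\La)$. The standard fact making this work is that over a regular one-dimensional base, flat $=$ torsion-free, so a closed subscheme of $(\P^1\times\P^1)\times\La$ is flat over $\La$ exactly when it is the scheme-theoretic closure of its generic fiber; conversely any closed subscheme of the generic fiber has a unique flat closure. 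This gives the bijection $\Gamma \mapsto \Gamma_\eta$, and I would note it is order-preserving for inclusion and compatible with the $f\times_\La f$-action: $\Gamma$ is $f\times_\La f$-invariant iff $(f\times_\La f)(\Gamma) \subseteq \Gamma$, which after taking generic fibers (and using that the flat closure of $(f_\eta\times f_\eta)(\Gamma_\eta)$ is $(f\times_\La f)(\Gamma)$ since $f\times_\La f$ is finite, hence flat, over $\La$) translates to $f_\eta\times f_\eta$-invariance of $\Gamma_\eta$.

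Given this, the proof is immediate: the map $\Gamma \mapsto \Gamma_\eta$ identifies $\Corr^{\flat}(\P^1_\La)^f$ with a subset of $\Corr(\P^1_\eta)^{f_\eta}$ (in fact with all of it, but only the injection is needed), and the latter is countable by Lemma \ref{lemgincoun} applied to the endomorphism $g = f_\eta$ of $\P^1_{\bk}$ over $\bk = k(\eta)$. Hence $\Corr^{\flat}(\P^1_\La)^f$ is countable.

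I do not expect any real obstacle here; the one point to be careful about is the invariance bookkeeping under the passage to generic fibers — specifically that pushing forward along the finite (hence flat, proper) morphism $f\times_\La f$ commutes with restriction to the generic point, so that no information about invariance is lost or spuriously created. A secondary minor point is whether ``flat family of correspondences'' is meant scheme-theoretically or just as a closed subset; since the excerpt phrases everything with closed subsets and uses flatness over the smooth curve $\La$, I would work with the reduced closed subscheme structure throughout (a reduced scheme flat over a regular curve is automatically the closure of its generic fiber), which makes the bijection with correspondences of $\P^1_\eta$ clean. With these conventions fixed, the corollary follows in two lines from Lemma \ref{lemgincoun}.
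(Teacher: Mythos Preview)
Your proposal is correct and follows exactly the paper's approach: the paper states just before the corollary that $\Gamma\mapsto\Gamma_\eta$ is a bijection between flat families of correspondences over $\La$ and correspondences of $\P^1_\eta$, compatible with invariance, and then invokes Lemma~\ref{lemgincoun}. Your write-up simply fleshes out the details of this bijection and invariance compatibility that the paper asserts without proof.
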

Denote by $\Corr^{\flat}(\P^1_{\La})^f_*$ the set of $\Gamma\in \Corr^{\flat}(\P^1_{\La})^f$  whose generic fiber is in $\Corr^{\flat}(\P^1_{\eta})^{f_{\eta}}_*.$

\medskip
\subsubsection{Transcendental points}
The notion of transcendental points was introduced in \cite{xie2023partial}. Let $a_i, i=1,\dots.m$ be marked points.
Let $L$ be an algebraically closed subfield of $\C$ such that $\La$, $f$ and $a_i, i=1,\dots,m$ are defined over $L.$
Then there is a variety $\La_0$ over $L$, a morphism $F:\La_0\times \P^1\to \P^1$ and marked points $a'_1,\dots, a'_m$ such that $\La=\La_0\otimes_{L}\C$, $f=F\otimes_{L}\C$ and $a_i=a'_i\otimes_L\C.$
A point $b\in \La(\C)=\La_0(\C)$ is called a \emph{transcendental point} for $\La_0/L$
if the image of 
$b:\Spec \C\to \La_0$
is the generic point of $\La_0$.  
In other words, $b\in \La(\C)=\La_0(\C)$ is transcendental if and only if it is not in $\La_0(L)$.
\begin{rem}\label{remdescou}
We can always assume $L$ to have a finite transcendence degree. In this case, $L$ is countable, hence 
 $\La_0(L)$ is countable. So all but countably many points in $\La(\C)$ are transcendental for $\La_0/L$. 
 In particular, if $f$ is defined over $\overline{\Q}$, and $a_i$ are marked critical points, we can take $L$ to be $\overline{\Q}.$
\end{rem}

\begin{rem}\label{remdescouae}
 Since $\mu_{f,a_i}$ has continuous potential, it does not have atoms.
So if $L$ have finite transcendence degree, the for $\mu_{f,a_i}$-a.e. $t\in \La(\C)$, $t$ is transcendental with respect to $\La_0/L$. 
\end{rem}

\begin{pro}\label{protransco}Let $b\in \La(\C)$ be a transcendental point with respect to $\La_0/L$. Then for every $\Gamma_b\in \Corr(\P^1)_*^{f_{b}}$,
there is $\Gamma'\in  \Corr^{\flat}(\P^1_{\La})_*^f$ such that $\Gamma'_b:=\Gamma'\cap \pi_1^{-1}(b)$ contains $\Gamma_b.$ 
Moreover, for every $n\geq 0$ and $i,j\in \{1,\dots,m\}$, if 
$(f_b^n(a_i(b)),f_b^n(a_j(b)))\in \Gamma_b$, then $(f^n(a_i),f^n(a_j))\in \Gamma'.$
\end{pro}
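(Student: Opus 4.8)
The plan is to exploit the spreading-out picture that is already set up just before the statement: since $b$ is transcendental for $\La_0/L$, its image in $\La_0$ is the generic point, so $b$ factors as $\Spec\C \to \Spec L(\La_0) \to \La_0$, i.e. the $\C$-point $b$ induces a field embedding $L(\La_0)=L(\eta)\hookrightarrow \C$. The geometric generic fiber $f_\eta$ base-changes along this embedding to exactly $f_b$ (after identifying $\P^1$'s via the chosen model $F$ over $L$), and similarly each marked point $a_i$ specializes to $a_i(b)$. So the whole point is that the fiber over a transcendental point \emph{is} a specialization of the geometric generic fiber under a field extension, with no loss of dynamical structure.

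First I would make this identification precise: write $K_\eta$ for an algebraic closure of $L(\eta)$ inside which everything lives, and view the embedding $L(\eta)\hookrightarrow\C$ extended to $\OK_\eta:=\overline{L(\eta)}\hookrightarrow\C$; under this, $(\P^1\times\P^1)_{K_\eta}\otimes_{K_\eta}\C \cong (\P^1\times\P^1)_\C$ compatibly with $f_\eta\times f_\eta$ going to $f_b\times f_b$ and with $(a_i(\eta),a_j(\eta))\mapsto (a_i(b),a_j(b))$. Now given $\Gamma_b\in\Corr(\P^1)^{f_b}_*$, I want to descend it along this base change. Apply Lemma \ref{lemincordes} with the pair of fields $L(\eta)\subseteq\C$ (or rather $\OK_\eta\subseteq\C$, both algebraically closed after replacing $L(\eta)$ by $\OK_\eta$): this produces $\Gamma_\eta\in\Corr(\P^1_{\OK_\eta})^{f_\eta}_*$ with $\Gamma_b\subseteq \Gamma_\eta\otimes_{\OK_\eta}\C$. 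Then spread $\Gamma_\eta$ out: it is defined over a finitely generated sub-$L$-algebra of $\OK_\eta$, but since $\Corr$ over the generic point corresponds bijectively to flat families over $\La$ (the discussion right before Corollary \ref{corfcorcountable}), and $\Gamma_\eta$ is $f_\eta\times f_\eta$-invariant, we get $\Gamma'\in\Corr^{\flat}(\P^1_\La)^f_*$ whose geometric generic fiber is $\Gamma_\eta$. Taking the fiber $\Gamma'_b=\Gamma'\cap\pi_1^{-1}(b)$ and chasing through the base-change identification gives $\Gamma_b\subseteq\Gamma'_b$, since the fiber over $b$ of the flat family is the base change of $\Gamma_\eta$ along $\OK_\eta\hookrightarrow\C$ (flatness guarantees the fiber is the honest base change, not something with embedded components).

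For the ``moreover'' clause: the condition $(f_b^n(a_i(b)),f_b^n(a_j(b)))\in\Gamma_b$ is a closed condition that holds over $\C$; but the point $(f^n(a_i),f^n(a_j))$ restricted to the generic point $\eta$ is exactly $(f_\eta^n(a_i(\eta)),f_\eta^n(a_j(\eta)))$, which is the $\OK_\eta$-point whose base change along $\OK_\eta\hookrightarrow\C$ is $(f_b^n(a_i(b)),f_b^n(a_j(b)))$. Membership in $\Gamma_\eta$ is detected after faithfully flat base change $\OK_\eta\hookrightarrow\C$ (a point of a scheme lies on a closed subscheme iff it does so after any field extension, since the relevant condition is the vanishing of the ideal sheaf pulled back, and $\Spec\C\to\Spec\OK_\eta$ is faithfully flat), so $(f_\eta^n(a_i(\eta)),f_\eta^n(a_j(\eta)))\in\Gamma_\eta$. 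Hence the section $(f^n(a_i),f^n(a_j))$ of $(\P^1\times\P^1)\times\La$ meets $\Gamma'$ over the generic point $\eta$; since $\Gamma'$ is closed and the section is continuous (a morphism $\La\to(\P^1\times\P^1)\times\La$), and $\La$ is an irreducible curve, the locus where the section lands in $\Gamma'$ is closed and contains $\eta$, hence is all of $\La$. In particular $(f^n(a_i),f^n(a_j))\in\Gamma'$ as claimed.

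I expect the main obstacle to be purely bookkeeping: making the base-change identifications genuinely canonical and checking that Lemma \ref{lemincordes} applies in the form I need (its statement is about $\Corr(\P^1_\bk)^g_*$ for a field extension $\bk/K$, and I need to invoke it with $K=\OK_\eta$, $\bk=\C$ — both algebraically closed, so this is exactly the algebraically-closed case in its proof). One genuine subtlety worth a sentence: $\Gamma_\eta$ a priori is only defined over \emph{some} algebraically closed subfield of $\C$ containing $\OK_\eta$, but Lemma \ref{lemincordes} hands us descent all the way to $\OK_\eta$ precisely because preperiodic points of $f_\eta$ are already $\OK_\eta$-rational and Zariski-dense in each correspondence in $\Corr_*$; that is the content we borrow. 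The rest — flatness of the spread-out family, closedness of specialization loci, faithfully flat descent of incidence — is standard and I would not belabor it.
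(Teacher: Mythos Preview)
Your proof is correct and follows essentially the same route as the paper: use the transcendence of $b$ to identify $f_b$ with a base change of the generic fiber along $L(\La_0)\hookrightarrow\C$, apply Lemma~\ref{lemincordes} to descend $\Gamma_b$, spread out via the bijection between correspondences over the generic point and flat families, and deduce the ``moreover'' clause by faithfully flat descent of point membership. The only difference is cosmetic: the paper applies Lemma~\ref{lemincordes} directly with $K=L(\La_0)$ rather than detouring through $\overline{L(\eta)}$, which saves you the implicit extra descent step from $\OK_\eta$ down to the function field needed to invoke the bijection with flat families.
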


\proof
Since the image of $b:\Spec \C\to \La_0$ is the generic point of $\La_0$,
$f_b$ and $(a_i)_b, i=1,\dots,m$  are the base change of $F_{K}, (a_i')_K, i=1,\dots,m$ via the natural morphism 
$$K:=L(\La_0)\hookrightarrow \C$$ defined by $b.$
By Lemma \ref{lemincordes}, for every $\Gamma_b\in \Corr(\P^1)_*^{f_{b}}$, there is $\Gamma'_K\in \Corr(\P^1_K)_*^{F_K}$
such that $\Gamma_b\subseteq \Gamma'_K\otimes_K\C.$  
Via the natural bijection between $\Corr(\P^1_K)_*^{F_K}$ and $\Corr^{\flat}(\P^1_{\La_0})_*^{F}$,
$\Gamma'_K$ is the generic fiber of $\Gamma'_{L}\in \Corr^{\flat}(\P^1_{\La_0})_*^{F}.$
Set $\Gamma':=\Gamma_L'\otimes_L \C\in \Corr^{\flat}(\P^1_{\La})_*^f.$
Then we get $\Gamma'_K\otimes_K\C=\Gamma'_b.$ For every $n\geq 0$ and $i,j\in \{1,\dots,m\}$, if 
$(f_b^n(a_i(b)),f_b^n(a_j(b)))\in \Gamma_b$, then $(F_K^n((a_i')_K),F_K^n((a_j')_K))\in \Gamma_K'$, hence
$(f^n(a_i),f^n(a_j))\in \Gamma'.$ This concludes the proof.
\endproof

\subsection{The $\AS(\Sigma)$ condition for families of rational maps}

\begin{thm}\label{thmavergdistance}
 Let $f$ be an algebraic family of rational maps as in (\ref{family}). Let $a,b$ be active marked points.
 Let $V$ be a Zariski closed subset of $\La\times (\P^1\times \P^1)$.
Assume that $f$, $a,b$ and $V$ are defined over $\overline{\Q}$; for every $n\geq 0$, the image $\Gamma_n$ of $p_n:=(f^n(a),f^n(b)): \La\to  \La\times (\P^1\times \P^1)$ is not contained in 
$V;$ and there is a sequence of distinct points $t_i\in  \La(\C), i\geq 0$ such that both $\widehat{h}_{f_{t_i}}(a(t_i))$ and $\widehat{h}_{f_{t_i}}(b(t_i))$ tend to zero as $i\to \infty.$
 Then for $\mu_{f,a}$-a.e. $t$ in $\La(\C)$, the pair $a(t), b(t)$ satisfies the $\AS(V_t)$ condition for $f_t.$
\end{thm}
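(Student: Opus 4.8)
The plan is to reduce the statement to an arithmetic estimate: the $\AS(V_t)$ condition asserts that the sequence of Cesàro averages $\frac1n\sum_{i=0}^{n-1}\max\{-\log d((f_t^i(a(t)),f_t^i(b(t))),V_t),0\}$ stays bounded along a subsequence for a.e.\ $t$, and we will obtain this by integrating against $\mu_{f,a}$ and bounding the resulting integrals uniformly in $n$. Concretely, set $\Gamma_n$ to be the image of $p_n=(f^n(a),f^n(b))\colon\La\to\La\times(\P^1\times\P^1)$ and let $g_V$ be a Green-type function (a quasi-potential) attached to $V$, so that $-\log d(\,\cdot\,,V_t)$ is comparable to the restriction of a global function with logarithmic singularities along $V$. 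The quantity $\int_\La \max\{-\log d((f_t^i(a(t)),f_t^i(b(t))),V_t),0\}\,d\mu_{f,a}(t)$ is then, up to bounded error, an arithmetic intersection number of the cycle $\Gamma_i$ against an adelically metrized line bundle supported on $V$, computed on the quasi-projective base $\La$ via the Yuan–Zhang intersection theory \cite{yuan2021}. The hypothesis that $\Gamma_i\not\subseteq V$ for all $i$ guarantees these intersection numbers are well-defined (no improper intersection), and the hypothesis on the existence of small-height points, combined with the equidistribution input of Section \ref{2}, forces $\mu_{f,a}$ to be proportional to $\mu_{f,b}$ and to have the right arithmetic normalization.

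The key steps, in order, are: (1) replace $d((f_t^i(a(t)),f_t^i(b(t))),V_t)$ by $d((f^i_t(a(t))),\pi_{V}(f_t^i(b(t))))$-type quantities using Remark \ref{remcorronefactor} or directly build an adelic line bundle $\overline{\sL}_V$ on $\La\times(\P^1\times\P^1)$ whose archimedean Green function is $-\log$ of the distance to $V$; (2) pull back $\overline{\sL}_V$ by $p_i$ to get adelic line bundles $p_i^*\overline{\sL}_V$ on $\La$ and observe that $\int_{\La(\C)}(-\log d(p_i(t),V))\,d\mu_{f,a}(t)$ equals, up to a bounded additive constant independent of $i$, the arithmetic self-intersection / height contribution $\widehat{\deg}(p_i^*\overline{\sL}_V\cdot \overline{M})$ where $\overline{M}$ is the adelic line bundle on $\La$ whose curvature is $\mu_{f,a}$ (this is where \cite{yuan2021} and the discussions acknowledged with Yuan enter, cf.\ the forward references to Proposition \ref{prointegboundbyint} and Proposition \ref{prointergrow}); (3) bound this intersection number by $O(1)$ uniformly in $i$ — the crucial point being that $p_i^*\overline{\sL}_V$ has height growing at most linearly, or better, that the relevant pairing telescopes because $V$ and the metrics are $f\times f$-compatible so that the dynamical degree growth is absorbed; (4) sum over $i<n$, divide by $n$, invoke Fatou/Fubini to move the $\liminf$ inside the integral, and conclude that the integrand — which is the $\AS$ average — is finite for $\mu_{f,a}$-a.e.\ $t$.

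The main obstacle I expect is step (3): controlling the growth in $i$ of the arithmetic intersection numbers $\widehat{\deg}(p_i^*\overline{\sL}_V\cdot\overline{M})$. The naive bound grows like $d^i$ (the degree of $f^i$), which is useless after dividing by $n$. The resolution should come from choosing the metric on $\overline{\sL}_V$ to be the \emph{dynamical} (canonical) one attached to the invariant correspondence structure — essentially using that $V$ is $f\times f$-invariant so that $(f\times_\La f)^*\overline{\sL}_V \cong d\cdot\overline{\sL}_V$ up to a bounded term, which makes the sequence of heights \emph{bounded} rather than exponentially growing. One must be careful here because $V$ is only assumed Zariski closed, not invariant; so one first replaces $V$ by the union of the finitely many (by Corollary \ref{corfcorcountable}-type finiteness, or by Noetherianity) irreducible components that matter, intersects with $\Corr^{\flat}(\P^1_\La)^f$ where possible, and handles components not dominating $\La$ separately (these contribute only finitely many bad parameters $t$, a $\mu_{f,a}$-null set since $\mu_{f,a}$ has no atoms, cf.\ Remark \ref{remdescouae}). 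A secondary technical point is the interchange of $\liminf_n$ and $\int d\mu_{f,a}$, handled by Fatou's lemma applied to the nonnegative partial averages, together with the uniform $L^1$ bound from steps (2)–(3).
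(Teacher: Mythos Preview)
Your overall architecture is the same as the paper's: introduce a Green function for $V$, bound $\int_\La \max\{-\log d(p_n(t),V_t),0\}\,d\mu_{f,a}(t)$ uniformly in $n$ via Yuan--Zhang's arithmetic intersection theory, then apply Fatou's lemma to the Ces\`aro averages. Steps (1), (2), (4) are correct in outline; note only that in (2) one gets a one-sided inequality (the archimedean integral is bounded above by the full adelic intersection, using nonnegativity of the contributions at all other places---this is Proposition \ref{prointegboundbyint}), which is all that is needed.

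The genuine gap is step (3). Your proposed mechanism---equip $\overline{\sL}_V$ with a dynamical metric satisfying $(f\times_\La f)^*\overline{\sL}_V\cong d\,\overline{\sL}_V$---fails on two counts. First, $V$ is not assumed $(f\times_\La f)$-invariant, and your fallback of passing to invariant components does not prove the theorem as stated. Second, even when $V$ \emph{is} invariant (as in the application, Corollary \ref{corinvariantas}), the pullback relation you write is generally false: for $\Gamma\in\Corr(\P^1)^g_*$ one does not have $(g\times g)^*\sO(\Gamma)\simeq d\,\sO(\Gamma)$ except in very special cases. So there is no telescoping of the kind you describe.

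The paper places no dynamical structure on $V$ at all: one takes any ample $\overline{A}$ on a projective model with a small section cutting out $V$. The dynamics enters only through the canonical adelic polarization $\overline{L}_g$ on $(\P^1\times\P^1)_\La$, which satisfies $g^*\overline{L}_g=d\,\overline{L}_g$. The decisive input is the small-height hypothesis: via the fundamental inequality \cite[Theorem 5.3.2]{yuan2021} it forces $\overline{L}_g^{\,2}\cdot\Gamma_0=0$. From this vanishing, the projection formula, and the fact that $\overline{A}$ differs from a multiple of $\overline{L}_g$ only by a class pulled back from the base curve, one obtains $\overline{A}\cdot\overline{L}_g\cdot\Gamma_n\le Cd^n$ (Proposition \ref{prointergrow}). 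The $d^n$ then cancels exactly against $(\pi_1)_*(P_1^*T_f\wedge[\Gamma_n])=\mu_{f,f^n(a)}=d^n\mu_{f,a}$ when pushing the integral down to $\La$. In your language, $p_n^*\overline{\sL}_V\cdot\overline{M}$ stays bounded not because of any property of $\overline{\sL}_V$, but because $\overline{M}$ (a summand of $p_0^*\overline{L}_g$) has vanishing self-intersection---and this is precisely where the small-height hypothesis is used, whereas you invoke it only vaguely for ``normalization.''
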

 
 \medskip
 
 \rem
 By Theorem \ref{stable} and \cite[Proposition 13]{Gauthier2019}, $\mu_{f,a}$ and $\mu_{f,b}$ are of finite non-zero mass.
 Moreover, by Theorem \ref{equi}, $\mu_{f,a}$ and $\mu_{f,b}$ are proportional.
 \endrem
 
  \medskip
 Combing Corollary \ref{corfcorcountable}, Remark \ref{remdescou}, Remark \ref{remdescouae} with Proposition \ref{protransco},  we get the following result.
\begin{cor}\label{corinvariantas}Let $f,a,b$ as in Theorem \ref{thmavergdistance}.  
Assume further that $a,b$ are not dynamically related.
Then for $\mu_{f,a}$-a.e. $t$ in $\La(\C)$, we have that for every 
$\Gamma_t\in \Corr(\P^1)^{f_t}_*$,  the pair $a(t), b(t)$ satisfies the $\AS(\Gamma_t)$ condition for $f_t$.
\end{cor}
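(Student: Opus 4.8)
The plan is to deduce Corollary \ref{corinvariantas} from Theorem \ref{thmavergdistance} by a countability-plus-generic-point argument. Fix the data $f,a,b$ as in the theorem, with $a,b$ active but not dynamically related, all defined over $\overline{\Q}$. Choose $L=\overline{\Q}$ as the field of definition; by Remark \ref{remdescou}, $L$ has finite transcendence degree, and by Remark \ref{remdescouae}, the set of points of $\La(\C)$ that fail to be transcendental with respect to $\La_0/L$ is a countable set, hence a $\mu_{f,a}$-null set (the bifurcation measure has continuous potential, so no atoms). By Corollary \ref{corfcorcountable}, the set $\Corr^{\flat}(\P^1_{\La})^f_*$ is countable; enumerate it as $\Gamma^{(1)},\Gamma^{(2)},\dots$. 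For each $\Gamma^{(k)}$, set $V=\Gamma^{(k)}$ and observe that we are in position to invoke Theorem \ref{thmavergdistance}, \emph{provided} we can check that for every $n\geq 0$ the image $\Gamma_n$ of $p_n=(f^n(a),f^n(b))$ is not contained in $\Gamma^{(k)}$. Assuming this, Theorem \ref{thmavergdistance} gives a $\mu_{f,a}$-conull set $E_k\subseteq \La(\C)$ on which the pair $a(t),b(t)$ satisfies $\AS(\Gamma^{(k)}_t)$ for $f_t$.

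The heart of the matter is the verification that no $\Gamma_n=p_n(\La)$ is contained in any $\Gamma^{(k)}$. Here I would argue by contradiction: if $\Gamma_n\subseteq \Gamma^{(k)}$ for some $n$ and some $k$, then passing to the generic fiber over $\La$ (equivalently, over the function field $k=\C(\La)$, or over $L(\La_0)$), the point $(f^n(a),f^n(b))$ of $\P^1\times\P^1$ over that function field lies on the correspondence $\Gamma^{(k)}_\eta\in \Corr(\P^1_\eta)^{f_\eta}_*$. Since $\Gamma^{(k)}_\eta$ is $f_\eta\times f_\eta$-invariant and of pure dimension $1$ with finite projections, its full orbit $\bigcup_{m\geq 0}(f_\eta\times f_\eta)^m(\Gamma^{(k)}_\eta)$ is still a curve; taking the Zariski closure of the orbit of $(a(\eta),b(\eta))$, which is contained in the (iterated) preimage of this invariant curve, one sees that the orbit of $(a,b)$ over $\overline{k}$ is not Zariski dense in $\P^1_{\overline{k}}\times\P^1_{\overline{k}}$. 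By the definition recalled in the introduction, this says exactly that $a$ and $b$ are dynamically related — contradicting the hypothesis. I should be slightly careful about whether membership on $\Gamma^{(k)}$ at time $n$ forces membership on an invariant curve through $(a,b)$ itself; the clean way is: $p_n$ lands in $\Gamma^{(k)}$, so $(a,b)$ lands in $(f\times_\La f)^{-n}(\Gamma^{(k)})$, whose generic fiber is a proper closed subset of $\P^1_\eta\times\P^1_\eta$ of dimension $\leq 1$ containing $(a(\eta),b(\eta))$; replacing it by the union of its one-dimensional components that are periodic under the correspondence (which is nonempty, since $\Gamma^{(k)}$ is invariant and the projections are finite, forcing the preimage's components to eventually cycle) produces a preperiodic curve through $(a(\eta),b(\eta))$, i.e.\ dynamical relatedness.

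With the hypotheses of Theorem \ref{thmavergdistance} verified for each $\Gamma^{(k)}$, set $E:=\bigcap_{k\geq 1}E_k$ intersected with the conull set of transcendental points. A countable intersection of $\mu_{f,a}$-conull sets is $\mu_{f,a}$-conull, so $E$ is conull. Now fix $t\in E$ and let $\Gamma_t\in \Corr(\P^1)^{f_t}_*$ be arbitrary. Since $t$ is transcendental with respect to $\La_0/L$, Proposition \ref{protransco} yields some $\Gamma'\in \Corr^{\flat}(\P^1_\La)^f_*$, say $\Gamma'=\Gamma^{(k)}$ in our enumeration, such that the fiber $\Gamma^{(k)}_t=\Gamma'\cap\pi_1^{-1}(t)$ contains $\Gamma_t$. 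Because $t\in E_k$, the pair $a(t),b(t)$ satisfies $\AS(\Gamma^{(k)}_t)$ for $f_t$; and since $\Gamma_t\subseteq \Gamma^{(k)}_t$, the monotonicity of the $\AS$ condition (noted in the excerpt: $\Sigma\subseteq\Sigma'$ implies $\AS(\Sigma')\Rightarrow\AS(\Sigma)$) gives $\AS(\Gamma_t)$ for $f_t$. As $\Gamma_t$ was arbitrary, $t$ has the desired property, completing the proof. The main obstacle is the non-containment step $\Gamma_n\not\subseteq\Gamma^{(k)}$: it is exactly where the "not dynamically related" hypothesis gets used, and it requires identifying the preperiodic-curve structure inside the iterated preimage of an invariant correspondence — everything else is bookkeeping about countable unions of null sets and applying Proposition \ref{protransco} fiberwise.
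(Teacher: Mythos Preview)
Your proof is correct and follows exactly the route the paper indicates in its one-sentence argument (combine Corollary~\ref{corfcorcountable}, Remarks~\ref{remdescou}--\ref{remdescouae}, Proposition~\ref{protransco}, and the monotonicity of the $\AS$ condition); you have moreover made explicit the one point the paper leaves tacit, namely $p_n(\La)\not\subseteq\Gamma^{(k)}$ for every $n$, and your argument for this is fine --- indeed it is simpler than you make it, since once $(f^n_\eta(a_\eta),f^n_\eta(b_\eta))$ lies on the invariant curve $\Gamma^{(k)}_\eta$ the entire forward orbit of $(a_\eta,b_\eta)$ from time $n$ on stays there, so the orbit is not Zariski dense and the ``orbit not Zariski dense'' reformulation of dynamical relatedness applies directly, with no need to produce a preperiodic curve through $(a_\eta,b_\eta)$ itself. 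One small tweak: Theorem~\ref{thmavergdistance} requires $V$ to be defined over $\overline{\Q}$, so it is cleanest to enumerate $\Corr^\flat(\P^1_{\La_0})^F_*$ over $L=\overline{\Q}$ and base-change to $\C$ (this is precisely the family Proposition~\ref{protransco} outputs), rather than enumerating $\Corr^\flat(\P^1_\La)^f_*$ over $\C$ directly.
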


\proof[Proof of Theorem \ref{thmavergdistance}]
Let $P_i: \La\times (\P^1\times \P^1)\to \La\times \P^1, i=1,2$ be the morphism defined by $(t,x_1,x_2)\mapsto (t, x_i).$
Let $\pi_1:=\La\times (\P^1\times \P^1)\to \La$ be the first projection. Let $T:=P_1^*T_f+P_2^*T_f.$ 
Set $g:=f\times_{\La}f.$ 
Since $\mu_{f,a}$ does not have atomic point, we may assume that $V$ is of dimension $2$ and is flat over $\La.$
Let $B$ be a smooth projective curve containing $\La$ as an open subset.
Then $W:=\overline{V}$ is a Cartier divisor of $B\times (\P^1\times \P^1).$

Let $g_W$ be a Green function on for $W$ i.e. $g_W$ is a continuous function on $B\times (\P^1\times \P^1)(\C)$
such that 
for every point $y\in W(\C)$, there is an open neighborhood $U$ of $y$ such that 
$g_{W}=-\log |h|+O(1)$ where $W$ is defined by $h=0$ in $U.$ We may assume that $g_W\geq 0.$
The following lemma is the key to the proof.
\begin{lem}\label{lemintegarith}There is $C>0$ such that for every $n\geq 0,$
$$\int g_W \,T\wedge [\Gamma_n]\leq Cd^n.$$
\end{lem}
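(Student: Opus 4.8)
The plan is to prove Lemma \ref{lemintegarith} via arithmetic intersection theory on quasi-projective varieties, following Yuan--Zhang \cite{yuan2021}. The key observation is that the left-hand side $\int g_W\, T\wedge[\Gamma_n]$ has an arithmetic meaning: after setting everything up over a number field $K$, it is (up to bounded error) the height of the cycle $\Gamma_n$ with respect to a suitable adelic line bundle whose associated divisor is $W$ and whose archimedean Green function is (comparable to) $g_W$. First I would compactify: let $B$ be a smooth projective curve containing $\La$, so that $W = \overline{V}$ is a Cartier divisor on $\overline{\sX} := B\times(\P^1\times\P^1)$. Equip $\overline{\sX}$ with the adelic line bundle $\overline{L} := P_1^*\overline{L}_f + P_2^*\overline{L}_f$ where $\overline{L}_f$ is the canonical adelic line bundle attached to $f$ (so that the archimedean curvature of $P_i^*\overline{L}_f$ is $P_i^*T_f$, matching $T$), together with the adelic metrized line bundle $\overline{\sO(W)}$ whose archimedean Green function is $g_W$ and whose non-archimedean components are chosen via a fixed model. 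The quantity $\int g_W\, T\wedge[\Gamma_n]$ is then the archimedean local contribution to the arithmetic intersection number $\overline{\sO(W)}\cdot \overline{L}\cdots\overline{L}\cdot [\overline{\Gamma_n}]$ (with the right number of copies of $\overline{L}$, here one, since $\Gamma_n$ is a curve in a threefold over $\La$ and $T$ is a $(1,1)$-current), up to non-archimedean terms that I will need to control uniformly in $n$.

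The core of the argument is then a height computation. The cycle $\Gamma_n$ is the image of $p_n = (f^n(a), f^n(b)): \La \to \La\times(\P^1\times\P^1)$, so its degree with respect to $\omega_i$ grows like $d^n$ (since $f^n$ has degree $d^n$ and $a,b$ are active marked points with bifurcation measures of finite mass — see the remark after the theorem and \cite[Proposition 13]{Gauthier2019}). The canonical height $\widehat{h}_{\overline{L}}(\Gamma_n)$ is bounded: because $\overline{L}_f$ is $f$-canonical, $f^*\overline{L}_f = d\,\overline{L}_f$ (as adelic line bundles, up to the canonical trivialization), so the height of $f^n(a)$ against $\overline{L}_f$ equals $d^n$ times the height of $a$ against $\overline{L}_f$, which is a fixed bounded function on $\La(\overline{\Q})$ — more precisely, the canonical height $\widehat{h}_{f_t}$ of $a(t)$ and $b(t)$ is globally bounded on $\La(\overline{\Q})$ because it extends to a continuous function on the analytification and $\La$ has finite-height behavior at the boundary (this is exactly the finiteness of mass of $\mu_{f,a}$). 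Decomposing $\widehat{h}_{\overline{\sO(W)}}(\Gamma_n) = \widehat{h}_{\overline{L}}(\Gamma_n) + (\text{comparison between } \overline{\sO(W)} \text{ and } \overline{L})$, and using that both $\widehat{h}_{\overline{L}}(\Gamma_n)$ is $O(d^n)$ (it is a sum over places of local intersections, each bounded by a constant times $\deg_{\omega}\Gamma_n = O(d^n)$) and that $\Gamma_n \not\subseteq W$ guarantees the intersection $\Gamma_n\cdot W$ is proper so the height is well-defined, one gets $\int g_W\, T\wedge[\Gamma_n] \le Cd^n$ after absorbing the non-archimedean local heights — each of which is again bounded by $C'\cdot\deg\Gamma_n = O(d^n)$ — into the constant.

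The main obstacle I anticipate is twofold. First, making the arithmetic intersection theory work on the \emph{quasi-projective} variety $\La\times(\P^1\times\P^1)$ rather than on a projective model: the cycle $\Gamma_n$ accumulates at the boundary $B\setminus\La$, and one must check that the boundary contributions are either zero (if $\overline{L}_f$ is chosen with the correct "nef" extension à la Yuan--Zhang, so that the integrable adelic metric has vanishing boundary defect) or at least uniformly $O(d^n)$. This is precisely where the Yuan--Zhang theory of adelic line bundles on quasi-projective varieties is needed, and where I would invoke their intersection pairing and its compatibility with limits. Second, one must ensure the constant $C$ is genuinely uniform in $n$: the danger is that as $n\to\infty$ the curves $\Gamma_n$ could become "more singular" or pass through a bad locus of $W$ in a way that makes local intersection numbers blow up faster than $d^n$. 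This is prevented by the hypothesis $\Gamma_n\not\subseteq V$ for all $n$ (ensuring properness of every intersection) combined with the fact that all local height functions attached to $\overline{\sO(W)}$ are bounded continuous on the relevant compact analytic spaces, so each local term is $\le (\text{const})\cdot \deg_{\omega_1}\Gamma_n + (\text{const})\cdot\deg_{\omega_2}\Gamma_n$, and both degrees are $O(d^n)$ by the canonicity of $T_f$. I would structure the write-up by first recalling the needed input from \cite{yuan2021} (the adelic line bundles $\overline{L}_f$, $P_i^*\overline{L}_f$, their intersection theory, and the height inequality), then proving the degree growth $\deg\Gamma_n = O(d^n)$, then assembling the local decomposition of the height and bounding each term.
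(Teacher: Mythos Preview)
Your framework is correct and matches the paper's: set things up over a number field, use the Yuan--Zhang adelic line bundle $\overline{L}_g$ on $X=(\P^1\times\P^1)\times S$ with $c_1(\overline{L}_g)_v=T$, bound the archimedean integral by an arithmetic intersection number $\overline{A}\cdot\overline{L}_g\cdot\Gamma_n$ (the paper's Proposition~\ref{prointegboundbyint}), and then show this grows like $d^n$. But your argument for the $O(d^n)$ growth has a genuine gap.

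You never use the hypothesis of Theorem~\ref{thmavergdistance} that there exist infinitely many parameters $t_i$ with $\widehat{h}_{f_{t_i}}(a(t_i)),\widehat{h}_{f_{t_i}}(b(t_i))\to 0$. This hypothesis is not decorative: it is exactly what, via Yuan--Zhang's fundamental inequality \cite[Theorem 5.3.2]{yuan2021}, forces the self-intersection $\overline{L}_g^2\cdot\Gamma_0=0$. Without this vanishing the projection formula gives $\overline{L}_g^2\cdot\Gamma_n=d^{2n}\,\overline{L}_g^2\cdot\Gamma_0$, and the intersection $\overline{A}\cdot\overline{L}_g\cdot\Gamma_n$ then grows like $d^{2n}$, not $d^n$. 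The paper's Proposition~\ref{prointergrow} writes $\overline{A}\equiv m_1\overline{L}_g+\pi^*(\text{something on }S')$ up to controlled error, so that $\overline{A}\cdot\overline{L}_g\cdot\Gamma_n = m_1(\overline{L}_g^2\cdot\Gamma_n)+\pi^*\overline{R}\cdot\overline{L}_g\cdot\Gamma_n$; the second term is $d^n(\pi^*\overline{R}\cdot\overline{L}_g\cdot\Gamma_0)$ by $g$-equivariance, but killing the first term requires precisely $\overline{L}_g^2\cdot\Gamma_0=0$.

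Your substitute arguments do not work. The claim that ``$\widehat{h}_{f_t}(a(t))$ is globally bounded on $\La(\overline{\Q})$ because it extends to a continuous function'' is false: $t\mapsto\widehat{h}_{f_t}(a(t))$ is itself a height function on the curve $\La$ (with respect to an adelic line bundle of positive degree when $a$ is active) and is unbounded. The finiteness of the mass of $\mu_{f,a}$ is a geometric statement about $\widetilde{L}_g\cdot\Gamma_0$, not an arithmetic bound on $\overline{L}_g^2\cdot\Gamma_0$. Likewise, ``each local term is bounded by a constant times $\deg_\omega\Gamma_n$'' is not true for archimedean local heights in general; arithmetic intersection numbers are not controlled by geometric degrees alone. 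The missing ingredient is the small-points hypothesis feeding into the fundamental inequality.
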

Since the proof of Lemma \ref{lemintegarith} is based on the arithmetic intersection theory,
we postpone it to the end of this section. We indeed prove a more general result which implies Lemma \ref{lemintegarith} as a direct consequence.
For every $t\in \La(\C)$, $g_{W_t}:=g_W|_{\pi_1^{-1}(t)}$ is a Green function of $W_t=V_t.$
Then we have $$Cd^n\geq \int g_W \,P^*_1T\wedge \Gamma_n=d^n\int g_{W_t}(p_n(t)))\mu_{f,a}.$$
Hence $\int g_{W_t}(p_n(t)))\mu_{f,a}\leq C.$
Set $\phi_n(t):=\max\{-\log d(p_n(t),V), 0\}.$
There is $C_1\geq 1$ such that  
$$\max\{-\log d(p_n(t),V), 0\}\leq C_1(g_{W_t}(p_n(t)))+1).$$
So there is $C_2>0$ such that for every $n\geq 0,$
$\int \phi_n(t)\mu_{f,a}\leq C_2.$
Set $s_n:=\frac{1}{n}\sum_{i=0}^{n-1}\phi_n$, we have $\int s_n\mu_{f,a}\leq C_2.$
According to  Fatou's lemma,
$$\int (\liminf_{n\to \infty}s_n) \mu_{f,a}\leq \liminf_{n\to \infty}\int s_n \mu_{f,a}\leq C_2.$$
So for $\mu_{f,a}-a.e.$ $t\in \La(\C)$, we have $\liminf\limits_{n\to \infty}s_n(t)<+\infty$ which concludes the proof.
\endproof

\subsubsection{Chern-Levine-Nirenberg inequality}
Recall the Chern-Levine-Nirenberg inequality from pluripotential theory, see \cite[Theorem A.31]{dinh2010dynamics}, \cite{Chern1969} and \cite[(3.3)]{Demailly}.
\begin{thm}\label{CLN}
	Let $(X,\omega)$ be a Hermitian manifold. Let $S$ be a positive closed current of bi-dimension $(1,1)$ on $X$. Let $u$ be  a  locally bounded p.s.h. function on $X$ and $K$ a compact subset of $X$. Then there is a constant $c=c(X,K)>0$ such that if $v$ is p.s.h. on $X$, then
	\begin{equation*}
		\int_K |v|\; dd^c u \wedge S\leq c \left(\int_X |v| \;S\wedge \omega \right) \|u\|_{L^\infty(X)}.
	\end{equation*}
\end{thm}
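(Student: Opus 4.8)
The final statement to prove is the Chern--Levine--Nirenberg inequality (Theorem \ref{CLN}), which is a standard result from pluripotential theory. Since the paper explicitly cites \cite[Theorem A.31]{dinh2010dynamics}, \cite{Chern1969} and \cite[(3.3)]{Demailly}, the ``proof'' here is really a matter of recalling the classical argument rather than inventing something new; I will sketch the standard local estimate and the patching that turns it into the stated global bound.

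\textbf{Plan of proof.} The plan is to reduce to a purely local estimate on a polydisc and then patch. First I would fix a finite cover of the compact set $K$ by coordinate balls $B_j \Subset B_j' \Subset X$, choose a partition of unity $\chi_j \geq 0$ subordinate to $\{B_j'\}$ with $\sum_j \chi_j \equiv 1$ on a neighborhood of $K$, and reduce the claim to proving, for each $j$, a bound of the form $\int_{B_j} |v|\, dd^c u \wedge S \leq c_j \big(\int_X |v|\, S\wedge \omega\big)\|u\|_{L^\infty(X)}$; summing over the finitely many $j$ gives the theorem with $c = \sum_j c_j$, a constant depending only on $X$ and $K$ (through the choice of cover and cutoffs). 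Second, on a single ball I would use integration by parts: writing $dd^c u \wedge S$ against $|v|$ is delicate because $|v|$ is not smooth, so the honest route is to first assume $v$ is smooth (or bounded p.s.h.\ and approximate decreasingly by smooth p.s.h.\ functions, using monotone convergence on the left and dominated convergence on the right), and similarly regularize $u$ by standard p.s.h.\ regularization so that $u_\varepsilon \downarrow u$ with $\|u_\varepsilon\|_{L^\infty} \leq \|u\|_{L^\infty(X)} + o(1)$. Third, with a smooth cutoff $\theta$ equal to $1$ on $B_j$ and supported in $B_j'$, I would estimate
\[
\int_{B_j} |v|\, dd^c u \wedge S \;\leq\; \int_X \theta\, |v|\, dd^c u \wedge S \;=\; \int_X u\, dd^c(\theta |v|)\wedge S,
\]
where the equality uses that $S$ is closed and $dd^c$ integration by parts is valid (after regularization) on compactly supported test forms. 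Then $|dd^c(\theta|v|)| \lesssim |v|\,\omega + (\text{terms with }d\theta\wedge d^c|v|)$; the cross terms are controlled by Cauchy--Schwarz together with the sub-mean-value / gradient bound for p.s.h.\ functions on the slightly larger ball, all of which are ultimately dominated by $\int_{B_j'} |v|\, S\wedge\omega$ times a geometric constant. Bounding $|u| \leq \|u\|_{L^\infty(X)}$ pointwise then yields the local inequality.

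\textbf{Main obstacle.} The technical heart is handling the lack of smoothness of $v$ and $u$ simultaneously while keeping the integration by parts legitimate: one must be careful that the approximating sequences converge in the right (weak) topologies against the positive current $S$, that no mass escapes at the boundary of the balls (this is why one passes to the slightly larger $B_j'$ and uses compactly supported cutoffs), and that the $L^\infty$ norm of the regularization of $u$ does not blow up. This is precisely the content worked out in \cite[Theorem A.31]{dinh2010dynamics}; since that reference is available and the argument is classical, I would simply cite it for the details and present only the reduction-to-local and the one-ball estimate above as the skeleton. No new idea beyond the standard CLN machinery is needed here, so this statement functions in the paper purely as a recalled tool to be invoked in the proof of Lemma \ref{lemintegarith} and the estimates of Section \ref{3}.
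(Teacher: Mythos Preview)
The paper does not supply its own proof of this theorem: it is stated purely as a recalled tool with references to \cite[Theorem A.31]{dinh2010dynamics}, \cite{Chern1969}, and \cite[(3.3)]{Demailly}, exactly as you surmised. Your sketch is the standard CLN argument (cover by balls, cutoff plus integration by parts against the closed current $S$, regularize $u$ and $v$); the only minor imprecision is that one usually reduces to $v\le 0$ locally (so that $|v|=-v$ is itself p.s.h.) before writing $dd^c(\theta|v|)$, but this is a routine normalization and does not affect the correctness of your outline.
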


Recall that a function $v$ is called quasi-p.s.h. if there is $C>0$ such that $C\omega+dd^c v\geq 0.$
Then we have the following two variants of the Chern-Levine-Nirenberg inequality.
\begin{cor}\label{corglobelclnv}
Let $(X,\omega)$ be a K\"ahler manifold. Let $S$ be a positive closed current of bi-dimension $(1,1)$ on $X$. 
Let $v$ be a quasi-p.s.h. function on $X$. Then there is a constant $c''=c''(X,K,v)>0$ such that if $u$ be  a  locally bounded p.s.h. function on $X$, then
	\begin{equation*}
		\int_K |v|\; dd^cu \wedge S\leq c' \|u\|_{L^\infty(X)}.
	\end{equation*}
\end{cor}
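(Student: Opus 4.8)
The plan is to deduce the inequality from Theorem \ref{CLN} by localising and by using a local Kähler potential to replace the quasi-p.s.h.\ function $v$ by a genuine p.s.h.\ function, so that no global potential of $\omega$ is required.

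Fix $C>0$ with $C\omega+dd^cv\ge 0$ on $X$. Since $K$ is compact, cover it by finitely many open sets $W_1,\dots,W_N$ with $\overline{W_j}\subset W_j'\Subset X$, where each $W_j'$ is chosen small enough that the Kähler form admits a smooth bounded local potential on it: $\omega|_{W_j'}=dd^c\rho_j$. Then $v_j:=v+C\rho_j$ is p.s.h.\ on $W_j'$.

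Now apply Theorem \ref{CLN} on the Hermitian manifold $W_j'$, with compact subset $\overline{W_j}$, positive closed current $S|_{W_j'}$ of bidimension $(1,1)$, locally bounded p.s.h.\ function $u|_{W_j'}$, and p.s.h.\ function $v_j$. This yields a constant $c_j=c(W_j',\overline{W_j})>0$ with
\[
\int_{\overline{W_j}}|v_j|\,dd^cu\wedge S\ \le\ c_j\Big(\int_{W_j'}|v_j|\,S\wedge\omega\Big)\,\|u\|_{L^\infty(W_j')}\ \le\ c_j\Big(\int_{W_j'}|v_j|\,S\wedge\omega\Big)\,\|u\|_{L^\infty(X)}.
\]
Since $|v|\le|v_j|+C\|\rho_j\|_{L^\infty(W_j')}$ on $W_j'$, since $dd^cu\wedge S$ is a positive measure, and since Theorem \ref{CLN} applied with the constant function $1$ in place of $v$ (the constant $c_j$ being the same, as it does not depend on $v$) gives $\int_{\overline{W_j}}dd^cu\wedge S\le c_j\big(\int_{W_j'}S\wedge\omega\big)\|u\|_{L^\infty(X)}$, we obtain
\[
\int_{\overline{W_j}}|v|\,dd^cu\wedge S\ \le\ c_j\Big(\int_{W_j'}|v_j|\,S\wedge\omega + C\|\rho_j\|_{L^\infty(W_j')}\int_{W_j'}S\wedge\omega\Big)\|u\|_{L^\infty(X)}.
\]
Summing over $j$ and using that the $W_j$ cover $K$ gives $\int_K|v|\,dd^cu\wedge S\le c''\|u\|_{L^\infty(X)}$ with $c'':=\sum_{j=1}^N c_j\big(\int_{W_j'}|v_j|\,S\wedge\omega + C\|\rho_j\|_{L^\infty(W_j')}\int_{W_j'}S\wedge\omega\big)$; since the cover $\{W_j\}$, the potentials $\rho_j$ and the constants $c_j$ depend only on $X$ and $K$, while $v$ (hence $C$) and $S$ are fixed, $c''$ depends only on $X$, $K$, $v$ (and the given current $S$), as claimed.

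The only substantive point beyond Theorem \ref{CLN} itself is the local-potential trick that converts quasi-p.s.h.\ into p.s.h., together with the observation that Theorem \ref{CLN} may be invoked on the small pieces $W_j'$ rather than on all of $X$. Finiteness of $\int_{W_j'}|v_j|\,S\wedge\omega$ is exactly the hypothesis already underlying Theorem \ref{CLN}, and it causes no difficulty in the applications of this paper, where $S$ is always a wedge of currents with continuous local potentials, so that $S\wedge\omega$ puts no mass on the polar set of $v$. Everything else is a routine finite-cover-and-sum, so the argument above is essentially complete.
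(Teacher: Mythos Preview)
Your proof is correct and follows essentially the same approach as the paper: cover $K$ by finitely many charts with local K\"ahler potentials $\rho_j$, replace $v$ by the genuinely p.s.h.\ function $v+C\rho_j$ on each chart, apply Theorem~\ref{CLN} locally, and sum. The paper's version is slightly terser (it writes $v_x':=v+Cw_x+B_x$ and $v_x'':=v+Cw_x-B_x$ and bounds $|v|$ by $|v_x'|+|v_x''|$ rather than separating out the constant term as you do), but the content is the same; your explicit handling of the constant term via a second application of Theorem~\ref{CLN} with $v\equiv 1$ is a clean touch.
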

\proof
Assume that $C\omega+dd^c v\geq 0.$
For every $x\in K$, there is a compact neighborhood $Z_x$ of $x$ and an open neighborhood $U_x$ of $Z_x$ such that   $\omega|_{U_x}=dd^cw_x$ where $w_x$ is a bounded continuous p.s.h. functions. 
There is a constant $B_x\geq 0$, such that $-B_x\leq w_x\leq B_x$ on $U_x.$
There is a finite set $F\subseteq K$ such that $K\subseteq \cup_{x\in F}Z_x.$
Set $v_x':=v+Cw_x+B_x$ and $v_x'':=v+Cw_x-B_x$. Both $v_x'$ and $v_x$ are locally bounded p.s.h. on $U_x$ and 
$v_x'\leq v\leq v_x''.$ By Theorem \ref{CLN}, we have 
\begin{align*}
\int_K |v|\; dd^cu \wedge S\leq& \sum_{x\in F}\int_{K\cap Z_x} |v|\; dd^cu \wedge S\\
\leq & \sum_{x\in F}\int_{K\cap Z_x} (|v_x'|+ |v_x''|)\; dd^cu \wedge S\\
\leq & \left(\sum_{x\in F}c_{K\cap Z_x, U_x}\int_{U_x} (|v_x'|+ |v_x''|)\; \omega \wedge S\right)\|u\|_{L^\infty(X)},
\end{align*}
which concludes the proof.
\endproof

Since every closed positive $(1,1)$-current having  continuous potential locally takes form $dd^c u$ for some  continuous p.s.h. function $u$, 
the proof of the following result is similar to the proof of Corollary \ref{corglobelclnv}.
\begin{cor}\label{corglobelcln}
Let $(X,\omega)$ be a Hermitian manifold. Let $S$ be a positive closed current of bi-dimension $(1,1)$ on $X$. 
Let $T$ be a closed positive $(1,1)$-current having continuous potential. Then there is a constant $c'=c'(X,K,T)>0$ such that if $v$ is a p.s.h. on $X$, then
	\begin{equation*}
		\int_K |v|\; T \wedge S\leq c' \left(\int_X |v| \;S\wedge \omega \right).
	\end{equation*}
\end{cor}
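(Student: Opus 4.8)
The plan is to localize $T$ through its continuous potentials and reduce everything to the Chern--Levine--Nirenberg inequality of Theorem \ref{CLN}, following verbatim the scheme of the proof of Corollary \ref{corglobelclnv}. In that proof the locally bounded p.s.h.\ function to which Theorem \ref{CLN} is applied was built by hand; here the local potential of the fixed current $T$ plays that role directly, and since $v$ is genuinely p.s.h.\ (not merely quasi-p.s.h.) no regularization or splitting $v=v_x'\pm(\text{bounded})$ is needed, so the bookkeeping is even lighter.

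\emph{Step 1 (local potentials and a finite cover).} For each $x\in K$, by the hypothesis that $T$ has continuous potential there is an open neighborhood of $x$ on which $T=dd^cu$ for some continuous p.s.h.\ $u$. Shrinking, pick a compact neighborhood $Z_x$ of $x$ and a relatively compact open neighborhood $U_x\supseteq Z_x$ on which $T|_{U_x}=dd^cu_x$ with $u_x$ continuous p.s.h.; then $u_x$ is bounded, $\|u_x\|_{L^\infty(U_x)}<+\infty$. By compactness of $K$, choose a finite $F\subseteq K$ with $K\subseteq\bigcup_{x\in F}Z_x$.

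\emph{Step 2 (apply Theorem \ref{CLN} on each chart).} For $x\in F$, apply Theorem \ref{CLN} on the Hermitian manifold $(U_x,\omega|_{U_x})$, with the positive closed bi-dimension $(1,1)$ current $S|_{U_x}$, the locally bounded p.s.h.\ function $u_x$, the compact set $K\cap Z_x\subseteq U_x$, and the p.s.h.\ function $v|_{U_x}$: there is $c_x:=c(U_x,K\cap Z_x)>0$ with
\[
\int_{K\cap Z_x}|v|\;dd^cu_x\wedge S\;\leq\;c_x\Bigl(\int_{U_x}|v|\;S\wedge\omega\Bigr)\|u_x\|_{L^\infty(U_x)}.
\]
\emph{Step 3 (sum up).} Since $|v|\,T\wedge S$ is a positive measure, $\mathbf 1_K\leq\sum_{x\in F}\mathbf 1_{K\cap Z_x}$, and $T=dd^cu_x$ on $U_x$, we obtain
\[
\int_K|v|\;T\wedge S\;\leq\;\sum_{x\in F}\int_{K\cap Z_x}|v|\;dd^cu_x\wedge S\;\leq\;\Bigl(\sum_{x\in F}c_x\|u_x\|_{L^\infty(U_x)}\Bigr)\int_X|v|\;S\wedge\omega,
\]
using $\int_{U_x}|v|\,S\wedge\omega\leq\int_X|v|\,S\wedge\omega$ as the integrand is nonnegative (if this is $+\infty$ the claim is trivial). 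Setting $c':=\sum_{x\in F}c_x\|u_x\|_{L^\infty(U_x)}$, which depends on $X$, $K$ and $T$ through the chosen cover and potentials but not on $v$, finishes the proof.

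There is no real obstacle: the single point that merits a sentence is that a closed positive $(1,1)$-current with continuous potential is locally $dd^cu$ with $u$ continuous p.s.h., which is essentially the definition of "continuous potential" (together with local $\partial\bar\partial$-exactness of closed $(1,1)$-currents). Everything else is the same localization-and-summation argument already used for Corollary \ref{corglobelclnv}.
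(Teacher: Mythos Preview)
Your proof is correct and is exactly the approach the paper indicates: the paper does not spell out the argument but simply remarks that since $T$ locally equals $dd^c u$ for a continuous p.s.h.\ $u$, the proof is similar to that of Corollary \ref{corglobelclnv}, and your Steps 1--3 carry this out precisely, with the simplification you noted (no need to split $v$ since it is genuinely p.s.h.).
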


\subsubsection{Integrations via arithmetic intersection theory}
This section is based on the theory of adelic line bundles on quasi-projective varieties developed by Yuan and Zhang in \cite{yuan2021}.
We follow their notations and terminologies. 
 
 \medskip
 
 Let $S$ be a smooth curve over a number field $K.$ 
 Let $S'$ be a smooth projective curve containing $S$ as an open subset.
Let $\pi':X'\to S'$ be a projective model of $\pi: X\to S.$ Set $D_S:=S'\setminus S$ and $\overline{D_S}=(D_S, g_{D_s})$ be an ample arithmetic divisor associated with $D_S,$ where $g_{D_s}$ is a continuous Green function.
Let $D:=\pi'^{-1}(D_S)$ and $\overline{D}:=\pi^*\overline{D_S}$. Write $\overline{D}=(D, g_{D})$ where $g_D=\pi^*g_{D_s}.$
 
 \medskip
 
 Let $(X,g,L)$ be a \emph{polarized dynamical system} over $S$ i.e.
 \begin{points}
 \item[(1)] $\pi:X\to S$ is a projective and flat scheme over $S;$
 \item[(2)] $g: X\to X$ is an endomorphism over $S$;
 \item[(3)] $L\in \Pic(X)$ is a line bundle, relatively ample over $S$, such that $g^*L=qL$ for some integer $q>1.$
 \end{points}

 \medskip
 
In \cite[Theorem 6.1.1]{yuan2021}, by Tate's limiting argument, Yuan and Zhang constructed an adelic line bundle $\overline{L}_g\in \widehat{\Pic}(X)$ extending $L$ such that $g^*\overline{L}_g=q\overline{L}_g.$
Moreover,  it is  strongly nef in the following sense:
There is a sequence of nef adelic line bundles $\overline{L_n},n\geq 1$ which are defined over some projective models $\pi_n: X_n'\to S'$ of $\pi: X\to S,$ and positive numbers $\ep_n, n\geq 0$ tending to $0$,
such that $\overline{L_g}-\overline{L_n}$ is represented by an arithmetic divisor $\overline{D_n}=(D_n, g_{D_n})\in \widehat{\Pic}(X)_{int}$ with $$-\ep_n\overline{D}\leq \overline{D_n}\leq \ep_n \overline{D}.$$
We may assume that for every $n\geq 0$, $X_n'$ dominates $X'$.
For every archimedean place  $v\in \sM_K$, $c_1(\overline{L}_g)_v$  is a positive $(1,1)$-current on $X_v^{\an}$ having following properties 
 \begin{points}
 \item $c_1(\overline{L}_g)_v$ has continuous potentials;
 \item for every $t\in \La$, $c_1(\overline{L}_g)_v|_{X_t}$ is a Green current for $g_t$;
 \item $\int c_1(\overline{L}_g)_v^{\dim X}=0.$
 \end{points}
 
 \medskip
 
Let $p:S\to X$ be a section of $\pi$ and let $\Gamma_p$ be its image. 
Let $\overline{A}$ be an ample adelic line bundle defined on $X',$ let $A\in \Pic(X')$ be the line bundle associated with $\overline{A}$.

\begin{pro}\label{prointegboundbyint}Let $s$ be a small section of $A\in \Pic(X')$ i.e. for every $v\in M_K$ and $x\in X'^{\an}_v$, $|s(x)|_v\leq 1.$
Assume that $\Gamma_p\not\subseteq \div(s).$ Then for every archimedean place  $v\in \sM_K$, we have 
$$\int g_{\div(s),v} \, c_1(\overline{L}_g)_v\wedge [\Gamma_p]\leq \overline{A}\cdot \overline{L}_g\cdot \Gamma_p,$$
where $g_{\div(s),v}(x)=-\log |s(x)|_v$ is the Green function.
\end{pro}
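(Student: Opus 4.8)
The plan is to reinterpret the integral on the left-hand side as an archimedean local height contribution and bound it by the global arithmetic intersection number using the non-negativity of all the other local terms in the arithmetic product formula. First I would equip the divisor $\div(s)$ on $X'$ with the metrized structure coming from the small section $s$ of $\overline{A}$: the adelic metric on $\overline{A}$ gives, at each place $v\in\sM_K$, a Green function $g_{\div(s),v}(x)=-\log|s(x)|_v$, and smallness of $s$ (that $|s(x)|_v\le 1$ for every $v$ and every $x$) means precisely that $g_{\div(s),v}\ge 0$ at every place. Thus $\widehat{\div}(s):=(\div(s),(g_{\div(s),v})_v)$ is an effective arithmetic divisor representing $\overline{A}$ (up to the usual identification in $\widehat{\Pic}$), all of whose local Green functions are non-negative. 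Because $\Gamma_p\not\subseteq\div(s)$, the arithmetic intersection $\widehat{\div}(s)\cdot\overline{L}_g\cdot\Gamma_p$ is defined and equals $\overline{A}\cdot\overline{L}_g\cdot\Gamma_p$.

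Next I would expand this arithmetic triple intersection number à la Yuan--Zhang: decompose $\Gamma_p$ (the image of the section $p$, an arithmetic curve) and pair successively. The key identity is that $\overline{A}\cdot\overline{L}_g\cdot\Gamma_p$ is a sum of local terms over $v\in\sM_K$, where the term at $v$ is of the shape $\int_{\Gamma_{p,v}} g_{\div(s),v}\,c_1(\overline{L}_g)_v$ plus (at non-archimedean places, or more precisely in the induction) a further arithmetic pairing of $\overline{L}_g$ with the arithmetic cycle $p_*\,\widehat{\div}(s)|_{\Gamma_p}$, which by the construction in \cite[Theorem 6.1.1]{yuan2021} — $\overline{L}_g$ strongly nef, approximated by nef adelic line bundles $\overline{L_n}$ — is a limit of non-negative quantities. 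Here is where I must be slightly careful: $\overline{L}_g$ is only strongly nef, not itself represented on a fixed projective model, so the cleanest route is to run the argument with each $\overline{L_n}$ in place of $\overline{L}_g$, use that $\overline{A}\cdot\overline{L_n}\cdot\Gamma_p\ge \int g_{\div(s),v}\,c_1(\overline{L_n})_v\wedge[\Gamma_{p}]$ for each archimedean $v$ (all other contributions being intersections of nef/effective arithmetic cycles, hence $\ge 0$), and then pass to the limit $n\to\infty$, using $\overline{L_n}\to\overline{L}_g$, the uniform convergence of potentials, and $c_1(\overline{L_n})_v\to c_1(\overline{L}_g)_v$ as currents with continuous potentials, together with $\overline{A}\cdot\overline{L_n}\cdot\Gamma_p\to\overline{A}\cdot\overline{L}_g\cdot\Gamma_p$.

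The main obstacle, I expect, is the bookkeeping of the non-archimedean (and the remaining archimedean) local terms: one must verify that in the expansion of $\overline{A}\cdot\overline{L_n}\cdot\Gamma_p$ the term singled out at the chosen archimedean place $v$ is $\int g_{\div(s),v}\,c_1(\overline{L_n})_v\wedge[\Gamma_p]\ge 0$ and that \emph{every} other term is non-negative. The non-negativity at finite places follows because $s$ is small (so the $v$-adic Green function of $\div(s)$ is $\ge 0$ on $\Gamma_{p,v}$) and $\overline{L_n}$ is nef; the non-negativity of the residual ``top'' term $\overline{L_n}\cdot(p_*\widehat{\div}(s)\!\mid_{\Gamma_p})$ follows from nefness of $\overline{L_n}$ paired with an effective $0$-cycle on the arithmetic curve $\Gamma_p$. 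One technical point worth flagging in the write-up: one should check $\Gamma_p\not\subseteq\div(s)$ propagates to $\Gamma_p\not\subseteq\div(s_n)$ for the approximants, or simply keep $s$ fixed and only vary $\overline{L_n}$, which avoids the issue entirely. Modulo this careful tracking of signs, the inequality drops out, and I would then record it in the stated form $\int g_{\div(s),v}\,c_1(\overline{L}_g)_v\wedge[\Gamma_p]\le\overline{A}\cdot\overline{L}_g\cdot\Gamma_p$, with Lemma \ref{lemintegarith} following by taking $\overline{A}$, $s$ appropriately so that $\div(s)\supseteq W$ and $g_{\div(s),v}\ge g_W$ at the relevant archimedean place, and absorbing the growth in $n$ into the $d^n$ coming from $f^n$ acting on the graph.
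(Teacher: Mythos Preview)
Your strategy is the same as the paper's: approximate $\overline{L}_g$ by the nef models $\overline{L_n}$, prove the inequality for each $\overline{L_n}$ using the arithmetic induction formula (the paper cites \cite[Theorem 1.4]{Chambert-Loir2009}) together with positivity of all remaining local terms coming from smallness of $s$ and nefness of $\overline{L_n}$, and then pass to the limit using $\overline{A}\cdot\overline{L_n}\cdot\Gamma_{p,n}\to \overline{A}\cdot\overline{L}_g\cdot\Gamma_p$.

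The one point where you are vaguer than the paper is the limiting step $\int g_{\div(s),v}\,c_1(\overline{L_n})_v\wedge[\Gamma_p]\to \int g_{\div(s),v}\,c_1(\overline{L}_g)_v\wedge[\Gamma_p]$. Weak convergence of $c_1(\overline{L_n})_v$ to $c_1(\overline{L}_g)_v$ and ``uniform convergence of potentials'' do not suffice on their own, because $g_{\div(s),v}=-\log|s|_v$ is unbounded (it is only quasi-p.s.h.). The paper handles this by restricting to an arbitrary compact $Y\subset X_v^{\an}$, writing $c_1(\overline{L}_g)_v=c_1(\overline{L_n})_v+dd^c g_{D_n,v}$, and controlling the error $\int_Y g_{\div(s),v}\,dd^c g_{D_n,v}\wedge[\Gamma_p]$ by the Chern--Levine--Nirenberg inequality (Corollary~\ref{corglobelclnv}): this gives a bound $c(Y)\,\|g_{D_n,v}\|_{L^\infty}\le c(Y)\,\ep_n\|g_{D,v}\|_{L^\infty}\to 0$. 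One then lets $n\to\infty$ first and $Y\uparrow X_v^{\an}$ second. You should insert exactly this CLN step in place of the phrase ``uniform convergence of potentials'' when you write it up; otherwise your argument is correct and matches the paper's proof.
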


\medskip

\proof[Proof of Proposition \ref{prointegboundbyint}]
Let $Y$ be any compact subset of $X_v^{\an}$. Pick $U_Y$ an open neighborhood of $Y$ such that $U_Y\subset\subset X_v^{\an}.$ 

Since $g_{\div(s),v}$ is a quasi-p.s.h. function,  by Corollary \ref{corglobelclnv}, there is a constant $c=c(Y,U_Y, g_{\div(s),v})$ such that for every $n\geq 0$, we have 
$$\int_Y g_{\div(s),v}dd^cg_{D_n,v}\wedge [\Gamma_p]\leq c\|g_{D_n,v}\|_{L^{\infty}(U_Y)}\leq \ep_nc\|g_{D,v}\|_{L^{\infty}(U_Y)}.$$
Then for every $n\geq 0$, we have 
\begin{equation}\label{equintapp}
\begin{split}&\int_Y g_{\div(s),v}c_1(\overline{L}_g)_v\wedge [\Gamma_p]\\=&\int_Y g_{\div(s),v} c_1(\overline{L_n})_v\wedge [\Gamma_p]
+\int_Y g_{\div(s),v}dd^cg_{D_n,v}\wedge [\Gamma_p]\\
\leq &\int_Y g_{\div(s),v} c_1(\overline{L_n})_v\wedge [\Gamma_p]
+\ep_nc\|g_{D,v}\|_{L^{\infty}(U_Y)}.
\end{split}
\end{equation}

Since $X_n'$ dominates $X'$, we may view $s$ as a section on $X_n.$
Let $\Gamma_{p,n}$ be the Zariski closure of $\Gamma_p$ in $X_n.$
By \cite[Theorem 1.4]{Chambert-Loir2009} (see also \cite[Page 1161]{Yuan2017}), 
we have 
$$\overline{A}\cdot \overline{L_n}\cdot \Gamma_{p,n}=\overline{L_n}\cdot (\Gamma_{p,n}\cdot \div s)+\sum_{w\in \sM_k}\int g_{\div(s),w} c_1(\overline{L_n})_w\wedge [\Gamma_p].$$
Since each term on the right hand side is positive,
we get 
\begin{equation}\label{equboundln}
\int_Y g_{\div(s),v} c_1(\overline{L_n})_v\wedge [\Gamma_p]\leq \int g_{\div(s),v} c_1(\overline{L_n})_v\wedge [\Gamma_p]\leq \overline{A}\cdot \overline{L_n}\cdot \Gamma_{p,n}.
\end{equation}
The definition of $\overline{A}\cdot \overline{L}_g\cdot \Gamma_{p}$ shows that
\begin{equation}\label{equdefiadeint}\overline{A}\cdot \overline{L}_g\cdot \Gamma_{p}=\lim_{n\to \infty}\overline{A}\cdot \overline{L_n}\cdot \Gamma_{p,n}.
\end{equation}
As $\ep_n\to 0$, we conclude the proof by (\ref{equintapp}), (\ref{equboundln}) and (\ref{equdefiadeint}).
\endproof

 Set $\Gamma_n:=\Gamma_{g^n(p)}.$
 \begin{pro}\label{prointergrow}Assume that $\overline{L_g}^2\cdot \Gamma_p=0$, then  for every  ample adelic line bundle $\overline{A}$ defined on $X'$, there is $C>0$
 such that for every $n\geq 0$,
  $$\overline{A}\cdot \overline{L_g}\cdot \Gamma_{n}\leq Cq^n,$$
  where $q$ is the integer in the definition (3) of the polarized dynamical system  $(X,g, L)$.
 \end{pro}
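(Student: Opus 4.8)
The plan is to reduce the growth estimate to a finite sum of bounded local contributions via the arithmetic projection formula, exactly as in the proof of Proposition \ref{prointegboundbyint}. First I would fix an ample adelic line bundle $\overline{A}$ on $X'$ and a small section $s$ of the underlying line bundle $A\in\Pic(X')$ whose divisor does not contain $\Gamma_p$ (and hence, after iterating, does not contain $\Gamma_n=\Gamma_{g^n(p)}$ either, which we may arrange since $\Gamma_n$ is not contained in a fixed divisor for generic choice of $s$). Applying $g^*\overline{L_g}=q\overline{L_g}$ and the projection formula gives $\overline{A}\cdot\overline{L_g}\cdot\Gamma_n = \overline{A}\cdot\overline{L_g}\cdot(g^n)_*\Gamma_p$, and the key observation is that the self-intersection hypothesis $\overline{L_g}^2\cdot\Gamma_p=0$ together with $g^*\overline{L_g}=q\overline{L_g}$ forces $\overline{L_g}^2\cdot\Gamma_n=0$ for all $n$ as well; so the only term that can grow is the mixed term involving $\overline{A}$.

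Next I would decompose $\overline{A}\cdot\overline{L_g}\cdot\Gamma_n$ using the arithmetic Bézout-type identity of Chambert-Loir--Thuillier (\cite[Theorem 1.4]{Chambert-Loir2009}, as used in the proof of Proposition \ref{prointegboundbyint}): working on a projective model $X_n'$ dominating $X'$ and letting $\Gamma_{n,N}$ denote the Zariski closure of $\Gamma_n$ in $X_N'$, one has
\begin{equation*}
\overline{A}\cdot\overline{L_N}\cdot\Gamma_{n,N}=\overline{L_N}\cdot(\Gamma_{n,N}\cdot\div s)+\sum_{w\in\sM_K}\int g_{\div(s),w}\,c_1(\overline{L_N})_w\wedge[\Gamma_n],
\end{equation*}
and then passing to the limit $N\to\infty$ as in (\ref{equdefiadeint}) gives the analogous identity for $\overline{L_g}$. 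The finite part $\overline{L_g}\cdot(\Gamma_n\cdot\div s)$ is a geometric intersection number which grows like the degree of $\Gamma_n$ on $X'$ with respect to $L$; since $g^*L=qL$, the degree of $(g^n)_*\Gamma_p$ against $L$ is $q^n$ times the degree of $\Gamma_p$, so this term is $O(q^n)$. Each archimedean local integral $\int g_{\div(s),v}\,c_1(\overline{L_g})_v\wedge[\Gamma_n]$ is bounded via Proposition \ref{prointegboundbyint} applied with the section $g^n(p)$ in place of $p$: it is at most $\overline{A}\cdot\overline{L_g}\cdot\Gamma_n$ — but this is circular, so instead I would bound it directly by Corollary \ref{corglobelclnv} on a fixed compact exhaustion, using that the Chern--Levine--Nirenberg constant depends only on $X_v'$, $K$, and the quasi-p.s.h. function $g_{\div(s),v}$, while the mass $\int|g_{\div(s),v}|\,c_1(\overline{L_g})_v\wedge[\Gamma_n]$ is controlled by the $L$-degree of $\Gamma_n$, again $O(q^n)$. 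The finitely many non-archimedean terms are each nonnegative and bounded by the same geometric degree estimate.

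The main obstacle I anticipate is handling the archimedean integrals uniformly in $n$: a priori the currents $[\Gamma_n]$ have mass blowing up like $q^n$, so one must extract from Corollary \ref{corglobelclnv} a bound of the form (constant depending only on the fixed data) $\times$ (mass of $[\Gamma_n]$ against $\omega\wedge S$), and then check that this mass is exactly comparable to the $L$-degree of $\Gamma_n$, hence $O(q^n)$. This is where the hypothesis $\overline{L_g}^2\cdot\Gamma_p=0$ is really used: it guarantees that after subtracting off the $\overline{A}$-contribution there is no additional hidden growth, so that the final bound is genuinely $Cq^n$ rather than, say, $Cn q^n$. Once all pieces are assembled, summing the finitely many place contributions yields the constant $C$, which depends on $\overline{A}$, on $\Gamma_p$ through its $L$-degree, and on the fixed models and Green functions, completing the proof.
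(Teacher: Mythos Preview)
Your approach has a genuine gap and differs fundamentally from the paper's argument. The term you call the ``finite part,'' $\overline{L_g}\cdot(\Gamma_n\cdot\div s)$, is \emph{not} a geometric intersection number: it is the arithmetic degree of the $0$-cycle $\Gamma_n\cap\div s$ with respect to $\overline{L_g}$, i.e.\ a sum of canonical heights of points, and there is no reason these heights are uniformly bounded. More seriously, you never explain concretely how the hypothesis $\overline{L_g}^2\cdot\Gamma_p=0$ enters. In your Chambert-Loir--Thuillier decomposition there is nothing to ``subtract off,'' and your proposed CLN bound on the local integrals would, if it worked, give $O(q^n)$ regardless of the hypothesis---which is impossible, since without it one has $\overline{A}\cdot\overline{L_g}\cdot\Gamma_n$ of order $q^{2n}$ (take $\overline{A}$ close to $\overline{L_g}$ and use $(g^n)^*\overline{L_g}=q^n\overline{L_g}$). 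Indeed, the local integral bound you are aiming for is exactly Lemma~\ref{lemintegarith}, which the paper deduces \emph{from} Proposition~\ref{prointergrow}, not the other way around.

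The paper's proof avoids local decompositions entirely. The idea is to compare $\overline{A}$ to a multiple of $\overline{L_g}$ plus a class pulled back from the base curve $S'$: after adjusting models one arranges that $\overline{A}\leq m_1\overline{L_g}+\pi^*\overline{R}$ for an arithmetic divisor $\overline{R}$ on $S'$. The hypothesis is then used directly via the projection formula, which gives $\overline{L_g}^2\cdot\Gamma_n=q^{2n}\,\overline{L_g}^2\cdot\Gamma_p=0$, so the $m_1\overline{L_g}$-contribution vanishes. For the remaining term one uses that $g$ is a morphism over $S$, so $\pi\circ g^n=\pi$ and hence $(g^n)^*\pi^*\overline{R}=\pi^*\overline{R}$; the projection formula then gives
\[
\pi^*\overline{R}\cdot\overline{L_g}\cdot\Gamma_n=(g^n)^*\pi^*\overline{R}\cdot(g^n)^*\overline{L_g}\cdot\Gamma_p=q^n\bigl(\pi^*\overline{R}\cdot\overline{L_g}\cdot\Gamma_p\bigr),
\]
yielding the bound with $C=\pi^*\overline{R}\cdot\overline{L_g}\cdot\Gamma_p$. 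The whole argument is a short manipulation of arithmetic intersection numbers once the comparison $\overline{A}\leq m_1\overline{L_g}+\pi^*\overline{R}$ is set up; no local integrals or CLN estimates are needed.
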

 \proof
 For every $n\geq 0$, by projection formula, we have  
 \begin{equation}\label{queintergamn}\overline{L_g}^2\cdot \Gamma_n=\overline{L_g}^2\cdot g_*^n(\Gamma_p)= g^{*n}(\overline{L_g})^2\cdot \Gamma_p=q^{2n}\overline{L_g}^2\cdot \Gamma_p=0.
 \end{equation}

 As $L$ is relatively ample over $S$, after replacing $X'$ by some projective model $X''$ which dominates $X'$ and $X'_0$, $L$ by some multiple of it and $\overline{A}$ by $\overline{A}+\overline{A'}$ for some ample adelic line bundle $\overline{A'}$ on $X''$, we may assume that $L$ extends to a line bundle on $X'$.  
 We now view $L$ and $L_0$ as line bundles on $X'.$

 There is an integer $m_1>0$  and an ample line bundle $M$ on $S'$ such that 
 $E:=m_1L+M-A$ is ample.  Pick a suitable metric on $E$, we get an ample adelic line bundle $\overline{E}$ extending $E.$
 After replacing $\overline{A}$ by $\overline{A}+\overline{E}$, we may  assume that $A=m_1L+M$ on $X'.$
 Since $m_1L_0-A$ is trivial on the generic fiber of $\pi': X'\to S'$, $m_1\overline{L_0}-\overline{A}$ is represented by  
 an arithmetic divisor $\overline{F}$ on $X'$ such that $\pi(\supp F)\neq S'.$   Then there is an ample arithmetic divisor $\overline{Q}$ on $S'$  such that 
 $$-\pi^*\overline{Q}\leq \overline{F}\leq \pi^*\overline{Q}.$$

Set $R:=\ep_0\overline{D_S}+\overline{Q}.$
 By (\ref{queintergamn}), we get
 \begin{equation*}
 \begin{split}
 \overline{A}\cdot \overline{L}_g\cdot \Gamma_{n}=&(m_1\overline{L_g}-m_1\overline{D_0}-\overline{F})\cdot\overline{L}_g\cdot \Gamma_{n}\\
 \leq &m_1\overline{L_g}^2\cdot \Gamma_{n}+\pi^*\overline{R}\cdot\overline{L}_g\cdot \Gamma_{n}\\
 =&\pi^*\overline{R}\cdot\overline{L}_g\cdot \Gamma_{n}\\
 =&\pi^*\overline{R}\cdot\overline{L}_g\cdot g^n_*(\Gamma_p)\\
 =&(g^n)^*\pi^*\overline{R}\cdot (g^n)^*\overline{L}_g\cdot \Gamma_p\\
 =&q^n(\pi\circ g^n)^*\overline{R}\cdot\overline{L}_g\cdot \Gamma_p\\
 =&q^n(\pi^*\overline{R}\cdot\overline{L}_g\cdot \Gamma_p),
 \end{split}
 \end{equation*}
 which concludes the proof.
  \endproof
 
 Combining Proposition \ref{prointegboundbyint} and Proposition \ref{prointergrow}, we get the following result.
 \begin{cor}\label{corgvdn}Let $V$ be a Zariski closed subset of $X'.$ 
 Let $v\in \sM_K$ be an archimedean place. Let $g_V$ be a Green function of $V.$
Assume that $\overline{L_g}^2\cdot \Gamma_p=0$. Then there is $C>0$ such that, for every 
$n\geq 0$, if $\Gamma_n$ is not contained in $V,$ we have
 $$\int g_{V}\, c_1(\overline{L}_g)_v\wedge [\Gamma_n]\leq  Cq^n,$$
 where $q$ is the integer in the definition (3) of the polarized dynamical system  $(X,g, L)$.
 \end{cor}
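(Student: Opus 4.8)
The plan is to deduce Corollary \ref{corgvdn} by reducing it, place by place and via an ample auxiliary line bundle, to the two preceding propositions. First I would fix a projective model $X'$ of $\pi\colon X\to S$ on which $V$ sits as a Zariski closed subset, and choose an ample adelic line bundle $\overline{A}$ on $X'$ together with a global section $s$ of a sufficiently high power $A^{\otimes m}$ whose divisor contains $V$; after scaling the metric we may assume $s$ is a small section, i.e. $|s(x)|_v\le 1$ for all $v\in\sM_K$ and all $x\in X'^{\an}_v$. The Green function $g_V$ of $V$ and the Green function $g_{\div(s),v}=-\tfrac1m\log|s|_v$ of $\tfrac1m\div(s)$ differ by a bounded continuous function on the compact-in-the-relevant-sense loci we integrate over, so it suffices to bound $\int g_{\div(s),v}\,c_1(\overline{L}_g)_v\wedge[\Gamma_n]$ up to an additive $O(q^n)$ error; the error absorbs into the constant $C$ because, by Proposition \ref{prointergrow} applied with $\overline A$ replaced by a fixed ample $\overline{A'}$, the mass $c_1(\overline L_g)_v\wedge[\Gamma_n]$ integrated against a bounded function grows at most like $q^n$.

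Next I would run the following chain. Since $\Gamma_n$ is not contained in $V$, it is not contained in $\div(s)$, so Proposition \ref{prointegboundbyint} (applied to the section $p_n:=g^n\circ p$, whose image is $\Gamma_n$) gives
\[
\int g_{\div(s),v}\,c_1(\overline{L}_g)_v\wedge[\Gamma_n]\ \le\ \overline{A}\cdot\overline{L}_g\cdot\Gamma_n,
\]
after dividing by $m$ (or absorbing $m$ into $C$). The hypothesis $\overline{L_g}^2\cdot\Gamma_p=0$ is exactly the input needed for Proposition \ref{prointergrow}, which yields a constant $C_0>0$ with $\overline{A}\cdot\overline{L}_g\cdot\Gamma_n\le C_0 q^n$ for all $n\ge0$. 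Combining the two displays and folding in the bounded-error term from replacing $g_{\div(s),v}$ by $g_V$ gives the desired bound $\int g_V\,c_1(\overline L_g)_v\wedge[\Gamma_n]\le Cq^n$, with $C$ independent of $n$.

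The main obstacle, and the one point deserving care, is the comparison between $g_V$ and $g_{\div(s),v}$ and the handling of the error term: $g_V$ is only a Green function for $V$, whereas $\div(s)$ may be strictly larger than $V$ (with multiplicities) and may have extra components, so $g_V - g_{\div(s),v}$ is merely bounded \emph{below} near $\div(s)\setminus V$ rather than globally bounded. The fix is to arrange $\div(s)\ge V$ as effective divisors (possible after passing to a high power of an ample $\overline A$ and a suitable section), so that $g_{\div(s),v}\ge g_V + O(1)$ holds with a two-sided $O(1)$ away from a neighborhood of $\div(s)$, and near $\div(s)$ one uses $g_V\le g_{\div(s),v}+O(1)$ directly together with the finiteness of the mass of $c_1(\overline L_g)_v\wedge[\Gamma_n]$ (again $O(q^n)$) to control the contribution; alternatively one simply notes that any two Green functions for the same $V$, resp. a Green function for $V$ and one for a larger effective divisor dominating $V$, differ by a function bounded above, which is all that is needed for an upper bound. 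Once this bookkeeping is in place, everything else is a formal citation of Propositions \ref{prointegboundbyint} and \ref{prointergrow}.
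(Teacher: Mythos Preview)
Your overall architecture is right---reduce to Propositions \ref{prointegboundbyint} and \ref{prointergrow} via an auxiliary ample $\overline{A}$---but there is a genuine gap in the middle that your ``main obstacle'' paragraph does not address. You write ``Since $\Gamma_n$ is not contained in $V$, it is not contained in $\div(s)$'', and then apply Proposition \ref{prointegboundbyint}. This implication is false in general: a single section $s$ with $V\subseteq\div(s)$ will typically have $\div(s)$ strictly larger than $V$ (you say as much), and nothing prevents $\Gamma_n$ from lying inside one of the extra components of $\div(s)$ while avoiding $V$. In that case Proposition \ref{prointegboundbyint} does not apply, because its hypothesis $\Gamma_n\not\subseteq\div(s)$ fails. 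Your discussion of the Green-function comparison is about a different issue (the size of $g_V$ versus $g_{\div(s),v}$) and does not repair this.

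The paper's fix is to use several sections rather than one. After replacing $\overline{A}$ by a multiple so that $A\otimes I_V$ is globally generated, choose sections $s_1,\dots,s_m$ of $A$ with $\bigcap_{i=1}^m\div(s_i)=V$ exactly. Then for each $n$ with $\Gamma_n\not\subseteq V$ there exists an index $i_n$ with $\Gamma_n\not\subseteq\div(s_{i_n})$, so Proposition \ref{prointegboundbyint} applies to $s_{i_n}$. After rescaling the metric on $\overline{A}$ one may arrange $g_{\div(s_i),v}\geq g_V$ for all $i$ (absorbing your additive $O(1)$ directly, so no separate mass estimate is needed), and then
\[
\int g_V\, c_1(\overline{L}_g)_v\wedge[\Gamma_n]\ \le\ \int g_{\div(s_{i_n}),v}\, c_1(\overline{L}_g)_v\wedge[\Gamma_n]\ \le\ \overline{A}\cdot\overline{L}_g\cdot\Gamma_n\ \le\ Cq^n
\]
by Propositions \ref{prointegboundbyint} and \ref{prointergrow}. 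The essential new idea you are missing is this finite family of sections cutting out $V$ exactly; once you have it, the rest of your outline goes through cleanly.
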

 \proof
 Set $N:=\{n\geq 0|\,\, \Gamma_n\not\subseteq V\}.$
Let $I_V$ be the ideal sheaf of $V$ in $X'.$
We fix an ample adelic line bundle  $\overline{A}$ defined on $X',$ and we denote $A\in \Pic(X')$ to be the line bundle associated with $\overline{A}$. By replacing $\overline{A}$ by a suitable multiple, we may assume that $A\otimes I_V$ is generated by global sections.
Then there are sections $s_1,\dots, s_m$ of $A$ such that $\cap_{i=1}^m\div(s_i)=V.$
For every $n\in N$, there is $i_n\in \{1,\dots,m\}$ such that $\Gamma_n\not\subseteq \div(s_{i_n}).$
After modifying the metric of $A$, we may assume that for every $i=1,\dots,m$, $s_i$ is small for $\overline{A}$ and 
$g_{\div(s_i),v}\geq g_V.$
By Proposition \ref{prointegboundbyint} and Proposition \ref{prointergrow}, there is $C>0$ such that for every $n\in N$, we have
$$\int g_{V}\, c_1(\overline{L}_g)_v\wedge [\Gamma_n]\leq  \int g_{\div(s_{i_n})}\, c_1(\overline{L}_g)_v\wedge [\Gamma_n]   \leq \overline{A}\cdot \overline{L}_g\cdot \Gamma_n\leq Cq^n,$$
which concludes the proof.
 \endproof
 
\proof[Proof of Lemma \ref{lemintegarith}]
Since $\La, f,a,b$ and $V$ are defined over $\overline{\Q}$, there is 
a number field $K$, a smooth curve $S$ over $K$ and a polarized endomorphism $h: \P^1_S\to \P^1_S$ over $S$ of degree $d$, a closed subset $W$ of $X$ and a section $p: S\to X:=(\P^1\times \P^1)\times S$
of $\pi: X\to S$ such that $\La$, $f$, $(a,b): \La\to (\P^1\times \P^1)\times \La$ and $V$ are the base change of $S$, $h$ and $p$ via an embedding $K\hookrightarrow \C$ defined by an archimedean place $v\in \sM_K.$
Let $S'$ be a smooth projective curve containing $S$ as a Zariski closed subset. We still denote by $W$ by its Zariski closure in $X':=(\P^1\times \P^1)\times S'.$
Let $L:=\pi_1^*O(1)+\pi_2^*O(1)$ where $\pi_i: \P^1\times \P^1\to \P^1$ is the $i$-th projection. Set $g:=h\times_{S'}h.$ Let $\Gamma_n$ be the image of $g^n(p).$
Then we have $T=c_1(\overline{L_g})_{v}.$
Let $\widetilde{L_g}\in \widetilde{\Pic(X)}$ be the geometric part of $\overline{L_g}.$
By \cite[Lemma 5.4.4]{yuan2021}, we have 
$$\widetilde{L_v}\cdot \Gamma_0=\int c_1(\overline{L_g})_{v}\wedge \Gamma_0=\int T\wedge \Gamma_0=(\mu_{f,a}+\mu_{f,b})(\La)>0.$$
As $\Gamma_0$ contains an infinite sequence $(a(t_i),b(t_i)), i\geq 0$ of distinct points with heights tend to $0$, the fundamental inequality \cite[Theorem 5.3.2]{yuan2021} shows that $\overline{L_v}^2\cdot \Gamma_0=0.$
We conclude the proof by Corollary \ref{corgvdn}.
\endproof

 
	\section{Typical non-uniformly hyperbolic conditions}\label{4}
In this section, we show that certain non-uniformly hyperbolic conditions are typical for the bifurcation measure. We begin with some definitions.
 Let $f$ be a holomorphic family of rational maps as in (\ref{family}) and $a$ be  a marked point.   In this section, the distance and the norm of the derivatives are computed with respect to the metrics induced by $\omega_\La$ and $\omega_{\P^1}$. 
 \par For every $n\geq 0$, let $\xi_{a,n}:\La\to \P^1(\C)$ denote the map $\xi_{a,n}(t):=f_t^n(a(t))$. 
\begin{defi}
 A parameter $t_0\in \La$ is called 
 \begin{points}
 	\item {\em Marked Collet-Eckmann} $\CE^*(\la)$ for some $\la>1$, if there exists $C>0$ and $N>0$ such that $$|df_{t_0}^n(f_{t_0}^N(a(t_0)))|\geq C\la^n$$ for every $n\geq 0$. 
 	\item  {\em Parametric Collet-Eckmann} $\PCE(\la)$ for some $\la>1$, if there exists $C>0$ such that $$\left|\frac{d\xi_{a,n}}{dt}(t_0)\right|\geq C\la^n$$ for every $n\geq 0$. 
 	\item  {\em Polynomial Recurrence} $\PR(s)$ for some $s>1/2$, if there exists an integer $N>0$ such that
 	$$ d(f_{t_0}^n(a(t_0)), \mathcal{C}_{t_0})\geq n^{-s}$$
 	for every $n\geq N$.  Where $\mathcal{C}_{t_0}$ is the critical set of $f_{t_0}$ 
\end{points}
\end{defi}
\medskip
\begin{lemma}\label{precri}
   Let $f$ be a holomorphic family of rational maps as in (\ref{family}) and $a$ be  a marked point. Assume $t\in \supp \mu_{f,a}$, then there exists $N>0$ such that the  $f_{t}$-orbit of $f_t^N(a(t))$ does not intersect $\mathcal{C}_{t}$.  
\end{lemma}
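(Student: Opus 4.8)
I would argue by contradiction. Suppose the conclusion fails, so that for every $N\geq 0$ the orbit $\{f_t^m(a(t)):m\geq N\}$ meets $\mathcal{C}_t$; equivalently, the set $S:=\{m\geq 0:\ f_t^m(a(t))\in\mathcal{C}_t\}$ is infinite. Since $\mathcal{C}_t$ is finite (at most $2d-2$ points), by the pigeonhole principle there is a single critical point $c\in\mathcal{C}_t$ and an increasing sequence $n_1<n_2<\cdots$ with $f_t^{n_j}(a(t))=c$ for all $j$. Then $f_t^{\,n_{j+1}-n_j}(c)=c$, so $c$ is periodic; let $q$ be its period and $\mathcal{O}=\{c=p_0,\,p_1=f_t(c),\dots,p_{q-1}\}$ its cycle. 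Because $\mathcal{O}$ contains the critical point $c$, its multiplier $(f_t^q)'(p_0)=0$, so $\mathcal{O}$ is a \emph{super-attracting} cycle, $a(t)$ is preperiodic, and $f_t^m(a(t))\in\mathcal{O}$ for every $m\geq n_1$.

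\textbf{Trapping near a super-attracting cycle.} The heart of the argument is that super-attracting cycles are robust in the parameter. Applying the implicit function theorem to $f_{t'}^q(z)=z$ near $(t,p_0)$ (the multiplier $0$ is $\neq 1$), the cycle moves holomorphically to a cycle $\{p_i(t')\}$ of period $q$ for $t'$ near $t$, with multiplier $\rho(t')$ holomorphic and $\rho(t)=0$; shrinking, $|\rho(t')|<1/2$ on a neighborhood. A uniform local analysis then produces open sets $U_0,\dots,U_{q-1}$ with $p_i\in U_i$ and a neighborhood $\Omega$ of $t$ such that for all $t'\in\Omega$ one has $f_{t'}(\overline{U_i})\subset U_{i+1\bmod q}$; we may also take the $U_i$ small enough that $\overline{U}:=\bigcup_i\overline{U_i}$ omits at least three points of $\P^1$. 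Since $f_t^{n_1}(a(t))=p_0\in U_0$, continuity of $(t',z)\mapsto f_{t'}^{n_1}(z)$ and of $a$ gives a smaller neighborhood $\Omega'\ni t$ with $f_{t'}^{n_1}(a(t'))\in U_0$ for all $t'\in\Omega'$; the trapping property then forces $\xi_{a,m}(t')=f_{t'}^m(a(t'))\in U$ for all $m\geq n_1$ and all $t'\in\Omega'$.

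\textbf{Conclusion.} On $\Omega'$ the maps $\xi_{a,m}$ with $m\geq n_1$ all take values in $\overline{U}\subsetneq\P^1$, which omits three points, so by Montel's theorem $\{\xi_{a,m}\}_{m\geq n_1}$ is normal on $\Omega'$; adjoining the finitely many $\xi_{a,m}$ with $m<n_1$ keeps it normal, so $\{\xi_{a,n}\}_{n\geq 0}$ is normal on $\Omega'$. Thus the marked point $a$ restricted to the (holomorphic) family $f|_{\Omega'}$ is stable, hence passive by Theorem \ref{stable}, so $\mu_{f,a}|_{\Omega'}=\mu_{f|_{\Omega'},\,a|_{\Omega'}}=0$, contradicting $t\in\supp\mu_{f,a}$. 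Therefore the set $S$ is finite, and taking $N$ larger than $\max S$ (or $N=0$ if $S=\varnothing$) gives the statement.

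\textbf{Main difficulty.} Everything else is elementary; the step requiring care is the uniform trapping statement — namely that, over a fixed parameter neighborhood of $t$, the super-attracting cycle admits a common absorbing neighborhood $U$ into which the orbit of $a(t')$ enters after a fixed number of iterates. This is a standard but slightly delicate application of the local normal form for super-attracting cycles, made uniform in the parameter.
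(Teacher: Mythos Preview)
Your proof is correct and follows essentially the same route as the paper's: argue by contradiction, use pigeonhole on the finite critical set to find a periodic critical point (hence a super-attracting cycle), observe that the super-attracting basin persists under perturbation so that $a$ is stable near $t$, and invoke Theorem~\ref{stable} to contradict $t\in\supp\mu_{f,a}$. The paper compresses your trapping-plus-Montel argument into the single phrase ``the attracting basin is stable under perturbation,'' but the underlying logic is identical.
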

\begin{proof}
Assume by contradiction that there is no such $N$. Since the cardinality of $\mathcal{C}_{t}$ is finite, there exists $c\in \mathcal{C}_{t}$ such that $c$ is $f_t-$periodic and $a(t)$ is a preimage of $c$.  This implies $a(t)$ is contained in the super-attracting basin of the $f_t-$orbit of $c$. Since the attracting basin is stable under perturbation, it implies that $a$ is stable at $t$, contradicts to $t\in \supp \mu_{f,a}$ by Theorem \ref{stable}.
\end{proof}
\medskip
\par The following result was essentially due to De Th\'elin-Gauthier-Vigny. 
\begin{thm}[De Th\'elin-Gauthier-Vigny, \cite{de2021parametric}]\label{DGV}
   Let $f$ be an algebraic family of rational maps as in (\ref{family}) and $a$ be  a marked point. Then we have:
\begin{points}
	\item For every $1<\la<d^{1/2}$, the condition $\PCE(\la)$ is typical with respect to $\mu_{f,a}$;
\item Assume $t$ satisfies $\PCE(\la_0)$ for some $\la_0>1$, then $t$ satisfies $\CE^*(\la)$ for every $1<\la<\la_0$.
\end{points}
\end{thm}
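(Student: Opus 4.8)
\textbf{Proof proposal for Theorem \ref{DGV}.}

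The plan is to deduce this statement from the work of De Th\'elin--Gauthier--Vigny \cite{de2021parametric} by checking that our setting fits the hypotheses of their main large deviation / hyperbolicity estimate, and then extracting the two assertions. First I would recall that the parametric Lyapunov exponent of the pair $(f,a)$ equals $\log d$ at $\mu_{f,a}$-a.e. parameter: indeed $\mu_{f,a}=a^*T_f$ is the bifurcation measure and the potential-theoretic identity $\frac1n\log\left|\frac{d\xi_{a,n}}{dt}\right|\to \frac12\log d$ holds in $L^1_{\mathrm{loc}}(\mu_{f,a})$, with the exceptional set being $\mu_{f,a}$-null. This is the ``maximal exponent'' input. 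For part (i), the content is to upgrade this $L^1$/a.e. asymptotic growth to a genuine \emph{uniform} exponential lower bound $\bigl|\frac{d\xi_{a,n}}{dt}(t_0)\bigr|\geq C\la^n$ for every $n$, which is exactly the type of statement proved in \cite{de2021parametric} via a Borel--Cantelli argument controlling how often $\frac{d\xi_{a,n}}{dt}(t_0)$ can dip below $\la^n$; the sub-maximal rate $\la<d^{1/2}$ is precisely what buys summability in that argument. So the proof of (i) is: invoke the relevant theorem of \cite{de2021parametric} (which they state for algebraic families of rational maps with a marked point), noting that our $\mu_{f,a}$ has finite mass by \cite[Proposition 13]{Gauthier2019} so their normalization applies, and that ``typical'' means ``$\mu_{f,a}$-a.e.''.

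For part (ii), I would argue by transferring the parametric derivative estimate to the phase space via the standard relation between $\frac{d\xi_{a,n}}{dt}$ and $df_{t_0}^{\,n}$ along the critical orbit. Concretely, differentiating $\xi_{a,n+1}=f_t\circ\xi_{a,n}$ in $t$ gives the cocycle identity
\[
\frac{d\xi_{a,n}}{dt}(t_0)=\sum_{j=0}^{n-1} df_{t_0}^{\,n-1-j}\bigl(\xi_{a,j+1}(t_0)\bigr)\cdot \frac{\partial f_t}{\partial t}\bigl(\xi_{a,j}(t_0)\bigr)\Big|_{t=t_0},
\]
so that $\bigl|\frac{d\xi_{a,n}}{dt}(t_0)\bigr|\leq C\sum_{j=0}^{n-1}\bigl|df_{t_0}^{\,n-1-j}(\xi_{a,j+1}(t_0))\bigr|$. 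The hypothesis $\PCE(\la_0)$ thus forces, for each $n$, at least one term $\bigl|df_{t_0}^{\,n-1-j}(\xi_{a,j+1}(t_0))\bigr|$ to be $\gg \la_0^{n}/n$. One then has to convert this ``large derivative at \emph{some} intermediate time'' into a large derivative at the \emph{fixed} time $N$, i.e. $|df_{t_0}^{m}(f_{t_0}^N(a(t_0)))|\geq C\la^m$ for all $m$. This is done by the usual telescoping: $df_{t_0}^{\,n-1-j}(\xi_{a,j+1})=df_{t_0}^{m}(f_{t_0}^N(a))\cdot\bigl(df_{t_0}^{\,j+1-N}(\xi_{a,N})\bigr)^{-1}$ with $m=n-1-j-(j+1-N)$ appropriately arranged, so a lower bound at a floating time plus finiteness of the ``backward'' factor on a bounded initial segment yields the desired bound with any $\la<\la_0$ (the loss $\la_0\to\la$ absorbs the polynomial factor $1/n$ and the bounded correction terms). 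Again, this is precisely the implication ``$\PCE\Rightarrow\CE^*$'' established in \cite{de2021parametric}, so in practice part (ii) is a direct citation.

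The main obstacle, and the reason the theorem is ``essentially due to'' \cite{de2021parametric} rather than a verbatim quotation, is matching conventions: \cite{de2021parametric} works with a specific normalization of the metrics and of the bifurcation current, and possibly phrases the Collet--Eckmann conditions slightly differently (e.g. along the critical orbit versus along a general marked orbit, or with derivatives measured in a different chart). I would therefore spend the bulk of the write-up verifying that (a) our $\mu_{f,a}$ coincides with their bifurcation measure up to a harmless finite constant, (b) the metric on $\P^1(\C)$ induced by $\omega_{\P^1}$ is bi-Lipschitz-equivalent on the relevant compacts to theirs (so the exponential rates $\la$ are unaffected while the constants $C$ may change), and (c) their typicality statement is stated for, or immediately extends to, an arbitrary marked point $a$ (not only marked critical points), which it does since the argument only uses activity of $a$. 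Once these identifications are in place, both (i) and (ii) follow by quoting \cite{de2021parametric} directly.
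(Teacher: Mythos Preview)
Your overall strategy---treat both (i) and (ii) as citations of \cite{de2021parametric}---is exactly what the paper does. Part (i) is indeed a verbatim invocation of \cite[Theorem 3]{de2021parametric}, and the convention-matching you worry about is not actually an issue (though note your claimed limit $\tfrac12\log d$ is inconsistent with the sentence before it).

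There is, however, a real gap in your treatment of (ii). The result you want to cite, \cite[Proposition 9]{de2021parametric}, proves $\PCE(\la_0)\Rightarrow\CE^*(\la)$ only under the additional hypothesis that the $f_{t_0}$-orbit of $a(t_0)$ does not meet the critical set $\sC_{t_0}$. Without this, the factor $df_{t_0}^{k+1}(a(t_0))$ in the denominator of the cocycle identity (Lemma \ref{relation}) can vanish, and your informal ``finiteness of the backward factor on a bounded initial segment'' fails. The paper closes this gap as follows: $\PCE(\la_0)$ forces $t_0\in\supp\,\mu_{f,a}$ (since a $\PCE$ parameter cannot be stable), and then Lemma \ref{precri} guarantees an $N>0$ such that the orbit of $f_{t_0}^N(a(t_0))$ avoids $\sC_{t_0}$; the shift by $N$ is already built into the definition of $\CE^*(\la)$, so \cite[Proposition 9]{de2021parametric} now applies. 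You should state this explicitly rather than hoping it is absorbed into ``matching conventions''.

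A minor point: your displayed cocycle identity omits the term $df_{t_0}^{\,n}(a(t_0))\cdot\frac{da}{dt}(t_0)$; compare Lemma \ref{relation}.
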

\begin{proof}
The statement (i) was proved in \cite[Theorem 3]{de2021parametric}. In \cite[Proposition 9]{de2021parametric}, it is proved that if  $t$ satisfies $\PCE(\la_0)$ for some $\la_0>1$, then $t$ satisfies $\CE^*(\la)$ for every $1<\la<\la_0$, provided that the $f_{t}$-orbit of $a(t)$ does not intersect $\mathcal{C}_{t}$.  The condition $\PCE(\la_0)$ implies  $t\in \supp \mu_{f,a}$, by Lemma \ref{precri}, we get that  (ii) holds. 
\end{proof}

\medskip

\subsection{A transversality statement}
Let $f$ be a holomorphic family of rational maps as in (\ref{family}) and $a$ be  a marked point. A direct computation gives the following equality, relating $df_{t_0}^n(a(t_0))$ and $d\xi_{a,n}/dt(t_0)$. A proof can be found in \cite[Lemma 4.4]{astorg2019collet}. We set $F(t,z):=f_t(z)$. 
\begin{lemma}\label{relation}
 Let $t_0\in\La$ such that  the $f_{t_0}$-orbit of $a(t_0)$ does not intersect $\mathcal{C}_{t_0}$. Then for every $n\geq 1,$ we have
 \begin{equation*}
 \frac{d\xi_{a,n}}{dt}(t_0)=df_{t_0}^n(a(t_0))\left(\frac{da}{dt}(t_0)+\sum_{k=0}^{n-1}\frac{\frac{\partial F}{\partial t}(t_0,f_{t_0}^k(a(t_0)))}{df_{t_0}^{k+1}(a(t_0))}\right).
 \end{equation*}
\end{lemma}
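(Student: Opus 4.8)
The statement to prove is Lemma~\ref{relation}, the formula expressing $d\xi_{a,n}/dt(t_0)$ in terms of $df_{t_0}^n(a(t_0))$ and the multipliers along the orbit. The plan is to proceed by induction on $n$ using the chain rule, exploiting the fact that $\xi_{a,n+1}(t) = f_t(\xi_{a,n}(t)) = F(t,\xi_{a,n}(t))$. First I would set up the notation: write $z_k(t) := \xi_{a,k}(t) = f_t^k(a(t))$, so that $z_0 = a$ and $z_{k+1}(t) = F(t, z_k(t))$, and note that the hypothesis that the $f_{t_0}$-orbit of $a(t_0)$ avoids $\mathcal{C}_{t_0}$ guarantees $df_{t_0}(z_k(t_0)) \neq 0$ for all $k \geq 0$, so all the denominators $df_{t_0}^{k+1}(a(t_0)) = \prod_{j=0}^{k} df_{t_0}(z_j(t_0))$ appearing in the formula are nonzero and the expression makes sense.

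\textbf{The inductive step.} Differentiating $z_{n+1}(t) = F(t, z_n(t))$ at $t_0$ gives, by the chain rule,
\begin{equation*}
\frac{dz_{n+1}}{dt}(t_0) = \frac{\partial F}{\partial t}(t_0, z_n(t_0)) + \frac{\partial F}{\partial z}(t_0, z_n(t_0))\cdot \frac{dz_n}{dt}(t_0).
\end{equation*}
Here $\frac{\partial F}{\partial z}(t_0, z_n(t_0)) = df_{t_0}(z_n(t_0))$ and $df_{t_0}^{n+1}(a(t_0)) = df_{t_0}(z_n(t_0))\cdot df_{t_0}^n(a(t_0))$ by the chain rule for the iterate. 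Substituting the inductive hypothesis for $\frac{dz_n}{dt}(t_0)$ — namely $\frac{dz_n}{dt}(t_0) = df_{t_0}^n(a(t_0))\bigl(\frac{da}{dt}(t_0) + \sum_{k=0}^{n-1} \frac{\partial F/\partial t(t_0, z_k(t_0))}{df_{t_0}^{k+1}(a(t_0))}\bigr)$ — into the displayed identity, one gets
\begin{equation*}
\frac{dz_{n+1}}{dt}(t_0) = \frac{\partial F}{\partial t}(t_0, z_n(t_0)) + df_{t_0}^{n+1}(a(t_0))\Bigl(\frac{da}{dt}(t_0) + \sum_{k=0}^{n-1}\frac{\partial F/\partial t(t_0, z_k(t_0))}{df_{t_0}^{k+1}(a(t_0))}\Bigr).
\end{equation*}
Then I would pull the leading term $\frac{\partial F}{\partial t}(t_0, z_n(t_0))$ inside the parenthesis by writing it as $df_{t_0}^{n+1}(a(t_0)) \cdot \frac{\partial F/\partial t(t_0, z_n(t_0))}{df_{t_0}^{n+1}(a(t_0))}$, which is legitimate precisely because the denominator is nonzero; this produces the $k=n$ term of the sum and yields exactly the claimed formula for $n+1$. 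The base case $n=1$ is the displayed identity with $n=0$, which reads $\frac{dz_1}{dt}(t_0) = \frac{\partial F}{\partial t}(t_0, a(t_0)) + df_{t_0}(a(t_0))\frac{da}{dt}(t_0) = df_{t_0}(a(t_0))\bigl(\frac{da}{dt}(t_0) + \frac{\partial F/\partial t(t_0,a(t_0))}{df_{t_0}(a(t_0))}\bigr)$, matching the formula.

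\textbf{Main obstacle.} This is essentially a bookkeeping computation, so there is no deep obstacle; the only point requiring genuine care is that the formula involves division by $df_{t_0}^{k+1}(a(t_0))$, so one must invoke the hypothesis on the orbit avoiding the critical set at every stage to ensure all these derivatives are nonzero — both to make sense of the statement and to justify the algebraic manipulation of moving terms in and out of the sum. Since the paper cites \cite[Lemma 4.4]{astorg2019collet} for a proof, I would either reproduce the short induction above or simply refer to that reference, noting that the identity is local and purely a matter of the chain rule. One subtlety worth a remark: the expressions are to be read as identities of numbers (using local coordinates near the relevant points of $\P^1$, away from the critical set), and the formula is independent of the choice of coordinates up to the usual transformation rules, so no issue arises from working on $\P^1$ rather than $\mathbb{C}$.
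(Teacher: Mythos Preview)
Your proposal is correct; the induction via the chain rule is exactly the standard argument and there is no gap. The paper itself does not give a proof of this lemma but simply cites \cite[Lemma 4.4]{astorg2019collet}, so your write-up is in fact more detailed than what the paper provides.
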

\medskip
The following transversality statement holds for $\PCE$ parameters. 
\begin{lemma}[Transversality condition]\label{trans}
 Let $t_0\in\La$ such that  the $f_{t_0}$-orbit of $a(t_0)$ does not intersect $\mathcal{C}_{t_0}$, and  $t_0$ satisfies $\PCE(\la)$ for some $\la>1$. Then  there exists a non-zero $\gamma\in \C$ such that
  \begin{equation*}
 	\frac{da}{dt}(t_0)+\sum_{k=0}^{\infty}\frac{\frac{\partial F}{\partial t}(t_0,f_{t_0}^k(a(t_0)))}{df_{t_0}^{k+1}(a(t_0))}=\gamma.
 \end{equation*}
\end{lemma}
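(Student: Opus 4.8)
The statement is essentially an immediate packaging of Lemma \ref{relation} (the formula for $d\xi_{a,n}/dt$) together with the definitions of $\CE^*(\la)$ and $\PCE(\la)$, so the whole point is a convergence-plus-nonvanishing argument. First I would observe that since $t_0$ satisfies $\PCE(\la)$ for some $\la>1$ and the $f_{t_0}$-orbit of $a(t_0)$ avoids $\mathcal{C}_{t_0}$, Theorem \ref{DGV}(ii) gives $\CE^*(\la')$ for any $1<\la'<\la$: there are $C>0$ and $N>0$ with $|df_{t_0}^n(f_{t_0}^N(a(t_0)))|\ge C(\la')^n$ for all $n\ge 0$. By the chain rule $df_{t_0}^{k+1}(a(t_0)) = df_{t_0}^{\,k+1-N}(f_{t_0}^N(a(t_0)))\cdot df_{t_0}^N(a(t_0))$ for $k\ge N$, and since the orbit avoids the critical set the finitely many factors $df_{t_0}^N(a(t_0)), \dots$ are all nonzero, so there is a constant $c_1>0$ with $|df_{t_0}^{k+1}(a(t_0))|\ge c_1(\la')^k$ for all $k\ge 0$.

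Next, the numerators $\partial F/\partial t(t_0, f_{t_0}^k(a(t_0)))$ are bounded: $\partial F/\partial t$ is a continuous function on $\La\times\P^1(\C)$ (or at least in a neighborhood of the relevant compact orbit closure), hence $|\partial F/\partial t(t_0, f_{t_0}^k(a(t_0)))|\le c_2$ for all $k\ge 0$. Combining, the $k$-th term of the series $\sum_{k=0}^\infty \frac{\partial F/\partial t(t_0,f_{t_0}^k(a(t_0)))}{df_{t_0}^{k+1}(a(t_0))}$ is bounded in modulus by $(c_2/c_1)(\la')^{-k}$, a convergent geometric series. Therefore the series converges absolutely to some $\gamma_0\in\C$, and we set
\begin{equation*}
\gamma := \frac{da}{dt}(t_0) + \gamma_0 = \frac{da}{dt}(t_0) + \sum_{k=0}^\infty \frac{\frac{\partial F}{\partial t}(t_0, f_{t_0}^k(a(t_0)))}{df_{t_0}^{k+1}(a(t_0))}.
\end{equation*}
It remains only to show $\gamma\ne 0$. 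For this I would use Lemma \ref{relation}, which states that for every $n\ge 1$,
\begin{equation*}
\frac{d\xi_{a,n}}{dt}(t_0) = df_{t_0}^n(a(t_0))\left(\frac{da}{dt}(t_0) + \sum_{k=0}^{n-1}\frac{\frac{\partial F}{\partial t}(t_0, f_{t_0}^k(a(t_0)))}{df_{t_0}^{k+1}(a(t_0))}\right).
\end{equation*}
If $\gamma=0$, then the partial sums $\frac{da}{dt}(t_0)+\sum_{k=0}^{n-1}(\cdots)$ equal the geometric-rate tail $-\sum_{k\ge n}(\cdots)$, whose modulus is $O((\la')^{-n})$. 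On the other hand, $|df_{t_0}^n(a(t_0))|$ grows at most exponentially; more precisely, writing $df_{t_0}^n(a(t_0))$ in terms of the $\CE^*$ estimate and the finitely many fixed factors before time $N$, one gets an upper bound $|df_{t_0}^n(a(t_0))|\le c_3 d^{\,n}$ (or any crude exponential bound suffices — in fact $|df_{t_0}^n(a(t_0))|\le c_3\mu^n$ for $\mu = \sup\|df_{t_0}\|$). The subtlety is that $d^n\cdot(\la')^{-n}$ need not go to zero, so I must instead argue via the $\PCE$ \emph{lower} bound on $d\xi_{a,n}/dt$: we have $|d\xi_{a,n}/dt(t_0)|\ge C\la^n$, while $\gamma=0$ would force $|d\xi_{a,n}/dt(t_0)| = |df_{t_0}^n(a(t_0))|\cdot O((\la')^{-n})$. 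Now $|df_{t_0}^n(a(t_0))|$ and $|d\xi_{a,n}/dt(t_0)|$ are comparable only up to the bracket factor — so instead I compare directly: from $\CE^*(\la')$, $|df^n_{t_0}(a(t_0))| \geq c_1' (\la')^n$, hence if $\gamma \ne 0$ the bracket tends to $\gamma \ne 0$ and $|d\xi_{a,n}/dt(t_0)| \sim |\gamma|\,|df_{t_0}^n(a(t_0))| \geq c(\la')^n$, consistent with $\PCE$; whereas if $\gamma = 0$, then pairing the lower bound $|d\xi_{a,n}/dt(t_0)|\ge C\la^n$ with $|d\xi_{a,n}/dt(t_0)| = |df^n_{t_0}(a(t_0))|\cdot |{\rm tail}_n|$ and $|{\rm tail}_n|\le c_4(\la')^{-n}$ yields $|df^n_{t_0}(a(t_0))|\ge (C/c_4)(\la\la')^n$; feeding this \emph{improved} growth of the derivative back into the tail estimate (the tail from index $n$ on is bounded by $\sum_{k\ge n} c_2/|df^{k+1}_{t_0}(a(t_0))|$) forces $|{\rm tail}_n| = O((\la\la')^{-n})$, and iterating this bootstrap drives the implied exponential rate of $|df^n_{t_0}(a(t_0))|$ to infinity, which is absurd since it is bounded by $c_3\mu^n$. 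Hence $\gamma\ne 0$.

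\textbf{Main obstacle.} The only genuinely non-routine point is the nonvanishing $\gamma\ne 0$; the convergence of the series is a soft geometric-series estimate. I would expect the cleanest route to nonvanishing is the bootstrap just sketched (or, equivalently, the standard observation that $d\xi_{a,n}/dt(t_0)/df^n_{t_0}(a(t_0))\to\gamma$ and that $\PCE$ forces this ratio to stay bounded away from $0$ along a subsequence, since $|d\xi_{a,n}/dt(t_0)|\ge C\la^n$ cannot hold if it is $o(|df^n_{t_0}(a(t_0))|)$ while $|df^n_{t_0}(a(t_0))|$ itself is at most $c_3\mu^n$ — choosing $\la'$ close to $\la$ so that $\la\la'>\mu^{?}$ is not automatic, which is exactly why the bootstrap is the safe argument). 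I would present the bootstrap version to avoid any hidden assumption relating $\la$ and the global expansion rate $\mu$.
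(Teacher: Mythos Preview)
Your argument is correct. The convergence step matches the paper's exactly (via Theorem~\ref{DGV}(ii) and the uniform bound on $\partial F/\partial t$). For the nonvanishing of $\gamma$, however, your route differs from the paper's. The paper introduces the lower Lyapunov exponent $\chi:=\liminf_n \tfrac{1}{n}\log|df_{t_0}^n(a(t_0))|$ and uses it two-sidedly: for every small $\ep>0$ one has $|df_{t_0}^k(a(t_0))|\ge C_2 e^{(\chi-\ep)k}$ for all $k$, so the tail is $O(e^{-(\chi-\ep)n})$; and along a subsequence $n_j$ one has $|df_{t_0}^{n_j}(a(t_0))|\le C_1 e^{(\chi+\ep)n_j}$. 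If $\gamma=0$ this yields $|d\xi_{a,n_j}/dt(t_0)|\le C_1C_3 e^{2\ep n_j}$, contradicting $\PCE(\la)$ once $2\ep<\log\la$. Your bootstrap instead repeatedly feeds the improved lower bound on $|df^n_{t_0}(a(t_0))|$ back into the tail estimate, multiplying the implied exponential rate by $\la$ at each step until it exceeds the crude bound $\mu=\sup|df_{t_0}|$; since the accumulated constant stays strictly positive, this gives the contradiction. Both arguments are valid: the paper's is shorter and avoids tracking constants across an iteration, while yours is self-contained and does not need to name the Lyapunov exponent. Your diagnosis that a one-step comparison of $\la\la'$ with $\mu$ is not automatic is exactly right---the paper sidesteps this precisely by working at the intrinsic rate $\chi$ rather than at $\la'$ or $\mu$.
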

\begin{proof}
By Theorem \ref{DGV} (ii), $t_0$ satisfies $\CE^*(\la_1)$ for every $1<\la_1<\la$. Since 
$|\partial F/\partial t(t_0,f_{t_0}^k(a(t_0))|$ is uniformly bounded by a constant $M>0$,  the power seris in Lemma \ref{trans}  converges. It remains to show that it converges to a non-zero number.  
\par Let $\chi>0$ be the lower Lyapunov exponent of $f_{t_0}$ at $a(t_0)$. 
Let $\ep>0$ small such that $\ep<\min\;(\log \la/10,\chi)$. By the definition of lower Lyapunov exponent, there exists a constant $C_1(\ep)>0$ and a sequence  of positive integers $n_j\to +\infty$ such that 
\begin{equation}\label{3.1}
|df_{t_0}^{n_j}(a(t_0))|\leq C_1 e^{(\chi+\ep)n_j}.
\end{equation}There is also a constant $C_2(\ep)>0$ such that for every $n\geq 1$,
\begin{align}\label{3.2}
\left|\sum_{k=n+1}^{\infty}\frac{\frac{\partial F}{\partial t}(t_0,f_{t_0}^k(a(t_0)))}{df_{t_0}^{k+1}(a(t_0))}\right|&\leq \sum_{k=n+1}^{\infty}\left|\frac{\frac{\partial F}{\partial t}(t_0,f_{t_0}^k(a(t_0)))}{df_{t_0}^{k+1}(a(t_0))}\right|
\notag\\&\leq \sum_{k=n+1}^{\infty} \frac{M}{C_2e^{(\chi-\ep)k}}\notag\\
&:= C_3e^{-(\chi-\ep)n},
\end{align}
where $C_3(\ep)>0$ is a constant. 
\par Assume by contradiction that the power seris in  Lemma \ref{trans} converges to $0$. By Lemma \ref{relation}, (\ref{3.1}) and (\ref{3.2}), for each $n_j$ in (\ref{3.1}) we have
\begin{align*}
	 \left|\frac{d\xi_{a,n_j}}{dt}(t_0)\right|&=\left|df_{t_0}^{n_j}(a(t_0))\right|\left|\sum_{k=n_j+1}^{\infty}\frac{\frac{\partial F}{\partial t}(t_0,f_{t_0}^k(a(t_0)))}{df_{t_0}^{k+1}(a(t_0))}\right|
	 \\&\leq C_1 e^{(\chi+\ep)n_j}C_3e^{-(\chi-\ep)n_j}\\
	 &=C_1C_3e^{2\ep n_j},
\end{align*}
which contradicts to the fact $t_0$ satisfies $\PCE(\la)$. 
\end{proof}

\medskip

\subsection{Polynomial Recurrence parameters are typical}
In this subsection, we prove the following:
\begin{thm}\label{PR}
	Let $f$ be an algebraic family of rational maps as in (\ref{family}) and $a$ be  a marked point. Then for every $s>1/2$, $\mu_{f,a}$-a.e. point $t_0\in \La$ satisfies $\PR(s)$. 
\end{thm}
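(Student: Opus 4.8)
The plan is to show that for each fixed $s > 1/2$, the "bad set"
$$
B(s) := \{ t_0 \in \La : d(f_{t_0}^n(a(t_0)), \mathcal{C}_{t_0}) < n^{-s} \text{ for infinitely many } n \}
$$
has $\mu_{f,a}$-measure zero, and then conclude by the usual Borel--Cantelli argument after summing over a sequence $s_m \downarrow 1/2$. The key analytic input will be the transversality statement in Lemma \ref{trans}, which is available at $\mu_{f,a}$-a.e. point because, by Theorem \ref{DGV}(i), the condition $\PCE(\la)$ holds $\mu_{f,a}$-a.e. for some $1 < \la < d^{1/2}$, and such points lie in $\supp \mu_{f,a}$ so that by Lemma \ref{precri} the critical orbit condition needed for Lemma \ref{relation} and Lemma \ref{trans} is also met. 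Thus at $\mu_{f,a}$-a.e. $t_0$ we have both exponential growth $|df_{t_0}^n(a(t_0))| \geq C\la^n$ (via $\CE^*$, Theorem \ref{DGV}(ii)) and the transversality identity, and I restrict attention to this full-measure set.

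The geometric heart of the argument is a quantitative comparison, near such a parameter $t_0$, between the distance $d(\xi_{a,n}(t_0), \mathcal{C}_{t_0})$ in the phase space and the behaviour of the parameter map $\xi_{a,n}$. First I would fix, on a coordinate chart, that $\mathcal{C}_{t}$ consists of finitely many algebraically varying points $c_i(t)$; so it suffices to control, for each $i$, the set where $d(\xi_{a,n}(t_0), c_i(t_0)) < n^{-s}$ infinitely often. Consider the holomorphic functions $\psi_{i,n}(t) := \xi_{a,n}(t) - c_i(t)$ on a disk. Because $t_0$ satisfies $\PCE(\la)$, Lemma \ref{relation} together with the transversality Lemma \ref{trans} gives that $\frac{d\xi_{a,n}}{dt}(t_0)$ is comparable to $\gamma \cdot df_{t_0}^n(a(t_0))$, hence grows like $\la^n$; in particular $\psi_{i,n}$ has a derivative of size $\gtrsim \la^n$ at $t_0$. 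Combined with a uniform upper bound $|\psi_{i,n}'| \lesssim d^{n}$ (or a Koebe/Cauchy-estimate normalization), the rescaled maps $t \mapsto \psi_{i,n}(t_0 + \cdot/|\psi_{i,n}'(t_0)|)$ are, on a definite disk, close to the identity, so the set $\{t : |\psi_{i,n}(t)| < n^{-s}\}$ near $t_0$ is contained in a disk of radius $\asymp n^{-s}/|\psi_{i,n}'(t_0)| \lesssim n^{-s} \la^{-n}$.

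This pointwise picture must be turned into a measure estimate. The natural route is to integrate against $\mu_{f,a} = a^* T_f$ and use that $\mu_{f,a}$ has a continuous potential: locally $\mu_{f,a} = dd^c u$ with $u$ bounded continuous. One then estimates $\mu_{f,a}(\{t : |\psi_{i,n}(t)| < n^{-s}\})$ using a Chern--Levine--Nirenberg type bound (Corollary \ref{corglobelcln}) applied to the subharmonic function $\log|\psi_{i,n}|$: the measure of a sublevel set $\{\log|\psi_{i,n}| < -s\log n\}$ is controlled by $\frac{1}{s\log n}\int |\log|\psi_{i,n}|\,|\,\mu_{f,a}$, and the latter integral is controlled by $\log \|\psi_{i,n}\|_{\infty} \lesssim n\log d$ plus the contribution of the zeros. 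To get a \emph{summable} bound one sharpens this by exploiting that at $\mu_{f,a}$-a.e. point the derivative is exponentially large: away from a neighborhood of the zero set of $\psi_{i,n}$ of size $\lesssim n^{-s}\la^{-n}$, one has $|\psi_{i,n}| \geq n^{-s}$; combined with normality of the renormalized family (the family $\{t\mapsto f_t^n(a(t))\}$ restricted to small disks around generic points, suitably rescaled, is precompact by the same equidistribution/$\PCE$ analysis as in Step 3 of the introduction), this yields $\mu_{f,a}(B_n(s)) \lesssim (n^{-s}\la^{-n})^{2}$ or at worst a bound summable in $n$ once $s > 1/2$ is used to beat a polynomially many ($O(n)$) choices. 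Summability over $n$ then gives $\mu_{f,a}(B(s)) = 0$ by Borel--Cantelli, and intersecting over a countable sequence $s_m \downarrow 1/2$ finishes the proof.

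The main obstacle I anticipate is the passage from the \emph{pointwise} transversality/growth estimate (valid $\mu_{f,a}$-a.e.) to a \emph{uniform} estimate on a fixed-size disk that is strong enough to survive the measure integration and remain summable; in particular one needs the exponential lower bound on $|\frac{d\xi_{a,n}}{dt}|$ to hold not just at $t_0$ but on a disk of controlled radius around it, which requires a genuine normal-families/renormalization argument (the family of rescaled $\xi_{a,n}$ converges, along the relevant parameters, with non-degenerate limit). Controlling the constants and the size of these disks in an $n$-independent way — equivalently, ruling out that the good set shrinks too fast — is the delicate point, and is exactly where the pluripotential estimates (CLN inequality, continuity of the potential of $\mu_{f,a}$) together with the Favre--Gauthier--De Th\'elin machinery of Theorem \ref{DGV} must be combined carefully.
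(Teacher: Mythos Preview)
Your approach is fundamentally different from the paper's, and it has a genuine gap.

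The paper's argument is purely potential-theoretic and uses \emph{no} pointwise dynamical information such as $\PCE$, $\CE^*$, or transversality. The key identity is $\mu_{f,f^n(a)}=d^{n}\mu_{f,a}$, combined with the degree bound $\deg_{\omega}\Gamma_{f^n(a)}\le d^{n}(\deg_{\omega}\Gamma_a+D)$. Setting $\phi(t,z)=d(z,\mathcal{C}_t)^{-1}$ and $\psi_n(t)=\phi(t,f_t^n(a(t)))$, one proves via CLN and the elementary fact that $|z|^{-\alpha}\in L^1_{\mathrm{loc}}$ for $\alpha<2$ (Lemma~\ref{exp1}) that
\[
\int_K \min(A,\psi_n^{\alpha})\,d\mu_{f,a}\le C\bigl(d^{-n}A+\deg_{\omega}\Gamma_a+D\bigr).
\]
Markov's inequality then gives $\mu_{f,a}\{\psi_n^{\alpha}\ge A\}\le C(d^{-n}+A^{-1})$. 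Taking $A=n^{\alpha s}$ with $\alpha\in(1/s,2)$, which is possible precisely because $s>1/2$, yields $\mu_{f,a}(F_n)\lesssim n^{-\alpha s}$ with $\alpha s>1$, and Borel--Cantelli finishes. The threshold $s>1/2$ arises transparently from the integrability exponent $2$ for $|z|^{-\alpha}$ in the plane.

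Your plan, by contrast, tries to localize around a typical $t_0$ using the $\PCE$ growth of $|d\xi_{a,n}/dt(t_0)|$ and then estimate the $\mu_{f,a}$-mass of the bad set $\{|\psi_{i,n}|<n^{-s}\}$ by an area-type bound such as $(n^{-s}\la^{-n})^2$. The gap is exactly here: $\mu_{f,a}$ is a singular measure with continuous potential, and a disk of radius $r$ does \emph{not} have $\mu_{f,a}$-mass $\lesssim r^{2}$ in general (only a H\"older bound $\lesssim r^{\kappa}$ for some unspecified $\kappa>0$ is available, and you have no control over $\kappa$). Moreover, the $\PCE$ constant $C=C(t_0)$ in $|d\xi_{a,n}/dt(t_0)|\ge C\la^n$ is not uniform in $t_0$, so your local picture does not integrate to a global summable bound without an additional argument you do not supply. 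You correctly identify this uniformity issue as ``the main obstacle'', but the renormalization machinery you allude to (Step~3 of the introduction, Theorem~\ref{renor}) already \emph{assumes} $\PR(s)$ as a hypothesis, so invoking it here would be circular. Finally, in your sketch the condition $s>1/2$ plays no visible role: the exponential factor $\la^{-n}$ would make your claimed bound summable for any $s>0$, which is a sign that the argument is not capturing the right mechanism.
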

There is a finite Zariski open cover $\sW$ of $\La$, such that for every $W\in \sW$ and every  marked critical point $c$, there is an algebraic family $g_{W,c}: W\to \PGL_{2,\C}$
such that for every $t\in W(\C)$, we have $$(g_{W,c}(t))(c(t))=0\in \P^1(\C).$$
Since we only need to prove Theorem \ref{PR} for the restriction of $f$ on each $W\in \sW.$ We may assume that $\sW=\{\La\}$ and write $g_{c}$ for $g_{\La,c}$.

\medskip

For every marked critical point $c$, define an algebraic automorphism $\sigma_c: \La\times \P^1\to \La\times \P^1$ sending $(t, z)$ to $(t, (g_{c}(t))(z)).$
It is clear that $\sigma_c(\Gamma_c)=\La\times\{0\}.$ Recall that $\om_1=\pi_1^*\,\omega_\La$ and $\om_2=\pi_2^* \,\om_{\P^1}.$ Set $\omega:=\omega_1+\omega_2$. 
For every closed algebraic curve $V\subseteq \La\times \P^1$, 
if $\pi_2(V)$ is not a single point, then for every $z\in \P^1(\C)$, we have $\# (\pi_2|_V)^{-1}(z)\leq \deg_{\omega_2}V$. The equality holds for all but finitely many $z\in \P^1(\C).$ There is a constant $C_0>0$ such that for every marked critical point $c$ and every closed algebraic curve $V\subseteq \La\times \P^1$, we have 
\begin{equation}\label{eqc0}\deg_{\omega}\sigma_c(V)\leq C_0\deg_{\omega}V.
\end{equation}
Note that we always have $\deg_{\omega}V\geq 1.$
Let $a$ be a marked point, we have $\deg_{\omega_1}\Gamma_a=1$, hence $\deg_{\omega}\Gamma_a=\deg_{\omega_2}\Gamma_a+1.$ 
Since $\sigma_c(\Gamma_a)$ is the graph of the marked point $\sigma(a): t\in \La\to (g_c(t))(a(t))\in \P^1$, we have $\deg_{\omega}\sigma_c(\Gamma_a)=\deg_{\omega_2}\sigma_c(\Gamma_a)+1.$  By (\ref{eqc0}), we get 
\begin{equation}\label{equmomo2}\deg_{\omega_2}\sigma_c(\Gamma_a)\leq \deg_{\omega}\sigma_c(\Gamma_a)\leq C_0\deg_{\omega}\Gamma_a.
\end{equation}
There is $D>0$ such that for every  marked point $a$, 
\begin{equation}\label{eqheightineq}\deg_{\omega}\Gamma_{f(a)}\leq d\deg_{\omega}\Gamma_{a}+D
\end{equation}
where $d:=\deg f\geq 2$.
It follows that for every $n\geq 0,$
\begin{equation}\label{eqheightineqit}\deg_{\omega}\Gamma_{f^n(a)}\leq \deg_{\omega}\Gamma_{f^n(a)}+D\leq d^n(\deg_{\omega}\Gamma_{a}+D).
\end{equation}
We set  $$\phi(t,z):= d(z,\mathcal{C}_t)^{-1},$$
where $\mathcal{C}_t$ is the critical set of $f_t$.
\begin{lemma}\label{exp1}
	Let $K$ be compact subset of $\La$ and $0<\alpha<2$. Then there exists a constant $C:=C(K,\alpha)>0$ such that for every  marked point  $a$ which is not a marked critical point and for every $A>0$ we have
	$$\int_{K\times \P^1(\C)}\min (A,  \phi^\alpha) \;T_f\wedge [\Gamma_a]\leq C( A+ \deg_{\omega}\Gamma_a).$$
\end{lemma}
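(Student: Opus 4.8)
The plan is to reduce to a single marked critical point normalised to $0$, split off the smooth part of $T_{f}$, and then carry out a local analysis near the finitely many points where the graph of $a$ meets the critical locus. Using $\phi^{\alpha}=d(z,\mathcal{C}_{t})^{-\alpha}\le\sum_{i}d(z,c_{i}(t))^{-\alpha}$, it suffices to treat one marked critical point $c$ with integrand $\min(A,d(z,c(t))^{-\alpha})$. Pushing everything forward by $\sigma_{c}$: the current $T_{f}$ becomes $T':=(\sigma_{c})_{*}T_{f}=(\sigma_{c})_{*}\omega_{2}+dd^{c}g'$, still positive and closed, with $\theta:=(\sigma_{c})_{*}\omega_{2}$ smooth (so $\theta\le C_{K}\omega$ on $K\times\P^{1}$) and $g'=g\circ\sigma_{c}^{-1}$ continuous with $\|g'\|_{L^{\infty}}=\|g\|_{L^{\infty}}=:M_{0}$; the graph $\Gamma_{a}$ becomes $\Gamma_{b}$ with $b=\sigma_{c}(a)$ and $\deg_{\omega}\Gamma_{b}\le C_{0}\deg_{\omega}\Gamma_{a}$ by (\ref{eqc0}); and $d(z,c(t))$ becomes $d(w,0)$, comparable on $K$ to $|w|$ for $|w|\le1$ and bounded below for $|w|\ge1$. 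So I must bound $\int_{K\times\P^{1}}\min(A,d(w,0)^{-\alpha})\,(\theta+dd^{c}g')\wedge[\Gamma_{b}]$.

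For the smooth part $\int\min(A,d(w,0)^{-\alpha})\,\theta\wedge[\Gamma_{b}]$, bound $\theta\le C_{K}(\omega_{1}+\omega_{2})$: the $\omega_{1}$-piece, restricted to the graph, is $\le A\int_{K}\omega_{\La}\le A$, while in the $\omega_{2}$-piece the integrand depends only on $w$, so I push forward by $\pi_{2}$, use $(\pi_{2})_{*}(\omega_{2}\wedge[\Gamma_{b}])\le(\deg_{\omega_{2}}\Gamma_{b})\,\omega_{\P^1}$, and invoke $\int_{\P^{1}}d(w,0)^{-\alpha}\,\omega_{\P^1}<\infty$ — this is precisely where $\alpha<2$ enters — to get $\le C_{\alpha}\deg_{\omega}\Gamma_{a}$. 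Thus the smooth part is $\le C(K,\alpha)(A+\deg_{\omega}\Gamma_{a})$.

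The heart of the matter is the potential part, which after slicing by the graph is $\int_{K}\min(A,d(b(t),0)^{-\alpha})\,dd^{c}(g'\circ\iota_{b})$ with $\iota_{b}(t)=(t,b(t))$. On $\{d(b(t),0)\ge\delta_{0}\}$ (a fixed small $\delta_{0}$) the integrand is $\le\delta_{0}^{-\alpha}$, and $dd^{c}g'\wedge[\Gamma_{b}]=T'\wedge[\Gamma_{b}]-\theta\wedge[\Gamma_{b}]$ has total variation $\le C_{K}\deg_{\omega}\Gamma_{b}$ by Corollary~\ref{corglobelcln} applied with $v\equiv1$ to $T'$ and to $\theta$; so this part is $\lesssim_{K,\alpha}\deg_{\omega}\Gamma_{a}$. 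On the neighbourhood $\{d(b(t),0)<\delta_{0}\}$ of the coincidence set $Z=\{t\in K:b(t)=0\}$ I would use the layer-cake identity $\int\min(A,d(b,0)^{-\alpha})\,dd^{c}(g'\circ\iota_{b})=\int_{0}^{A}\bigl(dd^{c}(g'\circ\iota_{b})\bigr)\bigl(\{d(b(t),0)<\lambda^{-1/\alpha}\}\bigr)\,d\lambda$ (up to the harmless chordal-versus-Euclidean comparison) and estimate the signed mass of $dd^{c}(g'\circ\iota_{b})$ in the shrinking discs around the points of $Z$. The two key points are: (i) since $g'$ is \emph{Hölder} and $|b|<\delta_{0}$ there, $g'\circ\iota_{b}$ differs from the \emph{fixed} function $t\mapsto g'(t,0)$ by a term of small sup-norm, so its $dd^{c}$-mass on a disc of radius $r$ about a point of $Z$ is controlled, by a Chern-Levine-Nirenberg / sub-mean-value estimate, by $1/\log(1/r)$ plus a fixed power of $\delta_{0}$ — uniformly in $a$, because $\|g\|_{L^{\infty}}$, the Hölder exponent, and the measures $\mu_{f,c_{i}}$ are fixed — whereas if the relevant radius fails to be small then $b$ (hence $a$) is uniformly close to $0$ (resp.\ to $c$) on a fixed disc, on which $T'\wedge[\Gamma_{b}]$ is, up to a negligible error, the fixed measure $\mu_{f,c}$ of bounded mass; (ii) the total multiplicity $\sum_{t_{0}\in Z}k(t_{0})$ with which the graph of $a$ meets $\mathcal{C}$ equals the degree of the pullback to $\La$ of the section cutting out the critical locus, hence is $\lesssim\deg_{\omega}\Gamma_{a}$. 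Splitting the $\lambda$-integral at the scales where these two alternatives change then yields $\le C(K,\alpha)(A+\deg_{\omega}\Gamma_{a})$.

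The main obstacle is exactly this last step: a crude bound of the potential part by $\|\min(A,\cdot)\|_{\infty}\cdot\|dd^{c}(g'\circ\iota_{b})\|_{\mathrm{TV}}$ only gives the \emph{product} $A\deg_{\omega}\Gamma_{a}$, and obtaining the \emph{sum} $A+\deg_{\omega}\Gamma_{a}$ forces one to exploit that near each coincidence point either the bad region is genuinely small (uniform Chern-Levine-Nirenberg estimate) or $a$ is so close to $c$ there that the current is essentially the fixed measure $\mu_{f,c}$, the $a$-dependence surviving only through the intersection count $\sum k(t_{0})\lesssim\deg_{\omega}\Gamma_{a}$.
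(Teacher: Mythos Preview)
Your reduction to one marked critical point, the push-forward by $\sigma_c$, and the treatment of the smooth piece $\theta\wedge[\Gamma_b]$ are all correct and coincide with what the paper does. The gap is in the ``potential part''. Decomposing $T'=\theta+dd^{c}g'$ does not help: restricted to the graph, $dd^{c}g'\wedge[\Gamma_b]=T'\wedge[\Gamma_b]-\theta\wedge[\Gamma_b]$, and since you have already bounded the $\theta$-piece, bounding the $dd^{c}g'$-piece is \emph{equivalent} to bounding the original integral against $T'\wedge[\Gamma_b]$. The layer-cake/H\"older dichotomy you sketch is not a proof: the signed mass $(dd^{c}(g'\circ\iota_b))(\{|b|<\lambda^{-1/\alpha}\})$ has no uniform control independent of $a$ from H\"older continuity of $g'$ alone, and the two alternatives you offer are not complementary in any usable way. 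You yourself flag this as ``the main obstacle'', and the heuristic offered does not close it.

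The paper's idea is to \emph{not} decompose $T_f$. After the same normalisation by $\sigma_c$, it applies the Chern--Levine--Nirenberg inequality (Corollary~\ref{corglobelcln}) in one stroke with $T=(\sigma_c)_*T_f$ (continuous potential), $S=[V_c]$ and test function $v=\min(A,B_3^\alpha|z|^{-\alpha})$, obtaining
\[
\int_{\overline{K\times B_0}}\min(A,B_3^\alpha|z|^{-\alpha})\,(\sigma_c)_*T_f\wedge[V_c]\;\le\;C'\int_{U\times 2B_0}\min(A,B_3^\alpha|z|^{-\alpha})\,\omega\wedge[V_c].
\]
This replaces the dynamical current by the K\"ahler form $\omega=\omega_1+\omega_2$, after which the right-hand side is elementary: the $\omega_1$-part is $\le A\int_U\omega_\La$ since $V_c$ is a graph, and the $\omega_2$-part pushes forward to $\P^1$ and is $\le C_0\deg_\omega\Gamma_a\int_{2B_0}|z|^{-\alpha}\omega_{\P^1}<\infty$ (this is where $\alpha<2$ enters). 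In other words, your ``smooth part'' computation \emph{is} the whole computation once CLN has traded $T_f$ for $\omega$; the additive form $A+\deg_\omega\Gamma_a$ then comes for free from the two summands $\omega_1,\omega_2$, with no local analysis near the coincidence set needed at all.
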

\begin{proof}
Pick an open neighborhood $U$ of $K$ which is relatively compact in $\La$.
There is a constant $B_1=B_1(U)>1$ such that for every marked critical point $c$ and $(t,z)\in \pi_1^{-1}(U)$, we have
\begin{equation}\label{equcomparedsigma}B_1^{-1}d((g_c(t))(z),0)\leq d(z, c(t))\leq B_1d((g_c(t))(z),0).
\end{equation}
There is  $\delta_0>0$ and $B_2>1$, such that for every $z\in \P^1(\C)$ with $d(z, 0)\leq \delta_0$, we have 
\begin{equation}\label{equdz0z}B_2^{-1}|z|\leq d(z, 0)\leq B_2|z|.
\end{equation}
Set 
$$\Omega_c:=\left\{(t,z):t\in U\;\text{and}\; d(z,c(t))<\delta_0\right\}.$$
Set $B_0:=B(0,B_1\delta_0)$, then we have $$\sigma_c(\Omega_c)\subseteq U\times B_0.$$
For every marked critical point $c$, set $\phi_c(t,z):=d(z, c(t))^{-1}.$ Then we have $\phi=\max_c \phi_c$ where $c$ is taken over all the $2d-2$ marked critical points.
Set $B_3:=B_1B_2.$ By (\ref{equcomparedsigma}) and (\ref{equdz0z}), for every $(t,z)\in \sigma_c(\Omega_c)$, 
$$\phi_c\circ \sigma_c^{-1}(t,z)\leq B_3|z|^{-1}.$$
Set $V_c:=\sigma_c(\Gamma_a).$
We have
\begin{flalign*}
&\int_{K\times \P^1(\C)}\min (A,  \phi^\alpha) \;T_f\wedge [\Gamma_a]\\=&\int_{K\times \P^1(\C)}\max_c\min (A,  \phi_c^\alpha) \;T_f\wedge [\Gamma_a]\\
\leq &\sum_c\int_{K\times \P^1(\C)}\min (A,  \phi_c^\alpha) \;T_f\wedge [\Gamma_a]\\
=&\sum_c\int_{(K\times \P^1(\C))\setminus \Omega_c}\min (A,  \phi_c^\alpha) \;T_f\wedge [\Gamma_a]+\\
&\sum_c\int_{\sigma_c(((K\times \P^1(\C))\cap\Omega_c)}\min (A,  \phi_c^\alpha\circ \sigma_c^{-1}) \;{\sigma_c}_*(T_f)\wedge [V_c]\\
\leq& (2d-2)\delta_0^{-\alpha}\int_{K\times \P^1(\C)}T_f\wedge [\Gamma_a]+\\
&\sum_c\int_{K\times B_0}\min (A,  B_3^{\alpha}|z|^{-\alpha}) \;{\sigma_c}_*(T_f)\wedge [V_c].
\end{flalign*}
Since $T_f$ has continuous potential, by Corollary \ref{corglobelcln}, there is a constant $c'$ depending on $K$ such that 
\begin{equation}\label{eqestoutom}\int_{K\times \P^1(\C)}T_f\wedge [\Gamma_a]\leq c'\deg_{\omega}\Gamma_a.
\end{equation}
 So we only need to show that for every marked critical point $c$, 
$$\int_{\overline{K\times B_0}}\min (A,  B_3^{\alpha}|z|^{-\alpha}) \;{\sigma_c}_*(T_f)\wedge [V_c]\leq  C_1( A+\deg_{\omega} \Gamma_a)$$
for some constant $C_1$ depending on $K.$
Since  $$\min(A,B_3^{\alpha}|z|^{-\alpha})=e^{-(\max (-\log A,\alpha \log |z|-\alpha\log B_3))}$$ is a p.s.h. function, and ${\sigma_c}_*(T_f)$ has continuous potential, by Corollary \ref{corglobelcln}, there is a constant $C'$ 
such that 
\begin{align*}
		&\int_{\overline{K\times B_0}}\min (A,  B_3^{\alpha}|z|^{-\alpha}) \;{\sigma_c}_*(T_f)\wedge [V_c]\\ \leq& C'\left(\int_{U\times 2B_0}\min (A,  B_3^{\alpha}|z|^{-\alpha})  \;  \omega\wedge [V_c]\right)
	\end{align*}
We only need to bound $\int_{U\times 2B_0}\min (A,  B_3^{\alpha}|z|^{-\alpha})  \;  \omega\wedge [V_c].$
 It suffices to show 
	\begin{equation}\label{3.4}
		\int_{U\times 2B_0}\min (A,  B_3^{\alpha}|z|^{-\alpha}) [V_c]\wedge \om_1\leq C_2 A
	\end{equation}
	and
	\begin{equation}\label{3.5}
		\int_{U\times 2B_0}\min (A,  B_3^{\alpha}|z|^{-\alpha})[V_c]\wedge \om_2\leq C_3\deg_{\om}\Gamma_a
	\end{equation}
	for some constants $C_2>0$ and $C_3:=C_3(\alpha)>0$. 

\par Since $V_c$ is a graph, we have 
	\begin{equation*}
		\int_{U\times 2B_0}\min (A,  B_3^{\alpha}|z|^{-\alpha}) [V_c]\wedge \om_1 \leq A\int_{U\times 2B_0}[V_c]\wedge \om_1=
		A\int_{U}\om_{\La},
	\end{equation*}
hence  (\ref{3.4}) is true.
On the other hand, since $a$ is not a marked critical point and the function $|z|^{-\alpha}$ is Lebesgue integrable when $0<\alpha<2$, we have 
	\begin{align*}
		\int_{U\times 2B_0}\min (A,  B_3^{\alpha}|z|^{-\alpha})[V_c]\wedge \om_2\leq & B_3^{\alpha}\int_{U\times 2B_0}|z|^{-\alpha}[V_c]\wedge \om_2\\
		\leq & B_3^{\alpha}\int_{2B_0}|z|^{-\alpha}({\pi_2}_*[V_c])\wedge \om_{\P^1}.
	\end{align*}
	By (\ref{equmomo2}), $\deg_{\om_2}V_c\leq C_0\deg_{\om}\Gamma_a$, we have
	$$\int_{2B_0}|z|^{-\alpha}({\pi_2}_*[V_c])\wedge \om_{\P^1}\leq C_0\deg_{\om}\Gamma_a\int_{2B_0}|z|^{-\alpha}\om_{\P^1},$$
	hence (\ref{3.5}) is true. This finishes the proof.
\end{proof}

\medskip
Let $a$ be a marked point.
For every $n\geq 0$, set  $\psi_{ n}(t):=\phi(t,f_t^n(a(t)))$. Recall that $\mu_{f,a}=(\pi_1)_\ast (T_f\wedge [\Gamma_a]).$ 
We note that $\mu_{f,f^n(a)}=d^n\mu_{f,a}$ for every $n\geq 0$. 
A direct corollary of Lemma \ref{exp1} and Inequality (\ref{eqheightineqit}) is the following:


\begin{cor}\label{exp2}
	Let $K$ be compact subset of $U$ and $0<\alpha<2$. Then there exists a constant $C:=C(K,\alpha)>0$ such that for every $n\geq 0,$ we have
	$$\int_{K} \min (A, \psi_n^{\alpha}) \;d\mu_{f,a}\leq C(d^{-n}A+\deg_{\om} \Gamma_a+D).$$
\end{cor}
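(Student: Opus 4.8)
The plan is to obtain Corollary~\ref{exp2} as a formal consequence of Lemma~\ref{exp1}, applied not to $a$ itself but to the marked point $\xi_{a,n}=f^n(a)$, combined with the degree estimate~(\ref{eqheightineqit}) and the scaling identity $\mu_{f,f^n(a)}=d^n\mu_{f,a}$.

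First I would fix $n\geq 0$ and put $b:=\xi_{a,n}$, so that $b(t)=f^n_t(a(t))$ and $\Gamma_b=f^n(\Gamma_a)$ is again a graph. On $\Gamma_b$ the second coordinate $z$ equals $b(t)$, hence the function $\phi(t,z)=d(z,\mathcal{C}_t)^{-1}$ restricts along $\Gamma_b$ to $\phi(t,b(t))=\psi_n(t)$. Since $T_f\wedge[\Gamma_b]$ is a measure carried by $\Gamma_b$ and $\min(A,\psi_n^{\alpha})$ depends only on $t$, the definition $\mu_{f,b}=(\pi_1)_\ast(T_f\wedge[\Gamma_b])$ gives
\begin{equation*}
\int_K \min(A,\psi_n^{\alpha})\,d\mu_{f,b}=\int_{\pi_1^{-1}(K)}\min(A,\phi^{\alpha})\,T_f\wedge[\Gamma_b]=\int_{K\times\P^1(\C)}\min(A,\phi^{\alpha})\,T_f\wedge[\Gamma_b].
\end{equation*}
Applying Lemma~\ref{exp1} to $b$ — legitimate as long as $b$ is not a marked critical point, and the degenerate case $f^n_t(a(t))\in\mathcal{C}_t$ for all $t$ will not occur for the (active) marked points we use, since an active critical point is not preperiodic — yields a constant $C=C(K,\alpha)>0$, independent of $n$ and of $A$, with
\begin{equation*}
\int_{K\times\P^1(\C)}\min(A,\phi^{\alpha})\,T_f\wedge[\Gamma_b]\leq C\bigl(A+\deg_{\omega}\Gamma_b\bigr).
\end{equation*}

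Now~(\ref{eqheightineqit}) gives $\deg_{\omega}\Gamma_b=\deg_{\omega}\Gamma_{f^n(a)}\leq d^n(\deg_{\omega}\Gamma_a+D)$, and dividing the two displays above by $d^n$ using $\mu_{f,b}=\mu_{f,f^n(a)}=d^n\mu_{f,a}$ produces
\begin{equation*}
\int_K \min(A,\psi_n^{\alpha})\,d\mu_{f,a}=d^{-n}\int_K \min(A,\psi_n^{\alpha})\,d\mu_{f,b}\leq C\bigl(d^{-n}A+\deg_{\omega}\Gamma_a+D\bigr),
\end{equation*}
which is exactly the asserted inequality. There is essentially no genuine obstacle here: the only points to check with a little care are the identification in the first display — that pushing $T_f\wedge[\Gamma_b]$ forward along $\pi_1$ converts the integral of a function of $t$ into the integral over $\La\times\P^1(\C)$ of the corresponding function restricted to the graph $\Gamma_b$ — and that the constant supplied by Lemma~\ref{exp1} is genuinely uniform in $n$. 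The statement is, as the paper says, essentially a direct corollary.
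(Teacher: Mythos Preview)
Your proof is correct and follows exactly the approach the paper indicates: apply Lemma~\ref{exp1} to the marked point $f^n(a)$, use the degree bound~(\ref{eqheightineqit}), and divide through by $d^n$ via the identity $\mu_{f,f^n(a)}=d^n\mu_{f,a}$. Your remark on the hypothesis that $f^n(a)$ not be a marked critical point is also appropriate; the paper leaves this implicit, and it is harmless in the intended application.
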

\medskip
\begin{lemma}\label{markov}
	Let $K$ be compact subset of $U$ and $0<\alpha<2$. Then there exists a constant $C(K,\alpha)>0$ such that for every $n\geq 0$, we have
	$$\mu_{f,a}(t\in K: \psi_{n}^\alpha (t)\geq A)\leq C(d^{-n}+A^{-1}(\deg_{\om} \Gamma_a+D)).$$
\end{lemma}
\begin{proof}
	By Markov inequality  and Corollary \ref{exp2} we have
	\begin{align*}
		\mu_{f,a}(t\in K: \psi^\alpha _{ n}\geq A)&=\mu_{f,a}(t\in K: \min (A, \psi^\alpha _{ n})\geq A)\\&
		\leq A^{-1} \int_{K} \min (A, \psi_n^{\alpha}) \;d\mu_{f,a}
		\\&\leq  C(d^{-n}+A^{-1}(\deg_{\om} \Gamma_a+D)).
	\end{align*}
\end{proof}
\medskip
\proof[Proof of Theorem \ref{PR}]
 It suffices to prove for every $s>1/2$ and for every compact subset $K\subseteq \La$, there exists a set $E(K,s)\subseteq K$ satisfying $\mu_{f,a}(K\setminus E)=0$ such that every $t_0\in E$ satisfies $\PR(s)$. 
 For every $n\geq 1,$ set $$F_n:=\left\{t\in K:d(f_t^{n}(a(t)), \mathcal{C}_t)\leq n^{-s}\right\}.$$ Pick a constant  $\alpha\in (1/s, 2)$. By Lemma \ref{markov}, for $n$ large enough, there exists a constant $C:=C(K,\alpha)>0$ such that
\begin{align*}
\mu_{f,a}(F_n)&=\mu_{f,a}(t\in K: \psi_{n}(t)^{\alpha}\geq  n^{\alpha s})
\\&\leq C(d^{-n}+n^{-\alpha s}(\deg_{\om}\Gamma_a+D))
\end{align*}
This implies that $$\sum_{n=1}^\infty \mu_{f,a}(F_n)<\infty,$$
and we conclude the proof by  the Borel-Cantelli lemma.
\endproof

Combine Theorem \ref{DGV} with Theorem \ref{PR}, we have:
\begin{pro}\label{nuh}
 Let $f$ be an algebraic family of rational maps as in (\ref{family}) and $a$ be  an active marked point.  Let $1<\la<d^{1/2}$ and $s>1/2$, then $\mu_{f,a}$-a.e. $t\in \La$ satisfies $\CE^*(\la)$, $\PCE(\la)$ and $\PR(s)$,  in particular $a(t)\in \sJ(f_t).$
\end{pro}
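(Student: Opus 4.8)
The plan is to deduce Proposition~\ref{nuh} by simply combining the two main theorems of this section with the elementary observations already in place. The statement has three assertions about $\mu_{f,a}$-a.e.\ $t\in\La$ (namely $\CE^*(\la)$, $\PCE(\la)$, $\PR(s)$) plus the conclusion $a(t)\in\sJ(f_t)$, so I would prove them one at a time and then take the intersection of the four full-measure sets.

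First, for $\PCE(\la)$: since $1<\la<d^{1/2}$, Theorem~\ref{DGV}(i) says precisely that $\PCE(\la)$ holds for $\mu_{f,a}$-a.e.\ $t$. Next, for $\CE^*(\la)$: pick any $\la'$ with $\la<\la'<d^{1/2}$; by the previous paragraph $\PCE(\la')$ holds a.e., and then Theorem~\ref{DGV}(ii) (applied with $\la_0=\la'$) gives $\CE^*(\la)$ at each such $t$, since $1<\la<\la'$. (One should note that Theorem~\ref{DGV}(ii) already incorporates Lemma~\ref{precri}, so no separate argument about the orbit avoiding $\mathcal C_t$ is needed here.) For $\PR(s)$: this is exactly the content of Theorem~\ref{PR}, valid for every $s>1/2$. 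Intersecting the three full-measure sets (a countable intersection is not even needed—just three sets) yields a set of full $\mu_{f,a}$-measure on which all three conditions hold simultaneously.

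Finally, for the conclusion $a(t)\in\sJ(f_t)$: at a parameter $t$ satisfying, say, $\CE^*(\la)$ with $\la>1$, the forward orbit of $a(t)$ cannot lie in the Fatou set. Indeed, $\CE^*(\la)$ forces $|df_t^n(f_t^N(a(t)))|\to\infty$ exponentially, and derivatives along orbits in the Fatou set are bounded (on an attracting or parabolic basin or a rotation domain the spherical derivative of $f_t^n$ stays bounded on compact subsets, and the orbit of a non-exceptional point stays in a compact subset of its Fatou component or converges to a cycle); hence $f_t^N(a(t))\in\sJ(f_t)$, and since the Julia set is totally invariant, $a(t)\in\sJ(f_t)$. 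Alternatively one can invoke Lemma~\ref{precri} together with the fact that a Collet--Eckmann point has $a(t)\in\supp\mu_{f,a}$, and then quote the standard fact that an active marked point whose orbit avoids the critical set and has positive lower Lyapunov exponent lies in the Julia set.

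The proof is therefore entirely a matter of bookkeeping: the real work has already been done in Theorem~\ref{DGV} (imported from De~Th\'elin--Gauthier--Vigny) and in Theorem~\ref{PR} (proved above via Lemma~\ref{exp1}, Corollary~\ref{exp2}, Lemma~\ref{markov} and Borel--Cantelli). The only point requiring a moment's care is the choice of an intermediate exponent $\la'\in(\la,d^{1/2})$ when passing from $\PCE$ to $\CE^*$, so that Theorem~\ref{DGV}(ii) applies with strict inequalities; everything else is immediate. I do not anticipate any genuine obstacle in this proposition itself—it is a clean corollary.
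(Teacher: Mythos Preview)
Your proposal is correct and follows essentially the same approach as the paper, which simply states ``Combine Theorem~\ref{DGV} with Theorem~\ref{PR}'' and gives no further proof. Your care in choosing an intermediate exponent $\la'\in(\la,d^{1/2})$ to pass from $\PCE$ to $\CE^*$ via Theorem~\ref{DGV}(ii), and your explicit justification that $\CE^*(\la)$ forces $a(t)\in\sJ(f_t)$, are details the paper leaves implicit but which are exactly the right points to verify.
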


\medskip

\section{Distortion of non-injective maps}\label{plough 1}
\begin{defi}
A rational map $g:\P^1(\C)\to \P^1(\C)$ is called {\em Topological Collet-Eckmann} $\TCE(\la)$ for some $\la>1$ if  there exists $\delta_0>0$ such that for each $n\geq 0$ and $z\in \sJ(g)$, we have 
$$\diam W_n\leq \la^{-n},$$
where $W_n$ is any connected component of $g^{-n}(B(z,\delta_0))$, and $\sJ(g)$ is the Julia set of $g$.
\end{defi}

The aim of Section \ref{plough 1}, \ref{plough 2} and \ref{harvest} is to prove the following theorem.  Recall that for a holomorphic family of rational maps $f:\D\times \P^1\to \D\times \P^1$  as in (\ref{family}) and for  a marked point $a$,  we let $\xi_{a,n}:\D\to \P^1(\C)$ be the map $\xi_{a,n}(t):=f_t^n(a(t))$. 
\begin{thm}\label{renor}
Let $f:\D\times \P^1\to \D\times \P^1$ be a holomorphic family of rational maps as in (\ref{family}) and $a$ be a marked point.  Assume $0\in \D$ satisfies
\begin{points}
	\item $\PCE(\la_0)$ for some $\la_0>1$;
	\item $\PR(s)$ for some $s>0$;
	\item $f_{0}$ is $\TCE(\la)$ for some $\la>1$;
	
\end{points} 
Then for every $\ep>0$,  there exists a subset $A\subseteq \Z_{\geq 0}$ with $\underline{d}(A)>1-\ep$ and  $0<\rho_m<1, m\in A$
 such that the family $\{h_m, m\in A\}$ of maps, 
\begin{align*}
h_m:\D&\to \P^1(\C),
\\t&\mapsto \xi_{a,m}\left( \rho_m t\right)
\end{align*}
 form a normal family, for which every limit map is non-constant.
 Moreover, we have $\rho_m\to 0$ as $m\to \infty.$
\end{thm}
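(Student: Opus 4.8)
\emph{Overall strategy.} The plan is to take for the renormalization maps $h_m(t)=\xi_{a,m}(\rho_m t)$, with the rescaling factor $\rho_m$ chosen --- in the final, ``harvest'' step --- essentially as the largest radius for which the image $\xi_{a,m}(\rho_m\D)$ still has a controlled shape at a fixed scale $\delta_0$, measured by the upper radius and the proper lower radius introduced in Section~\ref{plough 1}. Since $f_0$ is $\TCE(\la)$ it is in particular Collet--Eckmann along $\sJ(f_0)$; the orbit $z_n:=f_0^n(a(0))$ then lies in $\sJ(f_0)$ with $\|df_0^n(z_0)\|$ growing exponentially, and $\PCE(\la_0)$ gives, through Lemma~\ref{relation} and Lemma~\ref{trans}, the transversality identity $\xi_{a,m}'(0)=df_0^m(z_0)\,(\gamma+o(1))$ for a fixed $\gamma\neq0$. (Lemma~\ref{relation} requires the $f_0$-orbit of $a(0)$ to avoid $\sC_0$; since $\PCE(\la_0)$ forces $0\in\supp\mu_{f,a}$, Lemma~\ref{precri} shows this holds after replacing $a$ by a forward iterate, which only shifts the index $m$ and is harmless.) Consequently one will be able to take $\rho_m\asymp 1/\|df_0^m(z_0)\|$, so that $\rho_m\to0$ and $\|dh_m(0)\|\asymp|\gamma|$; these are exactly the properties that will give the last assertion and the non-constancy of all limits. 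Everything else is a purely phase-space matter: controlling the shape of $\xi_{a,m}(\rho_m\D)$ at all intermediate times.

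\emph{Phase-space step (Sections~\ref{plough 1}--\ref{plough 2}).} Fix a candidate time $m$ and a radius $\rho_m$. Working first with $f_0$ alone, set (in a chart at $z_0$) $\iota_m(t):=z_0+a'(0)\rho_m t$ and consider the holomorphic maps $\theta_{m,n}:=f_0^n\circ\iota_m:\D\to\P^1$ for $0\le n\le m$; they satisfy $\theta_{m,n}(0)=z_n$ and $\theta_{m,n+1}=f_0\circ\theta_{m,n}$, and their critical points are exactly the $t$ with $\iota_m(t)\in f_0^{-k}(\sC_0)$ for some $k<n$, so in general these maps are neither injective nor unramified. This is why Section~\ref{plough 1} works with the upper radius and the proper lower radius of a possibly non-injective holomorphic map at a point, the ``proper'' one engineered to survive small perturbations of the map. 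I would then declare $m$ \emph{good}, $m\in A$, exactly when the maps $\theta_{m,n}$, $n\le m$, acquire only a uniformly bounded number of critical points --- equivalently, when the orbit $z_n$ makes only a bounded-total-cost family of excursions into the shrinking neighbourhoods of $\sC_0$ visible at the scale $\rho_m\|df_0^n(z_0)\|$. Using $\PR(s)$ to bound the depth $d(z_n,\sC_0)\ge n^{-s}$ of each excursion, and the Collet--Eckmann expansion together with the backward contraction packaged in $\TCE$ to bound the length and cost of the recovery after it, one shows that the bad $m$ form a set of upper density $<\ep$, i.e. $\underline{d}(A)>1-\ep$. For $m\in A$, the distortion estimates of Koebe type in Section~\ref{plough 2} for maps with a bounded number of critical points propagate control from $\theta_{m,0}$, whose image is a near-round disk about $z_0$ of radius $\asymp\rho_m|a'(0)|$, to every $\theta_{m,n}$, whose image is, up to bounded distortion, a near-round neighbourhood of $z_n$ with proper lower radius and upper radius both $\asymp\rho_m\|df_0^n(z_0)\|$, as long as this stays $\le\delta_0$.

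\emph{Harvest (Section~\ref{harvest}).} Pin $\rho_m$ down so that the controlled scale at time $m$, namely $\rho_m\|df_0^m(z_0)\|$, equals $\delta_0$; since $\|df_0^m(z_0)\|\to\infty$ this forces $\rho_m\to0$. A binding argument then compares $h_m(t)=f_{\rho_m t}^m(a(\rho_m t))$ with $\theta_{m,m}(t)=f_0^m(\iota_m(t))$: the $O(\rho_m^2)$ discrepancy between $a(\rho_m t)$ and $\iota_m(t)$ and the $O(\rho_m)$ discrepancy between $f_{\rho_m t}$ and $f_0$ remain under control after $m$ steps precisely because the intermediate maps have a bounded number of critical points and because the proper lower radius is perturbation-stable, so that $h_m(\D)$ still has upper radius $\lesssim\delta_0$ and proper lower radius $\gtrsim\delta_0$ about $z_m$. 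The upper-radius bound makes the spherical derivatives of $\{h_m\}_{m\in A}$ locally uniformly bounded on $\D$, so $\{h_m\}_{m\in A}$ is normal by Marty's criterion; the proper-lower-radius bound together with the transversality computation at $0$ gives $\|dh_m(0)\|\ge c>0$ uniformly in $m\in A$, and since holomorphic convergence of a normal family is $C^1_{\mathrm{loc}}$, no subsequential limit can be constant. Finally $\rho_m\to0$, as claimed. (The value of $\rho_m$ is only determined up to a bounded factor; cf. Proposition~\ref{proresunique}, and cf. Proposition~\ref{measure} for how $\mu_{f_0}$ is recovered from $\mu_{\bif}$ through the family $h_m$.)

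\emph{Main obstacle.} The crux is the phase-space step: one needs a notion of ``size'' for the ramified, non-injective maps $\theta_{m,n}$ that is simultaneously controllable under one step of the dynamics even across a critical approach, robust under the $O(\rho_m)$ perturbation coming from letting $t$ range over $\D$ (the reason for the \emph{proper} lower radius, rather than the naive one), and compatible with the Riemann--Hurwitz bookkeeping that keeps the number of critical points bounded. Tied to this is the density estimate $\underline{d}(A)>1-\ep$: showing that the total cost of the critical approaches along $[0,m]$ is bounded for a set of $m$ of density arbitrarily close to $1$ is where $\PR(s)$ (the depth of each excursion) and the $\TCE$/Collet--Eckmann expansion (the length and cost of the ensuing recovery) have to be combined.
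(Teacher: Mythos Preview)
Your outline is correct and follows essentially the same route as the paper: transversality via Lemma~\ref{trans}, a good-time set $A$ on which the pullback $f_0^m$ has a uniformly bounded number of critical points, Koebe-type distortion for such maps, and a binding argument tracking upper and proper lower radii through the $O(\rho_m)$ parameter perturbation. Two small corrections to your bookkeeping: the paper takes $\rho_m=\alpha\,\diam W_0(m)$ (the diameter of the pullback of $B(f_0^m(z_0),\delta_0)$) rather than $\delta_0/\|df_0^m(z_0)\|$, and the density estimate $\underline d(A)>1-\ep$ comes from $\TCE$ together with the Denker--Przytycki--Urba\'nski bound (Lemma~\ref{dpu}) via an interval-covering argument (Lemma~\ref{pullback}), \emph{not} from $\PR(s)$; the $\PR(s)$ hypothesis enters only later, in Lemmas~\ref{pr1}--\ref{pr2}, to localize the critical encounters near the end of the orbit segment and to prove the summability $\sum_k \diam W_0/\diam W_k<C$ that makes the binding error absorbable.
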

\par The proof of Theorem \ref{renor} is given in Section \ref{harvest}. In Section \ref{plough 1}, \ref{plough 2}  we do some preparations. 
\medskip
\par Section \ref{plough 1} is devoted to proving some distortion properties for non-injective holomorphic maps. In this section the distances on $\P^1(\C)$ are computed with respect to the spherical metric.  Note that under the spherical metric, $\diam(\P^1(\C))=\pi.$ For every $a\in \P^1(\C)$, the unique point $a^-$ satisfying $d(a,a^-)=\pi$ is the 
antipodal point of $a$. We have $0^-=\infty$.
When $a\not\in\{0,\infty\}$, we have $a^{-}=-\overline{a}^{-1}.$

\subsection{Upper radius and proper lower radius}
Let $\Omega$ be a connected Riemann surface  and $x\in \Omega$.  
Let $h:\Omega\to \P^1(\C)$ be a holomorphic map. 
\par We say a connected open neighborhood $U$ of $h(x)$ in $\P^1(\C)$ is \emph{properly in the image of $(h,\Omega,x)$} and write $U\subseteq_p h(\Omega,x)$, if there is a connected open neighborhood $W$ of $x$ in $\Omega$ such that
$h|_{W}: W\to U$ is proper. 

\par Easy to check the following properties.
\begin{pro}\label{proproinima}
\begin{points}
\item If $U\subseteq_p h(\Omega,x)$, then for every connected open neighborhood $V$ of $h(x)$ contained in $U$, $V\subseteq_p h(\Omega,x).$
\item Let $\Omega'\subseteq \Omega$ be a connected open neighborhood of $x$, if $U\subseteq_p h(\Omega',x)$, then $U\subseteq_p h(\Omega,x).$
\end{points}
\end{pro}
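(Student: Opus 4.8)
The plan is to reduce everything to two standard facts from point-set topology: (a) for a proper map $g: X \to Y$ and any subspace $Z \subseteq Y$, the restriction $g|_{g^{-1}(Z)} : g^{-1}(Z) \to Z$ is proper; and (b) the restriction of a proper map to a clopen subset of its domain is again proper. Granting these, both assertions are short.

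For (ii) I would simply observe that a witness for $U \subseteq_p h(\Omega',x)$ is already a witness for $U \subseteq_p h(\Omega,x)$: if $W$ is a connected open neighborhood of $x$ in $\Omega'$ with $h|_W : W \to U$ proper, then since $\Omega'$ is open in $\Omega$, $W$ is also a connected open neighborhood of $x$ in $\Omega$, and the same proper map does the job. No further work is needed.

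For (i), starting from a connected open $W \ni x$ with $h|_W : W \to U$ proper and a connected open neighborhood $V \subseteq U$ of $h(x)$, I would first pass to $W_0 := W \cap h^{-1}(V)$; by fact (a) the map $h|_{W_0} : W_0 \to V$ is proper. Then I would replace $W_0$ by the connected component $W'$ of $W_0$ containing $x$. Since $\Omega$ is a Riemann surface, connected components of open sets are open, so $W'$ is a connected open neighborhood of $x$ in $\Omega$; moreover $W'$ is closed in $W_0$, being a component, so by fact (b) the map $h|_{W'} : W' \to V$ remains proper. Hence $W'$ witnesses $V \subseteq_p h(\Omega,x)$.

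I do not expect any genuine obstacle here; the only points calling for a little care are verifying the two properness-under-restriction facts (for (a): $(h|_{W_0})^{-1}(K) = (h|_W)^{-1}(K)$ is compact for $K \subseteq V$ compact; for (b): $(h|_{W'})^{-1}(K) = (h|_{W_0})^{-1}(K) \cap W'$ is a closed subset of a compact set, hence compact) and remembering that passing to the connected component through $x$ preserves openness. If one wishes, one may additionally note that when $h$ is non-constant it is an open map, so $h(W')$ is open and closed in the connected set $V$ and therefore equals $V$, although the definition of $\subseteq_p$ does not require surjectivity.
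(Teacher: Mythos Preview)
Your argument is correct. The paper does not give a proof of this proposition; it simply prefaces the statement with ``Easy to check the following properties.'' Your write-up supplies exactly the routine verification the authors omit, and the approach --- passing to $(h|_W)^{-1}(V)$ and then to the connected component through $x$, using that preimages of compact sets stay compact under these restrictions --- is the natural one.
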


The following criterion is useful.
\begin{lem}\label{lempropercriterion}
Assume that $\Omega$ is compactly contained in a Riemann surface $S$.
Assume that $h$ is not constant and extends to a neighborhood of $\overline{\Omega}.$
Let $V$ be a connected open neighborhood of $h(x).$
If there is a connected open neighborhood $W$ of $x$ and $D>0$ such that for every $y\in V$, it has exactly $D$ preimages under $h|_W$ counted with multiplicities, then 
$V\subseteq_p h(\Omega, x).$
\end{lem}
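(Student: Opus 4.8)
The plan is to reduce the statement to the holomorphic index theorem (argument principle) applied to the boundary of a suitable domain, exploiting that $h$ extends past $\overline{\Omega}$. First I would choose a connected open neighborhood $W'$ of $x$ with $\overline{W'}\subseteq W$ and $\overline{W'}$ compact in $S$, small enough that $h(\overline{W'})\subseteq V'$ for some connected open $V'\subseteq V$ to be pinned down, and such that $h|_{W'}$ still has the property that every point of $V'$ has exactly $D$ preimages in $W$ counted with multiplicity (this uses Proposition \ref{proproinima}(i) and the fact that the preimage count cannot jump down as one shrinks $V$, only the domain changes). The key is then to locate a connected open neighborhood $V''$ of $h(x)$ inside $V'$ so that $h^{-1}(V'')\cap W'$ is, after discarding components not containing $x$, a domain on which $h$ is proper of the correct degree.

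Here is the core of the argument I would carry out. Since $h$ is nonconstant and extends holomorphically to a neighborhood of $\overline{\Omega}\supseteq \overline{W'}$, the set of critical values of $h|_{\overline{W'}}$ together with $h(\partial W')$ is a closed set not containing a full neighborhood of $h(x)$ in general — but it may contain $h(x)$. To avoid this I would instead argue by an open-mapping/properness bootstrap: let $U$ be the connected component containing $x$ of $h^{-1}(V)\cap W$; I claim $h|_U:U\to V$ is proper provided $V$ is chosen so that $\partial U \subseteq h^{-1}(\partial V)$, which forces $h(U)=V$ by openness and connectedness, and then properness follows because a sequence in $U$ with $h$-images converging into $V$ cannot escape to $\partial U$ (its image would have to approach $\partial V$). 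The hypothesis that \emph{every} $y\in V$ has exactly $D$ preimages in $W$ counted with multiplicity is what guarantees that no mass of the fiber leaks out of the component $U$ as $y$ varies: the total fiber degree over $V$ is the locally constant integer $D$ on the component $U$ plus nonnegative contributions from other components, and continuity of this count forces the contribution of $U$ to be constant, hence $h|_U$ is a branched cover of some fixed degree $D'\le D$, in particular proper. This gives $U\subseteq_p h(\Omega,x)$ after shrinking, i.e. $V\subseteq_p h(\Omega,x)$ up to replacing $V$ by this component's image; one then uses Proposition \ref{proproinima}(i) to descend to the stated $V$ if necessary.

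The main obstacle I expect is the bookkeeping around the component $U$ of $h^{-1}(V)\cap W$: one must verify that $\partial U\subseteq h^{-1}(\partial V)$, i.e. that $U$ does not touch $\partial W$, which is exactly where compact containment $\Omega\Subset S$ and the extension of $h$ past $\overline\Omega$ are used — without them a boundary point of $W$ could be an interior point of the domain where $h$ is defined, breaking properness. A clean way to sidestep part of this is to first pass to a slightly smaller $V$ so that $h^{-1}(\overline V)\cap \overline W$ stays away from $\partial W$; this is possible because $h(x)\in V$ is an interior value and the hypothesized exact count $D$ over all of $V$ prevents the fiber from accumulating on $\partial W$ as $y\to h(x)$. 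Once the component $U$ is pinned down with $\overline U\subseteq W$ and $\partial U\subseteq h^{-1}(\partial V)$, properness of $h|_U\colon U\to V$ is immediate from the standard characterization (preimages of compacts are compact), and the proof concludes by definition of $\subseteq_p$.
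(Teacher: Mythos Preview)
Your core idea is right and matches the paper's: the exact count $D$ over all of $V$ prevents fibers of $h|_W$ from escaping to $\partial W$, and this forces properness. But the write-up buries this under unnecessary shrinking and contains two genuine missteps.

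First, the ``mass conservation'' paragraph is circular. You argue that the contribution of the component $U$ to the fiber degree is locally constant, hence $h|_U$ is a branched cover; but local constancy of the fiber count over a component is precisely what fails when the component touches $\partial W$, i.e.\ when $h|_U$ is not proper. So this step assumes what you are trying to prove.

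Second, the suggestion to ``pass to a slightly smaller $V$'' and then invoke Proposition~\ref{proproinima}(i) to recover the original $V$ is backwards: that proposition lets you go from a larger neighborhood to a smaller one, not the reverse. Likewise, it is not enough to prevent accumulation on $\partial W$ ``as $y\to h(x)$''; you need it for every $y\in V$.

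The paper's argument is the direct version of the one sentence you did get right. Reduce to $W=\Omega$ (via Proposition~\ref{proproinima}(ii)), set $U:=h^{-1}(V)\cap W$, and suppose $h|_U$ is not proper. Then some compact $K\subseteq V$ has $h^{-1}(K)\cap W$ non-compact; since $h^{-1}(K)\cap\overline W$ is compact, there is $z\in\partial W$ with $h(z)\in K\subseteq V$. Now $h(z)$ already has $D$ preimages $z_1,\dots,z_s$ (with multiplicities $m_i$, $\sum m_i=D$) in $W$, none equal to $z$. Choose pairwise disjoint neighborhoods of $z,z_1,\dots,z_s$; for $w\in W$ close to $z$, the value $h(w)\in V$ has $m_i$ preimages near each $z_i$ and the additional preimage $w$ near $z$, giving at least $D+1$ preimages in $W$ --- contradiction. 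Hence $h|_U$ is proper, and so is its restriction to the component $U_0\ni x$, which gives $V\subseteq_p h(\Omega,x)$.

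In short: drop the preliminary shrinking, and turn your sentence ``the hypothesized exact count $D$ over all of $V$ prevents the fiber from accumulating on $\partial W$'' into this explicit $D+1$ contradiction. That is the whole proof.
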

\proof
We may assume that $h$ extends to $S$ and $W=\Omega.$
Set $U:=h|_{\Omega}^{-1}(V)$ and let $U_0$ be the connected component of $U$ containing $x.$
We only need to show that $h|_{U_0}:U_0\to V$ is proper. So we only need to show that $h|_U$ is proper. 

If $h|_U$ is not proper, then there is a compact subset $K$ of $V$ such that $$h^{-1}(K)\cap \Omega \neq h^{-1}(K)\cap \overline{\Omega}.$$
Pick $z\in \partial\Omega\cap h^{-1}(K).$ Let $z_1,\dots z_s$ be the preimages of $h(z)$ under $h|_{\Omega}$ with multiplicities $m_1,\dots,m_s$.
Then $\sum_{i=1}^s m_i=D.$ Pick open neighborhoods $W_i$ of $z_i$ in $\Omega$ and $W_0$ of $z$ such that $W_i\cap W_j=\emptyset$ for $i\neq j.$
Pick $w\in W_0\cap \Omega$ sufficiently close to $z.$ Then $h(w)$ has exactly $m_i$ preimages in $W_i, i=1,\dots,s$ counted with multiplicities and has a preimage $w$ in $W_0\cap \Omega.$
So $h(w)$ has at least $\sum_{i=1}^s m_i+1=D+1$ preimages in $\Omega$ counted with multiplicities. This is a contradiction.
\endproof

\defi
Assume that $h$ is not constant.
We define 
the \emph{upper radius} of $(h,\Omega,x)$ to be
$$ \rho^\ast(h, \Omega, x):=\inf\left\{ r\geq 0: h(\Omega)\subseteq B(h(x),r)\right\}$$
and the \emph{proper lower radius} of $(h,\Omega,x)$ to be
$$\rho_\ast(h,\Omega, x):=\sup\left\{ r\geq 0: B(h(x),r)\subseteq_p h(\Omega,x) \right\}.$$
\enddefi

For convenience,  we define $\rho_\ast(h, \Omega, x)=\rho^\ast(h, \Omega, x):=0$ when $h$ is a constant map.
It is clear that $\rho^\ast(h, \Omega, x)\geq \rho_\ast(h,\Omega, x).$

\medskip

The above definition generalizes the usual notion of upper and lower radius for connected open subsets in $\P^1(\C).$
Let $U$ be a connected open subset of $\P^1(\C)$ and $a\in U.$
The \emph{upper radius} of $(U,a)$ is $$\rho^*(U,a):=\inf\left\{ r\geq 0: U\subseteq B(a,r)\right\}.$$
The \emph{lower radius} of $(U,a)$ is $$\rho_*(U,a):=\sup\left\{ r\geq 0:  B(a,r)\subseteq U\right\}.$$ 
Then $\rho^*(U,a)=\rho^*(\id, U,a) \text{ and } \rho_*(U,a)=\rho_*(\id, U,a).$
If $h$ is not constant, we have
$$\rho^*(h,\Omega, x)=\rho^*(h(\Omega),h(x)) \text{ and } \rho_*(h,\Omega, x)\leq \rho_*(h(\Omega),h(x)).$$
The equality holds if $h: \Omega\to h(\Omega)$ is proper.

\begin{pro}\label{proupdownrad} We have the following properties:
\begin{points}
\item Let $\Omega'\subseteq\Omega$ be a connected open neighborhood of  $x$. Then
$$\rho^\ast(h,\Omega', x)\leq \rho^\ast(h,\Omega, x) \text{ and } \rho_\ast(h,\Omega', x)\leq \rho_\ast(h,\Omega, x).$$
\item Let $\Omega_i, i\geq 0$ be an increasing sequence of connected open neighborhood of  $x$ satisfying $\cup_{i\geq 0}\Omega_i=\Omega$.
Then $$\rho^\ast(h,\Omega, x)=\sup_{i\geq 0}\rho^\ast(h,\Omega_i, x) \text{ and } \rho_\ast(h,\Omega, x)=\sup_{i\geq 0}\rho_\ast(h,\Omega_i, x).$$
\end{points}
\end{pro}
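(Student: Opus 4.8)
The plan is to unwind the definitions of $\rho^\ast$ and $\rho_\ast$ and check that each is either a monotone function of the domain (for (i)) or commutes with directed unions (for (ii)). Throughout one may assume $h$ is not constant; if $h$ is constant on $\Omega$ then it is constant on every $\Omega'$ and every $\Omega_i$, so all quantities are $0$ and all assertions are trivial. For (i), the statement about the upper radius is immediate: if $h(\Omega)\subseteq B(h(x),r)$ then \emph{a fortiori} $h(\Omega')\subseteq B(h(x),r)$, so the infimum defining $\rho^\ast(h,\Omega',x)$ is taken over a set containing the one for $\rho^\ast(h,\Omega,x)$, whence $\rho^\ast(h,\Omega',x)\le \rho^\ast(h,\Omega,x)$. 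For the proper lower radius, the key input is Proposition \ref{proproinima}(ii): if $B(h(x),r)\subseteq_p h(\Omega',x)$ then $B(h(x),r)\subseteq_p h(\Omega,x)$, so again the supremum for $\Omega'$ is over a subset of that for $\Omega$, giving $\rho_\ast(h,\Omega',x)\le \rho_\ast(h,\Omega,x)$.

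For (ii), write $\Omega=\bigcup_{i\ge0}\Omega_i$ with the $\Omega_i$ increasing. By part (i), $\sup_i\rho^\ast(h,\Omega_i,x)\le\rho^\ast(h,\Omega,x)$ and $\sup_i\rho_\ast(h,\Omega_i,x)\le\rho_\ast(h,\Omega,x)$, so only the reverse inequalities need work. For the upper radius, fix $r<\rho^\ast(h,\Omega,x)$; then $h(\Omega)\not\subseteq B(h(x),r)$, so there is $y\in\Omega$ with $d(h(y),h(x))>r$, and since the $\Omega_i$ exhaust $\Omega$ we have $y\in\Omega_i$ for some $i$, whence $\rho^\ast(h,\Omega_i,x)>r$. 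Letting $r\uparrow\rho^\ast(h,\Omega,x)$ gives $\sup_i\rho^\ast(h,\Omega_i,x)\ge\rho^\ast(h,\Omega,x)$.

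The remaining and most delicate point is that the proper lower radius passes to directed unions, i.e. $\rho_\ast(h,\Omega,x)\le\sup_i\rho_\ast(h,\Omega_i,x)$. Fix $r<\rho_\ast(h,\Omega,x)$, so $B:=B(h(x),r)\subseteq_p h(\Omega,x)$: there is a connected open $W\ni x$ in $\Omega$ with $h|_W:W\to B$ proper. Since $h|_W$ is a proper holomorphic map between Riemann surfaces it is finite-to-one of some degree $D$, and in particular $\overline{W}$ (closure in $\Omega$) maps into $\overline{B}$, which is compact; one should check that $\overline{W}$ is itself compact in $\Omega$ (properness of $h|_W$ together with compactness of $\overline B$ forces $h|_W^{-1}(\overline B\cap h(W))$ to have compact closure in $W$... here one must be slightly careful and instead argue directly: a smaller closed ball $\overline{B(h(x),r')}$ with $r'<r$ has $h|_W^{-1}(\overline{B(h(x),r')})$ compact in $W$, and this compact set lies in some $\Omega_i$). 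Then pick $r'<r$; the compact set $K:=h|_W^{-1}(\overline{B(h(x),r')})$ is contained in some $\Omega_i$, and I claim $B(h(x),r')\subseteq_p h(\Omega_i,x)$: indeed the connected component $W_0$ of the interior of $K$ containing $x$ maps properly onto $B(h(x),r')$ by the same counting-of-preimages argument as in Lemma \ref{lempropercriterion} (every point of $B(h(x),r')$ has exactly $D$ preimages under $h|_{W}$, all of which lie in $K\subseteq\Omega_i$, hence in $W_0$ after checking connectedness of the preimage of a ball under a proper map of Riemann surfaces). Hence $\rho_\ast(h,\Omega_i,x)\ge r'$; letting $r'\uparrow r$ and then $r\uparrow\rho_\ast(h,\Omega,x)$ finishes the proof. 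The main obstacle is precisely this last step: ensuring that the ``proper chart'' $W$ witnessing $B\subseteq_p h(\Omega,x)$ can be shrunk to one with compact closure inside a single $\Omega_i$, which is where properness of $h|_W$ and the exhaustion hypothesis must be combined carefully; the counting argument from Lemma \ref{lempropercriterion} then does the bookkeeping.
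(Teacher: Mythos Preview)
Your approach is essentially the same as the paper's: both reduce to the $\rho_\ast$ part of (ii), pick a proper chart $W$ onto a ball, use properness to get that the preimage of a slightly smaller closed ball is compact, and then find an $\Omega_i$ containing that compact set. The paper chooses $r<r'<\rho_\ast(h,\Omega,x)$ and takes $W'$ proper onto $B(h(x),r')$, while you choose $r'<r<\rho_\ast(h,\Omega,x)$ and take $W$ proper onto $B(h(x),r)$; this is just a relabeling.

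There is one genuine slip in your final step. You assert that the preimage $(h|_W)^{-1}(B(h(x),r'))$ is connected (``after checking connectedness of the preimage of a ball under a proper map of Riemann surfaces''), so that all $D$ preimages land in the single component $W_0$. This is false in general: already for $h(z)=z^2$ with $W=\{|z^2-1|<2\}$ (connected, proper onto $B(1,2)$, containing $x=1$), the preimage of $B(1,1/2)$ splits into two components near $\pm 1$. So the appeal to Lemma~\ref{lempropercriterion} with the full degree $D$ does not go through as written.

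The fix is simpler than what you attempt and avoids Lemma~\ref{lempropercriterion} entirely (this is what the paper does implicitly). Set $U:=(h|_W)^{-1}(B(h(x),r'))$. Since $h|_W:W\to B(h(x),r)$ is proper and $U$ is the preimage of an open subset, $h|_U:U\to B(h(x),r')$ is again proper. Let $W_0$ be the connected component of $U$ containing $x$; it is open and closed in $U$, so $h|_{W_0}$ is still proper, and its image is open (as $h$ is open) and closed in the connected set $B(h(x),r')$, hence all of $B(h(x),r')$. Since $W_0\subseteq K\subseteq\Omega_i$, this gives $B(h(x),r')\subseteq_p h(\Omega_i,x)$ directly.
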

\proof
If $h$ is constant, the proposition is trivial. Now assume that $h$ is not constant.
Property (i) and the $\rho^*$ part of (ii) are obvious. We only prove the $\rho_*$ part of (ii). 
By (i), we have $\rho_\ast(h,\Omega, x)\geq \sup_{i\geq 0}\rho_\ast(h,\Omega_i, x).$
For every $r<\rho_\ast(h,\Omega, x)$, pick $r'\in (r, \rho_\ast(h,\Omega, x))$. There is an open neighborhood $W'$ of $x$ such that 
$h|_{W'}: W'\to B(h(x),r')$ is proper. Then $(h|_{W'})^{-1}\left(\overline{B(h(x),r)}\right)$ is compact. There is $i\geq 0$ such that $(h|_{W'})^{-1}\left(\overline{B(h(x),r)}\right)\subseteq \Omega_i$.  \par Set $W:=(h|_{W'})^{-1}(B(h(x),r))\subseteq \Omega_i.$
Since $h|_{W}: W\to B(h(x),r)$ is proper, $\rho_*(h,\Omega_i, x)\geq r,$ which concludes the proof.
\endproof

The following lemma shows that the upper and proper lower radii are stable under perturbations.     
\begin{lemma}\label{perturb}
Let $\Omega$ be a Riemann surface  and $x\in \Omega$.  
 For holomorphic maps
$g,h:\Omega\to \P^1(\C)$, define $\rho(h,g):=\sup_{z\in \Omega}\rho(h(z),g(z)).$ 
Then we have 
 \begin{equation}\label{equghup}
 \rho^\ast (h, \Omega, x)\leq \rho^*(g,\Omega,x)+2\rho(h,g).
\end{equation}
Assume further that $\rho^*(g,\Omega,x)+\rho(h,g)<\pi$, then we have 
\begin{equation}\label{equghdown}
\rho_\ast(h,\Omega, x)\geq \rho_\ast(g, \Omega, x)-2\rho(h,g).
\end{equation}
\end{lemma}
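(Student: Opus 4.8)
The plan is to deduce both inequalities directly from the definitions of $\rho^\ast$ and $\rho_\ast$ together with the triangle inequality for the spherical metric $\rho$ on $\P^1(\C)$. Fix notation: write $a:=h(x)$, $b:=g(x)$, so that $\rho(a,b)\leq \rho(h,g)=:\tau$.

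For the upper bound \eqref{equghup}: the inequality is trivial if $h$ is constant, so assume not. For any $z\in\Omega$ we have, by the triangle inequality, $\rho(h(z),a)\leq \rho(h(z),g(z))+\rho(g(z),b)+\rho(b,a)\leq \tau+\rho^\ast(g,\Omega,x)+\tau$, using that $\rho(g(z),b)\leq \rho^\ast(g,\Omega,x)$ by definition of the upper radius of $(g,\Omega,x)$ (if $g$ is constant this term is $0$ and the estimate still holds). Taking the supremum over $z\in\Omega$ gives $h(\Omega)\subseteq \overline{B(a, \rho^\ast(g,\Omega,x)+2\tau)}$, hence $\rho^\ast(h,\Omega,x)\leq \rho^\ast(g,\Omega,x)+2\tau$.

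For the lower bound \eqref{equghdown}, which is the more delicate one, assume $\rho^\ast(g,\Omega,x)+\tau<\pi$. If the right-hand side of \eqref{equghdown} is $\leq 0$ there is nothing to prove, so assume $r_0:=\rho_\ast(g,\Omega,x)-2\tau>0$; in particular $g$ is non-constant. Fix any $r<r_0$ and pick $r'$ with $r+2\tau<r'<\rho_\ast(g,\Omega,x)$. By definition of the proper lower radius there is a connected open neighborhood $W$ of $x$ with $g|_W:W\to B(b,r')$ proper. We may shrink: let $W_1:=(g|_W)^{-1}(B(b,r'-\tau))$ and let $W_0$ be the connected component of $W_1$ containing $x$; then $g|_{W_0}:W_0\to B(b,r'-\tau)$ is proper. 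The key geometric point is that on $W_0$ the perturbed map $h$ stays uniformly close to $g$, namely $\rho(h(z),b)\leq \rho(h(z),g(z))+\rho(g(z),b)\leq \tau+(r'-\tau)=r'$, so $h(W_0)\subseteq B(b,r')$, and moreover, since $\rho(a,b)\le \tau$ and $r+\tau < r'-\tau$, the ball $B(a,r)$ is compactly contained in $B(b,r'-\tau)$ and $h(\partial W_0)$ stays in $\overline{B(b,r'-\tau)}\setminus B(a,r)$ (using $\rho(h(z),b)\ge \rho(g(z),b)-\tau = r'-\tau$ for $z$ near $\partial W_0$ after a further harmless shrinking, or more simply: $g|_{W_0}$ proper forces $\rho(g(z),b)\to r'-\tau$ as $z\to\partial W_0$, whence $\rho(h(z),b)\ge r'-2\tau>r+\tau>r+\rho(a,b)\ge \rho(h(z),a)$ is violated near the boundary). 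Consequently, by the argument principle / Rouché (counting preimages), $h|_{W_0}$ restricted to the component of $(h|_{W_0})^{-1}(B(a,r))$ containing $x$ maps properly onto $B(a,r)$, because the number of preimages of a point in $B(a,r)$ is constant on $B(a,r)$ and $B(a,r)$ has nonempty preimage (it contains $a=h(x)$); here Lemma \ref{lempropercriterion} applies after noting $W_0$ is compactly contained in a Riemann surface on which $h$ is defined, since we have shrunk $W_0$ to have compact closure inside $\Omega$. Thus $B(a,r)\subseteq_p h(\Omega,x)$, and letting $r\uparrow r_0$ gives $\rho_\ast(h,\Omega,x)\geq r_0=\rho_\ast(g,\Omega,x)-2\tau$.

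The main obstacle is the lower-radius statement: one must verify that properness of $g|_{W_0}$ over a ball transfers to properness of $h$ over a slightly smaller concentric ball, and the clean way to do this is to arrange that near $\partial W_0$ the values of $h$ are bounded away from $a$ by more than $r$ (so no preimages of $B(a,r)$ escape to the boundary), and then invoke the constancy of the preimage count — i.e. Lemma \ref{lempropercriterion} — to conclude. The hypothesis $\rho^\ast(g,\Omega,x)+\tau<\pi$ is exactly what guarantees all the relevant balls are genuine (proper) metric balls in $\P^1(\C)$ rather than the whole sphere, so that "closer to $b$" and "farther from the antipode" are unambiguous; this is where one uses that $\mathrm{diam}(\P^1(\C))=\pi$.
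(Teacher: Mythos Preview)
Your approach is essentially the paper's: for \eqref{equghdown} both arguments set up a domain $W_0$ with compact closure on which $g$ is proper over a ball, show $h(\partial W_0)$ misses a slightly smaller ball, deduce that the $h$-preimage count is constant there, and finish with Lemma~\ref{lempropercriterion}. There is, however, a bookkeeping slip. You shrink by exactly $\tau$ to obtain $W_0$, so on $\partial W_0$ you only have $\rho(g(z),b)=r'-\tau$, hence $\rho(h(z),b)\geq r'-2\tau$ and $\rho(h(z),a)\geq r'-3\tau$. Your chain ``$r'-2\tau>r+\tau$'' therefore needs $r'>r+3\tau$, not the $r'>r+2\tau$ you assumed, and as written you would only get $\rho_\ast(h,\Omega,x)\geq \rho_\ast(g,\Omega,x)-3\tau$. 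The fix is immediate: shrink by an arbitrarily small $\delta\in(0,r'-r-2\tau)$ instead of by $\tau$ (compactness of $\overline{W_0}$ in $\Omega$ requires only $\delta>0$), or follow the paper and first reduce via Proposition~\ref{proupdownrad}(ii) to $\Omega$ relatively compact in a larger surface, which eliminates the shrinking step entirely.

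For comparison, the paper makes the degree argument in a slightly different form: after using the hypothesis $\rho^\ast(g,\Omega,x)+\tau<\pi$ to view $g,h$ as $\C$-valued, it runs the linear homotopy $h_t:=th+(1-t)g$ and checks that $h_t(\partial W)$ avoids $B(g(x),r-\tau)$ for all $t\in[0,1]$ (because spherical balls are Euclidean disks, hence convex, so $h_t(z)\in\overline{B(g(z),\tau)}$). The argument principle then gives that the preimage count of any $y\in B(g(x),r-\tau)$ under $h_t|_W$ is independent of $t$, hence equals $\deg(g|_W)$. This is equivalent to your constancy-of-degree argument but makes the role of the hypothesis more visible: it is exactly what allows the linear interpolation to stay in $\C$.
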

\begin{proof}
The first assertion is obvious. We now prove the second assertion. 
There is a sequence $\Omega_i,i\geq 0$ of open neighborhood of $x$ compactly contained in $\Omega$ and having smooth boundary such that 
$\cup_{i\geq 0} \Omega_i=\Omega$. By (ii) of Proposition \ref{proupdownrad}, 
we only need to prove (\ref{equghdown}) for each $\Omega_i.$
So we may assume that $\Omega$ is compactly contained in a connected Riemann surface $S$ with a smooth boundary and $h$ extends to a neighborhood of  $\overline{\Omega}.$
Set $\ep:=\rho(h,g).$

If $g$ is constant,  (\ref{equghdown}) is trivial. If $h$ is constant, by (\ref{equghup}), $\rho^*(g,\Omega,x)\leq 2\ep$. So $\rho_*(g,\Omega,x)\leq 2\ep$ which implies (\ref{equghdown}). Now assume that both $g$ and $h$ are not constant and $\rho_*(g,\Omega,x)>2\ep.$ 

Pick any $r\in (2\ep, \rho_*(g,\Omega,x))$, we claim that 
\begin{equation}\label{equbgxrpinh}B(g(x),r-\ep)\subseteq_p h(\Omega, x).
\end{equation}
Since $d(h(x), g(x))\leq \ep$, $B(h(x), r-2\ep)\subseteq B(g(x),r-\ep)$. Hence $r-2\ep\leq \rho_*(h,\Omega,x)$.
Let $r$ tend to $\rho_*(g,\Omega,x)$, then we get (\ref{equghdown}).

\medskip

We only need to prove the claim. 
We may assume that $g(x)=0.$ Since $\rho^*(g,\Omega,x)+\rho(h,g)<\pi$, $h(\Omega)\subseteq \P^1(\C)\setminus \{\infty\}.$
We identify $\P^1(\C)\setminus \{\infty\}$ with $\C$ and view $g,h$ as holomorphic functions on $\Omega.$
There is a connected neighborhood $W$ of $x$ such that $g|_W: W\to B(0,r)$ is proper. Then $W$ is compactly contained in $\Omega$ and has a piecewisely smooth boundary.
There is $D\geq 1$ such that for every $y\in B(0,r)$, it has exactly $D$ preimages under $g|_W$ counted with multiplicity.
\par For every $t\in [0,1]$, define $h_t:=th+(1-t)g.$ For every $z\in \Omega$, $h(z)\in \overline{B(g(z), \ep)}$. Note that $\overline{B(g(z), \ep)}$ is a disk in $\C$, though its center may not be $g(z).$ It follows that $h_t(z)\in \overline{B(g(z), \ep)}$ for every $t\in [0,1].$
Hence $d(g,h_t)\leq \ep$ for every $t\in [0,1].$
Since $g(\partial W)\subseteq \partial B(0,r)$, $$h_t(\partial W)\subseteq  \C\setminus B(0,r-\ep)$$
for every $t\in [0,1].$
In other words, for every $y\in B(0,r-\ep)$ and $t\in [0,1]$ there is no zero of $h_t-y$ in $\partial W$.
By argument principle, for every $y\in B(0,r-\ep)$, the number of preimages of $y$ under $h_t|_W$ is constant in $t$, hence equal to $D.$
Since $h_1=h$, for every $y\in B(0,r-\ep)$, $y$ has exactly $D$ preimages under $h|_W.$
By Lemma \ref{lempropercriterion},  $B(0,r-\ep)\subseteq_p h(\Omega,x),$ which concludes the proof.
\end{proof}

%

\subsection{Euclidean coordinates}
It is often easier to do the computation using Euclidean metric rather than the spherical metric. For this reason, we
 introduce an Euclidean coordinate $Z_a$ at each point $a\in \P^1(\C)$. 
Let $z$ be the standard coordinate on $\A^1=\Spec \C[z]\subseteq \P^1.$ We note that the spherical metric is invariant under the action of 
${\rm PU}(2,\C)<\PGL(2,\C).$
For every $a\in \P^1(\C)$, pick an element $H_a\in {\rm PU}(2,\C)$ such that $H_a(a)=0$. Define $Z_a:=2H_a^*z.$
Note that the choice of $H_a$ is unique up to composing a rotation $z\to \gamma z, |\gamma|=1$ by left.
Hence the induced coordinate $Z_a$ is unique up to multiplying some $\gamma$ with $|\gamma|=1.$
Note that the point defined by $Z_a=\infty$ is $H_a^{-1}(\infty)=a^{-}$.
Via $Z_a$, we identify $\P^1(\C)\setminus a^{-}$ with the standard complex plane $\C$ with origin $a.$
The spherical metric at $a$ is given by 
\begin{equation}\label{eqsmza}ds^2=\frac{1}{(1+1/4Z_a\overline{Z_a})^2}dZ_a\overline{dZ_a}
\end{equation}
We let $B(a,r)$ be the ball centered at $a$ of radius $r$ with respect to the spherical metric and set $\D(a,r):=\{Z_a<r\}.$
Then there is a strict increasing function $\tau:\R_{>0}\to\R_{>0}$ such that $B(0,\tau(r))=\D(0,r).$
Since $H_a$ preserves the spherical metric, for every $a\in \P^1(\C)$, $B(a,\tau(r))=\D(a,r).$
By (\ref{eqsmza}),  we have
\begin{equation}\label{equtau}\tau(r)=r+O(r^2)
\end{equation}
when $r\to 0.$

\bigskip

Let $g:\P^1(\C)\to \P^1(\C)$ be a rational map.

\begin{lemma}\label{distor1}
 There are $r_0>0$  and $C>0$ such that the following holds:  if a ball $B:=B(x,r)$ satisfies $r<r_0$, then we have 
\begin{equation}\label{equuprho}\rho^\ast(g, B, x)\leq |dg(x)|r+C r^2,
\end{equation}
and 
\begin{equation}\label{equdownrho}\rho_\ast(g, B, x)\geq |dg(x)|r-C r^2.
\end{equation}
\end{lemma}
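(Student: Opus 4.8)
The statement is a quantitative first-order Taylor estimate for a rational map near a point, transported between the Euclidean coordinates $Z_x$ at the source and $Z_{g(x)}$ at the target, and then re-expressed in terms of the spherical metric via the comparison function $\tau$. The plan is to work in these Euclidean charts, where $g$ becomes (locally) a convergent power series, prove the estimates there, and then pay the controlled error of switching between spherical balls $B(\cdot,r)$ and Euclidean disks $\D(\cdot,\rho)$ using \eqref{equtau}, i.e. $\tau(r)=r+O(r^2)$.

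First I would set up uniformity. For each $a\in\P^1(\C)$, conjugating by $H_a\in{\rm PU}(2,\C)$ both at the source and (after applying $g$) at the target, we obtain a family of maps $g_a := H_{g(a)}\circ g\circ H_a^{-1}$ fixing $0$, with $g_a(0)=0$; since ${\rm PU}(2,\C)$ is compact and $g$ has degree $d$, this is a compact family of rational maps of degree $d$, all sending a fixed small disk $\D(0,2r_0)$ into $\C$ (avoiding the antipodal pole) provided $r_0$ is small enough. On such a disk each $g_a$ is given by a power series $g_a(Z)=\alpha_a Z + \beta_a Z^2 + \cdots$ with $|\alpha_a| = |dg(a)|$ (up to the unit-modulus ambiguity in the coordinates, which does not affect radii) and with the higher coefficients bounded uniformly: there is $C_0>0$ with $|g_a(Z)-\alpha_a Z|\le C_0|Z|^2$ for $|Z|<2r_0$, by Cauchy estimates applied uniformly over the compact family. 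This is the one place where some care is needed but it is routine.

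Next, the upper radius bound \eqref{equuprho}. Given the ball $B=B(x,r)$ with $r<r_0$, in the source chart it is the Euclidean disk $\D(x,\tau(r))$ with $\tau(r)=r+O(r^2)\le r + C_1 r^2$. For $Z$ in this disk, $|g_x(Z)|\le |dg(x)|\,\tau(r) + C_0\tau(r)^2 \le |dg(x)|\,r + C_2 r^2$ after absorbing the $\tau$-error (using $|dg(x)|$ is bounded over $\P^1$). In the target chart $g(B)$ is thus contained in a Euclidean disk of radius $|dg(x)|r + C_2 r^2$ about $g(x)$; converting back via $\tau^{-1}(\rho)=\rho+O(\rho^2)$ gives a spherical ball of radius $|dg(x)|r + Cr^2$, which is exactly $\rho^\ast(g,B,x)\le |dg(x)|r+Cr^2$.

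Finally the proper lower radius bound \eqref{equdownrho}, which is the heart of the lemma and where the notion of \emph{proper} lower radius is essential. If $dg(x)=0$ the bound is vacuous for $C$ large (take $r_0$ smaller), so assume $\alpha:=|dg(x)|>0$. Working in the source chart on $W:=\D(x,\tau(r))$, I want to show that $g_x|_W$ is a proper map onto a Euclidean disk $\D(0,\rho)$ with $\rho\ge \alpha r - C r^2$; then, since properness onto $\D(0,\rho)$ transports to properness of $g|_{W'}$ onto $B(g(x),\tau^{-1}(\rho))$ for a suitable neighborhood $W'\subseteq W$ of $x$, and $\tau^{-1}(\rho)\ge \rho + O(\rho^2)\ge \alpha r - Cr^2$, we get the claim. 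To get properness onto a disk I would argue exactly as in the proof of Lemma \ref{lempropercriterion} / the argument-principle step in Lemma \ref{perturb}: on the boundary circle $|Z-x|=\tau(r)$ (shrink $r$ slightly if needed so $g_x$ has no critical value on the relevant circle) we have $|g_x(Z)|\ge \alpha\tau(r) - C_0\tau(r)^2 \ge \alpha r - C_3 r^2 =: \rho$, while $g_x(x)=0\in\D(0,\rho)$; by the argument principle the number of preimages in $W$ of any point of $\D(0,\rho)$ is constant, hence positive, and in fact $g_x|_W$ restricted to the component of $g_x^{-1}(\D(0,\rho))$ containing $x$ is proper onto $\D(0,\rho)$. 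Thus $\D(0,\rho)\subseteq_p g_x(W,x)$, i.e. $B(g(x),\tau^{-1}(\rho))\subseteq_p g(B,x)$, giving $\rho_\ast(g,B,x)\ge \alpha r - Cr^2$ after enlarging $C$ to swallow the $\tau$-errors.

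\emph{Main obstacle.} The genuinely delicate point is the lower bound: a naive "image contains a ball of the linearized radius" is false for non-injective maps, so one must phrase everything through the proper lower radius and run an argument-principle / covering-degree argument to certify properness onto a round disk, while simultaneously keeping all error terms uniform in $x\in\P^1(\C)$ via the compactness of the conjugated family $\{g_x\}$. The conversions between spherical and Euclidean radii via $\tau$ are harmless $O(r^2)$ corrections, but they must be tracked consistently so that they end up inside the single constant $C$.
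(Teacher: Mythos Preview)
Your proposal is correct and follows essentially the same route as the paper: reduce via \eqref{equtau} to Euclidean coordinates, obtain uniform Cauchy/Taylor bounds over the compact family of conjugated maps, deduce \eqref{equuprho} directly, and for \eqref{equdownrho} use a boundary estimate plus a degree argument feeding into Lemma~\ref{lempropercriterion}. The only cosmetic difference is that the paper phrases the degree step as Rouch\'e's theorem against the linear map $A_1(a)Z-Y$ (yielding exactly one preimage), whereas you invoke the argument principle to get a constant positive preimage count; these are the same argument, and your parenthetical about avoiding critical values on the boundary circle is unnecessary since the boundary estimate already keeps $g_x(\partial W)$ outside $\D(0,\rho)$.
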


\medskip

Note that (\ref{equdownrho}) is trivial if $ |dg(x)|r-C r^2\leq 0.$
\begin{proof}
By (\ref{equtau}), we only need to prove (\ref{equuprho}) and (\ref{equdownrho}) for Euclidean metric induced by the local coordinate $Z_a, a\in \P^1(\C).$

For every $a\in \P^1(\C)$, define $r_a:= \sup\{r\geq 0|\,\, g(\D(a,r))\subseteq \D(g(a),1)\}.$
Since $g$ is continuous, $r_a>0$ and the maps $a\in \P^1(\C)\mapsto r_a\in \R_{>0}$ is continuous.
Since $\P^1(\C)$ is compact, $c_0:=\min\{r_a, a\in \P^1(\C)\}>0$ exists.
For every $a\in \P^1(\C)$, in the local coordinates $Z_a, Z_{g(a)}$, $g$ takes form 
\begin{equation}\label{equatalor}g(Z)= \sum_{i\geq 1}A_i(a)Z^i.
\end{equation}
By (\ref{eqsmza}), $|A_1(a)|=|dg(a)|$.
By Cauchy integration formula, we have $|A_i(a)|\leq c_0^{-i}$ for every $a\in \P^1(\C)$ and $i\geq 1.$
Set $c_1:=c_0/10.$ For every $Z\in \D(a, c_1)$, 
we have 
$$|g(Z)|\leq |dg(a)||Z|+\sum_{i\geq 2}c_0^{-i}|Z|^i\leq |dg(a)||Z|+\frac{c_0^{-2}}{9}|Z|^2.$$
So for every $r<c_1$, 
$$g(\D(a,r))\subseteq \D\left(g(a),  |dg(a)|r+\frac{c_0^{-2}}{9}r^2\right).$$
It implies (\ref{equuprho}) via (\ref{equtau}).

Set $D_1:=c_0^{-2}/9$ and $D_2:=2D_1.$ By (\ref{equtau}), we only need to show that for every $r\in (0,c_1),$ if $|dg(a)|r-D_2r^2>0$, then
\begin{equation}\label{equinnerradius}
\D(g(a), |dg(a)|r-D_2r^2)\subseteq_p g(\D(a,r),a).
\end{equation}
We write $g$ as in (\ref{equatalor}). For $|Z|=r$ and $|Y|\leq |dg(a)|r-D_2r^2$, we have 
$$|(g(Z)-Y)-(A_1(a)Z-Y))|\leq \sum_{i\geq 2}c_0^{-i}r^i\leq D_1r^2$$ and
$$|A_1(a)Z-Y|\geq |dg(a)|r-(|dg(a)|r-D_2r^2)=D_2r^2>D_1r^2.$$
Since $A_1(a)Z-Y$ has exactly one zero in $\{|Z|<r\}$,
by Rouch\'e's theorem $g(Z)-Y$ has exactly zero in  $\{|Z|<r\}$.
By Lemma \ref{lempropercriterion}, we get (\ref{equinnerradius}), which concludes the proof.
\end{proof}

\subsection{Critical points}\label{criticalpoint}
Let $\mathcal{C}$ be the critical set of $g$.  For every $c\in \sC$, we let $l_c$ be its multiplicity. Then $\sum_{c\in \sC}l_c=2d-2.$ Set $l:=\max\{l_c|\,\, c\in \sC\}.$ 

\begin{lem}\label{lemdistorcritical}
There exists $r_1>0$  and $C_1>1$ such that the following holds:  if $c\in \sC$ and $r<r_1$
then we have 
\begin{equation}\label{equuprhoc}\rho^\ast(g,B(c,r),c)\leq C_1r^{l_c},
\end{equation}
and 
\begin{equation}\label{equdownrhoc}\rho_\ast(g, B(c,r),c)\geq C_1^{-1} r^{l_c}.
\end{equation}
Moreover, for every $a\in B(c,r)$, if $g(a)\in B(g(c), C_1^{-1} r^{l_c})$, then 
\begin{equation}\label{equctoaproper}B(g(c), C_1^{-1} r^{l_c})\subseteq_p g(B(c,r),a).
\end{equation}
\end{lem}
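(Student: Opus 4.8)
The statement is the ramification-point counterpart of Lemma~\ref{distor1}, and the plan is to prove it the same way: pass to Euclidean coordinates, read the upper radius off the leading term of $g$, and obtain the proper lower radius together with (\ref{equctoaproper}) from Rouch\'e's theorem and the criterion of Lemma~\ref{lempropercriterion}. As in Lemma~\ref{distor1}, by (\ref{equtau}) it suffices to prove the three inequalities with the spherical balls $B(\cdot,\cdot)$ replaced by the Euclidean disks $\D(\cdot,\cdot)$ and then transport the result back using $\tau$, (\ref{equtau}) and Proposition~\ref{proproinima}(i); since $l_c\le l$ this transport distorts the radii only by uniformly bounded factors, which can be absorbed into $C_1$ after shrinking $r_1$. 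So from now on one works in the coordinates $Z_c$ at $c$ and $Z_{g(c)}$ at $g(c)$, where $g$ has an expansion as in (\ref{equatalor}) of the form $g(Z)=\sum_{i\ge l_c}A_i(c)Z^i$ with $A_{l_c}(c)\ne 0$. As $\sC$ is a finite set, there is a single $c_0>0$ with $g(\D(c,c_0))\subseteq\D(g(c),1)$ for every $c\in\sC$; the Cauchy estimates then give $|A_i(c)|\le c_0^{-i}$ for all $i\ge l_c$, and $\mu_0:=\min_{c\in\sC}|A_{l_c}(c)|>0$.

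The upper bound (\ref{equuprhoc}) is immediate: for $r<c_0/2$ and $|Z|\le r$ one gets $|g(Z)|\le|A_{l_c}(c)|\,r^{l_c}+\sum_{i>l_c}c_0^{-i}r^i\le C'r^{l_c}$ with $C'$ depending only on $c_0$ and $l$, so $g(\D(c,r))$ lies in the disk of radius $C'r^{l_c}$ about $g(c)$.

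For (\ref{equdownrhoc}) and (\ref{equctoaproper}) I would fix $C_1>\max\{C',2\mu_0^{-1}\}$ and then $r_1\in(0,c_0/2)$ small, and apply Rouch\'e's theorem. For $r<r_1$ and $Y$ with $|Y|<C_1^{-1}r^{l_c}$, on $|Z|=r$ the bounds $|A_i(c)|\le c_0^{-i}$ give
\[
|g(Z)-A_{l_c}(c)Z^{l_c}|\le\sum_{i>l_c}c_0^{-i}r^i<(|A_{l_c}(c)|-C_1^{-1})\,r^{l_c}\le|A_{l_c}(c)Z^{l_c}-Y|
\]
once $r_1$ is small enough, so $g(Z)-Y$ has as many zeros in $\{|Z|<r\}$ as the monomial equation $A_{l_c}(c)Z^{l_c}=Y$, namely exactly $l_c$ (all its roots lie in $\{|Z|<r\}$ since $|Y|<\mu_0r^{l_c}$). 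Hence every point of $\D(g(c),C_1^{-1}r^{l_c})$ has exactly $l_c$ preimages, counted with multiplicity, under $g|_{\D(c,r)}$. Now I would invoke Lemma~\ref{lempropercriterion} with $\Omega=W=\D(c,r)$ (compactly contained in $\P^1(\C)$ for $r$ small), $h=g$ (nonconstant, globally defined, hence extending past $\overline{\D(c,r)}$), $D=l_c$, $V=\D(g(c),C_1^{-1}r^{l_c})$, and basepoint $x=a$ for \emph{any} $a\in\D(c,r)$ with $g(a)\in V$: it yields $\D(g(c),C_1^{-1}r^{l_c})\subseteq_p g(\D(c,r),a)$. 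Specializing to $a=c$ gives $\rho_\ast(g,\D(c,r),c)\ge C_1^{-1}r^{l_c}$, the Euclidean form of (\ref{equdownrhoc}), and the general $a$ gives the Euclidean form of (\ref{equctoaproper}). Transporting back to spherical balls as explained above completes the proof.

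\textbf{Main difficulty.} I do not expect a genuine obstacle beyond what already appears in Lemma~\ref{distor1}; the argument is essentially routine once one has the vocabulary of proper lower radius and the criterion of Lemma~\ref{lempropercriterion}. The one point that needs attention is the last clause (\ref{equctoaproper}): the proper inclusion has to hold with an \emph{arbitrary} basepoint $a\in B(c,r)$, not just $a=c$. This is exactly what Lemma~\ref{lempropercriterion} is designed for --- its hypothesis is a preimage count that is constant over the whole target disk and therefore insensitive to the basepoint --- so the actual work reduces to producing that uniform count by the Rouch\'e estimate above, keeping all constants uniform over the finite set $\sC$, and tracking the Euclidean-versus-spherical comparison.
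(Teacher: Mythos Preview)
Your proof is correct and follows essentially the same route as the paper: pass to Euclidean coordinates via (\ref{equtau}), bound the tail of the Taylor expansion for (\ref{equuprhoc}), and use Rouch\'e's theorem plus Lemma~\ref{lempropercriterion} for the lower estimates. Your treatment of (\ref{equctoaproper}) is in fact slightly more direct than the paper's: you apply Lemma~\ref{lempropercriterion} once with basepoint $a$ (using that the preimage count is independent of the basepoint), whereas the paper first establishes properness at $c$ and then separately argues, via an auxiliary bound on the number of preimages near $c$, that $a$ lies in the proper component.
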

\proof 
By (\ref{equtau}), to show (\ref{equuprhoc}) and (\ref{equdownrhoc}), we only need to prove (\ref{equuprhoc}) and (\ref{equdownrhoc}) for Euclidean metric for the local coordinate $Z_a, a\in \P^1(\C).$

Let $c_0, c_1$ as in the proof of Lemma \ref{distor1}. As in the proof of Lemma \ref{distor1}, for every $c\in \sC$, in the local coordinates $Z_c, Z_{g(c)}$, $g$ takes form 
\begin{equation}\label{equftalor}g(Z)= \sum_{i\geq l_c}A_i(c)Z^i,
\end{equation}
where $A_{l_c}(c)\neq 0$ and $|A_i(c)|\leq c_0^{-i}$.

For every $Z\in \D(c, c_1)$, 
we have 
$$|g(Z)|\leq A_{l_c}(c)|Z|^{l_c}+\sum_{i\geq l_c+1}c_0^{-i}|Z|^i\leq \left(A_{l_c}(c)+c_0^{-l_c}/9\right)Z^{l_c}.$$
So for every $r<c_1$, 
$$g(\D(c,r))\subseteq \D(g(c),  (A_{l_c}(c)+c_0^{-l_c}/9)Z^{l_c}).$$
Since $\sC$ is finite, the above implies (\ref{equuprho}) via (\ref{equtau}).

\medskip
Set $A:=\min_{c\in \sC}|A_{l_c}(c)|.$ There is $c'>0$ such that for every $x\in \P^1(\C)$, $x$ has at most $l_c$ preimages counted with multiplicities in $\D(c,c').$
Pick $c_2:=\min\{c_1, \min\{1, c_0\}^{l}A/10, c'\}.$
Set $D_1:=c_0^{-l_c}\frac{c_2/c_1}{1-c_2/c_1}$ and  $D_2:=2D_1.$
We may check that 
\begin{equation}\label{equaid2c}|A_{l_c}(c)|-D_2>A/2.
\end{equation}
Since $\sC$ is finite, by (\ref{equtau}) and (\ref{equaid2c}), to prove (\ref{equinnerradiusc}) we only need to show that for every $r\in (0,c_2),$ 
\begin{equation}\label{equinnerradiusc}
\D(g(c), (|A_{l_c}(c)|-D_2)r^{l_c})\subseteq_p g(\D(c,r),c).
\end{equation}
We write $g$ as in (\ref{equftalor}). For $|Z|=r$ and $|Y|\leq (|A_{l_c}(c)|-D_2)r^{l_c}$, we have 
$$|(g(Z)-Y)-(A_{l_c}(c)Z^{l_c}-Y))|\leq \sum_{i\geq l_c+1}c_0^{-i}r^i\leq D_1r^{l_c}$$ and
$$|A_{l_c}(c)Z^{l_c}-Y|\geq |A_{l_c}(c)|r^{l_c}-(|A_{l_c}(c)|-D_2)r^{l_c}=D_2r^{l_c}>D_1r^{l_c}.$$
Since $A_{l_c}(c)Z^{l_c}-Y$ has exactly $l_c$ zeros in $\{|Z|<r\}$,
by Rouch\'e's theorem $g(Z)-Y$ has exactly $l_c$ zeros in  $\{|Z|<r\}$. By Lemma \ref{lempropercriterion}, we get 
(\ref{equinnerradiusc}).

We now prove (\ref{equctoaproper}).
Let $a\in B(c,r)$ with $g(a)\in B(g(c), C_1^{-1} r^{l_c})$.
By (\ref{equinnerradiusc}), there is an open neighborhood $W$ of $c$ in $B(c,r)$ such that $g|_W: W\to B(g(c), C_1^{-1} r^{l_c})$ is proper.
Since $g(c)$ has exactly one preimage $c$ of multiplicity $l_c$, $g(a)$ has $l_c$ preimages with multiplicity in $W.$ Since $r\leq c'$, $g(a)$ has at most $l_c$ preimages with multiplicity in $B(c,r).$ Hence $g^{-1}(g(a))\cap B(c,r)=g^{-1}(g(a))\cap W.$ Then $a\in W$, which implies (\ref{equctoaproper}).
\endproof

Set 
\begin{equation}\label{equdefinedelta}\delta:=\min\{d(c_1,c_2)|\,\, c_1,c_2 \text{ are distint points in }\sC\}/3.
\end{equation}
Then for every $a\in \P^1(\C)$ and $r\in (0,\delta]$ there is at most one $c\in \sC\cap B(a,r).$
For every $a\in \P^1(\C)$, define $l_a$ as follows: if $d(a,\sC)\geq \delta$, set $l_a:=0$; otherwise, let $c_a$ be the unique critical point with $d(a,c_a)<\delta$ and $l_a:=l_c.$
This extends our previous definition of $l_c$ for $c\in \sC.$
Note that, for every $a\in \P^1(\C)$ either for every $b\in B(a,\delta/100),$ $l_b=l_a$ or 
$d(B(a,\delta/100),\sC)>\delta/2.$
So there is a constant $A_1>1$ such that for every $a\in \P^1(\C)$ and $b\in B(a,\delta/100),$ we have 
\begin{equation}\label{equdgcriticaldis}
A_1^{-1} d(x,\mathcal{C})^{l_b-1}\geq |dg(a)|\geq A_1 d(x,\mathcal{C})^{l_b-1}.
\end{equation}

\medskip
\par The following lemma is a corollary of Lemma \ref{distor1}.
\begin{lemma}\label{distor2}
There exists $r_2>0$ and $C_2>0$ such that the following holds:  if two balls $B:=B(x,r)$, $B':=B(x,r')$ satisfy $r'<r<r_2$, and $d(x,\mathcal{C})^l>r$, then we have 
\begin{equation}\label{4.3}
 \frac{\rho_\ast(g, B', x)}{\rho^\ast(g, B,x)}\geq  \frac{r'}{r}-C_2r^{1/l}
\end{equation}
and 
\begin{equation}\label{4.4}
	\frac{\rho^\ast(g, B',x)}{\rho_\ast(g,B, x)}\leq   \frac{r'}{r}+C_2r^{1/l}.
\end{equation}
\end{lemma}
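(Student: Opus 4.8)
The plan is to deduce both inequalities directly from the two estimates in Lemma \ref{distor1} applied at the point $x$, using the hypothesis $d(x,\mathcal{C})^l>r$ to control $|dg(x)|$ from below and above via \eqref{equdgcriticaldis}. First I would fix $r_2$ small enough that $r_2<\min\{r_0,(\delta/100)^l\}$ so that Lemma \ref{distor1} applies to both balls $B(x,r)$ and $B(x,r')$ and so that $x$ lies within $\delta/100$ of the relevant part of $\mathcal{C}$ (or is $\delta/2$-far from it); in either case \eqref{equdgcriticaldis} gives $A_1 d(x,\mathcal{C})^{l-1}\le |dg(x)|\le A_1^{-1} d(x,\mathcal{C})^{l-1}$, and since $d(x,\mathcal{C})^l>r$ we get $d(x,\mathcal{C})>r^{1/l}$, hence $|dg(x)|\ge A_1 d(x,\mathcal{C})^{l-1}\ge A_1 (r^{1/l})^{l-1}=A_1 r^{(l-1)/l}=A_1 r/r^{1/l}$. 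In particular $|dg(x)|$ is bounded below by a definite positive quantity, and $r/|dg(x)|\le A_1^{-1} r^{1/l}$.

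Next I would write, using \eqref{equuprho} and \eqref{equdownrho} from Lemma \ref{distor1},
\begin{equation*}
\rho^\ast(g,B,x)\le |dg(x)|r + C r^2,\qquad \rho_\ast(g,B',x)\ge |dg(x)|r' - C (r')^2\ge |dg(x)|r' - C r'r.
\end{equation*}
Then
\begin{equation*}
\frac{\rho_\ast(g,B',x)}{\rho^\ast(g,B,x)}\ge \frac{|dg(x)|r' - C r'r}{|dg(x)|r + C r^2}=\frac{r'}{r}\cdot\frac{1-Cr/|dg(x)|}{1+Cr/|dg(x)|}\ge \frac{r'}{r}\Bigl(1-\frac{2Cr}{|dg(x)|}\Bigr),
\end{equation*}
valid once $Cr/|dg(x)|<1/2$, which holds for $r_2$ small since $r/|dg(x)|\le A_1^{-1}r^{1/l}$. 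Using $r'/r\le 1$ and $r/|dg(x)|\le A_1^{-1}r^{1/l}$ this gives $\rho_\ast(g,B',x)/\rho^\ast(g,B,x)\ge r'/r - 2CA_1^{-1}r^{1/l}$, which is \eqref{4.3} with $C_2:=2CA_1^{-1}$ (enlarging $C_2$ if needed). The inequality \eqref{4.4} is entirely symmetric: from $\rho^\ast(g,B',x)\le |dg(x)|r'+C(r')^2\le |dg(x)|r'+Cr'r$ and $\rho_\ast(g,B,x)\ge |dg(x)|r-Cr^2$ one gets
\begin{equation*}
\frac{\rho^\ast(g,B',x)}{\rho_\ast(g,B,x)}\le\frac{r'}{r}\cdot\frac{1+Cr/|dg(x)|}{1-Cr/|dg(x)|}\le\frac{r'}{r}\Bigl(1+\frac{4Cr}{|dg(x)|}\Bigr)\le\frac{r'}{r}+\frac{4C}{A_1}r^{1/l},
\end{equation*}
again for $r_2$ small enough that $Cr/|dg(x)|<1/2$; adjusting $C_2$ to also exceed $4C/A_1$ finishes the proof.

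I do not expect a serious obstacle here; the only point requiring care is making sure the denominators $\rho^\ast(g,B,x)$ and $\rho_\ast(g,B,x)$ are genuinely positive and comparable to $|dg(x)|r$, which is exactly where the hypothesis $d(x,\mathcal{C})^l>r$ enters — it forces $x$ to be a non-critical point at which $|dg(x)|$ is not too small relative to $r$, so that the quadratic error terms $Cr^2$ are a lower-order perturbation of the linear term $|dg(x)|r$. One must also handle the degenerate case where $|dg(x)|r - Cr^2\le 0$ (so $\rho_\ast$ might be $0$): but as just noted $r/|dg(x)|\le A_1^{-1}r^{1/l}\to 0$, so for $r_2$ small this cannot happen, and both lower radii are strictly positive. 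Finally, one chooses the single constant $C_2:=\max\{2CA_1^{-1},\,4CA_1^{-1}\}=4CA_1^{-1}$ (or any larger value) to make both \eqref{4.3} and \eqref{4.4} hold simultaneously.
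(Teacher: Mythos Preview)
Your proof is correct and follows essentially the same approach as the paper: both apply the linear-plus-quadratic bounds of Lemma~\ref{distor1} to $B$ and $B'$, then use the hypothesis $d(x,\mathcal{C})^l>r$ together with \eqref{equdgcriticaldis} to bound $r/|dg(x)|$ by a constant times $r^{1/l}$, making the quadratic error a lower-order term. The only difference is cosmetic algebra---the paper subtracts $\frac{r'}{r}$ directly and bounds the resulting difference, while you factor out $\frac{r'}{r}$ and estimate the ratio $(1-Cr/|dg(x)|)/(1+Cr/|dg(x)|)$; both routes are elementary and yield the same constant up to a harmless factor.
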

\begin{proof}
Let $r_0$, $C$ as in Lemma \ref{distor1}.
We may further ask that $r_0<\min\{1, A_1/C\}.$ Then under the assumption $r<r_0$ and  $d(x,\mathcal{C})^l>r$, for every $u\in (0,r]$, 
we have $|dg(x)|u-Cu^2>0.$

\medskip

By Lemma \ref{distor1} we have 
\begin{align*}
 \frac{\rho_\ast(g,B',x)}{\rho^\ast(g,B,x)}&\geq \frac{|dg(x)|r'-Cr'^2}{|dg(x)|r+Cr^2}
 \\&=\frac{r'}{r}-\frac{Cr'(r-r')}{r|dg(x)|+Cr^2}
 \\&\geq \frac{r'}{r}-\frac{Cr^2/4}{A_1r^{2-1/l}+Cr^2}
\\&\geq \frac{r'}{r}- C_2 r^{1/l},
\end{align*}
where $C_2>0$ is a constant. This implies (\ref{4.3}).  Similarly one can prove (\ref{4.4}). 
\end{proof}

\begin{lem}\label{leminjectiveball}There is a constant $A_2>1$ such that for every point $x\in \P^1(\C)\setminus \sC$, 
$g|_{B(x, d(x,\mathcal{C})/A_2)}$ is injective.
\end{lem}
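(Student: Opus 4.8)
The plan is to pass to the local Euclidean coordinates $Z_a$ introduced above and to split the problem into two regimes according to whether $d(x,\mathcal{C})$ is bounded below or small. By (\ref{equtau}), for small $r$ the spherical ball $B(x,r)$ corresponds, in the coordinate $Z_x$, to a disc comparable up to a multiplicative factor tending to $1$ to the Euclidean disc $\D(x,r)$; hence it suffices to find $A_2>1$ such that, in the coordinate $Z_x$, the map $g$ is injective on the Euclidean disc of radius $d(x,\mathcal{C})/A_2$ about the point representing $x$. Throughout I will use the univalence criterion of Noshiro--Warschawski: a holomorphic map with $\mathrm{Re}(g'/g'(p))>0$ on a round disc centred at $p$ is injective on that disc.

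\emph{Far regime.} Fix a small threshold $\rho_1>0$ (to be matched below). On the compact set $\{x:d(x,\mathcal{C})\ge\rho_1\}$ the spherical derivative does not vanish, so $|dg|\ge m_0>0$ there. Writing $g$ in the coordinates $Z_x,Z_{g(x)}$ as $g(Z)=\sum_{i\ge 1}A_iZ^i$ with $|A_1|=|dg(x)|\ge m_0$ and $|A_i|\le c_0^{-i}$ (notation as in the proof of Lemma \ref{distor1}), a one-line estimate gives $|g'(Z)/A_1-1|<1$ on a disc $\D(x,r_3)$ of a fixed radius $r_3>0$ depending only on $m_0$ and $c_0$, so $g$ is injective there. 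Since $d(x,\mathcal{C})\le\pi$, taking $A_2$ large enough forces the disc of radius $d(x,\mathcal{C})/A_2$ into $\D(x,r_3)$, and injectivity follows.

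\emph{Near regime.} Suppose $\rho:=d(x,\mathcal{C})<\rho_1$ with $\rho_1$ small; let $c\in\mathcal{C}$ be the unique critical point at distance $\rho$ from $x$, and let $m\ge 2$ be the local degree of $g$ at $c$. The crucial move is to Taylor-expand $g$ about the point $z_0$ representing $x$, not about $c$. In the coordinate $Z_c$ one has $g(Z)=\sum_{i\ge m}A_iZ^i$ with $|A_m|$ bounded below and $|A_i|\le c_0^{-i}$ on a fixed disc $\D(c,c_1)$, and $|z_0|\asymp\rho$; moreover $\sup|g|=O(\rho^{m})$ on any disc about $z_0$ of radius $O(\rho)$ contained in $\D(c,c_1)$. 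Cauchy's estimates then yield, for $g(z_0+W)=\sum_{j\ge 0}B_jW^{j}$,
\[ |B_j|\ \lesssim\ 2^{j}\,\rho^{\,m-j}\qquad(j\ge 0), \]
while $B_1=g'(z_0)$ has modulus $\asymp\rho^{\,m-1}$, the leading term $mA_mz_0^{m-1}$ dominating the $O(\rho^{m})$ tail. Consequently, for $|W|\le\kappa\rho$ with $\kappa>0$ a small absolute constant, $\bigl|B_1^{-1}\sum_{j\ge 2}jB_jW^{j-1}\bigr|\lesssim\kappa<1$, so $\mathrm{Re}(g'/B_1)>0$ on that disc and $g$ is injective there by Noshiro--Warschawski. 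Translating back to the spherical metric, $g$ is injective on $B(x,\kappa'\,d(x,\mathcal{C}))$ for a fixed $\kappa'>0$. Choosing $A_2$ to dominate $1/\kappa'$ and the constant from the far regime, and matching $\rho_1$, completes the proof.

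The main obstacle is precisely the near regime. Feeding the global Taylor bound $|A_i|\le c_0^{-i}$ directly (as in Lemma \ref{distor1}) only produces an injectivity radius of order $|dg(x)|\asymp d(x,\mathcal{C})^{\,m-1}$, which is far smaller than $d(x,\mathcal{C})$ once $m\ge 3$; expanding about $x$ rather than about $c$ repairs this, the point being that on the scale $\rho=d(x,\mathcal{C})$ the map is essentially linear even though $|g'(x)|$ is tiny. Equivalently, the estimates above are an effective version of the local normal form $g=\psi\circ(w\mapsto w^{m})\circ\phi$ near $c$ together with the elementary fact that $w\mapsto w^{m}$ is injective on the disc $\D(w_0,|w_0|\sin(\pi/m))$.
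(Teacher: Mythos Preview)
Your proof is correct, and the underlying idea---that near a critical point of local degree $m$ the map behaves like $w\mapsto w^m$, which is injective on any disc $\D(w_0,\kappa|w_0|)$ for a suitable $\kappa$---is exactly the phenomenon the paper exploits. The implementations differ, however. In the near regime the paper invokes the local normal form literally: it picks conformal charts $\phi_c,\psi_c$ in which $g$ becomes $z\mapsto z^{l_c}$, observes directly that $z\mapsto z^{l_c}$ is injective on $\D(\phi_c(x),|\phi_c(x)|/(100l))$, and then transports this back via the bi-Lipschitz equivalence of the charts with the spherical metric. You instead stay in the single coordinate $Z_c$, Taylor-expand $g$ about $z_0$ rather than about $c$, bound the coefficients by Cauchy's estimates on a disc of radius comparable to $\rho$, and conclude via the Noshiro--Warschawski criterion. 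In the far regime the paper uses a pure compactness/Lebesgue-number argument, while you again run Noshiro--Warschawski from the Taylor bounds of Lemma~\ref{distor1}.

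Both routes are short; the paper's is slightly more geometric and makes the dependence on $l$ explicit with no analysis beyond the normal form, whereas yours is self-contained within the Taylor-coefficient framework already set up in Lemma~\ref{distor1} and avoids introducing the auxiliary charts $\phi_c,\psi_c$. Your closing remark that the estimates are an ``effective version of the local normal form'' is exactly right and nicely pinpoints the relationship between the two arguments.
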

\proof For every $c\in \sC$, there is an open neighborhood $U_c$ of $c$ such that 
such that there are isomorphisms $\phi_c: U_c\to \D$ and $\psi_c: g(U_c)\to \D$, such that $\phi_c(c)=0,\psi(f(c))=0$ and $G_c:=\psi_c\circ g\circ (\phi_c)^{-1}: z\to z^{l_c}.$ Recall that $l_c$ is the multiplicity of $c$.
After shrinking $U_c$, we may assume that $U_c\subseteq B(c,\delta)$ and on $U_c$ and  $g(U_c)$, the spherical metrics are equivalent to the metrics induced by Euclidean
metric on $\D$ via $\phi_c$ and $\psi_c$ respectively.  
There is $D>1$ such that for every $c\in \sC$ and $x,y\in U_c, z,w\in g(U_c)$, we have
\begin{equation}\label{equequidis}D^{-1}|\phi_c(x)-\phi_c(y)|\leq d(x,y)\leq D|\phi_c(x)-\phi_c(y)|
\end{equation}
Set $V_c:=\phi^{-1}_c(\D(0,1/2)).$
and  $K:=\P^1(\C)\setminus (\cup_{c\in \sC}V_c).$
For every $x\in K$, there is $r_x>0$ such that $g|_{B(x,r_x)}$ is injective.  Since $K$ is compact, there is a finite subset $F\subseteq K$ such that $K\subseteq \cup_{x\in F}B(x,r_x).$
There is $\delta_1>0$ such that for every $x\in K$, there is $y\in F$ such that 
\begin{equation}\label{equink}B(x,\delta_1)\subseteq B(y, r_y).
\end{equation}

\medskip

For every $x\in V_c$, $|\phi_c(x)|<1/2$, it is clear that $G_c|_{\D(\phi_c(x),|\phi_c(x)|/(100l))}$ is injective. 
By (\ref{equequidis}), we have 
$$B(x,d(x,\sC)/(100D^2l))\subseteq B(x, |\phi_c(x)|/(100Dl))$$
and $$\phi_c(B(x, |\phi_c(x)|/(100Dl)))\subseteq \D(\phi_c(x),|\phi_c(x)|/(100l)).$$
So $g|_{B(x,d(x,\sC)/(100D^2l))}$ is injective.  Set $A_2:=\max\{100D^2l, 2\pi/\delta_1\},$ we conclude the proof by (\ref{equink}).
\endproof

Set $A_3:=2A_2,$ where $A_2$ is the constant in Lemma \ref{leminjectiveball}. By Koebe distortion theorem and (\ref{equdgcriticaldis}), there is $C_3>1$ such that for every $x\in \P^1(\C)\setminus \sC$ and $r\leq d(x,\sC)/A_3$
we have 
\begin{equation}\label{equupouc}
\rho^*(g, B(x,r),x)\leq C_3d(x,\sC)^{l_x-1}r 
\end{equation}
and 
\begin{equation}\label{equdownouc}
 \rho_*(g, B(x,r), x)\geq C_3^{-1}d(x,\sC)^{l_x-1}r.
\end{equation}

\par Without assuming $d(x,\mathcal{C})^l>r$, we also have the following weaker distortion estimates.
\begin{lemma}\label{distor3}
There exists $r_3>0$ and $\theta>1$ such that the following holds:  if two balls $B:=B(x,r)$, $B':=B(x,r')$ satisfy $r'<r<r_3$, then we have 
\begin{equation}\label{4.5}
	\frac{\rho_\ast(g, B', x)}{\rho^\ast(g, B, x)}\geq  \frac{1}{\theta} \frac{(r')^l}{r^l}
\end{equation}
and 
\begin{equation}\label{4.6}
	\frac{\rho^\ast(g, B',x)}{\rho_\ast(g,B,x)}\leq \theta \frac{r'}{r}.
\end{equation}
\end{lemma}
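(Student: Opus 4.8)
The plan is to choose $r_3>0$ sufficiently small and $\theta>1$ sufficiently large, both depending only on the constants $r_0,r_1,r_2,\delta,A_2,A_3,C_1,C_3$ produced by Lemmas \ref{distor1}, \ref{lemdistorcritical}, \ref{leminjectiveball} and by (\ref{equupouc}), (\ref{equdownouc}), and to estimate the two-sided radii $\rho_*$ and $\rho^*$ of the concentric balls $B=B(x,r)$ and $B'=B(x,r')$ separately, according to the size of $s:=d(x,\mathcal{C})$ relative to $r$. One immediately arranges $r_3<1$, so that $r'<r<1$.

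If $x\in\mathcal{C}$, then Lemma \ref{lemdistorcritical} already gives $C_1^{-1}\rho^{l_x}\le\rho_*(g,B(x,\rho),x)\le\rho^*(g,B(x,\rho),x)\le C_1\rho^{l_x}$ for $\rho\in\{r',r\}$, and since $l_x\ge 2$ and $r'<r<1$ both (\ref{4.5}) and (\ref{4.6}) follow at once with $\theta=C_1^2$. If $x\notin\mathcal{C}$ and $s\ge A_3 r$, then $B(x,r)$ — hence also $B(x,r')$ — lies in the regime where (\ref{equupouc}) and (\ref{equdownouc}) apply, so $\rho_*$ and $\rho^*$ of each ball are trapped between $C_3^{-1}s^{l_x-1}$ and $C_3 s^{l_x-1}$ times the radius; the factor $s^{l_x-1}$ cancels in both ratios, leaving constants times $r'/r$, and since $0<r'/r<1$ we have $r'/r\ge(r'/r)^l$, so (\ref{4.5}) and (\ref{4.6}) hold with $\theta$ a power of $C_3$.

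The substantial case is $x\notin\mathcal{C}$ and $s<A_3 r$; for $r_3$ small this forces $s<\delta$, so there is a unique nearest critical point $c$, of multiplicity $l_c=l_x\ge 2$. Enlarging $B(x,r)\subseteq B(c,(1+A_3)r)$ and applying (\ref{equuprhoc}) bounds $\rho^*(g,B(x,r),x)$ above by a constant times $r^{l_c}$, and the same bound holds for $r'$ whenever $s<A_3 r'$. For the matching lower bound I distinguish two subcases. When $s\le r/(1+c'')$ with $c'':=\sqrt2\,C_1$, the ball $B(c,r-s)$ lies inside $B(x,r)$ and $d(g(x),g(c))\le C_1 s^{l_c}\le\tfrac12 C_1^{-1}(r-s)^{l_c}$, so (\ref{equctoaproper}) together with Proposition \ref{proproinima} yields $\rho_*(g,B(x,r),x)\ge\tfrac12 C_1^{-1}(r-s)^{l_c}$, which is a constant times $r^{l_c}$. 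When instead $r/(1+c'')<s<A_3 r$, the concentric ball $B(x,s/(A_3+1))$ is still inside the injectivity ball of Lemma \ref{leminjectiveball}, so (\ref{equdownouc}) shows $\rho_*(g,B(x,r),x)$ is bounded below by a constant times $s^{l_c}$, hence by a constant times $r^{l_c}$ because $s$ and $r$ are then comparable. Thus both radii of $B(x,r)$ are comparable to $r^{l_c}$, and likewise both radii of $B(x,r')$ are comparable to $(r')^{l_c}$ provided $s<A_3 r'$; then (\ref{4.5}) and (\ref{4.6}) follow from $(r'/r)^l\le(r'/r)^{l_c}\le r'/r$. In the remaining subcase $r'\le s/A_3$, the ball $B(x,r')$ is again in the injective regime, so (\ref{equupouc}) and (\ref{equdownouc}) make $\rho_*,\rho^*$ of $B(x,r')$ comparable to $s^{l_c-1}r'$; combining this with the $B(x,r)$ bounds and the elementary inequalities $(r')^{l-1}=(r')^{l_c-1}(r')^{l-l_c}<s^{l_c-1}r^{l-l_c}$ (from $r'<s$ and $r'<r$) and $s^{l_c-1}<A_3^{l-1}r^{l_c-1}$ (from $s<A_3 r$) gives (\ref{4.5}) and (\ref{4.6}).

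The one genuinely delicate point is the transitional range in which $s$, $r'$ and $r$ are all of the same order and $c$ has multiplicity $\ge 3$: there $B(x,r')$ neither comfortably contains a $c$-centered ball (so (\ref{equctoaproper}) is not directly available with a radius comparable to $r'$) nor sits inside the injectivity ball $B(x,d(x,\mathcal{C})/A_2)$ of Lemma \ref{leminjectiveball} (so the Koebe-type bounds (\ref{equupouc}), (\ref{equdownouc}) are not available with radius $r'$ either). The fix, used above, is always to retreat to the concentric ball of radius $d(x,\mathcal{C})/(A_3+1)$, which lies inside both $B(x,r')$ and the injectivity ball, in order to bound $\rho_*$ from below; since in this range all the radii are comparable, nothing is lost beyond enlarging $\theta$. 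Because $\theta$ is only required to be \emph{some} constant $>1$, every estimate above may be carried out crudely, and one concludes by taking $r_3$ small enough for all the invoked lemmas to apply to balls of radius up to $(1+A_3)r_3$ and $\theta$ equal to the maximum of the finitely many constants produced in the cases.
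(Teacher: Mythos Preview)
Your proof is correct and follows essentially the same approach as the paper. The paper's argument is organized around an auxiliary statement (Lemma~\ref{lembounupdownc}) which records, once and for all, that $\rho^*(g,B(x,\rho),x)$ and $\rho_*(g,B(x,\rho),x)$ are both comparable to $d(x,\mathcal{C})^{l_x-1}\rho$ when $\rho<d(x,\mathcal{C})/2$ and to $\rho^{l_x}$ when $\rho\ge d(x,\mathcal{C})/2$; it then reads off the two ratios by a clean trichotomy on where $r$ and $r'$ fall relative to $d(x,\mathcal{C})/2$. You carry out the same estimates inline, with the threshold $A_3$ in place of $2$, and your two subcases for the lower bound on $\rho_*$ in the near-critical regime (retreat to a $c$-centered ball via (\ref{equctoaproper}) when $s$ is small compared to $r$, retreat to the injectivity ball $B(x,s/(A_3+1))$ when $s$ and $r$ are comparable) match the paper's proof of (\ref{equrladown}) in Lemma~\ref{lembounupdownc} exactly. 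The ``delicate point'' paragraph is already absorbed in your second subcase and could be dropped.
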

\begin{proof}
The proof is based on the following lemma.
\begin{lem}\label{lembounupdownc}
There are $p_0>0$ and  two constants $\theta_1>0$ and $\theta_2>0$ such that for $r<p_0$, the following holds:
\begin{points}
\item If a ball $B:=B(x,r)$ satisfies $r<d(x,\mathcal{C})/2$, then
\begin{equation}\label{eqursmup}\rho^\ast(g,B,x)\leq \theta_1 d(x,\mathcal{C})^{l_x-1}r
\end{equation} and 
\begin{equation}\label{eqursmdown}\rho_\ast(g, B,x)\geq \theta_2 d(x,\mathcal{C})^{l_x-1}r.
\end{equation}
\item If a ball $B:=B(x,r)$ satisfies $r\geq d(x,\mathcal{C})/2$, then
\begin{equation}\label{equrlaup}\rho^\ast(g, B, x)\leq \theta_1 r^{l_x}
\end{equation} and 
\begin{equation}\label{equrladown}\rho_\ast(g, B,x)\geq \theta_2 r^{l_x}.
\end{equation}
\end{points}
\end{lem}

\par Set $\theta:=2\theta_1/\theta_2$. To show (\ref{4.5}) and (\ref{4.6}), there are three cases. 
\medskip
\par {\em Case 1}: we have $r<d(x,\mathcal{C})/2$. Then 
\begin{equation*}
	\frac{\rho_\ast(g,B',x)}{\rho^\ast(g,B,x)}\geq  \frac{\theta_2 d(x,\mathcal{C})^{l_x-1}r'}{\theta_1 d(x,\mathcal{C})^{l_x-1}r}\geq \frac{1}{\theta}\frac{r'}{r},
\end{equation*}
and 
\begin{equation*}
	\frac{\rho^\ast(g,B',x)}{\rho_\ast(g,B,x)}\leq    \frac{\theta_1 d(x,\mathcal{C})^{l_x-1}r'}{\theta_2 d(x,\mathcal{C})^{l_x-1}r}\leq \theta \frac{r'}{r}.
\end{equation*}
\medskip
\par {\em Case 2}: we have $r'<d(x,\mathcal{C})/2$ but  $r\geq d(x,\mathcal{C})/2$. Then
\begin{equation*}
	\frac{\rho_\ast(g,B',x)}{\rho^\ast(g,B,x)}\geq  \frac{\theta_2 d(x,\mathcal{C})^{l_x-1}r'}{\theta_1 d(x,\mathcal{C})^{l_x}}\geq \frac{1}{\theta} \frac{r'}{r},
\end{equation*}
and 
\begin{equation*}
	\frac{\rho^\ast(g,B',x)}{\rho_\ast(g,B,x)}\leq    \frac{\theta_1 d(x,\mathcal{C})^{l_x-1}r'}{\theta_2 d(x,\mathcal{C})^{l_x}}\leq \theta\frac{r'}{r}.
\end{equation*}
\medskip
\par {\em Case 3}: we have $r'\geq d(x,\mathcal{C})/2$. Then
\begin{equation*}
	\frac{\rho_\ast(g,B',x)}{\rho^\ast(g,B,x)}\geq  \frac{\theta_2 (r')^{l_x}}{\theta_1 r^{l_x}}\geq \frac{1}{\theta} \frac{(r')^l_x}{r^l_x},
\end{equation*}
and 
\begin{equation*}
	\frac{\rho^\ast(g,B',x)}{\rho_\ast(g,B,x)}\leq    \frac{\theta_1 (r')^{l_x}}{\theta_2 r^{l_x}}\leq \theta\frac{r'}{r} .
\end{equation*}
\end{proof}

\proof[Proof of Lemma \ref{lembounupdownc}]
By (\ref{equdgcriticaldis}), for $r<\delta/100$ and every $y\in B(x,r),$ we have $$|dg(y)|\leq A_1 d(y,\mathcal{C})^{l_x-1}.$$
If $r<d(x,\sC)/2$, then $d(y,\mathcal{C})\leq 3/2d(x,\mathcal{C}).$
So $$g(B(x,r))\subseteq B(g(x),A_1 (3/2)^{l-1}d(x,\mathcal{C})^{l_x-1}r),$$ which implies (\ref{eqursmup}).

Next, we prove (\ref{eqursmdown}). Since $r<d(x,\mathcal{C})/2$, $r/A_3<d(x,\mathcal{C})/A_3.$ By 
(\ref{equdownouc}), we have 
$$\rho_*(g, B(x,r), x))\geq \rho_*(g, B(x,r/A_3), x))\geq (C_3A_3)^{-1}d(x,\mathcal{C})^{l_x-1}r.$$
This implies (\ref{eqursmdown}).

Now we assume that $r\geq d(x,\mathcal{C})/2$.  
Let $r_1, C_1$ as in Lemma \ref{lemdistorcritical}. Assume that $r<\min\{r_1,\delta, 1/C_1\}/100.$ Then $d(x,\mathcal{C})<\min\{r_1,\delta,1/C_1\}/50$. Set $c:=c_x.$ We have $l_x=l_c.$ Recall that $l_c$ is the multiplicity of $c$.
Since $B(x,r)\subseteq B(c, 3r)$, $g(B(x,r))\subseteq g(B(c,3r)).$ 
By Lemma \ref{lemdistorcritical}, $$g(B(x,r))\subseteq g(B(c,3r))\subseteq B(g(c), C_13^l_cr^{l_c}).$$ 
Since $d(c,x)\leq 2r$, by Lemma \ref{lemdistorcritical}, we have $d(g(c),g(x))\leq C_12^l_cr^{l_c}.$
Then we have $$g(B(x,r))\subseteq B(g(c), C_13^l_cr^{l_c})\subseteq B(x, C_1(3^{l_c}+2^{l_c})r^{l_c})).$$
Since $C_1(3^{l_c}+2^{l_c})r^{l_c})\leq C_1(3^{l}+2^{l})r^{l_c})$, we get (\ref{equrlaup}).

Finally, we prove (\ref{equrladown}). We first treat the case where $r\leq 10(C_1^2+1)d(x,\mathcal{C}).$
Since $r\geq d(x,\mathcal{C})/2$ and $A_3>2$, 
$B(x,r)$ contains $B(x, d(x,\mathcal{C})/2).$
By (\ref{equdownouc}), 
\begin{align*}\rho_*(g(B(x,r)),g(x))\geq &\rho_*(g(B(x,d(x,\mathcal{C})/A_3)), g(x))\\ 
\geq &(A_3C_3)^{-1}(d(x,\mathcal{C}))^{l_x}\\
\geq & (A_3C_3)^{-1}(r/(10(C_1^2+1)))^{l_x},
\end{align*}
which implies (\ref{equrladown}).
Now assume that $r>(10(C_1^2+1))d(x,\mathcal{C})$.
Note that $d(x,\sC)=d(x,c)$. 
Set $Q:=(10(C_1^2+1)).$ Since $r>Qd(x,\mathcal{C})$,
\begin{equation}\label{equdchangr}
C_1^{-1}(r-d(x,c))^{l_x}\geq C_1^{-1}\left(\frac{Q-1}{Q}\right)^{l_x}r^{l_x}.
\end{equation}
By Lemma \ref{lemdistorcritical},
we have $$d(g(c),g(x))\leq C_1d(c,x)^{l_x}\leq C_1Q^{-l_x}r^{l_x}.$$
One may check that $C_1Q^{-l_x}\leq 1/10 C_1^{-1}(\frac{Q-1}{Q})^{l_x}.$
We get 
\begin{equation}\label{equbxbc}B(g(x),9C_1Q^{-l_x}x^{l_x})\subseteq B(g(c), C_1^{-1}(r-d(x,c))^{l_x})
\end{equation}
and 
$$g(x)\in B(g(c), C_1^{-1}(r-d(x,c))^{l_x}).$$
Since $r>(10(C_1^2+1))d(x,c)>10d(x,c)$, $x\in B(c,r-d(x,c)).$
Then by (\ref{equctoaproper}) of Lemma \ref{lemdistorcritical}, we get 
$$B(g(c), C_1^{-1}(r-d(x,c))^{l_x})\subseteq_p g(B(c,r-d(x,c)),x).$$
Since   $B(c,r-d(x,c))\subseteq B(x,r),$
$$B(g(c), C_1^{-1}(r-d(x,c))^{l_x})\subseteq_p g(B(x,r),x).$$
By (\ref{equbxbc}), $B(g(x),9C_1Q^{-l_x}x^{l_x})\subseteq_p g(B(x,r),x)$, which concludes the proof.
\endproof

\section{Bounded distortion for non-uniformly hyperbolic maps}\label{plough 2}
In this section, we show some nice bounded distortion properties of Topological Collet-Eckmann and Polynomial Recurrence  maps.
\medskip
\par Let $q:=\#(\mathcal{C}\cap \sJ(g))$. 
Define $d_1(\cdot,\cdot):=\min\{d(\cdot,\cdot),1\}.$  We have $d_1(x,y)\leq d(x,y)$ for every $x,y\in\P^1(\C)$.
The following lemma is \cite[(3.3) in the proof of Lemma 3.4 ]{denker1996transfer}.
\begin{lemma}[Denker-Przytycki-Urbanski \cite{denker1996transfer}]\label{dpu}
There exists $Q>0$ such that for every $x\in \sJ(g)$ and $n\geq 1$, the following holds:
\begin{equation*}
\sum_{\substack{0\leq k\leq n-1, \\\text{except}\;q\;\text{terms}}} -\log d_1(g^k(x),\mathcal{C})\leq Qn.
\end{equation*}
\end{lemma}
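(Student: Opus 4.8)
Since the statement is quoted verbatim from Denker--Przytycki--Urba\'nski, the plan is to recall the structure of their proof of \cite[(3.3)]{denker1996transfer}. First I would make a harmless reduction to the case $\sC\subseteq\sJ(g)$: the set $\sC\setminus\sJ(g)$ is finite and contained in the Fatou set, hence lies at distance $\delta_0>0$ from $\sJ(g)$, and since $d_1\le 1$ one has, for every $y\in\sJ(g)$, that $-\log d_1(y,\sC)\le-\log d_1(y,\sC\cap\sJ(g))-\log\min\{\delta_0,1\}$; summing over $k$ absorbs the difference into an $O(n)$ term, so from now on $q=\#\sC$.

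Next I would fix $\delta>0$ so small that the balls $B(c,3\delta)$, $c\in\sC$, are pairwise disjoint and each carries the local normal form used in the proof of Lemma \ref{lemdistorcritical}, so that on $B(c,3\delta)$ the quantity $d(g(y),g(c))$ is comparable to $d(y,c)^{\ell_c}$ and $|dg(y)|$ to $d(y,c)^{\ell_c-1}$, while $|dg|\ge\kappa>0$ on the compact set $\{d(\cdot,\sC)\ge\delta\}$. Writing $x_k=g^k(x)$, I would split $\{0,\dots,n-1\}$ into the \emph{far} times, where $d(x_k,\sC)\ge\delta$, and the \emph{close} times, where $x_k\in B(c_{i(k)},\delta)$ for a unique index $i(k)$. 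Far times contribute at most $(-\log\min\{\delta,1\})\,n$ to the sum, so only the close times are at issue. For each $c_i$ I would single out an index $k_i^\ast$ minimizing $d(x_k,c_i)$ over the close times lying near $c_i$ (none if there are none): these are the at most $q$ exceptional terms to be discarded, and they are the only ones that can be infinite — the orbit of $x$ meets $\sC$ at most once at each critical point, a second meeting forcing that point to be periodic, hence superattracting and outside $\sJ(g)$.

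The core of the argument is then to bound $\sum{}'-\log d(x_k,c_{i(k)})$, the sum over the remaining close times, by $O(n)$, the mechanism being that each close visit is paid for by the expansion accumulated beforehand. Between two consecutive close times $k<k'$ the orbit stays $\delta$-away from $\sC$; restricting further to the sub-blocks that also avoid fixed neighbourhoods of the (finitely many, non-critical, hence $\sC$-separated) parabolic cycles, Ma\~n\'e's theorem provides uniform constants $\lambda>1$, $C_M>0$ with $|dg^{\,k'-k-1}(x_{k+1})|\ge C_M\lambda^{k'-k-1}$. Meanwhile the normal form gives $-\log d(x_{k+1},g(c_{i(k)}))=\ell_{i(k)}\cdot(-\log d(x_k,c_{i(k)}))+O(1)$, and once $x_{k+1}$ is pinned near the critical value $g(c_{i(k)})$ the ensuing orbit shadows the \emph{fixed} forward orbit of $c_{i(k)}$. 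Combining these along the whole list of close times yields an estimate of the shape ``depth at the next close time $\le$ a bounded multiple of the present depth $-\,(\text{elapsed time})\log\lambda+O(1)$'', which, summed telescopically, lets the cumulative expansion dominate the bounded amplification and gives $\sum{}'-\log d(x_k,c_{i(k)})=O(n)$, hence the lemma with a uniform $Q$.

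The genuinely delicate step is the last one: one must convert the heuristic ``a deep entry near $\sC$ forces a long, uniformly expanding excursion before the next close time'' into a precise inequality, and in particular control the cascade by which a deep approach to $c_i$ propagates to deep approaches to the other critical points along the postcritical orbit of $c_i$. This bookkeeping is exactly the content of \cite[(3.3)]{denker1996transfer}, and it is what forces the number of exceptional indices to be precisely $q=\#(\sC\cap\sJ(g))$. A subsidiary technicality is securing the Ma\~n\'e expansion on the far blocks, which is why neighbourhoods of the parabolic cycles must be treated apart; here this costs nothing since parabolic periodic points keep a definite distance from $\sC$, so those times are anyway counted among the far ones.
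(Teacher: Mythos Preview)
The paper does not give its own proof of this lemma: it is quoted verbatim as ``(3.3) in the proof of Lemma 3.4'' of \cite{denker1996transfer}, with no argument supplied. So there is nothing in the paper to compare against --- your proposal goes well beyond what the authors do, and the relevant question is only whether your sketch is a faithful outline of the Denker--Przytycki--Urba\'nski argument.

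It is not, and the gap is in your appeal to Ma\~n\'e's theorem. You claim that along an orbit segment staying $\delta$-away from $\sC$ and from the parabolic cycles one has uniform expansion $|dg^{k'-k-1}(x_{k+1})|\ge C_M\lambda^{k'-k-1}$. This is false for general rational maps. Ma\~n\'e's theorem concerns compact \emph{forward-invariant} sets (which your orbit segments do not form), and even then its conclusion is: either hyperbolic, or contains a parabolic point, \emph{or intersects $\omega(c)$ for some recurrent critical point $c\in\sJ(g)$}. Cremer periodic points illustrate the failure: they lie in $\sJ(g)$, are neither critical nor parabolic, yet an orbit in $\sJ(g)$ can linger near a Cremer point for arbitrarily long with derivatives of modulus essentially $1$. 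In that situation your recursion ``depth at the next close time $\le$ bounded multiple of the present depth minus (elapsed time)$\cdot\log\lambda$'' simply does not hold, and the telescoping collapses.

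The point is that Lemma~\ref{dpu} is stated and proved in \cite{denker1996transfer} for \emph{arbitrary} rational maps of degree $\ge 2$, with no non-uniform hyperbolicity hypothesis whatsoever, so the proof cannot rest on expansion. The actual mechanism uses only the global Lipschitz bound $d(g(y),g(y'))\le L\,d(y,y')$: a deep visit $d(x_k,c)=r$ forces the subsequent orbit to shadow the \emph{fixed} forward orbit of $c$ for roughly $(-\log r)/\log L$ steps, and the careful bookkeeping you defer to ``exactly the content of (3.3)'' is how one converts this shadowing into the bound $\sum' -\log d_1(x_k,c)\le Qn$ after discarding one term per critical point. Your reduction to $\sC\subseteq\sJ(g)$ and the far/close dichotomy are fine, but the expansion step should be replaced by this Lipschitz shadowing, and the combinatorics that follows is where all the work lies --- it cannot be short-circuited by Ma\~n\'e.
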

\medskip
\par   Let $\delta_0>0$. For every fixed $x\in \sJ(g)$ and $n\geq 1$, for $0\leq k\leq n$ we define $W_k(n)$ to be the connected component of $g^{k-n}(B(g^n(x),\delta_0))$ containing $g^k(x)$.  When $n$ is clear, we write $W_m$ for the simplicity. The following lemma is inspired by Przytycki-Rohde \cite{przytycki1998porosity}. 
\begin{lemma}\label{pullback}
Assume $g$ is $\TCE(\la)$ for some $\la>1$, $\delta_0>0$.  Let $x\in \sJ(g)$. Then for every $\ep>0$, there exists $N>0$ and a subset $A\subseteq \Z_{\geq 0}$ satisfying $\underline{d}(A)>1-\ep$ such the following holds: for every $m\in A$,  and $k\not\in E_m$ where $E_m$ is a subset of $\{0,\dots,m\}$ containing at most $N$ elements, we have
$$d_1(g^k(x),\mathcal{C})^l>\diam W_k(m). $$
\end{lemma}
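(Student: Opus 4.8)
The plan is to combine the $\TCE(\la)$ shrinking estimate with the two ``sparseness of bad times'' lemmas just proved (Lemma \ref{dpu} of Denker--Przytycki--Urbanski, and the Polynomial Recurrence hypothesis via Lemma \ref{pullback}'s hypotheses) to control, for most $m$, how many pull-back steps along the orbit of $x$ can fail the desired inequality $d_1(g^k(x),\mathcal{C})^l>\diam W_k(m)$.

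\medskip

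First I would fix $x\in\sJ(g)$ and $\delta_0>0$ as in the $\TCE(\la)$ definition, so that every connected component of $g^{-j}(B(z,\delta_0))$ has diameter at most $\la^{-j}$ for any $z\in\sJ(g)$. In particular, for $0\le k\le m$ the component $W_k(m)$ of $g^{k-m}(B(g^m(x),\delta_0))$ satisfies $\diam W_k(m)\le \la^{-(m-k)}$. Hence the inequality we want, $d_1(g^k(x),\mathcal{C})^l>\diam W_k(m)$, is implied by the cleaner condition
\begin{equation*}
-\log d_1(g^k(x),\mathcal{C}) < \frac{(m-k)\log\la}{l}.
\end{equation*}
So it suffices to bound, for each $m$, the number of indices $k\in\{0,\dots,m\}$ for which $-\log d_1(g^k(x),\mathcal{C})\ge (m-k)(\log\la)/l$.

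\medskip

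Next I would split the range of $k$ into a ``deep tail'' $m-k\le N_0$ (the last $N_0$ indices before $m$) and the rest. On the deep tail there are only $N_0$ indices, which we simply absorb into the exceptional set $E_m$; this contributes a bounded number $N_0$ independent of $m$. For the remaining indices, where $m-k$ is large, the threshold $(m-k)(\log\la)/l$ is large, so a violation requires $g^k(x)$ to be very close to $\mathcal{C}$. Here is where the Denker--Przytycki--Urbanski estimate enters: Lemma \ref{dpu} says $\sum_{0\le k\le m-1}^{\ \ *} -\log d_1(g^k(x),\mathcal{C})\le Qm$, where the sum omits at most $q$ terms (the ``close to periodic critical points in the Julia set'' terms). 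Those $q$ omitted terms go into $E_m$. For the other terms, a counting/Markov-type argument: if $\#\{k : -\log d_1(g^k(x),\mathcal{C})\ge (m-k)(\log\la)/l\}$ were large, say along a positive-upper-density set of $m$'s, the sum $\sum_k -\log d_1(g^k(x),\mathcal{C})$ would grow faster than linearly in $m$ — actually one has to be a bit careful because the threshold depends on $m-k$, not on an absolute constant. The clean way: the hypothesis of \emph{Polynomial Recurrence} $\PR(s)$ for the orbit (which in the intended application comes through the marked point, but here is just assumed as an input to Theorem \ref{renor}) gives $d(g^k(x),\mathcal{C})\ge k^{-s}$ for all large $k$, hence $-\log d_1(g^k(x),\mathcal{C}) \le s\log k$, which is $o(m-k)$ as long as $m-k$ stays above, say, $\sqrt{m}$. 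Wait — one must handle $k$ close to $m$ separately: for $m-\sqrt m\le k\le m$ I'd again throw those into $E_m$... but that set is not of bounded size.

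\medskip

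So the correct bookkeeping is the following, which I expect to be the main obstacle and the heart of the argument. Fix $\ep>0$. Using the superlinear-gap idea: for a fixed large $N$, call $m$ \emph{good} if $\#\{k\in\{0,\dots,m\} : -\log d_1(g^k(x),\mathcal{C})\ge (m-k)(\log\la)/l\text{ or }k\text{ is one of the }q\text{ DPU-exceptional indices}\}\le N$. I would show the set of good $m$ has lower density $>1-\ep$ by a double-counting argument over the triangle $\{(k,m): 0\le k\le m\le M\}$: summing the indicator of a violation over this triangle and using Lemma \ref{dpu} (rewritten as $\sum_{k=0}^{m-1}{}^{*}(-\log d_1(g^k(x),\mathcal{C}))\le Qm$ for every $m$, hence $\sum_{m\le M}\sum_{k=0}^{m-1}{}^* (-\log d_1(g^k(x),\mathcal{C}))\le QM^2$) together with the fact that each violation at $(k,m)$ contributes at least $(m-k)(\log\la)/l$ to the inner sum and each fixed $k$ can be a DPU-exception for at most... here I would instead use the cleaner per-$k$ bound $\sum_{m\ge k}\mathbf{1}[\text{violation at }(k,m)]\cdot\frac{(m-k)\log\la}{l}$ is at most the total mass that value $-\log d_1(g^k(x),\mathcal{C})$ can support, i.e.\ $\lesssim -\log d_1(g^k(x),\mathcal{C})$, so violations at position $k$ occur for at most $O(\sqrt{-\log d_1(g^k(x),\mathcal{C})})$ values of $m$. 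Summing over $k\le M$ and applying Cauchy--Schwarz with $\sum_{k\le M}-\log d_1(g^k(x),\mathcal{C})=O(M^2)$ (a weaker consequence of Lemma \ref{dpu} after returning the $q$ exceptional terms, each bounded on the deep tail because of $\PR(s)$ or at worst $O(\log M)$) bounds the total number of violating pairs $(k,m)$ with $k,m\le M$ by $O(M^{3/2})$. Therefore the number of $m\le M$ with more than $N$ violations is $O(M^{3/2}/N)=\ep M$ for $N$ large — but this only gives density, not lower density with the uniform exceptional size $N$. To upgrade to the stated form I'd note that once $m$ has at most $N$ violating/exceptional indices, those indices constitute $E_m$ (with $|E_m|\le N+q$ or so), and for $k\notin E_m$ we have by construction $-\log d_1(g^k(x),\mathcal{C})<(m-k)(\log\la)/l$, hence $d_1(g^k(x),\mathcal{C})^l>\la^{-(m-k)}\ge\diam W_k(m)$, which is exactly the claim. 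Renaming $N+q\rightsquigarrow N$ and $A:=\{\text{good }m\}$ finishes the proof. The delicate points I would need to be careful about are: (a) the $\PR(s)$ hypothesis only kicks in for $k\ge$ some $N_1$, so indices $k<N_1$ must also be dumped into $E_m$ — again a bounded number; (b) making the density bound uniform enough to give $\underline d(A)>1-\ep$ rather than just $\overline d$; I would handle (b) by proving the violation-count bound $O(M^{3/2})$ holds for \emph{all} $M$ simultaneously (it does, since it came from summing nonnegative terms over $k,m\le M$), which forces the density of bad $m$ to be small along every sufficiently long initial segment.
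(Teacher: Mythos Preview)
Your overall strategy --- reduce via $\TCE(\la)$ to the condition $-\log d_1(g^k(x),\mathcal{C}) \geq (m-k)(\log\la)/l$, then double-count violation pairs $(k,m)$ using Lemma~\ref{dpu} --- is exactly the paper's. But the execution has a concrete error and an illegal hypothesis. First, your weighted-sum claim is wrong: writing $a_k := \frac{l}{\log\la}(-\log d_1(g^k(x),\mathcal{C}))$, the violating $m$ for fixed $k$ are precisely those with $k\le m\le k+a_k$, so your weighted sum $\sum_m \mathbf{1}[\text{viol}]\cdot(m-k)(\log\la)/l$ is $\asymp a_k^2$, not $\lesssim a_k$. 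This breaks the $O(\sqrt{a_k})$ step and the Cauchy--Schwarz bookkeeping that follows. The fix is trivial: the \emph{unweighted} count of violating $m$ is directly $\le a_k+1$, and no $\sqrt{\cdot}$ or Cauchy--Schwarz is needed. Second, $\PR(s)$ is not a hypothesis of this lemma (it enters only in Lemmas~\ref{pr1}--\ref{pr2}); your use of it to bound the $q$ DPU-exceptional terms is both illegal here and unnecessary. Without $\PR(s)$ those terms may be $+\infty$ (the orbit may hit $\mathcal{C}$), so you cannot ``return'' them to the sum --- but you never need to, since there are at most $q$ of them and they simply contribute at most $q$ indices to each $E_m$.

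The paper packages the same double count cleanly via intervals: set $I_k:=[k,\,k+a_k]$, so that $(k,m)$ is a violation iff $m\in I_k$, and let $A_N$ be the set of $m$ lying in at most $N$ of the $I_k$. If $m\notin A_N$, then after discarding the $\le q$ exceptional intervals and the one with left endpoint $m$, at least $N-q$ non-exceptional intervals cover $[m-1,m]$; summing the length bound from Lemma~\ref{dpu} gives $(N-q)\cdot\#(\{0,\dots,n-1\}\setminus A_N)\le\frac{lQ}{\log\la}\,n$ for every $n$, whence $\underline{d}(A_N)>1-\ep$ once $N$ is large. For $m\in A_N$ take $E_m:=\{k:m\in I_k\}$; for $k\notin E_m$ the $\TCE$ bound $\diam W_k(m)\le\la^{k-m}$ finishes. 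This is your argument, executed without the detour.
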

\begin{proof}
For each $k\in \Z_{\geq 0}$, let $I_k$ be the closed interval $$I_k:=\left[k,k+\frac{l}{\log \la}(-\log d_1(g^k(x),\mathcal{C})) \right].$$
By Lemma \ref{dpu},  for every $n\geq q$ there is a subset $F_n\subseteq \{0,\dots, n-1\}$ with $\#F_n=q$ such that 
$$\sum_{0\leq k\leq n-1, k\not\in F_n} |I_k\cap [0,n]|\leq \frac{lQ}{\log \la} n.$$
For every $N\geq 1$, set 
$$A_N:=\left\{ n\in \Z_{\geq 0}: \text{there are at most}\; N\; \text{intervals}\;I_k\;\text{containing}\; n\right\}.$$
For every $i\in \{0,\dots,n-1\}\setminus A_N$, $i$ is covered by at least $N+1$ intervals $I_k$.
Since the left endpoint of those $I_k$ are distinct integers, at least $N$ left endpoints are $\leq k-1.$
Hence  there are at least $N-q$ intervals among $I_k, k\in \{0,\dots,n-1\}\setminus F_n$ covers the interval $[i-1,i]$.
Hence $$(N-q)(n-\#(A_N\cap [0,n-1]))\leq \frac{lQ}{\log \la} n.$$ It follows that 
$$\#(A_N\cap [0,n-1])\geq \left(1- \frac{lQ}{\log \la(N-q)} \right)n.$$
Pick $N$ large enough, we have $\underline{d}(A_N)>1-\ep$.  We set $A:=A_N$. We need to show that $A$ satisfies the property we want. It suffices to show for $m\in A$,  if $m\notin I_k$  for $0\leq k\leq m$, then we have $d(g^k(x),\mathcal{C})^l>\diam W_k. $ The condition $m\notin I_k$ and the the $\TCE(\la)$ property imply that 
$$ d_1(g^k(x),\mathcal{C})^l>\la^{k-m}\geq \diam W_k.$$
 This finishes the proof. 
\end{proof}
\medskip
\par If in addition a point $x\in\sJ(g)$ satisfies $\PR(s)$, we  have the following two lemmas.
\begin{lemma}\label{pr1}
Assume $g$ is $\TCE(\la)$ for some $\la>1$, $\delta_0>0$.  Let $x\in \sJ(g)$ satisfy $\PR(s)$ for some $s>0$ and $x$ is not a preimage of a critical point.  Then the following holds:  for every large $n\geq 1$,  if some $k\in \{0,\dots, n-1\}$ sartisfies $d(g^k(x),\mathcal{C})^l\leq \diam W_k(n)$, then $k\geq n-(sl/\log \la)\log n$. 
\end{lemma}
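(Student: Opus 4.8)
The statement to prove is Lemma~\ref{pr1}: if $g$ is $\TCE(\la)$, $x\in\sJ(g)$ satisfies $\PR(s)$ and is not a preimage of a critical point, then for all large $n$, whenever $k\in\{0,\dots,n-1\}$ satisfies $d(g^k(x),\mathcal C)^l\leq\diam W_k(n)$, one must have $k\geq n-(sl/\log\la)\log n$. The plan is to argue by contraposition: assume $k< n-(sl/\log\la)\log n$ and derive $d(g^k(x),\mathcal C)^l>\diam W_k(n)$, which is exactly the negation of the hypothesis on $k$.

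First I would control $\diam W_k(n)$ from above using the $\TCE(\la)$ property. Since $g$ is $\TCE(\la)$ with the fixed $\delta_0$, and $W_k(n)$ is the connected component of $g^{k-n}(B(g^n(x),\delta_0))$ containing $g^k(x)$, i.e.\ a component of $g^{-(n-k)}$ of a $\delta_0$-ball around a Julia point, the definition gives $\diam W_k(n)\leq\la^{-(n-k)}$. (One should first shrink $\delta_0$ if necessary so that it is $\leq$ the $\delta_0$ in the $\TCE$ definition, or simply invoke $\TCE$ at the scale $\delta_0$; this is the same normalization already used in Lemma~\ref{pullback}.) Next I would bound $d(g^k(x),\mathcal C)$ from below using $\PR(s)$: there is $N_0$ such that for $k\geq N_0$ one has $d(g^k(x),\mathcal C)\geq k^{-s}$, hence $d(g^k(x),\mathcal C)^l\geq k^{-sl}\geq n^{-sl}$ when $k\leq n$. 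Since $x$ is not a preimage of a critical point, $g^k(x)\notin\mathcal C$ for all $k$, so $d(g^k(x),\mathcal C)>0$ even for the finitely many $k<N_0$; absorbing those finitely many indices into ``$n$ large'' (their contribution is a fixed positive constant, eventually dominated) lets me assume $d(g^k(x),\mathcal C)^l\geq n^{-sl}$ for all $k\in\{0,\dots,n-1\}$ once $n$ is large.

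Then the comparison is immediate: under the contrapositive assumption $k<n-(sl/\log\la)\log n$, i.e.\ $n-k>(sl/\log\la)\log n$, I get
\begin{equation*}
\diam W_k(n)\leq\la^{-(n-k)}<\la^{-(sl/\log\la)\log n}=e^{-sl\log n}=n^{-sl}\leq d(g^k(x),\mathcal C)^l,
\end{equation*}
which contradicts $d(g^k(x),\mathcal C)^l\leq\diam W_k(n)$. Hence any $k$ satisfying the hypothesis must obey $k\geq n-(sl/\log\la)\log n$, as desired. I should also note $d_1$ versus $d$ is harmless here: $d_1\leq d$ and for $n$ large $n^{-sl}<1$, so $d_1(g^k(x),\mathcal C)^l = d(g^k(x),\mathcal C)^l$ in the relevant range, consistent with the $d_1$ used elsewhere.

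The main obstacle is essentially bookkeeping rather than a deep difficulty: making sure the ``for every large $n$'' quantifier correctly absorbs (a) the finitely many indices $k<N_0$ where $\PR(s)$ does not yet apply, using that $x$ is not a critical preimage so these distances are bounded below by a positive constant, and (b) the exact normalization of $\delta_0$ in the $\TCE$ definition so that the clean bound $\diam W_k(n)\leq\la^{-(n-k)}$ holds. Once those are pinned down the inequality chain is one line.
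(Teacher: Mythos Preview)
Your proposal is correct and follows essentially the same approach as the paper's own proof: both combine the $\TCE$ bound $\diam W_k(n)\leq \la^{k-n}$ with the $\PR(s)$ lower bound $d(g^k(x),\mathcal C)\geq k^{-s}$, and handle the finitely many small indices $k<N_0$ via the positivity of $d(g^k(x),\mathcal C)$ coming from the assumption that $x$ is not a critical preimage. The only cosmetic difference is that the paper argues directly (assuming $d(g^k(x),\mathcal C)^l\leq \diam W_k$ and deducing $k\geq n-(sl/\log\la)\log n$) while you frame it as a contraposition; the inequality chain is identical.
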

\begin{proof}
By $\PR(s)$, there is a constant $N_0>0$ such that  
\begin{equation}\label{equprcon}d(g^k(x),\mathcal{C})>k^{-s}
\end{equation} for every $k\geq N_0$.  
There is $N_1>0$ such that for every $k=0,\dots, N_0$, 
\begin{equation}\label{equsmnpr} d(g^k(x),\mathcal{C})^l>\la^{-(N_1-N_0)}.
\end{equation}
Let  $n\geq N_1,$ and $k\in \{0,\dots, n\}.$
Assume that $d(g^k(x),\mathcal{C})^l\leq \diam W_k.$ 
Since $\diam W_k\leq \la^{k-n}$, 
we have 
\begin{equation}\label{equasstce}d(g^k(x),\mathcal{C})^l\leq \la^{k-n}.
\end{equation}
By (\ref{equsmnpr}),  we get
$k\geq N_0+1.$
Then by (\ref{equprcon}), we get 
$$k\geq n-(sl/\log \la)\log n,$$
which concludes the proof.
\end{proof}
\medskip
\begin{lemma}\label{pr2}
Assume that $g$ is $\TCE(\la)$ for some $\la>1$. Let $\delta_0>0$ be small enough.  Let $x\in \sJ(g)$ satisfy $\PR(s)$ for some $s>0$ and $x$ is not a preimage of a critical point.  Let $\ep>0$ and let $A$ be the subset defined in Lemma \ref{pullback}. Then 
\begin{points}
\item There is a constant $C>0$ such that the following holds:  for every $m\in A$, we have
\begin{equation*}
	\sum_{k=0}^m \frac{\diam W_0(m)}{\diam W_k(m)}<C.
\end{equation*}
\item  For every $\eta>0$ there exists $N_0>0$  such that the following holds:  for every $m\in A$, $m\geq N_0$ we have
\begin{equation*}
	\sum_{k=N_0}^m \frac{\diam W_0(m)}{\diam W_k(m)}<\eta.
\end{equation*}
\end{points}
\end{lemma}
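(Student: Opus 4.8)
The plan is to prove both statements simultaneously by a geometric-series comparison along the pullback branch, using the distortion estimates from Section \ref{plough 1} together with the telescoping structure of the ratios $\diam W_0(m)/\diam W_k(m)$. First I would fix $m\in A$ and write, for each $0\le k\le m-1$,
\[
\frac{\diam W_{k+1}(m)}{\diam W_k(m)}
\]
and estimate this ratio from below. Since $W_k(m)$ is the component of $g^{k-m}(B(g^m(x),\delta_0))$ containing $g^k(x)$ and $g$ maps $W_k(m)$ properly onto $W_{k+1}(m)$, this ratio is controlled by $\rho_\ast(g, W_k(m), g^k(x))/\rho^\ast(g, W_k(m), g^k(x))$ up to comparing $\diam$ with upper/proper lower radii. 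The key dichotomy is whether $k$ is a ``good'' index (i.e. $d_1(g^k(x),\mathcal{C})^l>\diam W_k(m)$, which by Lemma \ref{pullback} fails for at most $N$ indices $k\in E_m$, a set of cardinality bounded independently of $m$) or a ``bad'' one. For good indices, Lemma \ref{distor2} gives $\diam W_{k+1}/\diam W_k \ge (\text{something close to})\ |dg(g^k(x))|$, and more precisely the ratio is bounded below by a definite constant $>1$ coming from the $\TCE(\la)$ shrinking (the components shrink geometrically, so pulling back they expand geometrically); for bad indices, the weaker Lemma \ref{distor3} gives only a polynomial-in-ratio bound, but since there are at most $N=N(\ep)$ of them, their cumulative effect is a bounded multiplicative constant $\theta^N$ times a bounded power.

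Concretely, for part (i) I would telescope: for any $k$,
\[
\frac{\diam W_0(m)}{\diam W_k(m)}=\prod_{j=0}^{k-1}\frac{\diam W_j(m)}{\diam W_{j+1}(m)}.
\]
Splitting the product over the at-most-$N$ bad indices and the good indices, the good factors contribute at most $\la^{-(k-\#E_m)}\le \la^{-k+N}$ up to a bounded constant (using that $\diam W_j\le \la^{j-m}$ together with the reverse estimates of Lemma \ref{distor2}, which for $d(x,\mathcal{C})^l>r$ give the ratio $\rho_\ast/\rho^\ast$ within $O(r^{1/l})$ of $r'/r$, hence near-multiplicativity of the Koebe-type expansion), and the bad factors contribute at most $\theta^{N}$ times a bounded power of $\delta_0$ by Lemma \ref{distor3}. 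This yields $\diam W_0(m)/\diam W_k(m)\le C_0\la^{-k}$ for a constant $C_0=C_0(\ep,\delta_0,\la)$ independent of $m$ and $k$, whence $\sum_{k=0}^m \diam W_0(m)/\diam W_k(m)\le C_0\sum_{k\ge0}\la^{-k}=C_0\frac{\la}{\la-1}=:C$. For part (ii), the same bound $\diam W_0(m)/\diam W_k(m)\le C_0\la^{-k}$ gives $\sum_{k=N_0}^m \diam W_0(m)/\diam W_k(m)\le C_0\sum_{k\ge N_0}\la^{-k}=C_0\la^{-N_0}\frac{\la}{\la-1}$, which is $<\eta$ once $N_0$ is chosen large depending on $\eta$. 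Lemma \ref{pr1} and the $\PR(s)$ hypothesis are used to ensure the bad indices with $d(g^k(x),\mathcal{C})^l\le\diam W_k(m)$ all lie in the last $O(\log n)$ steps, so that when handling the tail sum starting at $N_0$ we may assume all indices in range are good for $m$ large, making the estimate cleaner (alternatively, absorb the finitely many exceptional tail indices into $C_0$).

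The main obstacle I anticipate is making the telescoping rigorous when the pullback branch $g|_{W_k(m)}\to W_{k+1}(m)$ is not injective: near the at-most-$q$ critical values encountered, $W_k(m)$ contains a critical point, $\diam$ is not literally the upper or proper lower radius, and one must pass through the $\rho^\ast,\rho_\ast$ formalism of Section \ref{plough 1} carefully, controlling $\diam W_k$ by $\rho^\ast$ from above and a definite multiple of $\rho_\ast$ (Koebe, via Lemma \ref{leminjectiveball}) from below, up to a bounded factor that only degrades the constant. The other delicate point is checking that the accumulated $O(r^{1/l})$ errors in Lemma \ref{distor2} summed over the good indices form a convergent product (they do, because $\sum_k (\diam W_k)^{1/l}\le\sum_k \la^{(k-m)/l}$ converges uniformly in $m$ by the $\TCE$ bound), so that the near-multiplicative estimate genuinely telescopes to a uniform constant rather than drifting. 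I would package this convergence-of-the-error-product step as a short preliminary lemma before doing the telescoping.
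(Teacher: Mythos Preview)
Your telescoping strategy has a genuine gap. The central claim is that for good indices $j$ the ratio $\diam W_{j+1}(m)/\diam W_j(m)$ is bounded below by a definite constant $>1$, justified by ``the $\TCE(\la)$ shrinking (the components shrink geometrically, so pulling back they expand geometrically)''. But $\TCE(\la)$ only gives the \emph{upper} bound $\diam W_j\le\la^{j-m}$; it says nothing about a per-step lower bound on $\diam W_{j+1}/\diam W_j$. When $g^j(x)$ is close to (but not within $(\diam W_j)^{1/l}$ of) $\mathcal{C}$, one still has a good index yet $|dg(g^j(x))|\ll 1$ and hence $\diam W_{j+1}\ll\diam W_j$; TCE is compatible with this because later steps compensate. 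So there is no reason the product of good factors is $\le C\la^{-k}$.

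What is missing is an exponential lower bound on the \emph{total} derivative $|dg^k(x)|\ge C'\la_1^k$ for some $\la_1>1$; the paper imports this from \cite[Lemma A.4]{ji2023non}, and it uses the $\PR(s)$ hypothesis together with TCE in an essential way. With this in hand, the paper applies Koebe's one-quarter theorem \emph{once} to the injective map $g^k|_{W_0}:W_0\to W_k$ (injectivity for $k\le m-(sl/\log\la)\log m$ is exactly what Lemma~\ref{pr1} supplies) to obtain $\diam W_k\ge C_4\la_1^k\diam W_0$ directly; the remaining $O(\log m)$ tail indices are handled by the crude bound $\diam W_k\ge L^{k-m}\delta_0$ combined with $\diam W_0\le\la^{-m}$, which makes that part of the sum $o(1)$. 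Even if you supplied the derivative bound, your step-by-step route would accumulate a Koebe loss of order $(\text{const})^k$ (one factor of $4/\beta$ per step), which can overwhelm $\la_1^k$; the single application of Koebe to $g^k$ is what avoids this.
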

\begin{proof}
Since $x$ satisfies $\PR(s)$, $g$ is $\TCE(\la)$ and $x$ is not a preimage of a critical point, 
by \cite[Lemma A.4]{ji2023non},
there exists $\la_1>1$ and $C'>0$ such that for every $n\geq 1$,
$|dg^n(x)|\geq C'\la_1^n.$

\par 
Since $g$ is  $\TCE(\la)$, for $\delta_0$ small enough, we may assume that all $W_i(m)$ has diameter at most $\delta$ (see (\ref{equdefinedelta})).
So every $W_i(m)$ meets at most one critical point. Hence all $W_i(m)$ are simply connected.
For each $i=0,\dots,m$, the map $g^{m-i}|_{W_i}: W_i\to W_m$ is proper with at most $l^N$ critical points counted with multiplicity.
By the Koebe type distortion property for proper holomorphic maps with a bounded number of critical points, see \cite[Lemma 2.1]{przytycki1998porosity},  there exists a uniform constant $\beta>0$ such that 
\begin{equation}\label{equPR98}\rho_\ast(W_i(m),x)>\beta \diam W_i(m).
\end{equation}
\par By Lemma \ref{pr1}, for $k< m-(sl/\log \la)\log m$, $g^k:W_0\to W_k$ is injective.  By Koebe one-quarter theorem, we have 
\begin{equation}\label{4.12}
\diam W_k\geq |dg^k(x)|\rho_\ast(W_0,x)/4\geq C_4\la_1^k \diam W_0,
\end{equation}
where $C_4>0$ is a constant.
\par On the other hand there exists a constant $L>1$ such that for every $0\leq k\leq m-1$ we have 
\begin{equation}\label{4.13}
\diam W_k\geq L^{k-m} \delta_0.
\end{equation}
\par Let $p:=\lfloor m-(sl/\log \la)\log m\rfloor$. Combine with (\ref{4.12}) and (\ref{4.13}) there exists $C_5>0$ such that 
\begin{align*}
\sum_{k=0}^m \frac{\diam W_0}{\diam W_k}&=\sum_{k=0}^{p}  \frac{\diam W_0}{\diam W_k}+ \sum_{k=p+1}^{m}  \frac{\diam W_0}{\diam W_k}
\\&\leq \sum_{k=0}^{\infty} \frac{1}{C_4\la_1^k}+\sum_{k=p+1}^{m}\frac{\la^{-m}}{ L^{k-m} \delta_0}
\\&\leq \sum_{k=0}^{\infty} \frac{1}{C_4\la_1^k}+\frac{\la^{-m}}{\delta_0(L-1)}L^{m-p}
\\&\leq \sum_{k=0}^{\infty} \frac{1}{C_4\la_1^k}+\frac{\la^{-m}}{\delta_0(L-1)}L^{(sl/\log \la)\log m+1}
\\&<C_5.
\end{align*}
\par This proves (i). To show (ii), similarly we have 
\begin{align*}
	\sum_{k=N_0}^m \frac{\diam W_0}{\diam W_k}&=\sum_{k=N_0}^{p}  \frac{\diam W_0}{\diam W_k}+ \sum_{k=p+1}^{m}  \frac{\diam W_0}{\diam W_k}
	\\&\leq \sum_{k=N_0}^{\infty} \frac{1}{C_4\la_1^k}+\sum_{k=p+1}^{m}\frac{\la^{-m}}{ L^{k-m} \delta_0}
	\\&<\eta,
\end{align*}
for $N_0$ large enough. This finishes the proof.
\end{proof}

\medskip

\section{From phase space to parameter space}\label{harvest}
\proof[Proof of Theorem \ref{renor}]
By Lemma \ref{precri}, after replacing $a$ by a suitable iterate, we may assume  that the orbit of $a(0)$ does not intersect $\mathcal{C}_0$. 
The $\PCE(\la_0)$ condition shows that there exists $\gamma\neq 0$ such the transversality condition in Lemma \ref{trans} holds. 
After replacing $a$ by a suitable iterate, we may assume that 
\begin{equation}\label{equpcetens}|d\xi_{a,n}/dt(0)|/|d(g^n)(z)|\in (|\gamma|/2, 2|\gamma|).
\end{equation}
Set $C_6:=\sup_{t_0\in \D}|d\xi_{a,0}/dt(t_0)|.$
Set $g:=f_0$ and $z:=a(0)$. By Theorem \ref{DGV}, $0$ is marked Collet-Eckmann parameter, in particular $z\in \sJ(g)$. Moreover, $z$ is not a preimage of a critical point. For an arbitrary fixed $\ep>0$, let $A$ be the subset defined in Lemma \ref{pullback}. To show $A$ has the properties we want, it suffices to construct $\left\{\rho_m \right\}$ such that for every $m\in A$ large, the following  holds: 
\begin{equation}\label{4.7}
h_m(\D)\subseteq B(g^m(z),\delta_0),
\end{equation}
and there exists $\delta_1>0$ such that 
\begin{equation}\label{4.8}
B(g^m(z),\delta_1)\subseteq h_m\left(\frac{1}{2}\D\right).
\end{equation}

\medskip

Since $g$ is  $\TCE(\la)$, for $\delta_0$ small enough, we may assume that for every $m\geq 0, i=0,\dots,m$, 
\begin{equation}\label{equwidiam}\diam(W_i(m))\leq \min\{\delta, 1/10\} 
\end{equation}
where $\delta$ is defined in (\ref{equdefinedelta}). Hence all $W_i(m)$ are simply connected.

\medskip

For $0\leq k\leq m$,  set $R_k:=\rho_\ast (W_k, g^k(z))$
 and $D:=B(z,R_0)$.  For $0\leq k\leq m$, set  $R'_k:=\rho^\ast (g^k(D),g^k(z))$. 
We first show that $R_k,R_k'$, and $\diam W_k$ are comparable.  By (\ref{equPR98}), we have $\beta \diam W_k \leq R_k\leq \diam W_k.$
By \cite[(2.2) of Lemma 2.1]{przytycki1998porosity},  there is  $C_{\tau}>0, \tau\in (0,1/2)$ with $C_{\tau}\to 0$ as $\tau\to 0$ such that the following holds:
For every $r\leq \tau R_k$, let $W''(\tau)$ be the connected component of $g^{-k}(B(g^k(z),r))$ containing $z$, then we have $\diam W''(\tau)<C_{\tau} \diam W_0.$
Pick $\beta_1\in (0,1/2)$ such that $C_{\beta_1}<\beta.$ Then 
$\diam W''(\beta_1)<\beta \diam W_0\leq R_0.$
Hence $W''(\beta_1)\subseteq D$.
Note that $g^k(D)\subseteq W_k$,  we get
\begin{equation}\label{equRk'bound}\beta\beta_1\diam W_k\leq \beta_1R_k\leq \rho_*(g^k(D),g^k(z))\leq R_k'\leq \diam W_k
\end{equation}

\medskip

Let $L:=2\sup_{x\in \frac{1}{2}\D\times \P^1(\C)} |df(x)|$. 
For every $t\in \frac{1}{2}\D$, we have 
\begin{equation}\label{equnearbyfiberd}
d(\xi_{a_{k+1}}(t),g(\xi_{a,k}(t)))=d(f_t(\xi_{a,k}(t)),g(\xi_{a,k}(t)))\leq L|t|
\end{equation}

Let $E_m\subseteq \left\{ 0,1,\dots, m-1\right\}$ be the exceptional set  as in Lemma \ref{pullback}.
We have $\#E_m\leq N$.   Set $r_k:=\rho^\ast  (\xi_{a,k}(\rho_m \D),g^k(z)) $ and $B_k:=B(g^k(z),r_k)$. 
By (\ref{equnearbyfiberd}), if $\rho_m<1/2$, we have 
\begin{equation}\label{4.9}
r_{k+1}\leq \rho^{\ast}(g, B_k, g^{k}(z))+L\rho_m,
\end{equation}
Hence 
\begin{equation}\label{4.9'}
\begin{split}
\frac{r_{k+1}}{R_{k+1}}\leq& \frac{\rho^*(g, B_k, g^k(z))}{\rho_*(W_{k+1}, g^{k+1}(z))}+\frac{L\rho_m}{R_{k+1}}\\
\leq& \frac{\rho^*(g, B_k, g^k(z))}{\rho_*(g, B(g^k(z),R_k), g^{k}(z))}+\frac{L\rho_m}{R_{k+1}}.
\end{split}
\end{equation}

For $0\leq k\leq m-1$ such that $k\notin E_m$, if 
\begin{equation}\label{equcondRkrk}
\frac{r_k}{R_k}<1,
\end{equation}
then by (\ref{4.4}) of Lemma \ref{distor2} and (\ref{4.9'}) we have
\begin{equation}\label{4.10}
\frac{r_{k+1}}{R_{k+1}}\leq\frac{r_k}{R_k}+C_2R_k^{1/l}+\frac{L\rho_m}{R_{k+1}}
\end{equation}
If  $k\in E_m$, and (\ref{equcondRkrk}) holds,
by (\ref{4.6}) of Lemma \ref{distor3}  and (\ref{4.9'}) we have
\begin{equation}\label{4.11}
	\frac{r_{k+1}}{R_{k+1}}\leq\theta  \frac{r_k}{R_k}+\frac{L\rho_m}{R_{k+1}}.
\end{equation}
If (\ref{equcondRkrk}) holds for every $i=0,\dots,k-1$,
by (\ref{4.10}) and (\ref{4.11}) we have for every $0\leq i\leq m$
\begin{equation}\label{equrioverRi}
\begin{split}
	\frac{r_{i}}{R_{i}}&\leq\theta^N \sum_{k=0}^{i-1}\left( C_2R_k^{1/l}+\frac{L\rho_m}{R_{k+1}}\right)+\theta^N \frac{r_0}{R_0}
	\\&\leq\theta^N \sum_{k=0}^{m-1}\left( C_2R_k^{1/l}+\frac{L\rho_m}{R_{k+1}}\right)+\theta^N \frac{r_0}{R_0}
	\\&\leq \theta^N \sum_{k=0}^{m-1}\left( C_2R_k^{1/l}+\frac{L\rho_m}{R_{k+1}}\right)+\theta^N \frac{C_6\rho_m}{R_0}.
\end{split}
\end{equation}
\par Since $R_k\leq \la^{k-m}$, shrink $\delta_0$ if necessary we may assume that 
\begin{equation}\label{equthncrk}\theta^N \sum_{k=0}^{m-1} C_2R_k^{1/l}<\beta\beta_1/4. 
\end{equation}
By Lemma \ref{pr2} (i), there exists a constant $\alpha \in (0, 1/(10L+10\pi))$ such that
\begin{equation}\label{equdefial}\theta^N \frac{C_6\alpha \diam W_0(m)}{R_0}+\theta^N \sum_{k=0}^{m-1} \frac{L\alpha \diam W_0(m)}{R_{k+1}}<\beta\beta_1/4. 
\end{equation}
We define $\rho_m:=\alpha \diam W_0(m)$, then $\rho_m<1/2.$ 
Hence (\ref{4.9}) holds. Moreover, by the $\TCE(\la)$ condition,
$\rho_m\to 0$ as $m\to \infty.$
Since $r_0\leq C_6\times \rho_m=C_6\alpha\diam W_0$, by (\ref{equdefial})
$r_0/R_0<\beta\beta_1/4<1.$
Apply (\ref{equcondRkrk}), (\ref{equrioverRi}), (\ref{equthncrk}) and (\ref{equdefial}) inductively, we get 
\begin{equation}\label{equrirri}r_{i}/R_{i}<\beta\beta_1/2
\end{equation} 
for every $i=0,\dots, m$, which implies (\ref{4.7}).  By (\ref{equwidiam}), for every $m\geq 0$,
\begin{equation}\label{condiepplrhost}\rho_mL\leq 1/10.
\end{equation}
\par It remains to show for our choice $\rho_m:=\alpha \diam W_0(m)$, there exists $\delta_1>0$ such that (\ref{4.8}) holds.  For $0\leq k\leq m$ we set $r'_k:=\rho_\ast (\xi_{a,k},(\rho_m/2)\D, 0)$ and $B'_k:=B(g^k(z),r'_k)$. 
By (\ref{equrirri}) and (\ref{equRk'bound}),
we get 
\begin{equation}\label{equrkprkp}r'_k\leq r_k<\beta\beta_1R_i\leq \beta\beta_1\diam W_i\leq R_k'.
\end{equation}
We need to show that there exists $\delta_1>0$ such that  $r'_m\geq \delta_1$. 

Combining (\ref{equpcetens}) with Koebe distortion theorem,  there exists $\alpha_0\in (0,1/2)$ such that 
\begin{equation}\label{equrpkrpkal}\frac{r'_k}{R'_k}>2\alpha_0,
\end{equation}
provided that $\xi_{a,k}$ is injective when restricted on $\rho_m\D$ and $g^k$ is injective on $2D$.
\par Set $\alpha_1:= \alpha_0^{l^N}/ (2\theta^{Nl^N})$. Combing Lemma \ref{pr2} (ii) which (\ref{equRk'bound}), shrink $\delta_0$ if necessary we can  choose $N_0$ large enough such that for $N_0\leq k\leq m-1$, we have 
\begin{equation}\label{equalphaoctrrp}\frac{\alpha_1^l}{\theta}>2 \left(C_2{R'_k}^{1/l}+\frac{L\rho_m}{R'_{k+1}}\right) .
\end{equation}
Since 
$$B'_k\subseteq_p \xi_{a,k}(\rho_m/2)\D,0) \text{ and } B(g^{k+1}(z),\rho_{\ast}(g, B'_k, g^{k}(z)))\subseteq_p g(B'_k, g^{k}(z)),$$
we have $B(g^{k+1}(z),\rho_{\ast}(g, B'_k, g^{k}(z)))\subseteq_p (g\circ \xi_k)((\rho_m/2)\D, 0).$
Hence $$\rho_{\ast}(g, B'_k, g^{k}(z))\leq \rho_*(g\circ \xi_k, (\rho_m/2)\D, 0).$$
By (\ref{equwidiam}) and (\ref{condiepplrhost}), we may apply (\ref{equghdown}) of Lemma \ref{perturb} and get 
\begin{equation}\label{4.14} 
	r'_{k+1}\geq \rho_{\ast}(g, B'_k, g^{k}(z))-L\rho_m.
\end{equation}
Hence we have
\begin{equation}\label{equcomrkprkp}
\begin{split}
\frac{r'_{k+1}}{R'_{k+1}}\geq&\frac{\rho_{\ast}(g, B'_k, g^{k}(z))}{\rho^\ast (g,g^{k}(D),g^{k}(z))}-\frac{L\rho_m}{R'_{k+1}}\\
\geq&\frac{\rho_{\ast}(g, B'_k, g^{k}(z))}{\rho^\ast (g,B(g^k(z),R_k'),g^{k}(z))}-\frac{L\rho_m}{R'_{k+1}}
\end{split}
\end{equation}
For $0\leq k\leq m-1$ such that $k\notin E_m$, by (\ref{equrkprkp}), (\ref{4.3}) of Lemma \ref{distor2}  and (\ref{equcomrkprkp}), we have
\begin{equation*}
	\frac{r'_{k+1}}{R'_{k+1}}\geq \frac{r'_k}{R'_k}-C_2(R'_k)^{1/l}-\frac{L\rho_m}{R'_{k+1}}.
\end{equation*}
If in addition $k\geq N_0$ and $r'_k/R'_k>\alpha_1$,  the above inequality and (\ref{equalphaoctrrp}) implies 
\begin{equation}\label{4.15}
	\log \frac{r'_{k+1}}{R'_{k+1}}\geq \log \frac{r'_k}{R'_k}-\frac{2}{\alpha_1}\left(C(R'_k)^{1/l}+\frac{L\rho_m}{R'_{k+1}}\right),
\end{equation}
here we use the inequality $\log(1-a)\geq -2a$ for $0<a\leq 1/2$. 
If  $k\in E_m$, by (\ref{equrkprkp}), (\ref{4.5}) of Lemma \ref{distor3}  and (\ref{equcomrkprkp}), we have
\begin{equation*}
	\frac{r'_{k+1}}{R'_{k+1}}\geq\frac{1}{\theta} \frac{(r'_k)^l}{(R'_k)^l}-\frac{L\rho_m}{R'_{k+1}}.
\end{equation*}
If in addition $k\geq N_0$ and $r'_k/R'_k>\alpha_1$,  by (\ref{equalphaoctrrp}) the above inequality implies 
\begin{equation}\label{4.16}
\log	\frac{r'_{k+1}}{R'_{k+1}}\geq l\log \frac{r'_k}{R'_k}-\frac{2\theta}{\alpha_1^l}\frac{L\rho_m}{R'_{k+1}}-\log\theta,
\end{equation}
again we are using $\log(1-a)\geq -2a$ for $0<a\leq 1/2$. 
By {\ref{equRk'bound}}, Lemma \ref{pr2} (ii) and the $\PCE(\la_0)$ condition, shrink $\delta_0$ if necessary, we choose $N_0$ large enough such that 
\begin{equation*}
l^N \sum_{k=N_0}^{m-1}\left( \frac{2}{\alpha_1}C{R'_k}^{1/l}+\frac{2\theta}{\alpha_1^l}\frac{L\rho_m}{R'_{k+1}}\right)\leq \log 2.
\end{equation*}

Since $\rho_m\to 0$ as $m\to \infty$, for $m$ sufficient large, by (\ref{equpcetens}) and the $\PCE(\la_0)$ condition,  
$\xi_{a,N_0}$ is injective when restricted on $\rho_m\D$ and $g^{N_0}$ is injective on $2D$. By (\ref{equrpkrpkal}), 
\begin{equation}\label{equindnz}\frac{r'_{N_0}}{R_{N_0}'}> 2\alpha_0>\alpha_1.
\end{equation}
We show that $\frac{r'_{p}}{R_{p}'}>\alpha_1$ for every $p=N_0+1,\dots, m$ by induction.
By (\ref{equindnz}) and the induction hypothesis, we assume that $\frac{r'_{p-1}}{R_{p-1}'}>\alpha_1$.
By (\ref{4.15}) and (\ref{4.16}),  we have
\begin{align*}
	\log \frac{r'_{p}}{R_{p}'}&\geq -l^N \sum_{k=N_0}^{p}\left( \frac{2}{\alpha_1}C{R'_k}^{1/l}+\frac{2\theta}{\alpha_1^l}\frac{L\rho_m}{R'_{k+1}}\right)+l^N \left(\log \frac{r'_{N_0}}{R'_{N_0}}-N\log\theta\right)
	\\&> l^N(\log \alpha_0-N\log\theta)-\log 2
	\\&=\log \alpha_1.
\end{align*}
Since $W_m=B(g^m(z), \delta_0)$, we have $R_m= \delta_0.$
By (\ref{equRk'bound}), we get 
$\frac{r'_m}{\delta_0}\geq \beta \frac{r'_m}{R'_m}\geq \beta\alpha_1.$
Set $\delta_1:=\beta\alpha_1\delta_0$, then $B(g^m(z),\delta_1)\subseteq h_m\left(\frac{1}{2}\D\right)$, which concludes the proof. 
\endproof


\subsection{Construction of invariant correspondences}

\subsubsection{Read maximal entropy measure from bifurcation measure}

We say that two sequences of positive real numbers $\rho_n, n\geq 0$ and $\rho_n', n\geq 0$ are \emph{equivalent} if $\log (\rho_n/\rho'_n), n\geq 0$ is bounded.

 Let $\mu_{f_0}$ be the maximal entropy measure of $f_0$.
For $\rho\in (0,1]$, let $[\rho]: \D\to \D$ be the map $t\mapsto \rho t$.

The following two results show that under the assumption of Theorem \ref{renor},
 $\mu_{f_0}$ can be read from $\mu_{f,a}$ via a suitable rescaling process. 
Moreover, the scales are determined by $\mu_{f,a}$ itself up to equivalence.  

 Recall that for a holomorphic family of rational maps $f:\D\times \P^1\to \D\times \P^1$  as in (\ref{family}) and for  a marked point $a$,  we let $\xi_{a,n}:\D\to \P^1(\C)$ be the map $\xi_{a,n}(t):=f_t^n(a(t))$. 
\begin{prop}\label{measure}
Let $f:\D\times \P^1\to \D\times \P^1$ be a holomorphic family of rational maps as in (\ref{family}) and $a$ be a marked point. 
Let $n_j, j\geq 0$ be an infinite subsequence of $n\geq 0.$
Let $\rho_{n_j}, j\geq 0$ be a sequence of positive real number tending to $0.$
Define $h_{n_j}:=\xi_{a,n_j}\circ [\rho_{n_j}].$
Assume that $h_{n_j}\to h$ locally uniformly. Then we have 
\begin{equation}\label{equmealimju}d^{n_j}[\rho_{n_j}]^*\mu_{f,a}\to h^\ast (\mu_{f_0})
\end{equation}
where the convergence is the weak convergence of measures. 
Moreover, if $a(0)\in \sJ(f_0)$ and $h$ is non-constant, then $0\in \supp\,h^\ast (\mu_{f_0}).$
\end{prop}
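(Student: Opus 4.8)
The plan is to transport everything onto the continuous potential of the relative Green current and then take limits by hand. Write $T_f=\omega_2+dd^c g$ with $g$ the H\"older continuous potential on $\D\times\P^1$, and for a marked point $b$ set $g_b(t):=g(t,b(t))$, so that $\mu_{f,b}=b^*\omega_{\P^1}+dd^c g_b$ (pulling back $T_f$ through the section $t\mapsto(t,b(t))$, which commutes with $dd^c$ since $g$ is continuous). The basic identity I would use is $\mu_{f,\xi_{a,n}}=\mu_{f,f^n(a)}=d^n\mu_{f,a}$, which follows from $f^*T_f=d\,T_f$ and the fact that the section of $\xi_{a,n}$ equals $f^n$ composed with the section of $a$. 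Applying this to $n=n_j$ and then pulling back along $[\rho_{n_j}]\colon t\mapsto\rho_{n_j}t$ gives, on $\D$,
\[
d^{n_j}[\rho_{n_j}]^*\mu_{f,a}=h_{n_j}^*\omega_{\P^1}+dd^c u_{n_j},\qquad u_{n_j}(t):=g\bigl(\rho_{n_j}t,\,h_{n_j}(t)\bigr).
\]

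For the limit I would argue as follows. Since $g$ is continuous on $\D\times\P^1$, $\rho_{n_j}t\to0$ locally uniformly and $h_{n_j}\to h$ locally uniformly, the functions $u_{n_j}$ converge locally uniformly to $g(0,h(\cdot))=G_0\circ h$, where $G_0:=g|_{\{0\}\times\P^1}$; slicing $T_f\wedge[t=0]=\mu_{f_0}$ (legitimate because $g$ is continuous) gives $\mu_{f_0}=\omega_{\P^1}+dd^c G_0$, so $G_0$ is a continuous potential of $\mu_{f_0}$. Local uniform convergence yields $dd^c u_{n_j}\to dd^c(G_0\circ h)$ weakly. For the other term I would prove the standard fact that $h_{n_j}\to h$ locally uniformly implies $h_{n_j}^*\omega_{\P^1}\to h^*\omega_{\P^1}$ weakly: near any $t_0$ choose a coordinate on $\P^1$ in which $h$, hence $h_{n_j}$ for $n_j$ large, avoids $\infty$, so that $h_{n_j}^*\omega_{\P^1}=dd^c(\rho\circ h_{n_j})$ for the Fubini--Study potential $\rho$ on $\C$ and $\rho\circ h_{n_j}\to\rho\circ h$ locally uniformly; glue over $\D$ with a partition of unity. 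Adding the two limits,
\[
d^{n_j}[\rho_{n_j}]^*\mu_{f,a}\longrightarrow h^*\omega_{\P^1}+dd^c(G_0\circ h)=h^*(\omega_{\P^1}+dd^c G_0)=h^*\mu_{f_0},
\]
the middle equality being the fact that pullback commutes with $dd^c$ for the continuous-potential measure $\mu_{f_0}$ under the holomorphic map $h$ (both sides being $0$ when $h$ is constant). This is (\ref{equmealimju}).

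For the ``moreover'' part, recall $\supp\mu_{f_0}=\sJ(f_0)$. We have $h(0)=\lim_j h_{n_j}(0)=\lim_j\xi_{a,n_j}(0)=\lim_j f_0^{\,n_j}(a(0))$, which lies in $\sJ(f_0)$ because $a(0)\in\sJ(f_0)$ and $\sJ(f_0)$ is closed and (totally) invariant; hence $h(0)\in\supp\mu_{f_0}$. Since $h$ is non-constant it is open, so for every sufficiently small disk $U\ni0$ the restriction $h|_U$ is proper onto the disk $h(U)\ni h(0)$; then every point of $h(U)$ has at least one $h|_U$-preimage in $U$, so $h^*\mu_{f_0}(U)\geq\mu_{f_0}(h(U))>0$ (the finitely many critical values of $h|_U$ being $\mu_{f_0}$-negligible, as $\mu_{f_0}$ has no atoms). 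Therefore $0\in\supp h^*\mu_{f_0}$.

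There is no deep obstacle here: the proposition is a soft consequence of the potential-theoretic picture. The part requiring the most care is the limiting step — the slicing identity $\mu_{f_0}=\omega_{\P^1}+dd^c(g|_{\{0\}\times\P^1})$, the weak convergence $h_{n_j}^*\omega_{\P^1}\to h^*\omega_{\P^1}$, and the identification of $h^*\omega_{\P^1}+dd^c(G_0\circ h)$ with $h^*\mu_{f_0}$, treating separately the degenerate case where $h$ is constant — whereas the functional identity $d^n\mu_{f,a}=\mu_{f,\xi_{a,n}}$ and the bookkeeping with $[\rho_{n_j}]$ are routine.
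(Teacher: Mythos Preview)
Your proof is correct and takes essentially the same approach as the paper: both rely on the continuous potential of $T_f$ to pass to the limit under the locally uniform convergence $h_{n_j}\to h$ and $\rho_{n_j}t\to 0$. The paper packages this slightly more concisely by working with the maps $H_{n_j}(t)=(\rho_{n_j}t,h_{n_j}(t))$ into $\D\times\P^1$ and invoking $H_{n_j}^*T_f\to H^*T_f$ and $H^*T_f=h^*(T_f\wedge[t=0])=h^*\mu_{f_0}$ directly, whereas you unpack the same computation through the decomposition $T_f=\omega_2+dd^cg$ and handle the two summands separately---the content is identical.
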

\begin{proof}
Define $H_{n_j}:\D\to \D\times \P^1(\C)$ by $H_n(t):=(\rho_n t,h_n(\rho_n t))$ and $H:\D\to \D\times \P^1(\C)$ by $H(t):=(0,h(\rho_n t))$.
Since $H_{n_j}=f^{n_j}(a)\circ[\rho_{n_j}]$, we have 
\begin{equation}\label{equhrhonj}H_{n_j}^\ast (T_f)=[\rho_{n_j}]^*((f^{n_j}(a))^*T_f)=d^{n_j}[\rho_{n_j}]^*\mu_{f,a}.
\end{equation}
Since $H_{n_j}\to H$ locally uniformly, $H_{n_j}^\ast (T_f)\to H^\ast (T_f)$. Since $H^\ast (T_f)=h^\ast (T_f\wedge [t=0])=h^*(\mu_{f_0})$, we get (\ref{equmealimju}) by (\ref{equhrhonj}). 
Assume that $a(0)\in \sJ(f_0)$ and $h$ is not constant. Since $h(0)=\lim\limits_{j\to \infty}f_0^{n_j}(a(0))\in \sJ(f_0),$ $0\in \supp\,h^\ast (\mu_{f_0}).$ This concludes the proof.
\end{proof}

\begin{pro}\label{proresunique}
Let $\mu$ be a Borel  measure on $\D$.
Let $\rho_n,\rho'_n, n\geq 0$ be two sequences of real numbers in $(0,1]$. Let $d_n, n\geq 0$ be a sequence of positive real numbers.
Let $\mu_1,\mu_2$ be Borel  measures on $\D$ having positive mass.
Assume that $\mu_1(\{0\})=\mu_2(\{0\})=0$.
If $d_n[\rho_n]^*\mu\to \mu_1$ and $d_n[\rho_n']^*\mu\to \mu_2$, then 
 $\rho_{n_j}, j\geq 0$ and $\rho'_{n_j}, j\geq 0$ are equivalent.  
\end{pro}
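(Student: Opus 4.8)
The plan is to argue by contradiction: if $\rho_n$ and $\rho'_n$ are not equivalent along the relevant subsequence, then (after passing to a further subsequence) one of the ratios $\rho_n/\rho'_n$ or $\rho'_n/\rho_n$ tends to $0$. Say $\rho'_n/\rho_n\to 0$. The idea is that $d_n[\rho_n]^*\mu$ and $d_n[\rho'_n]^*\mu$ describe the same measure $\mu$ seen at two vastly different scales, and convergence of both to measures with no atom at $0$ forces the scales to be comparable. Concretely, note that $[\rho'_n]^*\mu = [\rho'_n/\rho_n]^*\big([\rho_n]^*\mu\big)$, so writing $\nu_n := d_n[\rho_n]^*\mu$ (which converges to $\mu_1$) and $\epsilon_n := \rho'_n/\rho_n\to 0$, we have $d_n[\rho'_n]^*\mu = [\epsilon_n]^*\nu_n \to \mu_2$. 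Thus the rescalings $[\epsilon_n]^*\nu_n$ of a weakly convergent sequence $\nu_n\to\mu_1$, with $\epsilon_n\to 0$, converge to a measure $\mu_2$ of positive mass with no atom at the origin; I want to show this is impossible.

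First I would fix $0<r<1$ and estimate the mass of $[\epsilon_n]^*\nu_n$ on a disk $\D(0,r)$. By definition of the pullback by the dilation $[\epsilon_n]:t\mapsto\epsilon_n t$, we have $\big([\epsilon_n]^*\nu_n\big)(\D(0,r)) = \nu_n(\D(0,\epsilon_n r))$. Since $\epsilon_n r\to 0$ and $\nu_n\to\mu_1$ weakly, one expects $\nu_n(\D(0,\epsilon_n r))\to\mu_1(\{0\}) = 0$. The care needed here is that weak convergence only controls $\nu_n$ on a \emph{fixed} open set, while $\D(0,\epsilon_n r)$ shrinks; but one can dominate: for any fixed $\delta>0$, eventually $\epsilon_n r<\delta$, so $\nu_n(\D(0,\epsilon_n r))\le \nu_n(\overline{\D(0,\delta)})$, and $\limsup_n\nu_n(\overline{\D(0,\delta)})\le\mu_1(\overline{\D(0,\delta)})$ by the portmanteau theorem applied to the closed set $\overline{\D(0,\delta)}$. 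Letting $\delta\to0$ and using $\mu_1(\{0\})=0$ gives $\limsup_n\big([\epsilon_n]^*\nu_n\big)(\D(0,r))=0$ for every $r<1$. On the other hand, $[\epsilon_n]^*\nu_n\to\mu_2$ weakly, and $\mu_2$ has positive mass with $\mu_2(\{0\})=0$, so there is $r<1$ with $\mu_2(\D(0,r))>0$; but then $\liminf_n\big([\epsilon_n]^*\nu_n\big)(\D(0,r))\ge\mu_2(\D(0,r))>0$ by portmanteau for the open set $\D(0,r)$ — a contradiction.

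The symmetric case $\rho_n/\rho'_n\to 0$ is handled the same way with the roles of $\mu_1,\mu_2$ exchanged. Hence no subsequence of $\rho_n/\rho'_n$ or $\rho'_n/\rho_n$ tends to $0$, which means $\log(\rho_n/\rho'_n)$ stays bounded, i.e. the two sequences are equivalent. The main (minor) obstacle is the bookkeeping in the previous paragraph: weak convergence of measures does not directly control mass on shrinking sets, so one must interpose a fixed small disk $\overline{\D(0,\delta)}$ and use upper semicontinuity of mass on closed sets together with lower semicontinuity on open sets; once that is set up, everything is routine. (One should also note the statement as phrased refers to the subsequence $n_j$ implicitly from the context of its use, but the argument is insensitive to this — it works for any sequence along which the hypotheses hold.)
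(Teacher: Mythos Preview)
Your proof is correct and follows essentially the same approach as the paper's: argue by contradiction, pass to a subsequence with $\epsilon_n:=\rho'_n/\rho_n\to 0$, use the identity $d_n[\rho'_n]^*\mu=[\epsilon_n]^*(d_n[\rho_n]^*\mu)$ to compare masses on shrinking disks, and obtain a contradiction via portmanteau-type inequalities against $\mu_1(\{0\})=0$ and $\mu_2$ having positive mass. The paper carries out the same computation slightly more tersely (using the full disk $\D$ rather than $\D(0,r)$ for the lower bound, and writing the mass identity directly rather than through the composition of pullbacks), but the substance is identical; your version is in fact a bit more careful in spelling out the semicontinuity steps.
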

\proof
Assume by contradiction that it is not the case, then without loss of generality, by passing to a subsequence we may assume that
$r_n:=\frac{\rho'_{n}}{\rho_{n}}\to 0$ as $n\to \infty.$
Set $D_n:=\D(0,r_n).$ 
We have 
\begin{equation}\label{equmurhod}d_n[\rho_n]^*\mu(D_n)=d_n[\rho_n']^*\mu(\D).
\end{equation}
By the property of convergence of measures we  have
$$\liminf_{n\to \infty}d_n[\rho_n']^*\mu(\D)\geq \mu_2(\D)>0.$$
So for $n\gg 0$, 
$d_n[\rho_n']^*\mu(\D)\geq \mu_2(\D)/2.$
By (\ref{equmurhod}), for $n\gg 0$, 
$$d_n[\rho_n]^*\mu(\overline{D_n})\geq d_n[\rho_n]^*\mu(D_n)\geq \mu_2(\D)/2.$$
Since $r_n\to 0$ as $n\to 0,$ for every $r\in (0,1)$, we have 
$$\mu_1(\D(0,r))\geq \lim\sup_{n\to \infty}d_n[\rho_n]^*\mu(\overline{D_n})\geq \mu_2(\D)/2.$$
So $\mu_1(\{0\})\geq \mu_2(\D)/2$ which contradicts to our assumption. 
\endproof

\subsubsection{Asymptotic symmetry}
Let $X$ be a complex manifold and Let $\sH:=\{h_i, i\in A\}\subseteq \Hol(\D,X)$ be a family holomorphic maps from $\D$ to $X.$
Let $\lim\sH$ be the set of $h\in \Hol(\D,X)$, for which there is an infinity sequence of distinct $i_n\in A$ such that $h_{i_n}\to h$ as $n\to \infty.$
It is clear that $\lim\sH$ is closed in $\Hol(\D,X)$ and is contained in $\overline{\sH}.$
The family $\sH$ is normal if and only if $\lim\sH$ is compact. 
\begin{rem}
Note that $\Hol(\D,X)$ is a metric space. So by Lindel\"of property, a subset of $\Hol(\D,X)$ is compact if and only if it is sequentially compact.
\end{rem}
We say that $\sH$ is \emph{non-degenerate} if $\sH$ is normal and $\lim\sH$ does not contain any constant map\footnote{When $X=\P^1$, such a $\sH$ is called ``a non-trivial normal family" in \cite{Levin1990}.}.

\medskip

Let $g$ be a non-exceptional 
rational map of degree $d\geq 2.$ Let $\mu_{g}$ be the maximal entropy measure of $g$.
Let $\sH:=\{h_i, i\in A\}, \sH':=\{h_i', i\in A\}$ be two families of holomorphic maps from $\D$ to $\P^1.$
Set $\sH\times_A\sH':=\{h_i\times h_i': \D\to \P^1\times\P^1\}\subseteq \Hol(\D,\P^1\times \P^1).$ 
It is clear that $\sH\times_A\sH'$ is a closed subset of $\sH\times\sH'$.
Such a pair $(\sH, \sH')$ of families is called an \emph{asymptotic symmetry} of $g$ if the following conditions holds:
\begin{points}
\item  both $\{h_i, i\in A\}$ and $\{h_i', i\in A\}$ are non-degenerate;
\item $h_i(0), h_i'(0)\in \sJ(g);$
\item for every $\phi=h\times h'\in \lim (\sH\times_A\sH')$, we have $h^*\mu_g$ and $(h')^*\mu_g$ are proportional. 
\end{points}
We note that for an asymptotic symmetry $(\sH, \sH')$ and every $\phi=h\times h'\in \lim (\sH\times_A\sH')$, we have 
$$0\in h^{-1}(\sJ(g))=\supp\, h^*\mu_g=\supp\, (h')^*\mu_g=(h')^{-1}(\sJ(g)).$$

\medskip

Let $U$ be an open subset on $\P^1$.
Let $S(U)$ be the set of injective holomorphic maps $\sigma: U\to \P^1$ such that $\sigma^*\mu_g$ and $\mu_g|_U$ are nonzero and proportional.
The following theorem was proved in \cite[Theorem 1.7]{jixielocal2022}.
\begin{thm}\label{jxielocjr}For every $\sigma\in S(U)$, there is 
$\Gamma\in \Corr(\P^1)^g_*$ such that the image of $\id\times \sigma$ is contained in $\Gamma.$
\end{thm}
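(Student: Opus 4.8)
The plan is to reduce the statement about the single injective symmetry $\sigma\in S(U)$ to an invariant correspondence by a Zariski-closure / density argument, exploiting the rigidity of the maximal entropy measure under algebraic correspondences. First I would set up the graph $\Gamma_0 := \overline{\{(x,\sigma(x)) : x\in U\}}^{\zar}\subseteq \P^1\times\P^1$, the Zariski closure of the graph of $\sigma$; since $\sigma$ is injective holomorphic on an open set, $\Gamma_0$ is an irreducible algebraic curve on which both projections $\pi_1,\pi_2$ are dominant, hence finite after passing to the relevant components. The key analytic input is that $\sigma$ transports $\mu_g|_U$ to a multiple of $\mu_g$; since $\mu_g$ is the measure of maximal entropy and $g$ is non-exceptional, it has no atoms and its support is the Julia set $\sJ(g)$, which is Zariski dense. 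So $\Gamma_0\cap(\sJ(g)\times\sJ(g))$ is Zariski dense in $\Gamma_0$.

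The crucial step is to produce a $g\times g$-invariant correspondence containing $\Gamma_0$. Here I would use the functional-equation rigidity of $\mu_g$: because $g^\ast\mu_g = d\,\mu_g$, the pullback $(g\times g)^{-1}\Gamma_0$ again carries a piece whose graph transports $\mu_g$ to a multiple of $\mu_g$ locally, and similarly the pushforward $(g\times g)(\Gamma_0)$; iterating, one obtains a sequence of algebraic curves $\Gamma_n$ all of bounded degree (the degree is controlled because $\pi_1|_{\Gamma_0},\pi_2|_{\Gamma_0}$ are finite of bounded degree and $g$ has fixed degree $d$), so only finitely many distinct curves occur among the components of $\bigcup_n(g\times g)^{-n}\Gamma_0$. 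Taking the union $\Gamma$ of a suitable finite orbit of components yields an element of $\Corr(\P^1)^g_\ast$ with $\Gamma_0\subseteq\Gamma$, and then the image of $\id\times\sigma$ is contained in $\Gamma_0\subseteq\Gamma$, which is what we want. Alternatively, and probably more cleanly, I would invoke the theory of \emph{algebraic} symmetries of the maximal entropy measure: a theorem to the effect that any local biholomorphism pulling $\mu_g$ back to a proportional measure must be the restriction of an algebraic correspondence commuting (up to finite index) with $g$; this is essentially a rigidity statement in the spirit of Levin, and its proof runs through showing that the locally-defined map analytically continues along all of $\sJ(g)$ using the expansion of $g$ on repelling periodic points and the density of their preimages.

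I expect the main obstacle to be exactly the algebraization: upgrading the analytic symmetry $\sigma$, a priori defined only on the open set $U$, to something globally algebraic. The natural route is to show $\sigma$ extends to a (multivalued) analytic map on a neighbourhood of $\sJ(g)$ by using that $g$ is expanding on a dense set of repelling cycles and that $\mu_g$-measure zero sets cannot obstruct analytic continuation; once $\sigma$ is defined near all of $\sJ(g)$, its graph has Zariski-dense intersection with an algebraic curve and one concludes by the degree-boundedness argument above. One must be careful that $\sigma$ may become multivalued under continuation — but that is fine, since we only need the \emph{correspondence} $\Gamma$ to be algebraic and $g\times g$-invariant, not single-valued. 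The non-exceptionality hypothesis enters to rule out the Lattès and monomial cases, where $\mu_g$ has extra symmetries and the correspondence could fail to have finite projections or could be a curve of the wrong type; these cases are handled (or excluded) separately, and indeed the statement as quoted from \cite[Theorem 1.7]{jixielocal2022} already presupposes this setup, so in the present paper one simply cites that theorem.
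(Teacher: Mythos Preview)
The paper does not prove this theorem at all: it is simply quoted as \cite[Theorem~1.7]{jixielocal2022}, with no argument given here. You correctly note this in your final sentence, so your proposal is consistent with the paper's treatment.

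The bulk of your proposal is a sketch of how the cited theorem might be proved, which goes beyond what the present paper does. As a sketch it is reasonable in spirit, but there is a genuine gap in your outlined argument: the step where you claim the degrees of the iterated pullbacks $(g\times g)^{-n}\Gamma_0$ stay bounded is not justified and is in fact false in general without further input---pulling back by $g\times g$ multiplies degree by $d^2$. The actual mechanism in the cited work (and in related results such as Levin's and Dujardin--Favre--Gauthier's) is not a na\"ive degree bound on iterates of a fixed $\Gamma_0$, but rather the rigidity of local symmetries of $\mu_g$: one shows that the germ of $\sigma$ at points of the Julia set must intertwine iterates of $g$ up to deck transformations, which forces an algebraic functional equation of bounded complexity. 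Your ``alternative'' paragraph gestures at this correctly, but the first route as written would not close.
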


\begin{rem}
To prove our main result Theorem \ref{dao}, we only need the above theorem in the case where $g$ is $\TCE.$
This $\TCE$ case can be proved by  applying Dujardin-Favre-Gauthier's earlier results
\cite[Theorem A]{dujardin2022two} and \cite[Corollary 3.2]{dujardin2022two}. 
\end{rem}

Recall the following theorem of Levin \cite[Theorem 1]{Levin1990}.
\begin{thm}\label{thmlevin}
Every non-degenerate family $\sF\subseteq S(U)$ is finite. 
\end{thm}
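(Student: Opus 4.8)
The plan is to argue by contradiction: an infinite non-degenerate family of symmetries would, after a limiting procedure, produce a nontrivial one-parameter group of Möbius symmetries of $\mu_g$, and this forces $g$ to be exceptional.

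First I would normalize the situation. Suppose $\sF\subseteq S(U)$ is infinite and pick distinct $\sigma_n\in\sF$; by normality a subsequence converges locally uniformly on $U$ to some $\sigma_\infty$, which is non-constant by non-degeneracy, hence injective by Hurwitz's theorem. Since $\sigma_\infty^\ast\mu_g$ is proportional to $\mu_g|_U$ and nonzero, $\mu_g(U)>0$; fix a Euclidean ball $V$ with $\overline V\subset\sigma_\infty(U)$, $\mu_g(V)>0$ and $\mu_g(\partial V)=0$. For $n$ large the maps $\tau_n:=\sigma_n\circ\sigma_\infty^{-1}$, appropriately restricted, are injective holomorphic onto a neighbourhood of $\overline V$; pulling back the proportionality relations shows $\tau_n|_V\in S(V)$, one has $\tau_n\to\id_V$ locally uniformly, and, discarding at most one index, $\tau_n\neq\id_V$ for all $n$. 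Note $\sJ(g)\cap V$ has positive $\mu_g$-measure, hence is infinite because $\mu_g$ is non-atomic.

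Next I would globalize. Applying Theorem \ref{jxielocjr} to each $\tau_n$ produces $\Gamma_n\in\Corr(\P^1)^g_\ast$ containing $\mathrm{image}(\id\times\tau_n)$. The Zariski closure of that graph is an irreducible component $\Gamma_n'$ of $\Gamma_n$; since $\pi_1|_{\Gamma_n'}$ is finite and generically one-to-one, $\Gamma_n'$ is the graph of a rational map $R_n$ with $R_n|_V=\tau_n$, and finiteness of $\pi_2|_{\Gamma_n'}$ gives $\deg R_n\geq 1$. As $g\times g$ permutes the finitely many components of $\Gamma_n$, some iterate fixes $\Gamma_n'$, so $R_n\circ g^{m_n}=g^{m_n}\circ R_n$. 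Thus each $R_n$ is a rational map commuting with an iterate of $g$, $R_n\neq\id$, and $R_n\to\id$ locally uniformly on $V$. Since $g$ (hence $g^{m_n}$) is non-exceptional, any such $R_n$ of degree $\geq 2$ shares the measure of maximal entropy of $g^{m_n}$, i.e. $R_n^\ast\mu_g=(\deg R_n)\,\mu_g$; but for $n$ large $R_n$ is injective on $V$, whence $(\deg R_n)\,\mu_g(V)=\mu_g(R_n(V))\to\mu_g(V)$, forcing $\deg R_n=1$ eventually. So for $n$ large $R_n\in\PGL_2$, $R_n\neq\id$, $R_n\to\id$, and $(R_n)_\ast\mu_g=\mu_g$.

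Finally I would conclude. The Möbius symmetries of $\mu_g$ form a closed subgroup $G<\PGL_2$ in which $\id$ is not isolated, so $G$ contains a one-parameter subgroup $(\gamma_t)$ with $(\gamma_t)_\ast\mu_g=\mu_g$ for every $t$. If $(\gamma_t)$ were parabolic or hyperbolic, then $\mu_g$ would be invariant under all translations, respectively all positive real dilations, of $\C$, hence concentrated on $\{0,\infty\}$, contradicting non-atomicity; so $(\gamma_t)$ is elliptic and, after conjugating $g$, $\mu_g$ is invariant under all rotations $z\mapsto e^{i\theta}z$. Then $\bigl(g\circ(z\mapsto e^{i\theta}z)\bigr)^\ast\mu_g=d\,\mu_g$ for every $\theta$, so the non-atomic measure $\mu_g$ is simultaneously the measure of maximal entropy of the whole one-parameter family $g\circ(z\mapsto e^{i\theta}z)$; by the rigidity of the measure of maximal entropy for non-exceptional maps — a rotation-invariant maximal measure occurs only for maps semiconjugate to $z\mapsto z^{\pm d}$, equivalently a non-exceptional rational map is determined up to finitely many choices by its maximal measure — this forces $g$ to be exceptional, the desired contradiction. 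The hard part is precisely this last rigidity input (it is where non-exceptionality is genuinely used); everything before it is standard normal-family theory together with the extension theorem \ref{jxielocjr} and the finiteness of the components of an invariant correspondence.
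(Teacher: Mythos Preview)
The paper does not prove Theorem~\ref{thmlevin}; it is quoted from Levin \cite{Levin1990}. Your argument, however, has a genuine gap at the passage from the invariant correspondence $\Gamma_n$ to a global rational map $R_n$. You assert that the irreducible component $\Gamma_n'$ of $\Gamma_n$ containing the graph of $\tau_n$ has $\pi_1|_{\Gamma_n'}$ ``generically one-to-one,'' but nothing in your setup forces this: the Zariski closure of the graph of an injective holomorphic map on a disk can perfectly well be an irreducible curve on which $\pi_1$ has degree~$>1$. Concretely, for a generic $g$ of degree $d\ge 3$ the curve $\{(x,y):g(x)=g(y)\}\setminus\Delta$ is irreducible, $(g\times g)$-invariant, and has $\pi_1$-degree $d-1$; any non-identity local deck transformation $\tau$ of $g$ lies in $S(V)$ (since $g\circ\tau=g$ gives $\tau^*\mu_g=\mu_g|_V$), yet the Zariski closure of its graph is exactly this curve, not a graph of a rational function. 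Without the extension of $\tau_n$ to a rational $R_n$, the degree computation forcing $\deg R_n=1$ and the subsequent construction of a one-parameter subgroup of $\PGL_2$ both collapse.

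Two further points. First, be wary of circularity: Theorem~\ref{jxielocjr} is proved in \cite{jixielocal2022}, which postdates \cite{Levin1990} by decades and may itself rely on Levin's theorem; you should check before invoking it here. Second, your final ingredient---that a non-exceptional rational map is determined up to finitely many choices by its maximal entropy measure---is itself a nontrivial rigidity theorem that you are importing as a black box. Levin's own proof is entirely different and much more elementary: it works directly with repelling periodic points and their linearizing coordinates, not with $\mu_g$, invariant correspondences, or Lie subgroups of $\PGL_2$.
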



\begin{thm}\label{thmasymsymclosedtosym}Let $(\sH, \sH')$ be an asymptotic symmetry of $g$. Then there is $\Gamma\in \Corr(\P^1)^g_*$ such that for every $\ep>0$,
there is a finite subset $E$ of $A$ such that for every $i\in A\setminus E$ and $t\in 1/2\D$, we have $$d((h_i(t),h'_i(t)), \Gamma)< \ep.$$
\end{thm}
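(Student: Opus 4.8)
The plan is to extract from the asymptotic symmetry a single injective map $\sigma$ lying in some $S(U)$ and then apply Theorem \ref{jxielocjr} to produce the invariant correspondence $\Gamma$, after which a compactness/finiteness argument upgrades the pointwise conclusion to the uniform statement. First I would use non-degeneracy of $\sH$ and $\sH'$ together with the Lindel\"of remark: $\lim(\sH\times_A\sH')$ is nonempty and compact, and consists of maps $\phi = h\times h'$ with $h,h'$ nonconstant and $0\in h^{-1}(\sJ(g)) = \supp h^*\mu_g = \supp (h')^*\mu_g = (h')^{-1}(\sJ(g))$. Fix one such $\phi_0 = h_0\times h_0'$. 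Since $h_0$ is nonconstant, it is a branched cover onto its image near a generic point of $\D$; choosing a point $t_0\in\D$ near $0$ where both $h_0$ and $h_0'$ are local biholomorphisms, I get a small disk $\D'$ around $t_0$ and open sets $U := h_0(\D')$, $U' := h_0'(\D')$ with $h_0|_{\D'}\colon \D'\to U$, $h_0'|_{\D'}\colon \D'\to U'$ biholomorphic. Then $\sigma := h_0'\circ (h_0|_{\D'})^{-1}\colon U\to U'\subseteq\P^1$ is injective holomorphic, and because $h_0^*\mu_g$ and $(h_0')^*\mu_g$ are proportional and nonzero on $\D'$ (the support statement forces nonzero mass once we are near the point of the Julia set, and proportionality is given), $\sigma^*\mu_g$ and $\mu_g|_U$ are nonzero and proportional; that is, $\sigma\in S(U)$. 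Theorem \ref{jxielocjr} then yields $\Gamma\in\Corr(\P^1)^g_*$ whose graph contains the graph of $\id\times\sigma$, hence contains $(h_0(t),h_0'(t))$ for all $t\in\D'$; since $\Gamma$ is Zariski closed and $h_0\times h_0'$ is holomorphic on the connected set $\D$, in fact $(h_0(t),h_0'(t))\in\Gamma$ for all $t\in\D$.

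The next step is to show the \emph{same} $\Gamma$ works for \emph{every} limit $\phi = h\times h'\in\lim(\sH\times_A\sH')$. Here I would invoke Theorem \ref{thmlevin} (Levin's finiteness): were there infinitely many distinct correspondences arising from the various limit maps, one would get an infinite non-degenerate family inside some $S(U)$, a contradiction; more carefully, I would argue that the set of correspondences in $\Corr(\P^1)^g_*$ that can contain the graph of some $\phi\in\lim(\sH\times_A\sH')$ is finite — using that $g$ is non-exceptional so that $\Corr(\P^1)^g_*$ behaves rigidly, Levin's theorem bounding symmetries, and the fact that all these $\phi$ send $0$ into $\sJ(g)$ with the shared proportionality. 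Taking $\Gamma$ to be (a component of, or the union of) the finitely many correspondences so obtained — or handling the finitely-many-$\Gamma$ case by a routine pigeonhole over the exceptional set $E$ — I may assume a single $\Gamma$ contains the graph of every element of $\lim(\sH\times_A\sH')$.

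Finally I would prove the uniform estimate by contradiction. Suppose not: there is $\ep>0$ and an infinite sequence of distinct $i_n\in A$ and points $t_n\in\tfrac12\overline{\D}$ with $d((h_{i_n}(t_n),h_{i_n}'(t_n)),\Gamma)\geq\ep$. By normality of $\sH\times_A\sH'$, after passing to a subsequence $h_{i_n}\times h_{i_n}'\to \phi = h\times h'$ locally uniformly on $\D$, and $t_n\to t_\infty\in\tfrac12\overline{\D}$. Then $h_{i_n}(t_n)\to h(t_\infty)$ and $h_{i_n}'(t_n)\to h'(t_\infty)$ by uniform convergence on the compact $\tfrac12\overline\D$, so $d((h(t_\infty),h'(t_\infty)),\Gamma)\geq\ep>0$, contradicting $\phi\in\lim(\sH\times_A\sH')$ and $(h(t),h'(t))\in\Gamma$ for all $t\in\D$.

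The main obstacle I anticipate is the middle step: verifying that a single $\Gamma$ (or at worst finitely many) suffices for all limit maps simultaneously, and cleanly reducing to the hypotheses of Theorem \ref{jxielocjr} and Theorem \ref{thmlevin}. The delicate points are (a) ensuring $\sigma^*\mu_g$ is genuinely nonzero — this uses that $0\in\supp h^*\mu_g$ together with the openness of where $h$ is a local biholomorphism onto a set meeting $\sJ(g)$ with positive $\mu_g$-mass, which needs the non-exceptionality of $g$ to rule out degenerate behaviour of $\mu_g$ along curves — and (b) extracting the finiteness of the collection of relevant correspondences, where one must be careful that the domains $U$ vary with the limit map, so Levin's theorem must be applied after passing to a common domain or via an exhaustion argument. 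Everything else is a routine normal-families compactness argument on $\tfrac12\overline\D$.
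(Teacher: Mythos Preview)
Your overall architecture matches the paper's: build local symmetries $\sigma\in S(U)$ from limit maps, invoke Theorem~\ref{jxielocjr} to land in some $\Gamma\in\Corr(\P^1)^g_*$, use Levin's finiteness (Theorem~\ref{thmlevin}) to reduce to finitely many such $\Gamma$, take their union, and finish with a normal-families compactness argument on $\tfrac12\overline\D$. Your first and last steps are fine as written.

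The gap is exactly where you flagged it: the middle step. Your sentence ``were there infinitely many distinct correspondences\dots one would get an infinite non-degenerate family inside some $S(U)$'' is not justified, because different limit maps $\phi$ produce symmetries on different domains $U=U_\phi$, and Levin's theorem only applies to a family inside a \emph{fixed} $S(U)$. You note this yourself but do not resolve it. The paper's resolution is to work \emph{locally} in $\lim(\sH\times_A\sH')$: for each $\phi=h\times h'$ in the limit set, choose $t_\phi\in(1/2\D)\cap h^{-1}(\sJ(g))$, a radius $r_\phi$ on which $h,h'$ are injective, a ball $B_\phi\subset\subset h(\D(t_\phi,r_\phi))$, and then a neighborhood $N(\phi,\ep_\phi)$ in $\Hol(\D,\P^1\times\P^1)$ small enough that every $\psi=l\times l'$ in it is still injective on $\D(t_\phi,9r_\phi)$, still has $B_\phi\subset\subset l(\D(t_\phi,r_\phi))$, and has $|d(l'\circ l^{-1})(t_\phi)|$ trapped in a fixed interval. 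The last condition plus Koebe distortion is what makes the family $\{\sigma_\psi:=l'\circ(l|_{\D(t_\phi,r_\phi)})^{-1}|_{B_\phi}:\psi\in N(\phi,\ep_\phi)\cap\lim(\sH\times_A\sH')\}$ non-degenerate \emph{on the single common domain} $B_\phi$; then Levin gives finiteness, and Theorem~\ref{jxielocjr} produces one $\Gamma_\phi$ containing the image of every such $\psi$. Compactness of $\lim(\sH\times_A\sH')$ then reduces to finitely many $\phi$, and $\Gamma:=\bigcup_\phi\Gamma_\phi$ works for every limit map. Without this localization-with-derivative-bounds step you cannot get a common $U$, and your finiteness argument does not go through.

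Your worry (a) about $\sigma^*\mu_g$ being nonzero is less serious than you think: choosing $t_\phi\in h^{-1}(\sJ(g))$ guarantees $h(t_\phi)\in\sJ(g)=\supp\mu_g$, so $\mu_g|_{B_\phi}\neq 0$; non-exceptionality is not needed for this particular point (it is used elsewhere, in Theorem~\ref{jxielocjr}).
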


\proof 
For every $\phi\in \Hol(\D,\P^1\times \P^1)$ and $\ep'>0$, let $N(\phi,\ep')$ be the open neighborhood of $\phi$ consisting of $\phi'\in \Hol(\D,\P^1\times \P^1)$
such that for every $t\in (9/10)\overline{\D}$, $d(\phi(t),\phi'(t))<\ep'.$

For every $\phi:=h\times h'\in \lim (\sH\times_A\sH')$, there is a point $t_{\phi}\in (1/2\D)\cap h^{-1}(\sJ(g))$ and $r_{\phi}<1/100$ such that both
$h$ and $h'$ are injective on $\D(t_{\phi}, 10r_{\phi}).$  There is $s_{\phi}>0$ such that $B_{\phi}:=B(h(t_{\phi}), s_{\phi})\subset\subset h(\D(t_{\phi}, r_{\phi})).$
There is $\ep_{\phi}>0$ such that for every $\psi=l\times l'\in N(\phi,\ep_{\phi})$, we have
\begin{points}
\item both $l$ and $l'$ are injective on $\D(t_{\phi}, 9r_{\phi})$;
\item $B_{\phi}\subset\subset l(\D(t_{\phi}, r_{\phi}))$;
\item $|d(l'\circ (l|_{\D(t_{\phi}, r_{\phi})})^{-1})(t_{\phi})|\in \left(9/10\frac{|dh(t_{\phi})|}{|dh(t_{\phi})|}, 11/10\frac{|dh(t_{\phi})|}{|dh(t_{\phi})|}\right).$
\end{points}
Then for every $\psi=l\times l'\in N(\phi,\ep_{\phi})\cap \lim (\sH\times_A\sH')$,
$\sigma_{\psi}:=l'\circ (l|_{\D(t_{\phi}, r_{\phi})})^{-1}|_{B_{\phi}}: B_{\phi}\to \P^1$ is in $S(B_{\phi}).$
Moreover, by (iii) and Koebe distortion theorem, the family $\{\sigma_{\psi}, \psi\in N(\phi,\ep_{\phi})\cap \lim (\sH\times_A\sH')\}$
is non-degenerate. Then it is finite by Theorem \ref{thmlevin}.
By Theorem \ref{jxielocjr}, there is $\Gamma_{\phi}\in \Corr(\P^1)^g_*$ such that 
for every $\psi\in N(\phi,\ep_{\phi})\cap \lim (\sH\times_A\sH')\}$, the image of 
$\id\times \sigma_{\psi}$ is contained in $\Gamma_{\phi}.$
Hence the image of $\psi$ is contained in $\Gamma_{\phi}.$

Since $\lim (\sH\times_A\sH')$ is compact, there is a finite set $F\subseteq (\sH\times_A\sH')$ such that 
$\lim(\sH\times_A\sH')\subseteq \cup_{\phi\in F}N(\phi,\ep_{\phi}).$
Then for every $\psi\in \lim (\sH\times_A\sH')\}$, the image of 
$\psi$ is contained in $\Gamma:=\cup_{\phi}\Gamma_{\phi}\in  \Corr(\P^1)^g_*.$
Set $W:=\cup_{\phi\in \lim (\sH\times_A\sH')}N(\phi, \ep)$. It is an open neighborhood of $\lim (\sH\times_A\sH').$
Then $E:=\{i\in A|\,\, h_i\times h'_i\not\in W\}$ is finite.
For every $i\in A$, there is $\phi\in \lim (\sH\times_A\sH')$ such that $h_i\times h'_i\in N(\phi, \ep).$
Since the image of $\psi$ is contained in $\Gamma$ and $1/2\D\subset\subset (9/10)\D$, for every $t\in 1/2\D$, we have 
$$d((h_i(t),h'_i(t)), \Gamma)\leq d((h_i(t),h'_i(t)), \phi(t))< \ep.$$
This concludes the proof.
\endproof

\begin{prop}\label{Julia}
Let $f:\D\times \P^1\to \D\times \P^1$ be a holomorphic family of rational maps as in (\ref{family}).
Let $a, b$ be two marked points. 
Assume that for both $a$ and $b$, the parameter
$0\in \D$ satisfies
\begin{points}
	\item $\PCE(\la_0)$ for some $\la_0>1$;
	\item $\PR(s)$ for some $s>0$;
	\item $f_{0}$ is $\TCE(\la)$ for some $\la>1$;
	\end{points} 
Assume further that $\mu_{f,a}$ and $\mu_{f,b}$ are proportional and 
$f_0$ is not exceptional.
Then there is $\Gamma_0\in \Corr(\P^1)^{f_0}_*$ such that the $\FS(\Gamma_0)$ condition does not hold.
\end{prop}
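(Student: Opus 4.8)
The plan is to combine the renormalization construction of Theorem \ref{renor} (applied separately to the marked points $a$ and $b$) with the asymptotic symmetry formalism of Theorem \ref{thmasymsymclosedtosym}. First I would apply Theorem \ref{renor} to $a$: given $\ep>0$ there is $A_a\subseteq \Z_{\geq 0}$ with $\underline d(A_a)>1-\ep$ and rescaling factors $\rho_{a,m}\to 0$ such that $h_{a,m}:t\mapsto \xi_{a,m}(\rho_{a,m}t)$ is a non-degenerate family on $\D$; likewise for $b$, producing $A_b$ and $\rho_{b,m}$. Since $\underline d(A_a),\underline d(A_b)>1-\ep$ with $\ep<1/2$, the intersection $A:=A_a\cap A_b$ has positive lower density (indeed $\underline d(A)>1-2\ep$), so $A$ is infinite; restrict both families to this common time set. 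By Proposition \ref{measure}, the limits of $h_{a,m}$ pull $\mu_{f_0}$ back to $\lim_m d^m[\rho_{a,m}]^*\mu_{f,a}$, and similarly for $b$; since $\mu_{f,a}$ and $\mu_{f,b}$ are proportional, Proposition \ref{proresunique} forces $\rho_{a,m}$ and $\rho_{b,m}$ to be equivalent (after passing to a subsequence on which both rescaled measures converge). Hence, after replacing $\rho_{b,m}$ by $\rho_{a,m}$ — which changes $h_{b,m}$ only by a bounded reparametrization of $\D$, preserving non-degeneracy — we may assume $\rho_{a,m}=\rho_{b,m}=:\rho_m$ for all $m\in A$.

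Next I would check that $\sH_a:=\{h_{a,m},m\in A\}$ and $\sH_b:=\{h_{b,m},m\in A\}$ form an asymptotic symmetry of $g:=f_0$ in the sense defined before Theorem \ref{thmasymsymclosedtosym}. Condition (i), non-degeneracy, is exactly the conclusion of Theorem \ref{renor}. Condition (ii), $h_{a,m}(0),h_{b,m}(0)\in\sJ(g)$, holds because $h_{a,m}(0)=f_0^m(a(0))$ and $a(0)\in\sJ(f_0)$ (the $\PCE$ hypothesis implies $0\in\supp\mu_{f,a}$, and Proposition \ref{nuh} or Lemma \ref{precri} gives $a(0)\in\sJ(f_0)$), and $\sJ(f_0)$ is forward invariant; similarly for $b$. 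For condition (iii), take any $\phi=h\times h'\in\lim(\sH_a\times_A\sH_b)$, say $h_{a,m_j}\to h$, $h_{b,m_j}\to h'$ along an infinite subsequence $m_j\in A$; by Proposition \ref{measure}, $d^{m_j}[\rho_{m_j}]^*\mu_{f,a}\to h^*\mu_{f_0}$ and $d^{m_j}[\rho_{m_j}]^*\mu_{f,b}\to (h')^*\mu_{f_0}$. Since $\mu_{f,a}$ and $\mu_{f,b}$ are proportional (with the same proportionality constant along the whole family), the two limits are proportional, giving (iii). This is the step I expect to require the most care: one must make sure that the \emph{same} subsequence and the \emph{same} rescaling factors serve both points simultaneously, which is why the $\rho$-equivalence reduction above and the use of a common time set $A$ are essential; a subtlety is that $\lim(\sH_a\times_A\sH_b)$ could in principle contain pairs $h\times h'$ arising from different subsequences, but the proportionality of $\mu_{f,a}$ and $\mu_{f,b}$ makes condition (iii) hold for \emph{every} such pair regardless.

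Now I invoke Theorem \ref{thmasymsymclosedtosym}: since $g=f_0$ is non-exceptional and $(\sH_a,\sH_b)$ is an asymptotic symmetry, there is $\Gamma_0\in\Corr(\P^1)^{f_0}_*$ such that for every $\ep'>0$ there is a finite $E\subseteq A$ with $d\big((h_{a,m}(t),h_{b,m}(t)),\Gamma_0\big)<\ep'$ for all $m\in A\setminus E$ and all $t\in\tfrac12\D$. Evaluating at $t=0$ gives $d\big((f_0^m(a(0)),f_0^m(b(0))),\Gamma_0\big)<\ep'$ for all sufficiently large $m\in A$. Since $\underline d(A)>1-2\ep$, the set of integers $m$ with $d\big((f_0^m(a(0)),f_0^m(b(0))),\Gamma_0\big)<\ep'$ has upper density $>1-2\ep$. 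Because $\ep>0$ was arbitrary (and $\ep'>0$ was arbitrary for each fixed $\ep$), for every $\ep'>0$ and every $\eta>0$ we can find such configurations on a set of $m$ of upper density $>1-\eta$, i.e. the set $\{m\geq 0:d((f_0^m(a(0)),f_0^m(b(0))),\Gamma_0)\geq\delta\}$ fails to have upper density $>1-\eta$ for the relevant $\delta=\ep'$. This directly negates the $\FS(\Gamma_0)$ condition for the pair $a(0),b(0)$: for $\FS(\Gamma_0)$ to hold we would need, for each $\ep'>0$, some $\delta>0$ with $\overline d(\{m:d((f_0^m(a(0)),f_0^m(b(0))),\Gamma_0)\geq\delta\})>1-\ep'$, whereas the above shows the orbit is $\delta$-close to $\Gamma_0$ for a set of $m$ of density arbitrarily close to $1$ for \emph{every} $\delta$. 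Hence $\FS(\Gamma_0)$ does not hold, which is the assertion. The one remaining bookkeeping point is to state the quantifiers carefully: one applies Theorem \ref{renor} with a sequence $\ep_n\downarrow 0$, extracting for each $n$ a time set $A^{(n)}$ and a correspondence $\Gamma_0^{(n)}$; since $\Corr(\P^1)^{f_0}$ is countable (Lemma \ref{lemgincoun}), one can pass to a single $\Gamma_0$ that works for infinitely many $n$, or alternatively absorb all the finitely many $\Gamma_0^{(n)}$ occurring into a single larger correspondence as in the proof of Theorem \ref{thmasymsymclosedtosym}, and conclude that this fixed $\Gamma_0$ violates $\FS(\Gamma_0)$.
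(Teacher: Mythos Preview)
Your approach is essentially the same as the paper's: apply Theorem \ref{renor} to each of $a$ and $b$, intersect the time sets, use Proposition \ref{proresunique} to synchronize the rescaling factors, verify the asymptotic symmetry axioms via Proposition \ref{measure}, and invoke Theorem \ref{thmasymsymclosedtosym}.

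The one place where you make life harder than necessary is the final quantifier bookkeeping. You try to let the density parameter $\ep$ (from Theorem \ref{renor}) tend to $0$, then worry that $\Gamma_0$ may vary with $\ep$, and propose a countability/pigeonhole rescue. This detour is avoidable. The negation of $\FS(\Gamma_0)$ only asks for \emph{one} $\ep_0>0$ such that for every $\delta>0$ the set $\{m:d((f_0^m(a(0)),f_0^m(b(0))),\Gamma_0)\geq\delta\}$ has upper density $\leq 1-\ep_0$. So a \emph{single} application of Theorem \ref{renor} with, say, $\ep=1/10$ (this is what the paper does) already suffices: one gets $\underline d(A)>4/5$ and a single fixed $\Gamma_0$ from Theorem \ref{thmasymsymclosedtosym}; then for every $\delta>0$ the ``far'' set is contained in $A^c$ union a finite set, hence has upper density $<1/5$. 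This negates $\FS(\Gamma_0)$ with $\ep_0=4/5$, and no pigeonhole on $\Corr(\P^1)^{f_0}$ is needed. (Your alternative ``absorb the finitely many $\Gamma_0^{(n)}$'' is not safe as stated, since a priori infinitely many distinct $\Gamma_0^{(n)}$ could occur; the countability argument does work, but is superfluous.)

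Two minor citation points: the fact $a(0)\in\sJ(f_0)$ comes from Theorem \ref{DGV} (PCE $\Rightarrow$ marked CE, hence $a(0)\in\sJ(f_0)$), not from Proposition \ref{nuh} or Lemma \ref{precri}; and your phrasing of the $\rho$-equivalence step should be framed as a proof by contradiction (if not equivalent, extract a subsequence with ratio $\to\infty$, pass to a further subsequence where both $h$-families converge, then Propositions \ref{measure} and \ref{proresunique} give the contradiction), exactly as you indicate but with the quantifiers in that order.
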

\begin{proof}
By Theorem \ref{DGV}, 
the $\PCE(\la_0)$ condition implies the marked Collet-Eckmann condition for $a(0)$ and $b(0).$
In particular, both $a(0)$ and $b(0)$ are contained in $\sJ(g).$

By Theorem \ref{renor}, there are 
 $A_a, A_b\subseteq \Z_{\geq 0}$ with $\underline{d}(A_a)>9/10$ and $\underline{d}(A_b)>9/10$ and  two sequences of positive numbers
$\rho_{a,n}, n\in A_a,$ and $\rho_{b,n}, n\in A_b$ both tending to $0$ as $n\to \infty$ such that   $h_{a,n}:\D\to \P^1(\C), n\in A_a$ and $h_{b,n}:\D\to \P^1(\C), n\in A_b$ as in Theorem \ref{renor} are non-degenerate.
Set $A:=A_a\cap A_b.$ Then $\underline{d}(A)>0.8.$ 
The families $\sH_a:=\{h_{a,n}, n\in A\}$ and $\sH_b:=\{h_{b,n}, n\in A\}$ are non-degenerate.

\medskip

We claim that $\rho_{a,n}, n\in A$ and $\rho_{b,n}, n\in A$ are equivalent. Otherwise, we may assume that there is a sequence $n_j\in A$ tending to $+\infty$
such that $\rho_{a,n_j}/\rho_{b,n_j}\to +\infty.$ After taking subsequence, we may assume that $h_{a,n_j}\to h_a$ and $h_{b,n_j}\to h_b$, where $h_a$ and $h_b$ are non-constant holomorphic maps. 
Since  $a(0),b(0)\in \sJ(f_0),$ 
by Proposition \ref{measure}, we have 
\begin{equation}\label{equabmeacon}
d^{n_j}[\rho_{a,n_j}]^*\mu_{f,a}\to h_a^\ast \mu_{f_0} \text{ and } d^{n_j}[\rho_{b,n_j}]^*\mu_{f,b}\to h_b^\ast \mu_{f_0}
\end{equation}
and $0$ is contained in both the supports of $h_a^\ast (\mu_{f_0})$ and $h_b^\ast (\mu_{f_0}).$
Since $\mu_{f_0}$ has continuous potential, it does not have atoms.
Since $\mu_{f,a}=c\mu_{f,b}$, by Proposition \ref{proresunique}, $\rho_{a,n_j}, j\geq 0$ and $\rho_{b,n_j}, j\geq 0$ are equivalent, which is a contradiction.
Hence our claim holds.
After multiplying $\rho_{a,n}$ by a constant in $(0,1]$, we may assume that $\rho_{a,n}\leq \rho_{b,n}$ for every $n\in A.$
There is $q'\in (0,1]$ such that $q_n:=\rho_{a,n}/\rho_{b,n}\in [q',1]$ for every $n\in A.$ 
After replacing $\rho_{b,n}$ by $\rho_{a,n}=q_n\rho_{b,n}$ and 
$h_{b,n}$ by $h_{b,n}\circ [q_n]$, we may assume that $\rho_{a,n}=\rho_{b,n}$ for every $n\in A.$
Set $\rho_n:=\rho_{a,n}=\rho_{b,n}.$
Then for any sequence $n_j\in A$ tending to $+\infty$ satisfying 
$h_{a,n_j}\to h_a$ and $h_{b,n_j}\to h_b$,   (\ref{equabmeacon}) becomes
\begin{equation}\label{equabmeaconp}
d^{n_j}[\rho_{n_j}]^*\mu_{f,a}\to h_a^\ast \mu_{f_0} \text{ and } d^{n_j}[\rho_{n_j}]^*\mu_{f,b}\to h_b^\ast \mu_{f_0}
\end{equation}
Since $\mu_{f,a}$ and $\mu_{f,b}$ are proportional and $0$ is contained in both of $\supp\, h_a^\ast (\mu_{f_0})$ and $\supp\,  h_b^\ast (\mu_{f_0}),$
$h_a^\ast (\mu_{f_0})$ and $h_b^\ast (\mu_{f_0})$ are non-zero and proportional. 
This implies that the pair of families $(\sH_a, \sH_b)$ is an asymptotic symmetry.
By Theorem \ref{thmasymsymclosedtosym}, there is $\Gamma_0\in \Corr(\P^1)^{f_0}_*$ such that for every $\ep>0$,
there is a finite subset $E_{\ep}$ of $A$ such that for every $n\in A\setminus E_{\ep}$, we have $$d((h_{a,n}(0),h_{b,n}(0)), \Gamma_0)< \ep.$$
Hence, for every $\ep>0$, 
$$\overline{d}(\{n\geq 0|\,\, d((h_{a,n}(0),h_{b,n}(0)), \Gamma_0)\geq \ep\})\leq 1-\underline{d}(A)< 1/5.$$
So the $\FS(\Gamma_0)$ condition does not hold, which concludes the proof.
\end{proof}

\section{Proof of the main theorems}\label{6}


\proof[Proof of Theorem \ref{thmbogodao}]
The direction (ii) implies (i) was proved by DeMarco \cite[Section 6.4]{demarco2016bifurcations}. 
The direction (i) implies (iii) is trivial. 
We only need to show that (iii) implies (ii).

We may assume that $\phi_f(\La)$ is not contained in the locus of flexible Latt\`es maps, otherwise Theorem \ref{thmbogodao} trivially holds.
As $f$ is defined over $\overline{\Q},$ there is a variety $\La_0$ over $\overline{\Q}$ and a morphism $F:\La_0\times \P^1\to \P^1$ over $\overline{\Q}$ such that $\La=\La_0\otimes_{\overline{\Q}}\C$ and $f=F\otimes_{\overline{\Q}}\C$. After replacing $\La_0$ by a finite ramification cover on it over $\overline{\Q}$, we may assume that $F$ has $2d-2$ marked critical points $a_{i}, i=1,\dots,2d-2$ counted with multiplicity. Let $c_i$ be the base change of  $a_i.$ 
Then we get marked critical points $(c_i)_{1\leq i\leq 2d-2}.$ 

\medskip

Since  $f$ is not isotrivial and $\phi_f(\La)$ is not contained in the locus of flexible Latt\`es maps, $\mu_{\bif}\neq 0$.
By Corollary \ref{corequi}, for every $1\leq i\leq 2d-2$, $\mu_{f,c_i}$ is proportional to $\mu_{\bif}$. Moreover, by Theorem \ref{stable}, $\mu_{f,c_i}\neq 0$ if and only if $c_i$ is active.
Let $c_1$, $c_2$ be two marked critical points, we need to show they are dynamically related. If one of $c_1$, $c_2$ is preperiodic, then $c_1$ and $c_2$ are dynamically related holds trivially. So we only need to consider the case that  $c_1$ and $c_2$ are active. Assume for the sake of contradiction that $c_1$ and $c_2$ are active but not dynamically related.

\medskip
By Corollary \ref{corinvariantas} we have that
 
 \begin{points}
\item[(a)]for $\mu_{\bif}$-a.e. point $t\in\La(\C)$,  for every  $\Gamma_t\in \Corr(\P^1)^{f_t}_*$,  the pair $c_1(t), c_2(t)$ satisfies the $\AS(\Gamma_t)$ condition for $f_t$.
\end{points}
Let $1<\la<d^{1/2}$ and $s>1/2.$
By Proposition \ref{nuh} we have  
\begin{points}
\item[(b)] for $\mu_{\bif}$-a.e. $t\in \La(\C)$, $t$ satisfies $\CE^*(\la)$, $\PCE(\la)$ and $\PR(s)$ for all $c_i$ that is active.
\end{points}

\begin{defi}\label{ce}
A rational map $g$ of degree at least $2$ is called {\em Collet-Eckmann} $\CE(\la)$ for some $\la>1$ if:
\begin{points}
\item There exists $C>0$ such that  for every critical point $c\in\sJ(g)$, there exists $N>0$ such that $|dg^n(g^N(c))|\geq C\la^n$ for every $n\geq 1$;
\item $g$ has no parabolic cycle. 
\end{points}
\end{defi}
\medskip
\par The following proposition is  useful.  A precise statement can be found in   \cite[Main Theorem]{przytycki2003equivalence}.
\begin{prop}[Przytycki-Rohde \cite{przytycki1998porosity}]\label{cetce}
$\CE(\la_0)$  implies $\TCE(\la)$ for every $1<\la<\la_0$.  
\end{prop}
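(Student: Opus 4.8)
The statement is that $\CE(\la_0)$ implies $\TCE(\la)$ for every $1 < \la < \la_0$, and this is a known result attributed to Przytycki--Rohde. Since the paper explicitly points to \cite[Main Theorem]{przytycki2003equivalence} for a precise statement, the plan is to recall that the equivalence between various non-uniform hyperbolicity conditions (Collet--Eckmann along critical orbits, Topological Collet--Eckmann, the exponential shrinking of components property, etc.) is the content of that circle of results, and to extract exactly the implication we need. I would first reduce to the case where all critical points in the Julia set have orbits bounded away from parabolic cycles --- harmless since $\CE$ forbids parabolic cycles by Definition \ref{ce}(ii) --- and then invoke the pullback/shrinking argument directly.

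The core mechanism I would present is the following. Fix $z \in \sJ(g)$ and a small $\delta_0 > 0$, and consider a connected component $W_n$ of $g^{-n}(B(z,\delta_0))$. One wants $\diam W_n \leq \la^{-n}$. The standard approach is to follow the backward orbit, splitting the $n$ pullback steps into those that avoid a neighborhood of the critical set (where one gets genuine uniform expansion / bounded distortion via Koebe-type estimates for proper maps with boundedly many critical points, as in \cite[Lemma 2.1]{przytycki1998porosity}) and those finitely many steps that pass near a critical point. The $\CE(\la_0)$ hypothesis controls exactly the derivative growth along critical orbits, which is what lets one bound the distortion contributed by the ``bad'' times; the number of such times along any length-$n$ backward orbit is controlled (e.g.\ by a telescoping/counting argument analogous to Lemma \ref{dpu} and Lemma \ref{pullback} in this paper). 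Combining the uniform expansion from the good steps with the controlled, subexponential loss from the bad steps yields an exponential contraction rate that can be taken to be any $\la < \la_0$, with the $\delta_0$ chosen uniformly in $z$ by compactness of $\sJ(g)$.

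The main obstacle --- and the reason I would lean on the cited literature rather than reprove it --- is the bounded distortion control at the pullback steps that hit the critical set: one must show that each $W_k$ in the backward chain is simply connected (so Koebe-type theorems apply) and that the total number of ``critical'' returns along the chain is $o(n)$, or more precisely that their cumulative distortion cost is subexponential. This is precisely the technical heart of the Przytycki--Rohde theory, and it is exactly the kind of estimate that Lemmas \ref{dpu}--\ref{pr2} of the present paper re-derive in a quantitative form. So the cleanest exposition is: (1) note $\CE \Rightarrow$ no parabolics and the exponential derivative bound along critical orbits; (2) cite \cite[Main Theorem]{przytycki2003equivalence} (and \cite{przytycki1998porosity}) for the equivalence with the exponential shrinking of pullback components, tracking that the shrinking exponent is any $\la < \la_0$; (3) observe this shrinking property is literally the $\TCE(\la)$ condition. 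If a self-contained argument is wanted instead, one repeats the pullback argument sketched above, but that would essentially duplicate Section \ref{plough 2}, so I expect the paper simply quotes the theorem.
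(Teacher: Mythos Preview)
Your assessment is correct: the paper gives no proof of Proposition \ref{cetce} at all, treating it purely as a citation of Przytycki--Rohde \cite{przytycki1998porosity} with a pointer to \cite[Main Theorem]{przytycki2003equivalence} for the precise statement. Your expectation that ``the paper simply quotes the theorem'' is exactly what happens, and your background sketch of the mechanism is accurate but goes well beyond anything the paper itself provides.
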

\par We have the  following two results.
\begin{prop}\label{generic}
There exists  $\la>1$ such that 
\begin{points}
\item[\rm{(c)}] for $\mu_{\bif}-$a.e. point $t$, $f_t$ is $\CE(\la)$.
\end{points}
\end{prop}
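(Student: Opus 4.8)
The plan is to derive condition (c) as a combination of the non-uniform hyperbolicity established so far (condition (b)) together with Siegel's linearization theorem and the fact that Liouville-type behaviour is exceptional. First I would recall that, by condition (b), for $\mu_{\bif}$-a.e.\ $t\in\La(\C)$ every active marked critical point $c_i$ satisfies $\CE^*(\la)$ and $\PR(s)$ at $t$; moreover $\mu_{f,c_i}$ is proportional to $\mu_{\bif}$, so a marked critical point is passive exactly on a $\mu_{\bif}$-null set, and at a $\mu_{\bif}$-generic $t$ all $2d-2$ critical points are active and lie in $\sJ(f_t)$. Thus the first defining condition of $\CE(\la)$ in Definition \ref{ce} — that each critical point in the Julia set has an iterate whose derivative grows like $\la^n$ — holds at $\mu_{\bif}$-a.e.\ $t$ by condition (b).

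The remaining point is the absence of parabolic cycles. Here I would argue that the set of parameters $t$ for which $f_t$ has a parabolic cycle, or more generally a non-repelling cycle that obstructs the Collet-Eckmann property, is $\mu_{\bif}$-null. A clean way is to combine Siegel's linearization theorem \cite[Theorem 11.4]{milnor2011dynamics} with the fact that the set of Liouville numbers has Hausdorff dimension $0$ \cite[Lemma C.7]{milnor2011dynamics}: for a non-exceptional map with all critical points active, a Cremer or parabolic cycle would force the multiplier of some cycle to be a root of unity or a Liouville-type number, and since (by condition (1), which holds $\mu_{\bif}$-a.e.) $t$ is non-exceptional, one can parametrize the relevant multiplier as a non-constant holomorphic (indeed algebraic) function on $\La$ and push forward; as $\mu_{\bif}$ has continuous potential and no atoms, the preimage of the exceptional multiplier set has $\mu_{\bif}$-measure zero. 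Concretely, this is exactly the mechanism sketched in the introduction for deducing Condition (6): ``a combination of Condition (4), Siegel's linearization theorem and the fact that the set of Liouville numbers has Hausdorff dimension $0$''. So (c) is essentially Condition (6) repackaged: $\mu_{\bif}$-a.e.\ $t$ is Collet-Eckmann.

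Finally I would fix the value of $\la$: condition (b) gives, for any fixed $1<\la<d^{1/2}$, a full-$\mu_{\bif}$-measure set of parameters satisfying $\CE^*(\la)$ for all active $c_i$; intersecting with the full-measure set where there are no parabolic cycles and where all critical points are active and in the Julia set gives a full-$\mu_{\bif}$-measure set on which $f_t$ is $\CE(\la)$. The main obstacle is the second bullet of Definition \ref{ce}: ruling out parabolic (and more generally non-repelling) cycles on a set of full $\mu_{\bif}$-measure. This requires knowing that the multiplier of any periodic cycle varies non-trivially in $t$ once we are away from the exceptional locus, and then invoking that $\mu_{\bif}$ charges no proper analytic subset and no ``Liouville'' set — the Hausdorff-dimension-zero input is what makes the Cremer/Siegel borderline negligible. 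Everything else is a bookkeeping intersection of finitely many (or countably many, then taking a countable union of null sets) full-measure conditions already in hand.
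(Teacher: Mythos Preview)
Your proposal has a genuine gap rooted in a conceptual confusion: ``active'' versus ``passive'' is a \emph{global} property of the marked point $c_i$ on all of $\La$ (namely, whether $\mu_{f,c_i}\neq 0$), not a pointwise condition depending on $t$. By Theorem~\ref{stable}, a passive marked critical point is \emph{persistently} preperiodic, i.e.\ preperiodic for every $t$. So your assertion that ``at a $\mu_{\bif}$-generic $t$ all $2d-2$ critical points are active'' is simply false in general: some $c_i$ may be passive, and then for every $t$ the point $c_i(t)$ is preperiodic to a cycle $p_i(t)$. For such $c_i$ you still need to check condition (i) of Definition~\ref{ce}: if $c_i(t)\in\sJ(f_t)$, does the derivative along its orbit grow exponentially? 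This holds precisely when $p_i(t)$ is repelling, which is not automatic.

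The paper handles exactly this missing case by a dichotomy on the multiplier $\la_{p_i}:\La\to\C$. If $\la_{p_i}$ is non-constant, then the parabolic/Cremer parameters form a set of Hausdorff dimension $0$ (roots of unity are countable; Cremer multipliers lie in the Liouville locus), and since $\mu_{\bif}$ has H\"older continuous potential its Hausdorff dimension is strictly positive, so this set is $\mu_{\bif}$-null; note that ``no atoms'' alone, which is all you invoke, does not suffice here. If $\la_{p_i}$ is \emph{constant}, your argument breaks down entirely: you explicitly assume the multiplier is non-constant. The paper uses that $f$ is defined over $\overline{\Q}$ to get $\la_{p_i}\in\overline{\Q}$; having already excluded parabolic cycles (via the elementary observation that a parabolic basin must contain a non-preperiodic critical point, which for $\mu_{\bif}$-a.e.\ $t$ lies in $\sJ(f_t)$ by (b)), one knows $\la_{p_i}$ is not a root of unity, and then Baker's theorem on algebraic numbers combined with Siegel's linearization forces $p_i(t)$ to be Siegel, hence $c_i(t)\notin\sJ(f_t)$. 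You invoke neither the parabolic-basin argument nor Baker's theorem, and without them the constant-multiplier case cannot be closed.
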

\begin{proof}
By Theorem \ref{stable},  a marked critical point $c$ is either preperiodic or satisfies $\mu_{f,c}>0$.  Let $I\subseteq \left\{1,2,\dots,2d-2\right\}$ be the index such that $c_i$ is preperiodic when $i\in I$.

By (b), for every $i\in \left\{1,2,\dots,2d-2\right\}\setminus I$, $c_i(t)$ satisfies  (i) in Definition \ref{ce} for  $\mu_{\bif}$-a.e. point $t$.
This implies that for $\mu_{\bif}$-a.e. point $t$, the critical points are either contained in the Julia set or preperiodic.
Note that every parabolic basin contains a critical point,  and this critical point can not be preperiodic. 
This implies that $\mu_{\bif}$-a.e. point $t$ satisfies (ii) in Definition \ref{ce} i.e. $f_t$ has no parabolic cycle.

\par For each fixed $i\in I$, we let  $p_i$ be the cycle which $c_i$ is preperiodic to. We only need to show the following claim:
$\mu_{\bif}$-a.e. point $t\in \La$ satisfies the following property: if $c_i(t)\in \sJ(f_t)$, then $c_i(t)$ is preperiodic to a repelling cycle.  
\par Let $\la_{p_i}:\La\to \C$ be the holomorphic map defined by the multiplier of the cycle $p_i, i\in I$.  There are two cases.
\par {\em Case 1}: $\la_{p_i}$ is a constant map. 
As $f$ is defined over $\overline{\Q}$, $\la_{p_i}\in \overline{\Q}.$ If $|\la_{p_i}|\neq 1$, then the claim holds. If $|\la_{p_i}|=1$, since for $\mu_{\bif}$-a.e. point $t$, $p_i(t)$ is not a parabolic cycle, $\la_{p_i}$ is not a root of unity.
By Siegel's linearization theorem \cite[Theorem 11.4]{milnor2011dynamics} and Baker's theorem (c.f. \cite[Theorem 3.1]{Baker2022}),  $p_i(t)$ is a Siegel cycle. This implies our claim. 

\par {\em Case 2}: $\la_{p_i}$ is not a constant map. If the claim is not true for a parameter $t$,  then $c_i(t)$ is preperiodic to a parabolic cycle or a Cremer cycle.  
Since $\la_{p_i}$ is not a constant map, $\{t\in \La(\C)|\,\,  \la_{p_i} \text{ is a root of unity}\}$ is countable.
Recall that by Siegel's linearization theorem \cite[Theorem 11.4]{milnor2011dynamics},  if $p_i(t)$ is a Cremer cycle, then $\la_{p_i}(t)=e^{2\pi i \alpha}$ , where $\alpha$ is a Liouville number.   Recall that the set of Liouville numbers has Hausdorff dimension $0$  \cite[Lemma C.7]{milnor2011dynamics}. Let $\mathcal{CP}\subseteq \La$ be the parameters in $\La$ such that $p_i(t)$ is Cremer or parabolic. Then the Hausdorff  dimension of  $\mathcal{CP}$  is $0$. On the other hand since $\mu_{\bif}$ has H\"older continuous potential \cite[Lemma 1.1]{dinh2010dynamics}, the Hausdorff dimension of $\mu_{\bif}$ is strictly positive \cite[Theorem 1.7.3]{sibony1999dynamique}. This implies our claim.
\end{proof}

Since $\phi_f(\La)$ is not contained in the locus of flexible Latt\`es maps and not a point, 
the parameters $t$ such that $f_t$ is exceptional are finite.  
Since $\mu_{\bif}$ does not have atoms,  we have 
\begin{points}
\item[(d)] for $\mu_{\bif}$-a.e. point $t\in \La(\C)$,$f_t$ is not exceptional.
\end{points}

Combining (b), (c), (d) and  Proposition \ref{Julia}, we get that 
\begin{points}
\item[(e)] for $\mu_{\bif}$-a.e. point $t\in \La(\C)$, there is $\Gamma_t\in \Corr(\P^1)^{f_t}_*$ such that the $\FS(\Gamma_0)$ condition does not hold.
\end{points}
Since (e) contradicts (a), we finish the proof of Theorem \ref{dao}.
\endproof

We now deduce Theorem \ref{dao} from Theorem \ref{thmbogodao}.
\proof[Proof of Theorem \ref{dao}]
By Theorem \ref{thmbogodao}, we only need to reduce to the case where $f$ is defined over $\overline{\Q}.$
Set $\psi:=\Psi\circ \phi_f: \La\to \sM_d.$
Let $B$ be the Zariski closure of $\psi(\La)$ in $\sM_d.$ 
Since $f$ is not isotrivial,
$B$ is a curve and the map $\psi: \La\to B$ is quasi-finite.
We may assume that $\phi_f(\La)$ is not contained in the locus of flexible Latt\`es maps, otherwise Theorem \ref{dao} trivially holds.
 Since there are infinitely many PCF parameters $t\in \La$, by Thurston's rigidity of PCF maps \cite{Douady1993}, $B\cap \sM_d(\overline{\Q})$ is infinite. Hence $B$ is defined over $\overline{\Q}$. Since $Z:=\Psi^{-1}(B)$ is defined over $\overline{\Q}$, there is a smooth curve $\La_1$ and a morphism $\phi_1: \La_1\to Z$ defined over $\overline{\Q}$ such that the morphism $\psi_1:=\Psi\circ \phi_1: \La_1\to B$ is dominant. Let $g: \La_1\times \P^1\to  \P^1$ be the family of rational maps induced by $\phi_1.$ It is clear that Theorem \ref{dao} holds for $f$ if and only if it holds for $g$, this concludes the proof.
\endproof

\newpage


\begin{thebibliography}{DMWY15}
	
	\bibitem[AGMV19]{astorg2019collet}
	Matthieu Astorg, Thomas Gauthier, Nicolae Mihalache, and Gabriel Vigny.
	\newblock Collet, {E}ckmann and the bifurcation measure.
	\newblock {\em Inventiones mathematicae}, 217(3):749--797, 2019.
	
	\bibitem[Bak22]{Baker2022}
	Alan Baker.
	\newblock {\em Transcendental number theory}.
	\newblock Cambridge Mathematical Library. Cambridge University Press,
	Cambridge, 2022.
	\newblock With an introduction by David Masser, Reprint of the 1975 original
	[0422171].
	
	\bibitem[BDM13]{baker2013special}
	Matthew Baker and Laura De~Marco.
	\newblock Special curves and postcritically finite polynomials.
	\newblock {\em Forum of Mathematics, Pi}, 1, 2013.
	
	\bibitem[BIJL14]{benedetto2014attracting}
	Robert Benedetto, Patrick Ingram, Rafe Jones, and Alon Levy.
	\newblock Attracting cycles in p-adic dynamics and height bounds for
	postcritically finite maps.
	\newblock {\em Duke Mathematical Journal}, 163(13):2325--2356, 2014.
	
	\bibitem[CLN69]{Chern1969}
	S.~S. Chern, Harold~I. Levine, and Louis Nirenberg.
	\newblock Intrinsic norms on a complex manifold.
	\newblock In {\em Global {A}nalysis ({P}apers in {H}onor of {K}. {K}odaira)},
	pages 119--139. Univ. Tokyo Press, Tokyo, 1969.
	
	\bibitem[CLT09]{Chambert-Loir2009}
	Antoine Chambert-Loir and Amaury Thuillier.
	\newblock Mesures de {M}ahler et \'{e}quidistribution logarithmique.
	\newblock {\em Ann. Inst. Fourier (Grenoble)}, 59(3):977--1014, 2009.
	
	\bibitem[Dem]{Demailly}
	Jean-Pierre Demailly.
	\newblock {C}omplex analytic and differential geometry.
	\newblock available at https://www-fourier.ujf-grenoble.fr/~demailly/.
	
	\bibitem[DeM16]{demarco2016bifurcations}
	Laura DeMarco.
	\newblock Bifurcations, intersections, and heights.
	\newblock {\em Algebra and Number Theory}, 10(5):1031--1056, 2016.
	
	\bibitem[DeM18]{demarco2018critical}
	Laura DeMarco.
	\newblock Critical orbits and arithmetic equidistribution.
	\newblock In {\em Proceedings of the International Congress of Mathematicians
		(ICM 2018) (In 4 Volumes) Proceedings of the International Congress of
		Mathematicians 2018}, pages 1847--1866. World Scientific, 2018.
	
	\bibitem[DF08]{Dujardin2008}
	Romain Dujardin and Charles Favre.
	\newblock Distribution of rational maps with a preperiodic critical point.
	\newblock {\em Amer. J. Math.}, 130(4):979--1032, 2008.
	
	\bibitem[DFG22]{dujardin2022two}
	Romain Dujardin, Charles Favre, and Thomas Gauthier.
	\newblock When do two rational functions have locally biholomorphic {J}ulia
	sets?
	\newblock {\em Transactions of the American Mathematical Society}, 2022.
	
	\bibitem[DH93]{Douady1993}
	Adrien Douady and John~H. Hubbard.
	\newblock A proof of {T}hurston's topological characterization of rational
	functions.
	\newblock {\em Acta Math.}, 171(2):263--297, 1993.
	
	\bibitem[DM18]{de2018dynamical}
	Laura De~Marco.
	\newblock Dynamical moduli spaces and elliptic curves.
	\newblock In {\em Annales de la Facult{\'e} des sciences de Toulouse:
		Math{\'e}matiques}, volume~27, pages 389--420, 2018.
	
	\bibitem[DMWY15]{de2015bifurcation}
	Laura De~Marco, Xiaoguang Wang, and Hexi Ye.
	\newblock Bifurcation measures and quadratic rational maps.
	\newblock {\em Proceedings of the London Mathematical Society},
	111(1):149--180, 2015.
	
	\bibitem[DPU96]{denker1996transfer}
	Manfred Denker, Feliks Przytycki, and Mariusz Urba{\'n}ski.
	\newblock On the transfer operator for rational functions on the {R}iemann
	sphere.
	\newblock {\em Ergodic Theory and Dynamical Systems}, 16(2):255--266, 1996.
	
	\bibitem[DS10]{dinh2010dynamics}
	Tien-Cuong Dinh and Nessim Sibony.
	\newblock Dynamics in several complex variables: endomorphisms of projective
	spaces and polynomial-like mappings.
	\newblock {\em Holomorphic dynamical systems}, pages 165--294, 2010.
	
	\bibitem[DTGV21]{de2021parametric}
	Henry De~Th{\'e}lin, Thomas Gauthier, and Gabriel Vigny.
	\newblock Parametric {L}yapunov exponents.
	\newblock {\em Bulletin of the London Mathematical Society}, 53(3):660--672,
	2021.
	
	\bibitem[FG18]{favre2018classification}
	Charles Favre and Thomas Gauthier.
	\newblock Classification of special curves in the space of cubic polynomials.
	\newblock {\em International Mathematics Research Notices}, 2018(2):362--411,
	2018.
	
	\bibitem[FG22]{favre2022arithmetic}
	Charles Favre and Thomas Gauthier.
	\newblock The arithmetic of polynomial dynamical pairs:(ams-214).
	\newblock {\em Annals of Mathematics Studies}, 401, 2022.
	
	\bibitem[Gau21]{gauthier2021good}
	Thomas Gauthier.
	\newblock Good height functions on quasi-projective varieties: equidistribution
	and applications in dynamics.
	\newblock {\em arXiv preprint arXiv:2105.02479}, 2021.
	
	\bibitem[Gau22]{gauthier2018dynamical}
	Thomas Gauthier.
	\newblock Dynamical pairs with an absolutely continuous bifurcation measure.
	\newblock {\em Ann. Fac. Sci. Toulouse Math}, 2022.
	
	\bibitem[GHT13]{ghioca2013preperiodic}
	Dragos Ghioca, Liang-Chung Hsia, and Thomas Tucker.
	\newblock Preperiodic points for families of polynomials.
	\newblock {\em Algebra and Number Theory}, 7(3):701--732, 2013.
	
	\bibitem[GHT15]{ghioca2015preperiodic}
	Dragos Ghioca, L-C Hsia, and Thomas~J Tucker.
	\newblock Preperiodic points for families of rational maps.
	\newblock {\em Proceedings of the London Mathematical Society},
	110(2):395--427, 2015.
	
	\bibitem[GKN16]{ghioca2016case}
	Dragos Ghioca, Holly Krieger, and Khoa Nguyen.
	\newblock {A} case of the dynamical {A}ndr{\'e}--{O}ort conjecture.
	\newblock {\em International Mathematics Research Notices}, 2016(3):738--758,
	2016.
	
	\bibitem[GKNY17]{ghioca2017dynamical}
	Dragos Ghioca, Holly Krieger, Khoa~Dang Nguyen, and Hexi Ye.
	\newblock The dynamical {A}ndr{\'e}--{O}ort conjecture: unicritical
	polynomials.
	\newblock {\em Duke Mathematical Journal}, 166(1):1--25, 2017.
	
	\bibitem[GV19]{Gauthier2019}
	Thomas Gauthier and Gabriel Vigny.
	\newblock The {G}eometric {D}ynamical {N}orthcott and {B}ogomolov {P}roperties.
	\newblock arXiv:1912.07907, 2019.
	
	\bibitem[GY18]{ghioca2018dynamical}
	Dragos Ghioca and Hexi Ye.
	\newblock A dynamical variant of the {A}ndr{\'e}-{O}ort conjecture.
	\newblock {\em International Mathematics Research Notices}, 2018(8):2447--2480,
	2018.
	
	\bibitem[Ji23a]{jixiemultiplier2023}
	Junyi Ji, Zhuchao~andXie.
	\newblock The multiplier spectrum morphism is generically injective.
	\newblock {\em to appear}, 2023.
	
	\bibitem[Ji23b]{ji2023non}
	Zhuchao Ji.
	\newblock Non-uniform hyperbolicity in polynomial skew products.
	\newblock {\em International Mathematics Research Notices},
	2023(10):8755--8799, 2023.
	
	\bibitem[JX23a]{ji2023homoclinic}
	Zhuchao Ji and Junyi Xie.
	\newblock Homoclinic orbits, multiplier spectrum and rigidity theorems in
	complex dynamics.
	\newblock {\em Forum of Mathematics, Pi}, 11:e11, 2023.
	
	\bibitem[JX23b]{jixielocal2022}
	Zhuchao Ji and Junyi Xie.
	\newblock Local rigidity of {J}ulia sets.
	\newblock {\em arXiv preprint}, 2023.
	
	\bibitem[Lev90]{Levin1990}
	G.~M. Levin.
	\newblock Symmetries on {J}ulia sets.
	\newblock {\em Mat. Zametki}, 48(5):72--79, 159, 1990.
	
	\bibitem[Mil11]{milnor2011dynamics}
	John Milnor.
	\newblock {\em Dynamics in One Complex Variable.(AM-160):(AM-160)-}, volume
	160.
	\newblock Princeton University Press, 2011.
	
	\bibitem[PR98]{przytycki1998porosity}
	Feliks Przytycki and Steffen Rohde.
	\newblock Porosity of {C}ollet-{E}ckmann {J}ulia sets.
	\newblock {\em Fundamenta Mathematicae}, 155(2):189--199, 1998.
	
	\bibitem[PRLS03]{przytycki2003equivalence}
	Feliks Przytycki, Juan Rivera-Letelier, and Stanislav Smirnov.
	\newblock Equivalence and topological invariance of conditions for non-uniform
	hyperbolicity in the iteration of rational maps.
	\newblock {\em Inventiones mathematicae}, 151(1):29--63, 2003.
	
	\bibitem[PST21]{pila2021canonical}
	Jonathan Pila, Ananth~N Shankar, and Jacob~and Tsimerman.
	\newblock Canonical heights on {S}himura varieties and the {A}ndr{\'e}-{O}ort
	conjecture, {A}ppendix by {E}snault, {H}{\'e}l{\`e}ne and {G}roechenig,
	{M}ichael.
	\newblock {\em arXiv preprint arXiv:2109.08788}, 2021.
	
	\bibitem[Sib99]{sibony1999dynamique}
	Nessim Sibony.
	\newblock Dynamique des applications rationnelles de {$P^k$}.
	\newblock {\em Dynamique et g{\'e}om{\'e}trie complexes (Lyon, 1997)},
	8:97--185, 1999.
	
	\bibitem[Sil07]{Silverman2007}
	Joseph~H. Silverman.
	\newblock {\em The arithmetic of dynamical systems}, volume 241 of {\em
		Graduate Texts in Mathematics}.
	\newblock Springer-Verlag, New York, 2007.
	
	\bibitem[Sil12]{Silverman2012}
	Joseph~H. Silverman.
	\newblock {\em Moduli spaces and arithmetic dynamics}, volume~30 of {\em CRM
		Monograph Series}.
	\newblock American Mathematical Society, Providence, RI, 2012.
	
	\bibitem[Tan90]{Tan1990}
	Lei Tan.
	\newblock Similarity between the {M}andelbrot set and {J}ulia sets.
	\newblock {\em Comm. Math. Phys.}, 134(3):587--617, 1990.
	
	\bibitem[Ull98]{Ullmo1998}
	Emmanuel Ullmo.
	\newblock {P}ositivit\'e et discr\`etion des points alg\'ebriques des courbes.
	\newblock {\em Ann. of Math. (2)}, 147:167--179, 1998.
	
	\bibitem[Xie15]{Xie2015ring}
	Junyi Xie.
	\newblock Intersections of valuation rings in {$k[x,y]$}.
	\newblock {\em Proc. Lond. Math. Soc. (3)}, 111(1):240--274, 2015.
	
	\bibitem[XY23]{xie2023partial}
	Junyi Xie and Xinyi Yuan.
	\newblock Partial heights, entire curves, and the geometric {B}ombieri--{L}ang
	conjecture.
	\newblock {\em arXiv preprint arXiv:2305.14789}, 2023.
	
	\bibitem[YZ17]{Yuan2017}
	Xinyi Yuan and Shou-Wu Zhang.
	\newblock The arithmetic {H}odge index theorem for adelic line bundles.
	\newblock {\em Math. Ann.}, 367(3-4):1123--1171, 2017.
	
	\bibitem[YZ21]{yuan2021}
	Xinyi Yuan and Shou-Wu Zhang.
	\newblock {A}delic line bundles over quasi-projective varieties.
	\newblock arXiv preprint arXiv:2105.13587, 2021.
	
	\bibitem[Zha98]{Zhang1998}
	Shou-Wu Zhang.
	\newblock Equidistribution of small points on abelian varieties.
	\newblock {\em Ann.of Math. (2), 147(1998)}, 147:159--165, 1998.
	
\end{thebibliography}
\end{document}